\theoremstyle{plain} 
\newtheorem{thm}{Theorem}[section]
\newtheorem{lem}[thm]{Lemma} 
\newtheorem{prop}[thm]{Proposition} 
\newtheorem{cor}[thm]{Corollary} 
\theoremstyle{definition} 
\newtheorem{defn}[thm]{Definition}
\newtheorem{rem}[thm]{Remark} 
\newtheorem{example}[thm]{Example}
\newtheorem{quest}[thm]{Question}
\newtheorem{conj}[thm]{Conjecture}
\newtheorem{alg}[thm]{Algorithm} 
\newtheorem{set}[thm]{Setup} 
\DeclareMathOperator{\Z}{\mathbb{Z}}
\DeclareMathOperator{\CO}{\mathcal{O}}
\DeclareMathOperator{\CC}{\mathcal{C}}
\DeclareMathOperator{\CN}{\mathbb{C}}
\DeclareMathOperator{\CD}{\mathcal{D}}
\DeclareMathOperator{\D}{\mathrm{D}}
\DeclareMathOperator{\sign}{\mathrm{sign}}
\DeclareMathOperator{\Hom}{\mathrm{Hom}}
\DeclareMathOperator{\disc}{\mathrm{disc}}
\DeclareMathOperator{\Id}{\mathrm{id}}
\numberwithin{equation}{section}
\begin{document}

\title{On lattice polarizable cubic fourfolds}

\author{Song Yang \and Xun Yu}
\address{Center for Applied Mathematics, Tianjin University, Weijin Road 92, Tianjin 300072, P.R. China}%
\email{syangmath@tju.edu.cn, xunyu@tju.edu.cn}%

\begin{abstract}
We extend non-emtpyness and irreducibility of Hassett divisors to the moduli spaces of $M$-polarizable cubic fourfolds for higher rank lattices $M$, which in turn provides a systematic approach for describing the irreducible components of intersection of Hassett divisors. 
We show that Fermat cubic fourfold is contained in every Hassett divisor, 
which yields a new proof of Hassett's existence theorem of special cubic fourfolds. 
We obtain an algorithm to determine the irreducible components of the intersection of any two Hassett divisors and we give new examples of rational cubic fourfolds. 
Moreover, we derive a numerical criterion for the algebraic cohomology of a cubic fourfold having an associated K3 surface and answer a question of Laza by realizing infinitely many rank $11$ lattices as the algebraic cohomologies of cubic fourfolds having no associated K3 surfaces.
\end{abstract}

\maketitle

\setcounter{tocdepth}{1}
\tableofcontents


\section{Introduction}

Cubic fourfolds have received considerable attention in the last few decades, 
both because of the importance of their rationality problem, 
and because they are closely related to K3 surfaces. 
Let $X$ be a cubic fourfold, that is, a smooth hypersurface of degree three in complex projective five-space. 
Its algebraic cohomology $A(X):=H^{4}(X, \Z)\cap H^{2,2}(X)$ is a positive definite lattice containing the square $h_X^2$ of the hyperplane class.
By Beauville--Donagi \cite{BD85} and Hassett \cite{Has00}, 
the orthogonal complement $\langle h_X^2 \rangle^{\perp} \subset A(X)$ is an even lattice. 
Moreover, thanks to Voisin \cite[\S4, Proposition 1]{Voi86} and Hassett \cite{Has00}, 
$A(X)$ has no roots (i.e., $\nexists\; v\in A(X)$ with the intersection number $(v.v)=2$); see also Proposition \ref{pre-version} and Lemma \ref{non-empty-criterion}.
In the seminal work \cite{Has00}, Hassett introduced the notion of a {\it special cubic fourfold (of discriminant $d$)}, i.e., a cubic fourfold $X$ whose $A(X)$ has a rank $2$ primitive sublattice containing $h_X^2$ (of discriminant $d$). 
Recall that the moduli space $\CC$ of smooth cubic fourfolds is a quasi-projective variety of dimension $20$. 
Building on Voisin's Torelli theorem for cubic fourfolds \cite{Voi86},
Hassett \cite[Theorem 1.0.1]{Has00} proved that the moduli space $\mathcal{C}_{d}$ of the special cubic fourfolds of discriminant $d$ is an irreducible divisor of $\CC$ and $\mathcal{C}_{d}$ is non-empty if and only if 
\begin{equation*}\tag{$*$}\label{onestar}
d>6 \;\;\; \textrm{and}\;\;\; d\equiv 0, 2\; (\mathrm{mod}\; 6).
\end{equation*}
{\it Hassett divisors} (i.e., non-empty $\CC_d$) have played fundamental roles in many studies of cubic fourfolds. 
Our first main result is  the following generalization of Hassett's theorem. 

\begin{thm}[see Theorem \ref{mainthm-one-equal}]\label{mainthm-one}
Let $M$ be a positive definite lattice of rank $r(M)\geq 2$. 
We denote by $\CC_{M}\subset \CC$ the cubic fourfolds $X$ such that $A(X)$ has a primitive sublattice containing $h_X^2$ which is isometric to $M$. If the following conditions hold
\begin{enumerate}
\item $\exists\; \mathfrak{o}\in M$ such that $(\mathfrak{o}.\mathfrak{o})=3$ and the sublattice $\langle \mathfrak{o}\rangle^\perp\subset M$ is even;
\item $M$ has no roots; and
\item $r(M)+\ell(M)\leq 20$,
\end{enumerate}
then $\CC_M$ is a non-empty irreducible closed subvariety of codimension $r(M)-1$ in $\CC$.
\end{thm}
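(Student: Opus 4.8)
The plan is to prove this via a period-domain / Torelli-theoretic argument, generalizing Hassett's original strategy from rank $2$ to higher rank. Recall that the period domain for cubic fourfolds is an open subset of a $20$-dimensional type IV Hermitian symmetric domain $\mathcal{D}$ associated to the middle primitive cohomology lattice $\Lambda = \langle h^2 \rangle^\perp \subset H^4(X,\mathbb{Z})$, which is an even lattice of signature $(20,2)$. Under this framework, a cubic fourfold $X$ lies in $\mathcal{C}_M$ precisely when its period lies in the locus where a copy of the primitive lattice $K := \langle \mathfrak{o} \rangle^\perp \subset M$ (of signature $(r(M)-1, 0)$, even by condition (1)) embeds into the algebraic part, i.e., when the period point is orthogonal to a fixed positive-definite sublattice. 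The first step, then, is to fix a primitive embedding of $K$ into $\Lambda$ and identify $\mathcal{C}_M$ set-theoretically with (the image in $\mathcal{C}$ of) the sub-period-domain $\mathcal{D}_K = \{ \omega \in \mathcal{D} : \omega \perp K \}$, which is itself a type IV domain of dimension $20 - (r(M)-1) = 21 - r(M)$.

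**The core of the argument** breaks into three parts. For \textbf{non-emptiness}, I would produce at least one cubic fourfold with $A(X) \supseteq M$; here I expect to invoke the surjectivity of the period map for cubic fourfolds (the image is the complement in $\mathcal{D}/\Gamma$ of finitely many specified divisors, by Voisin, Laza, and Looijenga), so it suffices to exhibit a period point in $\mathcal{D}_K$ that avoids these forbidden divisors — this is where conditions (2) and (3) must do their work, since the no-roots hypothesis on $M$ guarantees we stay off the discriminant-$2$ and discriminant-$6$ walls that would force the point out of the image of the period map, while $r(M) + \ell(M) \le 20$ should guarantee that a suitable primitive embedding $K \hookrightarrow \Lambda$ exists with the right orthogonal complement (via Nikulin's embedding theory). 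For \textbf{irreducibility}, the domain $\mathcal{D}_K$ has two connected components interchanged by complex conjugation, and the arithmetic group acts transitively on primitive embeddings of $K$ into $\Lambda$ of a fixed type (again via Nikulin), so the image in the quotient $\mathcal{D}/\Gamma$ is irreducible; one must check that different choices of the distinguished vector $\mathfrak{o} \in M$ with $(\mathfrak{o}.\mathfrak{o})=3$, or different embeddings, all land in the same component. For the \textbf{codimension} count, since $\mathcal{D}_K$ has dimension $21 - r(M)$ inside the $20$-dimensional $\mathcal{D}$, the codimension is exactly $r(M)-1$, as claimed.

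**The main obstacle** I anticipate is the existence and uniqueness of the primitive embedding $K \hookrightarrow \Lambda$ (equivalently, the embedding $M \hookrightarrow \langle h^2 \rangle \oplus \Lambda$ matching the Hodge-theoretic data), together with the verification that the resulting period locus genuinely meets the image of the period map. The inequality $r(M) + \ell(M) \le 20$ is precisely the kind of hypothesis that feeds into Nikulin's existence criterion for primitive embeddings into an even lattice of large rank and signature $(20,2)$, so I would reduce both existence and the transitivity-of-embeddings statement to Nikulin's theorems on lattices and their discriminant forms; the delicate point is controlling the discriminant form of $M$ (and of $K$) well enough to guarantee a \emph{unique} embedding orbit, which underlies irreducibility. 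Conditions (1) and (2) are then exactly what ensure the resulting fourfolds are genuine smooth cubics with $A(X)$ having no roots and containing a primitively embedded $M \ni h_X^2$, so that the period locus does not accidentally fall into the non-cubic part of $\mathcal{D}/\Gamma$. I would expect to build an auxiliary lemma, presumably the referenced Proposition \ref{pre-version} and Lemma \ref{non-empty-criterion}, translating the no-roots and embedding conditions into statements directly about periods, and then assemble non-emptiness, irreducibility, and the dimension count in that order.
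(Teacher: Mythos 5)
Your overall strategy --- period domains, the Voisin/Laza/Looijenga description of the image of the period map, the no-roots condition to stay off the discriminant $2$ and $6$ walls, Nikulin's embedding theory, and the two-component/$\Gamma^{+}$ issue for irreducibility --- is the same as the paper's. The genuine gap is in the step you yourself single out as the main obstacle, where you assert that the relevant lattice problem is ``the primitive embedding $K\hookrightarrow\Lambda$ (equivalently, the embedding $M\hookrightarrow\langle h^2\rangle\oplus\Lambda$)'', with $K=\langle\mathfrak{o}\rangle^{\perp}_{M}$ and your $\Lambda$ being the even primitive-cohomology lattice (the paper's $\Lambda_{0}$). These problems are \emph{not} equivalent, and the passage between them is the technical heart of the proof. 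The full cohomology $H^{4}(X,\Z)\cong\Lambda$ is an \emph{odd} unimodular lattice of signature $(21,2)$ which is an index-$3$ overlattice of $\langle h^{2}\rangle\oplus\Lambda_{0}$, not their direct sum; likewise $M$ is in general an index-$3$ overlattice of $\langle\mathfrak{o}\rangle\oplus K$. What the theorem requires is a primitive embedding of $M$ itself into $\Lambda$ with $\mathfrak{o}\mapsto h^{2}$ (for non-emptiness), and transitivity of $\Gamma^{+}$ on all such embeddings (for irreducibility). A primitive embedding $j\colon K\hookrightarrow\Lambda_{0}$ only determines the primitive closure $M'$ of $\langle h^{2}\rangle\oplus j(K)$ in $\Lambda$, and $M'$ need not be isometric to $M$: this depends on gluing data that your plan never controls. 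The same issue infects your wall-avoidance step: the algebraic lattice at a very general period point orthogonal to $j(K)$ is $M'$, so it is $M'$, not $M$, that must be root-free, and overlattices can acquire roots (exactly the phenomenon exploited in Proposition \ref{algcohom-Fermat} and in step (4) of Algorithm \ref{alg:int}). Finally, Nikulin's existence and uniqueness theorems are statements about \emph{even} lattices, so they cannot be applied directly to the pair $(M,\Lambda)$ with $\Lambda$ odd.

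The paper's two key propositions are built precisely to get around this, and both require ideas absent from your outline. For existence (Proposition \ref{exist-primi-embed}), the odd rank-one lattice $\langle\mathfrak{o}\rangle$ is replaced by $A_{2}(-1)$, which has the same discriminant bilinear form; the glue data of $M\supset\langle\mathfrak{o}\rangle\oplus K$ is transported across this replacement to produce an auxiliary \emph{even} lattice $M'$ of signature $(r(M)-1,2)$, which is embedded into $\mathrm{II}_{20,4}$ by \cite[Corollary 1.12.3]{Nik80}; the construction is then reversed to yield an odd unimodular lattice $J\cong\Lambda$ containing $M$ primitively with $\langle\mathfrak{o}\rangle^{\perp}_{J}$ even, whence $\mathfrak{o}$ can be carried to $h^{2}$. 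For uniqueness (Proposition \ref{unique-primi-embed}), Nikulin is applied not to $M$ but to the even orthogonal complements $P_{i}=\iota_{i}(M)^{\perp}_{\Lambda}$; one must check that the induced isomorphism of glue groups respects the discriminant \emph{quadratic} forms and not merely the bilinear forms (a Sylow-$2$ argument using evenness of $\langle h^{2}\rangle^{\perp}_{\Lambda}$), and $\Gamma$-equivalence is upgraded to $\Gamma^{+}$-equivalence by exhibiting an isometry in $\Gamma\setminus\Gamma^{+}$ fixing $\iota_{2}(M)$ pointwise, via the splitting $P_{2}\cong U\oplus S$ from \cite[Corollary 1.13.5]{Nik80}; you flag the component issue but provide no mechanism for it. Note also that the hypothesis $r(M)+\ell(M)\leq 20$ enters exactly as $r(P_{i})\geq\ell(P_{i})+3$ for these complements, not as a condition for embedding $K$. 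Once these two propositions are available, the assembly you sketch (Propositions \ref{prop:CMo}, \ref{two-non-empty}, Corollary \ref{cor:irreducible}, Lemmas \ref{non-empty-criterion} and \ref{unique-CM-lem}) is indeed how the paper concludes in Theorem \ref{mainthm-one-equal}.
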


Here $\ell(M)$ denotes the minimal number of generators of the finite abelian group $M^\vee/M$ (Section \ref{sec:lattice}). 
Note that the condition (3) is automatic if $r(M)\le 10$ (Theorem \ref{prop:main}). 
Generally speaking, 
the higher the rank of $M$ is, 
the richer geometric structure an {\it $M$-polarizable cubic fourfold} (i.e., a cubic fourfold in $\CC_M$) has.  
Many authors studied $M$-polarizable cubics and their moduli for concrete lattices $M$ with $r(M)\ge 3$ from various points of view (see \cite{Tre84,Has99,AT14,ABBVA14,LPZ18,Laz18,AHTVA19,BRS19,DM19,Aue20} etc.). Our Theorem \ref{mainthm-one} gives a unified explanation for non-emptyness and irreducibility of $\CC_M$ for those explicit $M$. 
Based on \cite{Voi86,Has00,Laz10,Loo09},
the non-emptyness of $\CC_M$ has been studied for $M$ with a prescribed primitive embedding into $H^4(X,\Z)$ in \cite[\S 2]{Has16} and \cite[Proposition 5]{YY20} (cf. Proposition \ref{pre-version} and Lemma \ref{non-empty-criterion}).
However, the irreducibility of $\CC_M$ was not investigated for general lattices $M$ with $r(M)\ge 3$. 
Let us briefly explain the proof of Theorem \ref{mainthm-one}. As in \cite{Has00}, Theorem \ref{mainthm-one} boils down to highly nontrivial problems in lattice theory (i.e., existence and uniqueness of certain primitive embeddings) via the Torelli theorem for cubic fourfolds. 
We solve those problems based on Nikulin \cite{Nik80} and technique of glue (see Propositions \ref{exist-primi-embed}, \ref{unique-primi-embed}). 
Along the way, 
we obtain a stronger result (Corollary \ref{non-empty-CM}) on non-emptyness of $\CC_M$ replacing (3) by $r(M)+\ell(M)\leq 22$ and $r(M)\le 21$.

The rationality of cubic fourfolds is a long standing open problem; 
we refer to the excellent surveys \cite{Has16,Huy19}. 
In 1972, Clemens–Griffiths \cite{CG72} proved that all smooth cubic threefolds are irrational via Hodge theory. 
It is expected that a very general cubic fourfold is irrational; nevertheless, no such example is known. 
Cubic fourfolds are linked to K3 surfaces via Hodge theory, due to Hassett's work \cite{Has00}, 
and via derived categories, due to Kuznetsov's work \cite{Kuz10}. 
By \cite{AT14,Huy17,BLMNPS}, these two viewpoints identify the same set of cubic fourfolds. 
Such cubics are called {\it having associated K3 surfaces} and are parametrized exactly by the countably infinite union of the Hassett divisors $\CC_d$ with 
\begin{equation}\tag{$**$}\label{twostar}
4 \nmid d,\; 9\nmid d,\; p \nmid d \text{ for any odd prime } p \equiv 2\;  (\mathrm{mod}\, 3).
\end{equation}
Kuznetsov \cite{Kuz10} conjectured that a cubic fourfold is rational if and only if it has an associated K3 surface. 
The first five values of $d$ with the properties \eqref{onestar} and \eqref{twostar} are $14$, $26$, $38$, $42$ and $62$. 
To the best of our knowledge, currently known examples of rational cubic fourfolds are parametrized by:  
(i) a countably infinite union of divisors in $\CC_{8}$ (\cite{Has99}); 
(ii) $\CC_{14}$ (\cite{Fan43,Tre84,BD85,Tre93,BRS19,Aue20}); 
(iii) a countably infinite union of divisors in $\CC_{18}$ (\cite{AHTVA19}); 
(iv) three divisors in $\CC_{20}$ (\cite{FL20});
(v) $\CC_{26}$, $\CC_{38}$, $\CC_{42}$ (\cite{RS19a,RS18,RS19b}).
Kuznetsov conjecture holds for these examples. 
In fact, $\CC_{8}$ (resp. $\CC_{18}$, resp. $\CC_{20}$) does not satisfy \eqref{twostar} and cubics in (i) (resp. (iii), resp. (iv)) are contained in intersection between $\CC_{8}$ (resp. $\CC_{18}$, resp. $\CC_{20}$) and some Hassett divisors $\CC_d$ with $d$ satisfying \eqref{twostar} (cf. Remarks \ref{rem:C8rationalcubics}, \ref{rem:C18rationalcubics}). 

From discussions above, it is significant to study intersection of Hassett divisors for, among other things, understanding the (conjectural) locus of rational cubics inside $\CC$ and finding new examples of rational cubics.  The first issue for such intersection is non-emptyness. As already mentioned above, for some relatively small values $d$, $\CC_d$ may intersect (infinitely) many other Hassett divisors. 
Addington--Thomas \cite[Theorem 4.1]{AT14} showed that for any Hassett divisor $\CC_d$,  there exists a $\CC_{d^\prime}$ with $d^\prime$ satisfying \eqref{onestar} and \eqref{twostar} such that the intersection $\CC_8\cap \CC_d\cap\CC_{d^\prime}$ is non-empty. 
In \cite[Theorem 7]{YY20} we proved that the intersection of any two Hassett divisors is non-empty. It is known that there exists a collection of infinitely many Hassett divisors whose intersection is non-empty; however, an explicit description of such a collection is unknown. Now we obtain the following result, which solves the non-emptyness issue completely.

\begin{thm}\label{mainthm-two}
The intersection $\mathcal{Z}$ of all Hassett divisors is non-empty and contains the Fermat cubic fourfold.
\end{thm}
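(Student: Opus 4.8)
The plan is to reduce the geometric statement to lattice theory and then exploit the fact that the Fermat cubic fourfold $X_F$ has the largest possible algebraic cohomology. By Hassett's description, $X_F\in\CC_d$ precisely when $A(X_F)$ contains a primitive rank $2$ sublattice isometric to the Hassett lattice $K_d$ and containing $h_{X_F}^2$; here $K_d$ has Gram matrix $\left(\begin{smallmatrix}3&0\\0&d/3\end{smallmatrix}\right)$ if $d\equiv 0\pmod 6$ and $\left(\begin{smallmatrix}3&1\\1&(d+1)/3\end{smallmatrix}\right)$ if $d\equiv 2\pmod 6$. So I must produce such a sublattice for every $d$ satisfying \eqref{onestar}. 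The crucial input is that $X_F$ has complex multiplication by $\CN(\sqrt{-3})$ and that the Hodge conjecture is known for Fermat fourfolds, whence $\rank A(X_F)=21$ is maximal and the transcendental lattice $T:=A(X_F)^{\perp}\subset H^4(X_F,\Z)$ is an even, negative definite lattice of rank $2$.

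Next I would identify $A(X_F)$ explicitly enough to run the construction. First compute $T$ from the eigenspace decomposition of the Fermat cohomology under the CM action; it is the rank $2$ lattice attached to the maximal order of $\CN(\sqrt{-3})$, namely $T\cong A_2(-1)$. Since $H^4(X_F,\Z)$ is unimodular and the primitive cohomology is $L=U^{\oplus2}\oplus E_8^{\oplus2}\oplus A_2$ (hyperbolic plane $U$, positive definite $E_8$ and $A_2$), a primitive embedding $T\hookrightarrow L$ exists into the $U^{\oplus2}$ summand, and by Nikulin's uniqueness of primitive embeddings \cite{Nik80} it is unique up to isometry; hence $P:=\langle h_{X_F}^2\rangle^{\perp}\cap A(X_F)=T^{\perp}_L\cong E_8^{\oplus2}\oplus A_2^{\oplus2}$. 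Comparing discriminants ($\disc A(X_F)=\disc T=3$ while $\disc(\langle h_{X_F}^2\rangle\oplus P)=27$) shows $A(X_F)$ is the index $3$ overlattice of $\langle h_{X_F}^2\rangle\oplus P$ glued along an isotropic line in $\Z/3\oplus(\Z/3)^2$. Two structural features are extracted here. First, $E_8^{\oplus2}\subset P$. Second, the gluing must pair $h_{X_F}^2$ with the $A_2^{\oplus2}$-part, because $(\Z/3)^2$ carries no nonzero isotropic vector for the discriminant form of $A_2^{\oplus2}$; this forces a class $t\in A(X_F)$ with $(t.h_{X_F}^2)=1$ and $(t.t)=1$, orthogonal to $E_8^{\oplus2}$.

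With this in hand the realization of every $K_d$ is uniform, using only that $E_8$ represents every positive even integer by a primitive vector. For $d\equiv 0\pmod 6$, pick a primitive $v$ in one $E_8\subset P$ with $(v.v)=d/3$ (even, $\ge 4$); then $\langle h_{X_F}^2,v\rangle$ is orthogonal with Gram $\left(\begin{smallmatrix}3&0\\0&d/3\end{smallmatrix}\right)$, hence $\cong K_d$. For $d\equiv 2\pmod 6$, set $w=t+u$ with $u\in E_8$ of norm $(u.u)=(d-2)/3$ (even, $\ge 2$); then $(w.h_{X_F}^2)=1$ and $(w.w)=1+(d-2)/3=(d+1)/3$, so $\langle h_{X_F}^2,w\rangle\cong K_d$. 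In both cases the relevant vector lives in the $E_8$-part, over which $A(X_F)$ carries no extra glue, so the rank $2$ sublattice is primitive in $A(X_F)$. This yields $X_F\in\CC_d$ for every $d$ satisfying \eqref{onestar}, that is $X_F\in\mathcal{Z}$, so $\mathcal{Z}\neq\varnothing$.

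The main obstacle is the second step: pinning down $A(X_F)$ precisely, especially the computation of $T$ from the Fermat motive and the verification that $E_8^{\oplus2}\subset P$ together with the forced norm $1$ glue class $t$ pairing to $1$ with $h_{X_F}^2$. Once these structural facts are secured, the representation properties of $E_8$ close the argument for all $d$ simultaneously, and the parity bookkeeping (even target norms for $d\equiv 0$, odd for $d\equiv 2$, absorbed by the odd class $t$) matches exactly the congruence conditions in \eqref{onestar}. A secondary point requiring care is the primitivity of $\langle h_{X_F}^2,v\rangle$ and $\langle h_{X_F}^2,w\rangle$, which I would handle by noting that the gluing defining $A(X_F)$ involves only the $A_2^{\oplus2}$ discriminant and is therefore invisible to the chosen $E_8$-vectors.
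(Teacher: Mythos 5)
Your reduction of the theorem to producing, for every $d$ satisfying \eqref{onestar}, a primitive rank-$2$ sublattice of $A(X_F)$ containing $h_{X_F}^2$ of discriminant $d$ is the same reduction the paper uses, but the structural description of $A(X_F)$ on which your construction rests is false. You claim $T(X_F)\cong A_2(-1)$, hence $\disc A(X_F)=3$, hence $A(X_F)$ contains $P\cong E_8^{\oplus 2}\oplus A_2^{\oplus 2}$ together with a glue class $t$ with $(t.t)=1$ and $(h_{X_F}^2.t)=1$. In fact $\disc A(X_F)=27$: Proposition \ref{algcohom-Fermat} proves exactly this, by showing that the unique index-$3$ overlattice of the lattice spanned by the $21$ plane classes --- i.e.\ the discriminant-$3$ possibility that your analysis would force --- contains a root, contradicting Voisin's theorem. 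Correspondingly $T(X_F)\cong A_2(-3)$, consistent with the associated degree-$6$ K3 surface having transcendental lattice $\left(\begin{smallmatrix}6&3\\3&6\end{smallmatrix}\right)$ (Remark \ref{rem:FermatC14}).

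More decisively, the structure you derive is impossible for \emph{any} smooth cubic fourfold, so no repair of the CM computation can save the argument. By Voisin's theorem (see Lemma \ref{non-empty-criterion}), $A(X)$ has no roots, equivalently $(x.x)\ge 3$ for every nonzero $x\in A(X)$. But $E_8$ is generated by its $240$ roots, so $E_8^{\oplus 2}\subset A(X_F)$ cannot hold; and your class $t$ would produce the root $h_{X_F}^2-t$, of square $3-2+1=2$. This root-avoidance constraint is precisely the obstruction your proposal ignores, and it is why the paper, after identifying $A(X_F)$ explicitly via the $21$ planes, runs the representation argument inside the primitive rank-$7$ sublattice with Gram matrix \eqref{matrix} (diagonal entries $3,4,4,4,4,6,3$, containing no vectors of square $<3$). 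For that form, ``representing all admissible $d$'' is no longer trivial as it would be for $E_8$: the paper needs Lagrange's four-square theorem (Lemma \ref{Lagrange}) and Ramanujan's theorem on $2x^2+2y^2+2z^2+3u^2$ (Lemma \ref{Ramanujan}) to realize every $d$ satisfying \eqref{onestar}.
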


The proof of Theorem \ref{mainthm-two} is based on explicit description of the algebraic cohomology of the Fermat cubic fourfold.
More precisely, we show that it is generated by the classes of $21$ planes (Proposition \ref{algcohom-Fermat}) and combining with two classical results in number theory, 
Lagrange's four-square theorem and an analogous result of Ramanujan \cite{Ram}, we get the result. 
As a byproduct, this yields a new proof of Hassett's existence theorem \cite[Theorem 4.3.1]{Has00} of special cubic fourfolds. 
Moreover, Theorem \ref{mainthm-two} provides an explicit example of cubic for each Hassett divisor.  
Note that cubics in $\mathcal{Z}$ are rational and the dimension of $\mathcal{Z}$ is at least $13$ (Theorem \ref{subvar-Inter-all-HasD}). 
Another crucial issue regarding intersection is the determination of irreducible components. 
The description of the irreducible components of $\CC_{d_1}\cap \CC_{d_2}$ with relatively small $d_i$  has been studied extensively in \cite{ABBVA14,BRS19,Awa19,FL20} etc. 
Our Theorem \ref{mainthm-one} provides a systematic approach for describing the irreducible components of intersection of arbitrarily many Hassett divisors (see Proposition \ref{prop:int}). 
In order to find {\bf all} irreducible components,
the overlattices must be taken into account (cf. Theorem \ref{C20intC12}).
In particular, we find an algorithm to compute the irreducible components of intersection of any two Hassett divisors (Algorithm \ref{alg:int}). 
As an illustration, we determine the irreducible components of $\CC_{20}\cap\CC_{38}$ and $\CC_{20}\cap\CC_{12}$ respectively (Theorems \ref{C20intC38}, \ref{C20intC12}). 
Building on our complete description of the irreducible components of $\CC_{20}\cap \CC_{38}$, 
we find new examples of rational cubics: 

\begin{cor}[see Corollary \ref{new-rat-cubics}]
There are two rank $3$ positive definite lattices $M_1$ and $M_2$ of discriminants $197$ and $213$ respectively such that $\CC_{M_i}$ ($i=1,2$) are non-empty irreducible divisors of $\CC_{20}$. Moreover, $\CC_{M_i}$ parametrize rational cubic fourfolds and are not contained in $\CC_{8}$, $\CC_{14}$, $\CC_{18}$, $\CC_{26}$, $\CC_{38}$, 
$\CC_{42}$.
\end{cor}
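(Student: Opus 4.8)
The plan is to extract two explicit lattices from the list of components produced by Theorem \ref{C20intC38} and then to handle the four assertions in turn: the discriminant computation, the application of Theorem \ref{mainthm-one}, the non-containment in the six listed Hassett divisors, and finally rationality. Concretely, I would fix the distinguished class $\mathfrak{o}=h_X^2$ with $(\mathfrak{o}.\mathfrak{o})=3$ and a primitive rank $2$ sublattice $\langle\mathfrak{o},v\rangle$ of discriminant $20$, realized by the Gram matrix $\bigl(\begin{smallmatrix}3&1\\1&7\end{smallmatrix}\bigr)$, and then adjoin a third generator $w$ whose inner products with $\mathfrak{o},v$ are read off the rank $4$ intersection components of $\CC_{20}\cap\CC_{38}$, so that the determinant of the resulting $3\times3$ Gram matrix equals $197$, respectively $213$. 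Positive definiteness and the exact values of the discriminants are then a direct $3\times3$ determinant and principal-minor check.

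Next I would verify the hypotheses of Theorem \ref{mainthm-one} for each $M_i$. Condition (3) is automatic, since $r(M_i)=3$ forces $r(M_i)+\ell(M_i)\le 6\le 20$. For condition (1) I would check that the rank $2$ orthogonal complement $\langle\mathfrak{o}\rangle^{\perp}\subset M_i$ is even, i.e. has even Gram diagonal in a suitable basis obtained by projecting away $\mathfrak{o}$. For condition (2), the no-roots condition, I would Minkowski-reduce each Gram matrix and confirm that the minimum of $M_i$ is at least $3$ and that no vector has norm exactly $2$; this is a finite search bounded by the successive minima. Theorem \ref{mainthm-one} then gives that each $\CC_{M_i}$ is a non-empty irreducible closed subvariety of codimension $r(M_i)-1=2$ in $\CC$. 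Since $M_i$ contains $\langle\mathfrak{o},v\rangle$ primitively, every member of $\CC_{M_i}$ has a primitive discriminant $20$ sublattice, so $\CC_{M_i}\subset\CC_{20}$ and, by the codimension count, $\CC_{M_i}$ is a divisor of $\CC_{20}$.

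For the non-containment I would use that the very general member $X\in\CC_{M_i}$ has $A(X)=M_i$, so $\CC_{M_i}\subset\CC_d$ holds if and only if $M_i$ admits a primitive rank $2$ sublattice of discriminant $d$ containing $\mathfrak{o}$, i.e. a vector $u\in M_i$ with $(\mathfrak{o}.u)=a$ and $(u.u)=b$ solving $3b-a^2=d$ and with $\langle\mathfrak{o},u\rangle$ primitive. For each $d\in\{8,14,18,26,38,42\}$ this is a finite Diophantine test against the finitely many vectors of bounded norm in $M_i$; checking that no solution exists for any of these six values yields $\CC_{M_i}\not\subset\CC_d$ for all of them, and in particular shows that $\CC_{M_i}$ is neither one of the known rational divisors $\CC_{14},\CC_{26},\CC_{38},\CC_{42}$ nor contained in $\CC_8$ or $\CC_{18}$.

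The hard part is rationality. Since the very general member of $\CC_{M_i}$ lies in none of the rational Hassett divisors above, rationality cannot be imported from $\CC_{38}$ by containment; indeed $\CC_{M_i}\cap\CC_{38}$ is, by Theorem \ref{C20intC38}, only a codimension one sublocus of $\CC_{M_i}$ whose components appear in the list of components of $\CC_{20}\cap\CC_{38}$ and are rational by Russo--Stagliano \cite{RS18}. My plan is therefore to exhibit, for the general $X\in\CC_{M_i}$, a rational surface $S\subset X$ whose cohomology class already lies in $M_i=A(X)$ (so that $S$ persists on the very general member) and whose associated congruence of secant lines or conics --- in the spirit of \cite{RS18} and of the discriminant $20$ constructions of \cite{FL20} --- defines a birational map $X\dashrightarrow\mathbb{P}^4$. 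Concretely I would search within $M_i$ for a class $s$ whose invariants $(s.h_X^2)$ and $(s.s)$ match a rationality-inducing surface, verify by a Hilbert-scheme dimension count that a smooth such $S$ exists on the general $X\in\CC_{M_i}$, and confirm that the linear system cutting out the congruence is base-point free and of degree one. The main obstacle is precisely this last point: one must show the construction stays uniform and non-degenerate as $X$ varies over all of $\CC_{M_i}$, not merely over the $\CC_{38}$-sublocus, since rationality does not a priori propagate from a divisor to the ambient family.
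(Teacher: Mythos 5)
Your first three steps are essentially the paper's: Theorem \ref{mainthm-one} applies since $r(M_i)+\ell(M_i)\le 6\le 20$ once the no-roots and distinguished-element conditions are checked, the primitive discriminant-$20$ sublattice $\langle\mathfrak{o},v\rangle$ puts $\CC_{M_i}$ inside $\CC_{20}$ as a divisor, and non-containment in $\CC_d$ for $d\in\{8,14,18,26,38,42\}$ is correctly reduced to the finite Diophantine test $3(u.u)-(\mathfrak{o}.u)^2=d$ on a very general member with $A(X)\cong M_i$. These parts are sound.

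The genuine gap is rationality, and it is exactly the step you flag as open. Your plan to exhibit a rational surface with class in $M_i$ and a congruence of secant conics inducing a birational map $X\dashrightarrow\mathbb{P}^4$ is not an argument but a proposal to redo Russo--Staglian\`o-type geometry from scratch for a new divisor; as you yourself note, you have no mechanism to make it work uniformly over $\CC_{M_i}$, so the proof does not close. The paper avoids this entirely: the lattices $M_1=M'_{-4}$ and $M_2=M'_{4}$ are not found by an abstract search but are the \emph{images} of the components $\CC_{M_{\pm4}}$ of $\CC_{20}\cap\CC_{38}$ (Theorem \ref{C20intC38}) under the birational involution $\sigma_V$ of $\CC_{20}$ coming from the Cremona transformation defined by the Veronese surface \cite{FL20}. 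Concretely, a very general $X_\tau\in\CC_{M_\tau}$ ($\tau=\pm4$) is rational because it lies in $\CC_{38}$ \cite{RS19a}; its Cremona image $X'_\tau=F_V(X_\tau)$ is birational to $X_\tau$, hence rational, and the transformation formula (3.12) of \cite{FL20} computes $A(X'_\tau)\cong M'_\tau$, so the very general member of $\CC_{M'_\tau}$ is rational. Finally, specialization of birational types \cite{KT19} upgrades ``very general'' to \emph{all} members of $\CC_{M'_\tau}$ --- a step your proposal also omits, since nothing in your plan handles special members where $A(X)$ jumps. To repair your write-up you should replace the surface-and-congruence construction by this transport-of-rationality argument through $\sigma_V$ plus the Kontsevich--Tschinkel specialization.
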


Note that $\CC_{M_1}\subset \mathcal{C}_{98}$ and $\CC_{M_2} \subset \mathcal{C}_{206}$, 
where $98$ and $206$ satisfy \eqref{twostar}.
In general, we introduce the notion of admissible lattices as follows.
Let $M$ be a positive definite lattice of rank $r(M)\ge 2$ containing a {\it distinguished element} $\mathfrak{o}$ (i.e., the condition $(1)$ in Theorem \ref{mainthm-one} holds for $\mathfrak{o}\in M$).
We say that $M$ is {\it admissible} if $M$ has a rank $2$ primitive sublattice $K$, $\mathfrak{o}\in K$ such that the discriminant of $K$ satisfies \eqref{twostar} (see Definition \ref{def:admissiblelattice}).
In particular, for a cubic fourfold $X$, the algebraic cohomology $A(X)$ is an admissible lattice if and only if $\CC_{A(X)}\subset \mathcal{C}_{d}$ for some $d$ satisfying \eqref{twostar}.
As already mentioned above, admissibility of the algebraic cohomology of a cubic fourfold is conjecturally equivalent to the rationality. 
It is of importance to give characterizations of admissibility via lattice theory. 
By \cite[Theorem 1.0.2]{Has00}, admissibility is characterized by the numerical condition \eqref{twostar} on discriminant if the rank $r(A(X))$ of $A(X)$ is $2$. 
In \cite{AT14} (see also \cite{Add16,Huy17}) it was shown that $A(X)$ is admissible if and only if a closely related lattice of signature $(r(A(X))-1,2)$ contains the rank $2$ even hyperbolic lattice $U$, 
or equivalently, the transcendental lattice $T(X)$ of $X$ admits a primitive embedding into the even unimodular lattice of signature $(19,3)$.
Laza \cite[Proposition 2.5, Remark 2.6]{Laz18} pointed out that admissibility can often be determined by comparing two numerical invariants $\ell(A(X))$ and $r(A(X))$ but the exceptional case $\ell(A(X))=r(A(X))-1$ is delicate. 
Using a number theoretical result (Proposition \ref{prop:representprime}),
we derive a new characterization for admissibility of general lattices in terms of some numerical conditions. 
More precisely, we have the following numerical criterion:

\begin{thm}[{see Theorems \ref{thm:admM1}, \ref{thm:admM2}}]\label{mainthm-intro-3}
Let $M$ be a positive definite lattice of rank $r(M)\ge 2$ containing a distinguished element $\mathfrak{o}$, and let $\mathbf{b}$ be a basis of $M$ containing $\mathfrak{o}$.
Suppose that $f$ is the associated form of $(M,\mathbf{b})$ (see Definition \ref{associated-form}). 
We write 
$$
f=\lambda g,
$$
where $\lambda$ is the greatest common divisor of all the coefficients of $f$ and $g$ is a primitive integral quadratic form. 
Then the following two statements hold:
\begin{enumerate}
\item If $M\neq \langle \mathfrak{o} \rangle\oplus \langle \mathfrak{o} \rangle_{M}^{\perp}$,
         then $M$ is admissible if and only if $\lambda$ satisfies \eqref{twostar};
\item If $M= \langle \mathfrak{o} \rangle\oplus \langle \mathfrak{o} \rangle_{M}^{\perp}$,
         then $M$ is admissible if and only if $\lambda$ satisfies \eqref{twostar} and $g$ represents an integer $c\equiv\, 1\;  (\mathrm{mod}\, 3)$.
\end{enumerate}
\end{thm}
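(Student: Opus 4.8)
The statement concerns admissibility of a positive definite lattice $M$ with a distinguished element $\mathfrak{o}$ (so $(\mathfrak{o}.\mathfrak{o})=3$ and $\langle\mathfrak{o}\rangle^\perp$ even). By the definition of admissibility, I must determine exactly when $M$ contains a rank $2$ primitive sublattice $K$ with $\mathfrak{o}\in K$ whose discriminant satisfies $(**)$. So the first step is to translate the condition ``$M$ admissible'' into a concrete numerical statement about which discriminants $d$ are realized by rank $2$ primitive sublattices $K\ni\mathfrak{o}$. Writing a general element of $M$ in the basis $\mathbf b=\{\mathfrak{o},e_2,\dots,e_r\}$, and taking $K=\langle\mathfrak{o},v\rangle$, the discriminant of $K$ is $\disc(K)=3(v.v)-(\mathfrak{o}.v)^2$, and the primitivity of $K$ can be arranged by choosing $v$ appropriately. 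The key algebraic input is the \emph{associated form} $f$ of $(M,\mathbf b)$, which encodes the Gram data; the decomposition $f=\lambda g$ with $g$ primitive isolates the ``content'' $\lambda$ from the shape $g$ of the form.

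**Reducing to a representation problem.**
Next I would analyze which values $3(v.v)-(\mathfrak{o}.v)^2$ are attainable and then impose $(**)$ on them. The natural guess, confirmed by the statement, is that the achievable discriminants are governed by $\lambda$: the content $\lambda$ is the universal factor that every such discriminant must carry (up to the part coming from $(\mathfrak{o}.v)^2$), so admissibility forces $\lambda$ itself to satisfy $(**)$. Concretely, I expect to show that $d=\disc(K)$ ranges over $\lambda\cdot(\text{values represented by }g)$ corrected by a square term, and that one can freely adjust the $(\mathfrak{o}.v)^2$ contribution to land on a $d$ satisfying $(**)$ whenever $\lambda$ does. The tool for the ``whenever $\lambda$ does'' direction is the number-theoretic input cited as Proposition \ref{prop:representprime}, which guarantees that a suitable primitive form represents primes in prescribed congruence classes; this lets me produce a discriminant with the right behaviour modulo $4$, $9$, and each bad prime $p\equiv2\ (\mathrm{mod}\,3)$.

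**The split case and the rôle of $g$.**
The dichotomy between $(1)$ and $(2)$ comes from whether $\mathfrak{o}$ splits off as an orthogonal summand. When $M\neq\langle\mathfrak{o}\rangle\oplus\langle\mathfrak{o}\rangle^\perp_M$, there is an element with $(\mathfrak{o}.v)\not\equiv0$ in the relevant sense, giving enough flexibility in the term $(\mathfrak{o}.v)^2$ that the congruence obstructions modulo $3$ can always be met; hence only $\lambda$ satisfying $(**)$ matters. When $M$ does split, the cross term $(\mathfrak{o}.v)$ decouples and $\disc(K)$ becomes $3\lambda\cdot g(\text{coords of }v')-\ (\text{square})$ with the freedom now constrained; here the residue of the represented values of $g$ modulo $3$ becomes an genuine extra obstruction, which is why case $(2)$ carries the additional requirement that $g$ represent some $c\equiv1\ (\mathrm{mod}\,3)$. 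I would treat the two cases separately, in each case proving both implications: necessity by a congruence analysis of $\disc(K)$ against $(**)$, and sufficiency by explicitly constructing an admissible $K$ using Proposition \ref{prop:representprime}.

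**Main obstacle.**
The delicate point, exactly as flagged in the introduction via Laza's observation, is the sufficiency direction when one must simultaneously satisfy all the local conditions in $(**)$ — the constraints modulo $4$, modulo $9$, and modulo every odd prime $p\equiv2\ (\mathrm{mod}\,3)$ — by a single choice of primitive sublattice $K$. Handling the interaction between the content $\lambda$ and the prime-representation behaviour of $g$, and ensuring primitivity of the constructed $K$ at the same time, is where the argument will require care; this is precisely the role of the number-theoretic Proposition \ref{prop:representprime}, and verifying that its output can be arranged to also meet the mod $3$ condition in the split case is the crux of part $(2)$.
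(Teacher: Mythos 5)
Your plan follows the paper's own proof essentially step for step: admissibility is reduced to (proper) representation of an integer satisfying \eqref{twostar} by the associated form $f$ (Lemma \ref{lem:Mandform}), the split/non-split dichotomy is treated by normalizing the basis, necessity follows since $\lambda$ divides every represented value, and sufficiency in both cases is Proposition \ref{prop:representprime} applied to $g$, taking $d=\lambda p$ for a represented prime $p\equiv 1\ (\mathrm{mod}\ 3)$. The details you defer are exactly those the paper fills in by direct computation: in the non-split case Corollary \ref{cor:rank2} yields a basis with $(\mathfrak{o}.\mu_1)=1$ and $(\mathfrak{o}.\mu_j)=0$ for $j\ge 2$, so $f(1,0,\dots,0)=3a_{22}-1$, whence $\lambda$ is even with $3\nmid\lambda$, hence $\lambda\equiv 2\ (\mathrm{mod}\ 3)$ and $g$ automatically represents an integer $\equiv 1\ (\mathrm{mod}\ 3)$ (your ``flexibility'' from the cross term, made precise), while in the split case $6\mid\lambda$ and the mod-$3$ condition on $g$ is the genuine extra hypothesis; also note that no separate ``square-term correction'' is needed, since $f$ is by definition the discriminant $3(v.v)-(\mathfrak{o}.v)^2$ itself.
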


As one of the key features, our criterion is quite effective for explicit lattices (for rank $3$ case, see Propositions \ref{prop:admMrank3-1}, \ref{prop:admMrank3-2} and Corollaries \ref{cor:C8divisors}, \ref{cor:C18divisors}). 
Laza \cite{Laz18} observed that the rank of a nonadmissible lattice $A(X)$ is at most $11$ and asked the existence of such a lattice with rank $11$. 
Recently Dolgachev--Markushevich \cite[Remark 7.4]{DM19} found an example by considering cubics containing mutually intersecting $10$ planes. 
As far as we know,  this is the only known example answering Laza's question. 
Based on Corollary \ref{non-empty-CM} and our criterion of admissibility, we realize infinitely many rank $11$ lattices as the algebraic cohomologies of cubics having no associated K3 surfaces (Corollary \ref{cor:anwserLaza}).

The notion of $M$-polarized cubic fourfolds (see Definition \ref{def:Mpolarizable}) is analogous to lattice polarized K3 surfaces defined by Dolgachev \cite{Dol96}.   Important connection between moduli of $M$-polarized cubics for admissible $M$ and moduli of lattice polarized K3 surfaces goes back to Hassett's work \cite{Has00}. 
We will not touch this very interesting topic in this paper.  
However, our results (non-emptyness and irreducibility of $\CC_M$, the new admissibility criterion, etc) should shed light on further study of the theory of lattice polarized cubics and K3 surfaces.

\subsection*{Acknowledgements}
This work is partially supported by the National Natural Science Foundation of China (Grant No. 11701414, No. 12071337, No. 11701413, No. 11831013).


\section{Lattices and glue}\label{sec:lattice}

In this section, we gather a series of basic facts on lattices, glue and extensions.
We refer to \cite{Ser73,Nik80,McM11,McM16,OY20} for more detailed discussions.

\subsection{Lattices}
\begin{defn}
A {\it lattice} is a finite rank free $\Z$-module $L$ together with a symmetric and non-degenerate bilinear form 
$$
(-.-)_{L}: L \times L\longrightarrow \mathbb{Z}.
$$
Let $L$ be a lattice and $m\in \Z$.
$L(m)$ denotes the same free $\Z$-module with the bilinear form $(-.-)_{L(m)}:=m(-.-)_{L}$.
Let $L_{1}$ and $L_{2}$ be two lattices.
$L_{1}\oplus L_{2}$ denotes the orthogonal direct sum of $L_{1}$ and $L_{2}$, 
namely, the bilinear form $(x+y. x'+y')_{L_{1}\oplus L_{2}}:=(x.x')_{L_{1}}+(y.y')_{L_{1}}$.
For the bilinear form, we will drop the subscript if without causing any confusion. 
\end{defn}

Let $(e_1,e_2,...,e_{r(L)})$ be a basis of a lattice $L$ of rank $r(L)$. 
We call $((e_i.e_j))_{1\leq i, j \leq r(L)}$ the {\it Gram matrix} of $L$ with respect to $(e_1,e_2,...,e_{r(L)})$. 
The {\it discriminant} of  a lattice $L$, denote by $\disc(L)$,
is the determinant of the Gram matrix with respect to an arbitrary basis of $L$.
A lattice $L$ is called {\it unimodular} if its discriminant $\disc(L)=\pm 1$.
An element $x\in L$ is called a {\it root} if $(x.x)=2$.
A lattice $L$ is called {\it even} if $(x.x)$ is even for all $x\in L$; 
otherwise, it is called {\it odd}.
The bilinear form $(-.-)$ on $L$ induces a symmetric and non-degenerate bilinear form on $L\otimes_{\Z} \mathbb{R}$. 
The latter intersection matrix can be diagonalized with only $\pm1$ on the diagonal.
The {\it signature} of $L$ is $\sign(L):=(s_{+},s_{-})$, 
where $s_{\pm}$ is the number of $\pm1$ on the diagonal.
So the rank $r(L)=s_{+}+s_{-}$.
A lattice $L$ is called {\it definite} if $r(L)=s_{\pm}$; 
otherwise, $L$ is called {\it indefinite}.

\begin{defn}
Let $L$ and $L^{'}$ be two lattices.
An {\it isometry} is a homomorphism $f: L\longrightarrow L^{'}$ 
such that $f$ preserves the bilinear forms.
We denote by $L\cong L^{'}$ if there is an isometry between them,
and $\mathrm{O}(L)$ the {\it orthogonal group} consisting of all isometries $f: L\longrightarrow L$.
\end{defn}

A sublattice $N$ of $L$ is called {\it primitive} if $L/N$ is torsion-free.
Let $N\subset L$ be a sulattice. The {\it primitive closure }$\overline{N}$ of $N$ in $L$ is the smallest primitive sublattice of $L$ containing $N$.
We denote by $N^{\perp}_L$ the orthogonal complement of $N$ in $L$, and we sometimes omit the subscript $L$ if there is no confusion. 
For a subset $S$ of $L$, 
we denote by $\langle S \rangle\subset L$ the sublattice generated by $S$.

Next, we collect some interesting examples which we will need in the subsequent sections.

\begin{example}
$(i)$\, $U$ denotes the hyperbolic plane with Gram matrix
$\small{\left(\begin{array}{cc} 
0 & 1 \\
1 & 0 
\end{array} \right)}$.

$(ii)$\, $A_{2}$ denotes the lattice with Gram matrix
$
\small{\left(\begin{array}{cc} 
2 & 1 \\
1 & 2 
\end{array} \right)}
$. 

$(iii)$\,$E_{8}$ denotes the unique unimodular positive definite even lattice of rank $8$. 

$(iv)$ For $s>0$, $t>0$ (resp. $s>0$, $t>0$, $s\equiv r\; \mathrm{mod}\;8$), there is a unique odd (resp. even) indefinite unimodular lattice $\mathrm{I}_{s, t}$ (resp. $\mathrm{II}_{s, t}$) of signature $(s, t)$ by a theorem of Milnor (cf. Serre \cite[Chapter V]{Ser73}).
\end{example}

\subsection{Glue}

\begin{defn}\label{def:quadratic}
A {\it quadratic form} on a finite abelian group $G$ is a map
$$
q_{G}: G\longrightarrow \mathbb{Q}/ 2\Z
$$
with a symmetric bilinear form
$
b_{G}: G\times G \longrightarrow \mathbb{Q}/ \Z
$
satisfying the following conditions:
\begin{enumerate}
\item[(1)] $q_{G}(nx)=n^{2}q_{G}(x)$ for all $n\in \Z$ and $x\in G$; and
\item[(2)] $q_{G}(x+y)-q_{G}(y)-q_{G}(y)\equiv 2b_{G}(x,y)\; (\mathrm{mod}\; 2\Z)$ for any $x, y\in G$. 
\end{enumerate}
If $q_{G}$ is a quadratic form on a finite abelian group $G$,
so is $-q_{G}$ with symmetric bilinear form $-b_{G}$.
\end{defn}

Give a lattice $L$ and we denote its dual 
$$
L^{\vee}:=\Hom_{\Z}(L, \mathbb{Z})\cong \{x\in L\otimes \mathbb{Q} \mid (x. L) \subset \Z\}.
$$ 
The non-degenerate bilinear form $(-.-)$ on a lattice $L$ determines natural inclusions
$$
L\subset L^{\vee} \subset L\otimes \mathbb{Q}
$$

\begin{defn}
The {\it glue group} (or {\it discriminant group}) of a lattice $L$,
$$
G(L):=L^{\vee}/ L,
$$
is a finite abelian group of order $|G(L)|=|\disc(L)|$.
We denote by $\ell(L)$ the minimal number of generators of the glue group $G(L)$.
\end{defn}

For any element $x\in L^{\vee}$, 
$\bar{x}$ denotes the image of $x$ in $G(L)$ under the projection $L^{\vee}\rightarrow L^{\vee}/ L$. 
The symmetric bilinear form $(-.-)$ on $L$ extends to be a $\mathbb{Q}$-valued bilinear form on $L^{\vee}$ which induces a symmetric bilinear form $b_{G(L)}$ on $G(L)$:
$$
\begin{array}{cccl}
b_{G(L)}:&G(L)\times G(L)&\longrightarrow& \mathbb{Q}/\Z \\ 
&(\bar{x}, \bar{y}) &\longmapsto& b_{G(L)}(\bar{x}, \bar{y})\equiv (x.y) \;\mathrm{mod}\; \Z.
\end{array}
$$
If $L$ is even, we call the quadratic form 
$$
\begin{array}{cccl}
q_{G(L)}:&G(L) &\longrightarrow& \mathbb{Q}/ 2\Z \\ 
&\bar{x}&\longmapsto& q_{G(L)}(\bar{x})\equiv (x.x) \;\mathrm{mod}\; 2\Z
\end{array}
$$
the {\it discriminant form} of $L$. 
For any prime number $p$, 
$q_{G(L)_{p}}$ denotes the restriction of $q_{G(L)}$ on the Sylow $p$-subgroup $G(L)_{p}$.

\begin{example}\label{A_2-example}
Consider the lattice
$A_{2}(-1)=\langle  e_{1}, e_{2}\rangle $ with Gram matrix 
$\small{\left(\begin{array}{cc} 
-2 & -1 \\
-1 & -2 
\end{array} \right)}$.
Then its glue group $G(A_{2}(-1))\cong \Z/3Z$ is generated by $\overline{\frac{-2e_{1}+e_{2}}{3}}$.
Moreover,
$$
q_{G(A_{2}(-1))}(\overline{\frac{-2e_{1}+e_{2}}{3}})\equiv \frac{4}{3} \;\mathrm{mod}\; 2\Z
$$
and
$$
b_{G(A_{2}(-1))}(\overline{\frac{-2e_{1}+e_{2}}{3}}, \overline{\frac{-2e_{1}+e_{2}}{3}})\equiv \frac{1}{3}\; \mathrm{mod}\, \Z.
$$
\end{example}

Let $L_{1}$ and $L_{2}$ be two lattices (resp. two even lattices) and let $\psi:G(L_1)\longrightarrow G(L_2)$ be an isomorphism of abelian groups. 
If $\psi$ preserves the bilinear forms $b_{G(L_i)}$ (resp. the quadratic forms $q_{G(L_i)}$), we say that $\psi$ is an isometry (resp. an isomorphism of quadratic forms). 
Any isometry $f: L_1\longrightarrow L_2$ of lattices (resp. even lattices) induces an isomorphism $\bar{f}: G(L_1)\longrightarrow G(L_2)$ of abelian groups which is an isometry (resp. isomorphism of quadratic forms). For an even lattice $L$, we denote by ${\rm O}(q_{G(L)})$ the group of automorphisms of $q_{G(L)}$.

\subsection{Extensions}
The glue group plays a fundamental role in classifying extensions of a lattice.
Let $L$ be a lattice.
An {\it overlattice} of $L$ is a lattice $M$ such that $L\subset M \subset L^{\vee}$ and $M/L$ is a finite abelian group.
Note that the subgroup $H_{M}:=M/L \subset G(L)$ is isotropic.
Moreover, the set of overlattices $L\subset M \subset L^{\vee}$ is bijectively corresponding to the set of isotropic subgroups $0\subset H_{M} \subset G(L)$.
Notice that $[M:L]=|H_{M}|$ and $[M:L]^{2} |\disc(M)|=|\disc(L)|=|G(L)|$.

\begin{defn}
Let $L_{1}$ and $L_{2}$ be two lattices.
A {\it gluing map} is an isomorphism $\varphi: H_{1} \longrightarrow H_{2}$ 
between subgroups $H_{i}\subset G(L_{i})$ such that 
$$
b_{G(L_{1})}(x,y)=-b_{G(L_{2})}(\varphi(x),\varphi(y)),
$$
for $x,y \in H_{1}$.
\end{defn}

Therefore, the subgroup ${H}_M:=\{(x, \varphi(x)) \mid x\in H_{1}\}\subset G(L_{1} \oplus L_{2})$ is isotropic, 
and thus $\varphi$ defines a lattice as 
\begin{equation}\label{eq:glue}
M=L_{1}\oplus_{\varphi} L_{2}
:=
\{(x, y)\in L_{1}^{\vee}\oplus L_{2}^{\vee}  \mid \bar{x} \in H_{1}, \bar{y}\in H_{2}, \bar{y}=\varphi(\bar{x})\}.
\end{equation}
Note that $\disc(L_{1})\disc(L_{2})=\disc(L_{1}\oplus_{\varphi} L_{2})|H_{M}|^{2}$.
Then $M$ is a primitive extension of $L_{1}\oplus L_{2}$ in the sense of that $M/L_{i}$ is torsion-free.
Moreover, the set of primitive extensions $L_{1}\oplus L_{2}\subset M$ 
is bijective to the set of gluing map $\phi: H_{1} \rightarrow H_{2}$ of subgroups of $G(L_{1})$ and $G(L_{2})$.

Finally, we conclude this section reviewing the extending isometries. 
An isometry $f\in \mathrm{O}(L)$ extends to an overlattice $M\supset L$ 
if $\bar{f}(H_{M})=H_{M}$. 
Also, there is a bijection between the set of extensions $f_{1}\oplus_{\varphi} f_{2}\in \mathrm{O}(M)$ of $f_{1}\oplus f_{2}\in \mathrm{O}(L_{1}\oplus L_{2})$ and the set of gluing maps $\varphi: H_{1}\rightarrow H_{2}$ with $\varphi\circ f_{1}=f_{2}\circ \varphi$.


\section{Special cubic fourfolds}\label{sec:special}

This section reviews some important aspects on lattice theory and Hodge theory of cubic fourfolds and mainly focuses on special cubic fourfolds;
we refer to Hassett \cite{Has00,Has16} and Huybrechts \cite{Huy19,Huy20} for more detailed discussions.

Let $X$ be a cubic fourfold.
It is well-known that the singular cohomology $H^{\ast}(X, \Z)$ is torsion-free 
and the Hodge diamond of $X$ is as following:
\begin{equation}\label{Hodge-diamond}
\begin{array}{cccc}
\begin{matrix}
&&&&&    1  \\
&&&&  0  &&  0 \\
&&& 0 && 1 && 0\\
&&0 && 0 &&  0 && 0\\
&0 && 1 && 21 && 1 && 0
\end{matrix}
\end{array}
\end{equation}
Moreover, by Poincar\'{e} duality and Hodge-Riemann bilinear relations, 
the middle cohomology $H^{4}(X, \Z)$ is an odd unimodular lattice of signature $(21,2)$ under the intersection form $(-.-)$.
Thus, 
$$
H^{4}(X, \Z) 
\cong 
\Lambda
:=E_{8}^{\oplus 2}\oplus U^{\oplus 2}\oplus \mathrm{I}_{3,0}.
$$
 Moreover, based on Beauville--Donagi \cite[Proposition 6]{BD85}, 
Hassett \cite[Proposition 2.1.2]{Has00} showed that the primitive cohomology
$$
H_{\mathrm{prim}}^{4}(X, \Z)
\cong  \Lambda_{0}:=\langle h^{2}\rangle^{\perp}_{\Lambda} \cong E_{8}^{\oplus 2}\oplus U^{\oplus 2}\oplus A_{2}, 
$$
where the square $h^2_X$ of the hyperplane class $h_X$ is corresponding to $h^{2}=(1, 1, 1)\in \mathrm{I}_{3,0}$.
Note that $\Lambda_{0}$ is an even lattice.
We denote by $A(X):=H^{4}(X, \Z)\cap H^{2,2}(X)$ the algebraic cohomology of $X$.
Then $A(X)$ is a positive definite lattice containing $h_X^2$ and $\langle h_X^2 \rangle_{A(X)}^{\perp}$ is an even lattice. 
Furthermore, by Voisin \cite[\S4, Proposition 1]{Voi86} and Hassett \cite{Has00}, 
$A(X)$ has no roots.

In the fundamental work \cite{Has00}, 
Hassett introduced the notion of special cubic fourfolds.

\begin{defn}[Hassett \cite{Has00}]
A {\it labelling of discriminant $d$} on a cubic fourfold $X$ is a positive definite rank-two primitive sublattice
$$
K \subset A(X) \text{ with }h^2_X\in K
$$
where $d$ is the discriminant of $K$.
A cubic fourfold $X$ is called {\it special (of discriminant $d$)} if $X$ contains a labelling (of discriminant $d$).
\end{defn}

In \cite[Theorem 18]{Voi07}, 
Voisin proved that the algebraic cohomology of a cubic fourfold is generated by the classes of algebraic cycles, i.e., the integral Hodge conjectrue holds for cubic fourfolds. 
Therefore, a cubic fourfold $X$ is special if and only if it contains an algebraic surface which is not homologous to a complete intersection. 

Next we will review the Hassett divisors parametrizing special cubic fourfolds 
in the moduli space of smooth cubic fourfolds.
Note that the Hilbert scheme of cubic hypersurfaces in $\mathbb{P}^{5}$ is $\mathbb{P}(H^{0}(\mathbb{P}^{5}, \mathcal{O}_{\mathbb{P}^{5}}(3)))\cong \mathbb{P}^{55}$; in addition, the smooth cubic hypersurfaces in $\mathbb{P}^{5}$ forming a Zariski open dense subset $\mathcal{U}\subset \mathbb{P}^{55}$. 
It is known that two cubic fourfolds are isomorphic if and only if they are projectively equivalent by the action of $\mathrm{PGL}(6, \mathbb{C})$.
As a result, 
the {\it moduli space of cubic fourfolds} is the GIT quotient
$$
\CC:=\mathcal{U}// \mathrm{PGL(6, \mathbb{C})}
$$
which is a quasi-projective variety of dimension $20$.
For a cubic fourfold $X$, we denote by $[X]$ its moduli point in $\CC$.
Hassett \cite[Theorem 1.0.1]{Has00} obtained the following structure and existence theorem (or classification theorem) of special cubic fourfolds.

\begin{thm}[Hassett \cite{Has00}]\label{thm:Cd}
Let $\CC_{d}\subset \mathcal{C}$ denote the special cubic fourfolds of discriminant $d$. 
Then $\CC_{d}$ is an irreducible divisor of $\CC$;
moreover, $\CC_{d}$ is non-empty if and only if 
\begin{equation*}\tag{$*$}\label{onestar}
d>6 \;\;\; \textrm{and}\;\;\; d\equiv 0, 2\; (\mathrm{mod}\; 6).
\end{equation*}
\end{thm}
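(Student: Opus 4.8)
The plan is to transport the statement to the period domain of $\Lambda_0$ via Voisin's global Torelli theorem and then to reduce both irreducibility and the numerical criterion to lattice theory in the style of Nikulin. Since $\Lambda_0=E_8^{\oplus 2}\oplus U^{\oplus 2}\oplus A_2$ is even of signature $(20,2)$, I attach to it the period domain
\[
D=\{[\omega]\in \mathbb{P}(\Lambda_0\otimes \mathbb{C})\mid (\omega.\omega)=0,\ (\omega.\bar{\omega})<0\},
\]
a $20$-dimensional type-IV domain, and I let $\Gamma\subset \mathrm{O}(\Lambda_0)$ denote the monodromy group, which is a finite-index subgroup. By Voisin's Torelli theorem \cite{Voi86} the period map $\mathcal{P}\colon \CC\to \Gamma\backslash D$, $[X]\mapsto H^{3,1}(X)$, is an open immersion; by Laza \cite{Laz10} and Looijenga \cite{Loo09} its image is the complement of the two Heegner divisors of discriminants $2$ and $6$.

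Next I would translate the special condition. A labelling $K\subset A(X)$ of discriminant $d$ with $h^2\in K$ is equivalent to the existence of a primitive vector $v\in \Lambda_0$ with $(\omega.v)=0$ and belonging to a prescribed isometry class, where $\omega$ spans $H^{3,1}(X)$: indeed $\langle h^2\rangle^{\perp}_K=\langle v\rangle$, and $(v.v)$ is automatically even since $v\in\Lambda_0$. Computing $\disc K$ from $(v.v)$ and the possible glue with the discriminant-$3$ lattice $\langle h^2\rangle$, one finds that the attainable discriminants are exactly those with $d\equiv 0,2\ (\mathrm{mod}\ 6)$, the two residues reflecting the two shapes of $K$. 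Hence $\CC_d=\mathcal{P}^{-1}$ of the image of $\bigcup_v (v^{\perp}\cap D)$, the union running over all primitive $v$ in the relevant $\mathrm{O}(\Lambda_0)$-orbit.

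For irreducibility, each $v^{\perp}\cap D$ is an irreducible type-IV domain of codimension $1$, so it suffices to show that $\Gamma$ acts transitively on the relevant vectors $v$; the union then descends to a single irreducible divisor of $\Gamma\backslash D$, whence $\CC_d$ is an irreducible divisor of $\CC$. Because $\Lambda_0$ is even, this transitivity amounts to the uniqueness, up to $\mathrm{O}(\Lambda_0)$, of the primitive embedding of the rank-$1$ lattice $\langle v\rangle$ into $\Lambda_0$ in its given isometry class, together with the fact that the finite-index subgroup $\Gamma$ is still large enough to realize this uniqueness. I would derive existence and uniqueness of the embedding from Nikulin's criteria \cite{Nik80}: the orthogonal complement of $\langle v\rangle$ in $\Lambda_0$ has rank $19$ and a discriminant group generated by at most two elements, which rigidifies the gluing data and places any two embeddings in one orbit. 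This uniqueness-of-embedding step is the lattice-theoretic heart of the argument and the point I expect to be the main obstacle.

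Finally, for non-emptiness, the ``only if'' direction combines the arithmetic above with the fact that $A(X)$ has no roots (Voisin \cite{Voi86}, Hassett \cite{Has00}): for $d=2$ and $d=6$ the lattice $K_d$ itself contains a vector of self-intersection $2$, so such a labelling cannot lie inside a root-free $A(X)$, forcing $d>6$. For the ``if'' direction, once the primitive embedding has been shown to exist for every $d$ with $d>6$ and $d\equiv 0,2\ (\mathrm{mod}\ 6)$, the divisor $v^{\perp}\cap D$ is distinct from the discriminant-$2$ and discriminant-$6$ Heegner divisors and hence not contained in their union; so a very general point of $v^{\perp}\cap D$ lies in $\mathcal{P}(\CC)$ and is realized by a smooth cubic, giving $\CC_d\neq\emptyset$.
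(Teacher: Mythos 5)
Your skeleton (Torelli plus the Laza--Looijenga description of the period image for non-emptiness, plus Nikulin-style transitivity for irreducibility) is the same general route the paper takes through Proposition \ref{pre-version}, Lemma \ref{non-empty-criterion} and Proposition \ref{unique-primi-embed}, rather than the paper's genuinely different existence proof via the Fermat cubic (Theorem \ref{mainthm-two}); your non-emptiness argument is essentially correct. But the irreducibility step, which you yourself flag as the heart of the matter, contains a genuine gap: the lattice lemma you propose to prove is false. Uniqueness up to $\mathrm{O}(\Lambda_0)$ of the primitive embedding of the rank-one lattice $\langle v\rangle\cong\langle 2n\rangle$ into $\Lambda_0$ fails, because every isometry of $\Lambda_0$ preserves the divisibility $\mathrm{div}(v)=\gcd\{(v.x): x\in\Lambda_0\}\in\{1,3\}$, and both divisibilities occur for the same square: the vector $e_1+e_2$ in the $A_2$-summand has square $6$ and divisibility $3$ (its saturated labelling is $K_2$, i.e.\ $d=2$), while a vector of type $(1,3)$ in a $U$-summand has square $6$ and divisibility $1$ (giving $d=18$). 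Indeed the lemma \emph{must} fail, since otherwise the glued and unglued labellings of equal square would be conjugate and $\CC_{2n/3}$ would coincide with $\CC_{6n}$. What is actually needed is transitivity of the monodromy group --- which acts trivially on $G(\Lambda_0)\cong\Z/3\Z$, hence is strictly smaller than $\mathrm{O}(\Lambda_0)$ and cannot merge those orbits --- on primitive vectors with fixed square \emph{and} fixed class in the discriminant group; equivalently, Hassett's statement that the labelled sublattices $K_d\subset\Lambda$ containing $h^2$ are unique up to $\Gamma^{+}$. This is exactly the rank-two case of the paper's Proposition \ref{unique-primi-embed} (via Nikulin's genus uniqueness and the surjectivity of $\mathrm{O}(P)\to\mathrm{O}(q_{G(P)})$), or can be done by Eichler's criterion; it is not a refinement of your statement but a different one. (Also, the orthogonal complement of $v$ in $\Lambda_0$ has rank $21$, not $19$.)

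A second gap: with your definition, $D$ and hence $v^{\perp}\cap D$ each have \emph{two} connected components, so $v^{\perp}\cap D$ is not irreducible, and transitivity of $\Gamma$ on the vectors $v$ alone does not force the quotient divisor to be irreducible --- the two components of the hyperplane section could descend to two distinct components. One must in addition produce, for some $v$ in the orbit, an isometry in $\Gamma\setminus\Gamma^{+}$ stabilizing $v^{\perp}$; the paper does this in Proposition \ref{unique-primi-embed} by splitting a hyperbolic plane $U$ off the orthogonal complement and extending $(-\Id_U)\oplus\Id_S$ by the identity (alternatively, Eichler transvections lie in the component-preserving stable group, so Eichler's criterion settles this too). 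This bookkeeping is not a formality: the paper's rank-$21$ example following Proposition \ref{prop:CMo} shows that when no such element exists the locus $\CC_{(M,\mathfrak{o},\phi)}$ really does break into two components. With these two points repaired your plan goes through and reproduces the paper's argument; as written, it would prove a false lemma and leave the irreducibility unestablished.
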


Such a non-empty Noether–Lefschetz divisor $\CC_{d}$ is called a {\it Hassett divisor} of discriminant $d$.
We say that a special cubic fourfold $[X]\in \CC_{d}$ with a labelling $K_{d}$ has a {\it Hodge-theoretically associated K3 surface of degree $d$} in the sense of Hassett 
if there is a polarized K3 surface $(S, f)$ of degree $(f.f)=d$ and an Hodge isometry 
$H_{\mathrm{prim}}^{2}(S, \Z)(-1)\cong  K_{d}^{\perp}$.
In \cite[Theorem 5.1.3]{Has00}, 
Hassett showed that $[X]\in \mathcal{C}_{d}$ has a Hodge-theoretically associated K3 surface 
of degree $d$ if and only if  $d$ satisfies the following property
\begin{equation}\tag{$**$}\label{twostar}
4 \nmid d,\; 9\nmid d,\; p \nmid d \text{ for any odd prime } p \equiv 2\;  (\mathrm{mod}\, 3).
\end{equation}

We conclude this section by recalling Kuznetsov's conjecture \cite{Kuz10} for characterization of smooth rational cubic fourfolds.
For a cubic fourfold $X$, 
there is a semiorthogonal decomposition on the bounded derived category of coherent sheaves
$$
\D^{b}(X)=\langle \mathcal{A}_{X}, \CO_{X}, \CO_{X}(H), \CO_{X}(2H) \rangle, 
$$
where $\mathcal{A}_{X}:=\{E\in \D^{b}(X) \mid \Hom(\CO_{X}(iH),E[k])=0, \forall\, k\in \Z, i=0, 1, 2\}$ is called the {\it Kuznetsov component} of $X$.
In \cite[Conjecture 1.1]{Kuz10}, Kuznetsov conjectured that a cubic fourfold $X$ is rational if and only if $\mathcal{A}_{X}$ is equivalent to $\D^{b}(S)$ for some K3 surface $S$.
By \cite{AT14,Huy17,BLMNPS}, a cubic fourfold $[X]\in \mathcal{C}_{d}$ for some $d$ satisfying \eqref{twostar} if and only if $\mathcal{A}_{X}$ is equivalent to $\D^{b}(S)$ for some K3 surface $S$.
Therefore, Kuznetsov's conjecture can be restated as follows:

\begin{conj}\label{conj:Kuz}
A cubic fourfold $X$ is rational if and only if $[X]\in \CC_{d}$ with $d$ satisfying  \eqref{twostar}.
\end{conj}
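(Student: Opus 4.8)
The plan is to split the biconditional into its two implications, using the admissibility dictionary recalled above---namely that $[X]\in\CC_d$ for some $d$ satisfying \eqref{twostar} exactly when the algebraic cohomology $A(X)$ is admissible, with admissibility decidable through the numerical criterion of Theorem~\ref{mainthm-intro-3}. For the \emph{if} direction I would aim to produce an explicit birational parametrization of each admissible cubic. The strategy is to exploit the associated K3 surface $S$, which exists precisely under \eqref{twostar}: a labelling $K_d\subset A(X)$ forces algebraic surfaces on $X$ whose linear systems cut out a birational map $X\dashrightarrow\mathbb{P}^5$ onto a rational target, as is already known for $d=14,26,38,42$. More ambitiously one would seek a construction uniform in the moduli of degree-$d$ K3 surfaces, descending rationality from an explicit rational total space by means of the equivalence $\mathcal{A}_X\simeq\D^b(S)$ supplied by \cite{AT14,Huy17,BLMNPS} together with a semiorthogonal-decomposition argument. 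Here our Theorem~\ref{mainthm-one}, by organizing the admissible locus into the irreducible subvarieties $\CC_M$ and their overlattice strata, reduces the task to establishing rationality on each such stratum.

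For the \emph{only if} direction I must show that rationality forces admissibility of $A(X)$; equivalently, by the contrapositive, that every cubic fourfold with non-admissible $A(X)$---in particular the very general $X$, for which $A(X)=\langle h_X^2\rangle$ has rank one---is irrational. The approach would be to construct a birational invariant of $X$ that obstructs rationality and is governed by the Kuznetsov component $\mathcal{A}_X$ or by the Hodge structure on $H^4(X,\Z)$, fashioned so as to vanish exactly when $\mathcal{A}_X\simeq\D^b(S)$; its non-vanishing on non-admissible $A(X)$ would then certify irrationality. This is the sought fourfold analogue of the intermediate-Jacobian obstruction with which Clemens--Griffiths \cite{CG72} settled the threefold case.

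The hard part---indeed the entire substance of the conjecture---is this second implication. No cubic fourfold has ever been proven irrational, and the conjectured irrationality of the very general member is completely open: the intermediate Jacobian vanishes for fourfolds of this type, and no categorical, motivic, or decomposition-of-the-diagonal invariant is presently sharp enough to separate rational from irrational cubics. Thus any honest proposal can at best reduce the conjecture, through the admissibility criterion of Theorem~\ref{mainthm-intro-3} and the stratification coming from Theorem~\ref{mainthm-one}, to the twin problems of constructing rational parametrizations on each admissible stratum and of manufacturing an irrationality obstruction on the non-admissible locus---the latter remaining, at present, wholly beyond reach.
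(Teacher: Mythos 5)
This statement is Kuznetsov's conjecture (stated in the paper as Conjecture \ref{conj:Kuz}), and the paper supplies no proof: its only provable content is the restatement of Kuznetsov's derived-category formulation in the Hodge-theoretic form ``$[X]\in\CC_d$ with $d$ satisfying \eqref{twostar}'' via \cite{AT14,Huy17,BLMNPS}, with both implications (rationality of all admissible cubics, and irrationality of any single non-admissible one) wide open. Your proposal correctly recognizes exactly this, invokes the same equivalence and the same partial rationality results on $\CC_{14},\CC_{26},\CC_{38},\CC_{42}$, and honestly concludes that no proof is currently possible, so it matches the paper precisely in treating the statement as an open conjecture rather than a theorem.
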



\section{$M$-polarizable cubic fourfolds}

In this section, we discuss some basic properties (Lemma \ref{unique-CM-lem}, Proposition \ref{prop:CMo}, 
Corollary \ref{cor:irreducible}) of the moduli spaces $\mathcal{C}_M$, 
a generalization of Hassett divisors, 
of $M$-polarizable cubic fourfolds.

\begin{defn}\label{def:Mpolarizable}
Let $M$ be a positive definite lattice of rank $2\le r(M)\le 21$. We say a cubic fourfold $X$ is {\it $M$-polarizable} if there exists a primitive embedding $\iota: M \hookrightarrow A(X) \;\;\textrm{such that}\;\; h^2_X\in \iota(M)$. The pair $(X,\iota)$ is called an {\it $M$-polarized} cubic fourfold.  
We denote by $\mathcal{C}_M\subset \mathcal{C}$ the $M$-polarizable cubic fourfolds.
\end{defn}

\begin{rem}
(1) A cubic fourfold $X$ is a special cubic fourfold with a labelling $K_d$ of discriminant $d$ if and only if $X$ is a $K_d$-polarizable cubic fourfold. So lattice polarizable cubic fourfolds are generalization of special cubic fourfolds and $\mathcal{C}_M$ is a generalization of Hassett divisors $\CC_d$. We will study properties of $\CC_M$ toward generalizing Hassett's results (Theorem \ref{thm:Cd} and Corollary \ref{cor:rank2}) about $\CC_d$ to lattices $M$ of higher rank.

(2) Notice that the definition of $\mathcal{C}_M$ in this paper is different from that in \cite{Has16} (see also \cite{YY20}).  In fact, for a positive definite lattice $M$ together with a prescribed primitive embedding $\phi: M \hookrightarrow \Lambda$ such that $h^2\in \phi(M)$, the meaning of $\mathcal{C}_M$ in \cite{Has16} is the same as $\mathcal{C}_{(M, \mathfrak{o},\phi)}$ defined in Definition \ref{def:CMo} below, where $\mathfrak{o}=\phi^{-1}(h^2)$.

(3) The notion of $M$-polarized cubic fourfolds is analogous to lattice polarized K3 surfaces defined by Dolgachev \cite{Dol96}.  
For study of $M$-polarized cubic fourfolds and their moduli for explicit $M$ of higher rank
(see e.g. \cite{LPZ18,DM19,Aue20}).
\end{rem}

\begin{defn}
An element $\mathfrak{o}$ of a lattice $M$ is called {\it distinguished} if $(\mathfrak{o}.\mathfrak{o})=3$ and the orthogonal complement $\langle \mathfrak{o} \rangle^{\bot}_M$ is an even lattice.
\end{defn}

For any cubic fourfold $X$, the lattice $\langle h_X^2 \rangle^{\bot}_{A(X)}$ is even. Thus, for a positive definite lattice $M$, if there exists a $M$-polarizable cubic fourfold, then $M$ contains a distinguished element. By direct computation, we have the following

\begin{lem}\label{lem:eventest}
Let $\mathfrak{o}$ be an element of a lattice $M$ of rank $r(M)\ge 2$ with $(\mathfrak{o}.\mathfrak{o})=3$. Suppose $(\mathfrak{o}, e_1,...,e_{r(M)-1})$ is a basis of $M$. 
Then $\mathfrak{o}$ is a distinguished element of $M$ if and only if the integer $(e_i.e_i)-(\mathfrak{o}.e_i)$ is even for every $1\le i\le r(M)-1$.
\end{lem}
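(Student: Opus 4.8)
The plan is to prove the equivalence by a direct computation on an arbitrary element of $M$, reducing the evenness condition to a finite check on basis vectors. First I would recall that $\langle \mathfrak{o}\rangle^{\perp}_M$ is even by definition precisely when $(v.v)$ is even for every $v$ in this orthogonal complement, so the task is to translate this into conditions on the given basis $(\mathfrak{o}, e_1,\dots,e_{r(M)-1})$.

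The key observation is that since $(\mathfrak{o}.\mathfrak{o})=3$ is a unit-adjacent value, I can project each basis vector $e_i$ onto $\langle\mathfrak{o}\rangle^\perp$ rationally. Concretely, I would set $f_i := 3e_i - (\mathfrak{o}.e_i)\,\mathfrak{o}$, which satisfies $(f_i.\mathfrak{o})=3(\mathfrak{o}.e_i)-(\mathfrak{o}.e_i)\cdot 3=0$, so the $f_i$ lie in (a finite-index overlattice direction within) $\langle\mathfrak{o}\rangle^{\perp}\otimes\mathbb{Q}$. Rather than work with these scaled projectors directly, the cleaner route is to characterize when an element $w = a\mathfrak{o} + \sum_i b_i e_i$ lies in $\langle\mathfrak{o}\rangle^{\perp}$ and compute $(w.w)\bmod 2$. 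The orthogonality condition $(w.\mathfrak{o})=0$ reads $3a + \sum_i b_i(\mathfrak{o}.e_i)=0$, which forces $3a = -\sum_i b_i(\mathfrak{o}.e_i)$, hence $a = -\tfrac{1}{3}\sum_i b_i(\mathfrak{o}.e_i)$ must be an integer. Expanding $(w.w) = 3a^2 + 2a\sum_i b_i(\mathfrak{o}.e_i) + \sum_{i,j} b_ib_j(e_i.e_j)$ and substituting $3a = -\sum_i b_i(\mathfrak{o}.e_i)$ simplifies the cross terms, so that modulo $2$ the expression collapses to a sum involving $(e_i.e_i)$ and $(\mathfrak{o}.e_i)$ over the indices $i$.

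I expect the computation to reduce, modulo $2$, to the statement that $(w.w) \equiv \sum_i b_i\bigl((e_i.e_i)-(\mathfrak{o}.e_i)\bigr) \pmod 2$ (after absorbing the squared cross terms using $b_i^2\equiv b_i$ and the orthogonality relation). From this the equivalence is immediate: if each $(e_i.e_i)-(\mathfrak{o}.e_i)$ is even, then every $(w.w)$ is even, so $\langle\mathfrak{o}\rangle^{\perp}$ is even; conversely, to produce a witness $w\in\langle\mathfrak{o}\rangle^{\perp}$ detecting a given $e_i$ with $(e_i.e_i)-(\mathfrak{o}.e_i)$ odd, I would take $w = f_i = 3e_i - (\mathfrak{o}.e_i)\mathfrak{o}$, verify $w\in\langle\mathfrak{o}\rangle^{\perp}$, and compute $(f_i.f_i) = 9(e_i.e_i) - 6(\mathfrak{o}.e_i)^2 + 3(\mathfrak{o}.e_i)^2 = 9(e_i.e_i)-3(\mathfrak{o}.e_i)^2 \equiv (e_i.e_i)-(\mathfrak{o}.e_i)^2 \equiv (e_i.e_i)-(\mathfrak{o}.e_i) \pmod 2$, which is odd, contradicting evenness.

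The main obstacle is bookkeeping the cross terms $b_ib_j(e_i.e_j)$ for $i\neq j$ and confirming they contribute an even quantity modulo $2$ after the orthogonality substitution; the diagonal and mixed-with-$\mathfrak{o}$ terms are where the parity identity $(e_i.e_i)-(\mathfrak{o}.e_i)$ genuinely arises, and I must be careful that the factor-of-$3$ from $(\mathfrak{o}.\mathfrak{o})=3$ is odd so it does not alter parities. The backward direction via the explicit witnesses $f_i$ is the more transparent half, so I would lead with it to isolate which single parity condition controls each basis direction, then handle the forward direction by the general element computation. Since no deep lattice theory is needed — only a parity bookkeeping argument exploiting that $3$ is odd — this is genuinely a \emph{direct computation} as the statement asserts.
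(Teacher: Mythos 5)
Your proposal is correct and is exactly the ``direct computation'' the paper alludes to (the paper gives no written proof beyond that phrase): writing $w=a\mathfrak{o}+\sum_i b_ie_i\in\langle\mathfrak{o}\rangle^{\perp}_M$, using $3a=-\sum_i b_i(\mathfrak{o}.e_i)$ and the oddness of $3$ to get $(w.w)\equiv\sum_i b_i\bigl((e_i.e_i)-(\mathfrak{o}.e_i)\bigr)\pmod 2$, and using the integral witnesses $f_i=3e_i-(\mathfrak{o}.e_i)\mathfrak{o}$ for the converse. Both your parity identity and the witness computation check out, so there is no gap.
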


A {\it marking } of a cubic fourfold $X$ is an isometry $$\varphi: H^4(X,\mathbb{Z})\longrightarrow \Lambda$$ such that $\varphi(h_X^2)=h^2$. To study the structure of $\mathcal{C}_M$, we need the following

\begin{defn}\label{def:CMo}
Let $M$ be a positive definite lattice containing a distinguished element $\mathfrak{o}$.
\begin{enumerate}
\item[(i)] We define
$$
\CC_{(M, \mathfrak{o})}
:=
\{[X]\in \CC \mid  \; (X, \iota)\textrm{ is an } M\text{-polarized cubic fourfold with } \iota(\mathfrak{o})=h_{X}^{2} \}.
$$

\item[(ii)] We set 
$$
\mathcal{E}_{(M,\mathfrak{o})}:=\{\phi \mid \, \phi : M \hookrightarrow \Lambda \text{ is a primitive embedding with }\phi(\mathfrak{o})=h^2\}.
$$ 

\item[(iii)] 
For any $\phi \in \mathcal{E}_{(M,\mathfrak{o})}$, 
$$
\mathcal{C}_{(M,\mathfrak{o},\phi)}:=\{[X]\in \mathcal{C} \mid \, \exists \text{ marking }\varphi \text{ of } X \text{ such that }\; h^{2}\in \phi(M)\subset \varphi(A(X))\subset \Lambda\}.
$$
\end{enumerate}
\end{defn}

The following lemma is clear from Definition \ref{def:CMo}.
\begin{lem}\label{lem:union}
Let $M$ be a positive definite lattice containing a distinguished element $\mathfrak{o}$. Then 

\begin{enumerate}
\item $\mathcal{C}_{(M,\mathfrak{o})}=\cup_{\phi\in \mathcal{E}_{(M,\mathfrak{o})}} \mathcal{C}_{(M,\mathfrak{o},\phi)}$. 
\item For any $g\in \Gamma$ and any $\phi\in \mathcal{E}_{(M,\mathfrak{o})}$, 
we have $\mathcal{C}_{(M,\mathfrak{o},\phi)}=\mathcal{C}_{(M,\mathfrak{o},g\circ\phi)}$.
\end{enumerate}
\end{lem}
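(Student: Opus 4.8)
The plan is to set up a dictionary between the two ways of recording the polarization datum: an abstract primitive embedding $\iota:M\hookrightarrow A(X)$ with $\iota(\mathfrak{o})=h_X^2$ on the one hand, and a marking $\varphi$ together with a model embedding $\phi:M\hookrightarrow\Lambda$ on the other. The bridge in both directions is composition, namely $\phi=\varphi\circ\iota$ and conversely $\iota=\varphi^{-1}\circ\phi$. The only non-formal input I will need is that $A(X)=H^4(X,\mathbb{Z})\cap H^{2,2}(X)$ is a \emph{primitive} sublattice of the unimodular lattice $H^4(X,\mathbb{Z})$, together with two elementary facts: an isometry carries primitive sublattices to primitive sublattices, and if $N_1\subset N_2\subset L$ with $N_2$ primitive in $L$, then $N_1$ is primitive in $L$ if and only if it is primitive in $N_2$.

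For part (1) I would prove the two inclusions separately. For $\mathcal{C}_{(M,\mathfrak{o})}\subseteq\bigcup_{\phi}\mathcal{C}_{(M,\mathfrak{o},\phi)}$, start from $[X]\in\mathcal{C}_{(M,\mathfrak{o})}$ with primitive $\iota:M\hookrightarrow A(X)$, $\iota(\mathfrak{o})=h_X^2$, choose any marking $\varphi$, and set $\phi:=\varphi\circ\iota$. Since $\iota(M)$ is primitive in $A(X)$ and $A(X)$ is primitive in $H^4(X,\mathbb{Z})$, transitivity makes $\iota(M)$ primitive in $H^4(X,\mathbb{Z})$; applying the isometry $\varphi$ shows $\phi$ is a primitive embedding into $\Lambda$, and $\phi(\mathfrak{o})=\varphi(h_X^2)=h^2$, so $\phi\in\mathcal{E}_{(M,\mathfrak{o})}$. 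The same marking $\varphi$ then witnesses $h^2\in\phi(M)\subset\varphi(A(X))\subset\Lambda$, giving $[X]\in\mathcal{C}_{(M,\mathfrak{o},\phi)}$. For the reverse inclusion, given $[X]\in\mathcal{C}_{(M,\mathfrak{o},\phi)}$ with witnessing marking $\varphi$, set $\iota:=\varphi^{-1}\circ\phi$; the containment $\phi(M)\subset\varphi(A(X))$ forces $\iota(M)\subset A(X)$, primitivity of $\phi(M)$ in $\Lambda$ passes to primitivity in the primitive sublattice $\varphi(A(X))$ and then transports along $\varphi^{-1}$ to primitivity of $\iota(M)$ in $A(X)$, while $\iota(\mathfrak{o})=\varphi^{-1}(h^2)=h_X^2$, so $[X]\in\mathcal{C}_{(M,\mathfrak{o})}$.

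For part (2) I would first record that $g\circ\phi\in\mathcal{E}_{(M,\mathfrak{o})}$ whenever $g\in\Gamma$ and $\phi\in\mathcal{E}_{(M,\mathfrak{o})}$, since every such $g$ is an isometry of $\Lambda$ fixing $h^2$ and thus preserves both primitivity and the condition $\phi(\mathfrak{o})=h^2$. Then, given $[X]\in\mathcal{C}_{(M,\mathfrak{o},\phi)}$ with witnessing marking $\varphi$, I would apply $g$ to the chain $h^2\in\phi(M)\subset\varphi(A(X))\subset\Lambda$: because $g$ fixes $h^2$ and $g\circ\varphi$ is again a marking of $X$, the pair $(g\circ\varphi,\,g\circ\phi)$ witnesses $h^2\in(g\circ\phi)(M)\subset(g\circ\varphi)(A(X))\subset\Lambda$, whence $[X]\in\mathcal{C}_{(M,\mathfrak{o},g\circ\phi)}$. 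Running the identical argument with $g^{-1}$ in place of $g$ and $g\circ\phi$ in place of $\phi$ yields the opposite inclusion, so the two loci coincide. The entire proof is formal once the primitivity bookkeeping is in place; accordingly the only point deserving genuine attention—the main obstacle, such as it is—is checking that primitivity passes correctly through the tower $\iota(M)\subset A(X)\subset H^4(X,\mathbb{Z})\cong\Lambda$, which is exactly where the primitivity of $A(X)$ in $H^4(X,\mathbb{Z})$ does the real work.
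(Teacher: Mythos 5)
Your proof is correct and matches the paper's approach: the paper simply declares this lemma ``clear from Definition \ref{def:CMo},'' and your argument is exactly the careful unwinding of that definition, transporting the polarization datum back and forth via $\phi=\varphi\circ\iota$ and $\iota=\varphi^{-1}\circ\phi$. The one substantive ingredient you correctly isolate---primitivity of $A(X)=H^4(X,\Z)\cap H^{2,2}(X)$ in $H^4(X,\Z)$ and the transitivity of primitivity---is indeed what makes the bookkeeping legitimate.
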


For rank two positive definite lattices $M$ with a distinguished element, the three notions $\mathcal{C}_M$, $\CC_{(M, \mathfrak{o})}$, $\mathcal{C}_{(M,\mathfrak{o},\phi)}$ give the same set of cubic fourfolds.

\begin{cor}[Hassett \cite{Has00, Has16}]\label{cor:rank2}
Let $M$ be a positive definite rank-two lattice of discriminant $d$.
Suppose that $M$ contains a distinguished element $\mathfrak{o}$.
Then one of the following two conditions must hold:
\begin{enumerate}
\item[(i)] $d>0,\, d\,\equiv\, 2\, (\mathrm{mod}\; 6)$
and there exists $v\in M$ such that
         $M=\langle\mathfrak{o}, v\rangle$ and its Gram matrix is
         $\small{
         \left(\begin{array}{cc} 
         3 & 1 \\
         1 & \frac{d+1}{3} 
         \end{array} \right)}
         $;
\item[(ii)] $d>0,\, d\,\equiv\, 0\, (\mathrm{mod}\; 6)$ and there exists $v\in M$ such that
         $M=\langle\mathfrak{o}, v\rangle$ and its Gram matrix is
         $\small{
         \left(\begin{array}{cc} 
         3 & 0 \\
         0 & \frac{d}{3} 
         \end{array} \right)}
         $.        
\end{enumerate}
In particular, if $M^{'}$ is another positive definite lattice containing a distinguished element $\mathfrak{o}^{'}$, then $(M^{'}, \mathfrak{o}^{'})$ and $(M, \mathfrak{o})$ are isometric if and only if $\disc(M)=\disc(M')$.
Moreover, for $d\geq 8$, 
$\mathcal{C}_d=\mathcal{C}_M=\CC_{(M, \mathfrak{o})}=\CC_{(M, \mathfrak{o},\phi)}$, where $\phi\in \mathcal{E}_{(M,\mathfrak{o})}$.
\end{cor}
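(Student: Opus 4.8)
The plan is to first pin down a normal form for the Gram matrix of $(M,\mathfrak{o})$, read off $\disc(M)$ as the complete invariant, and only then translate the lattice statements into equalities of moduli sets via the Torelli theorem. I would begin with an arbitrary basis $(\mathfrak{o},w)$ of $M$, so that the Gram matrix is $\left(\begin{smallmatrix} 3 & a \\ a & b\end{smallmatrix}\right)$ with $d=\disc(M)=3b-a^2$, positive definiteness being equivalent to $d>0$. The only basis changes fixing the line $\langle\mathfrak{o}\rangle$ are $w\mapsto \pm w+k\mathfrak{o}$ ($k\in\Z$), under which $a\mapsto \pm a+3k$; hence the class of $a$ modulo $3$ up to sign is an invariant, and I may normalise $a\in\{0,1\}$. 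A generator of the rank-one lattice $\langle\mathfrak{o}\rangle^\perp_M$ is the primitive part of $3w-a\mathfrak{o}$, whose norm equals $3d$ when $3\nmid a$ and $d/3$ when $3\mid a$. Invoking Lemma \ref{lem:eventest}, i.e. evenness of $\langle\mathfrak{o}\rangle^\perp_M$, the distinguished hypothesis then forces: if $a\equiv 1$ then $3d$ is even, so $d\equiv 2\ (\mathrm{mod}\ 6)$ and the matrix is case (i) with $b=(d+1)/3$; if $a\equiv 0$ then $d/3$ is even, so $d\equiv 0\ (\mathrm{mod}\ 6)$ and the matrix is case (ii) with $b=d/3$. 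This yields the dichotomy, with $v=w$ the normalised second basis vector.

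For the uniqueness clause, the forward implication is immediate since an isometry of pairs preserves the Gram matrix and hence $\disc$. Conversely, for a second such (rank-two) pair $(M',\mathfrak{o}')$ with $\disc(M')=\disc(M)=d$, the value of $d$ modulo $6$ places both pairs in the same case above with one and the same Gram matrix; matching the two standard bases produces an isometry $M\to M'$ sending $\mathfrak{o}$ to $\mathfrak{o}'$. In particular, applying this to $M'=M$ shows that every distinguished element of a fixed $M$ lies in a single $\mathrm{O}(M)$-orbit, a fact I will use below.

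For the chain of moduli equalities I argue set-theoretically. If $[X]\in\CC_d$ has labelling $K$, then $(K,h_X^2)$ is a distinguished rank-two pair of discriminant $d$, hence isometric to $(M,\mathfrak{o})$ by the previous paragraph; the resulting primitive embedding $M\hookrightarrow A(X)$ sending $\mathfrak{o}$ to $h_X^2$ shows $[X]\in\CC_{(M,\mathfrak{o})}\subset\CC_M$, and the reverse inclusions are clear from the definitions. Here the $\mathrm{O}(M)$-orbit remark lets me promote any embedding $\iota$ with $h_X^2\in\iota(M)$ to one with $\iota(\mathfrak{o})=h_X^2$, since $\iota^{-1}(h_X^2)$ is again a distinguished element of $M$. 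This gives $\CC_d=\CC_M=\CC_{(M,\mathfrak{o})}$. For the last equality I invoke Lemma \ref{lem:union}: $\CC_{(M,\mathfrak{o})}=\bigcup_{\phi\in\mathcal{E}_{(M,\mathfrak{o})}}\CC_{(M,\mathfrak{o},\phi)}$ and each $\CC_{(M,\mathfrak{o},\phi)}$ depends only on the $\Gamma$-orbit of $\phi$, so it suffices to show that $\mathcal{E}_{(M,\mathfrak{o})}$ forms a single $\Gamma$-orbit.

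I expect this last point — uniqueness up to $\Gamma$ of the primitive embedding $\phi:M\hookrightarrow\Lambda$ with $\phi(\mathfrak{o})=h^2$ — to be the only genuinely nontrivial step. Passing to orthogonal complements, it is equivalent to uniqueness up to automorphism of the primitive embedding of the rank-one even lattice $\langle\mathfrak{o}\rangle^\perp_M$ into $\Lambda_0=E_8^{\oplus2}\oplus U^{\oplus2}\oplus A_2$, which I would settle with Nikulin's existence and uniqueness criteria for primitive embeddings into an even lattice; the two hyperbolic summands $U$ in $\Lambda_0$ make the relevant surjectivity and orbit hypotheses hold, while $d\geq 8$ guarantees both that the embedding exists and, by Theorem \ref{thm:Cd}, that $\CC_d$ is a non-empty Hassett divisor. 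Combining this single-orbit statement with the Torelli theorem for cubic fourfolds, which underlies the well-definedness of $\CC_{(M,\mathfrak{o},\phi)}$ through markings, yields $\CC_{(M,\mathfrak{o})}=\CC_{(M,\mathfrak{o},\phi)}$ and completes the proof. This embedding-uniqueness step is precisely the rank-two instance of the general glue-theoretic propositions the paper proves later.
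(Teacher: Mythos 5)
Most of your argument is correct, and the elementary half is complete: the normal form for $(M,\mathfrak{o})$, the fact that $d$ (through its residue mod $6$) determines the pair up to isometry, the single $\mathrm{O}(M)$-orbit of distinguished elements, and the set-theoretic identities $\CC_d=\CC_M=\CC_{(M,\mathfrak{o})}$ all check out. For context: the paper itself gives no proof of this corollary (it is quoted from Hassett), and the machinery it later develops for the remaining equality --- Lemma \ref{unique-CM-lem}, Proposition \ref{unique-primi-embed}, Corollary \ref{cor:irreducible}, Theorem \ref{mainthm-one-equal} --- works with the orthogonal complement $P_i=\phi_i(M)^{\perp}_{\Lambda}$ of signature $(19,2)$, not with $\Lambda_0=\langle h^2\rangle^{\perp}_{\Lambda}$ as you propose.

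The genuine gap is in your last step, the single $\Gamma$-orbit claim for $\mathcal{E}_{(M,\mathfrak{o})}$. The statement you plan to prove with Nikulin --- uniqueness up to automorphisms of $\Lambda_0$ of the primitive embedding of the rank-one lattice $N=\langle\mathfrak{o}\rangle^{\perp}_M$ into $\Lambda_0$ --- is false as stated, and even where true it is not the statement you need. It is false because the divisibility $\mathrm{div}(v)$ (the positive generator of the ideal $(v.\Lambda_0)\subset\Z$) and the class of $v/\mathrm{div}(v)$ in $G(\Lambda_0)\cong\Z/3\Z$ are $\mathrm{O}(\Lambda_0)$-invariants of a primitive vector: already for $d=8$, $\Lambda_0$ contains primitive vectors of norm $24=3d$ of divisibility $1$ (e.g. $e+12f$ in a hyperbolic summand $U=\langle e,f\rangle$) and of divisibility $3$ (e.g. $3[P]-h^2$ coming from a cubic with a plane), and no isometry of $\Lambda_0$ interchanges them. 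It is not the statement you need because an isometry $\alpha\in\mathrm{O}(\Lambda_0)$ extends to an element of $\Gamma$ if and only if $\bar{\alpha}=\mathrm{id}$ on $G(\Lambda_0)$ (it must preserve the glue of $\Lambda$ over $\langle h^2\rangle\oplus\Lambda_0$); isometries inducing $-\mathrm{id}$ do not extend to an isometry of $\Lambda$ fixing $h^2$. The repair is exactly the glue bookkeeping your reduction skips: in case (i) the index-$3$ extension $M\supsetneq\langle\mathfrak{o}\rangle\oplus N$ forces $\mathrm{div}(\psi(v'))=3$ and fixes the class $\overline{\psi(v')/3}$ to be the image of $\overline{h^2/3}$ under the glue map of $\Lambda$, while in case (ii) primitivity of $\langle h^2\rangle\oplus\psi(N)$ inside $\Lambda$ forces $\mathrm{div}=1$; hence the restrictions of any two embeddings of $M$ share the same norm, divisibility and discriminant class, and then Eichler's criterion (this is where $U^{\oplus 2}\subset\Lambda_0$ actually enters, rather than Nikulin's embedding theorems) puts them in a single orbit of the stable group $\{\alpha\in\mathrm{O}(\Lambda_0)\mid \bar{\alpha}=\mathrm{id}\}$, whose elements do extend to $\Gamma$. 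Alternatively, run the paper's Proposition \ref{unique-primi-embed} in this rank-two case: show $P_1$ and $P_2$ lie in one genus and use surjectivity of $\mathrm{O}(P_2)\to\mathrm{O}(q_{G(P_2)})$ (Nikulin, valid since $r(P_i)=21\geq\ell(P_i)+2$) to correct the glue. Either way, the orbit data that your sketch treats as automatic is the actual content of the step.
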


Similarly, for a higher-rank positive definite lattice $M$,
we have the following:

\begin{lem}\label{unique-CM-lem}
Let $M$ be a positive definite lattice containing a distinguished element $\mathfrak{o}_1$.
If $\mathfrak{o}_2$ is another distinguished element of $M$, then there is an isometry $f\in \mathrm{O}(M)$ such that $f(\mathfrak{o}_1)=\mathfrak{o}_2$. In particular, $\mathcal{C}_M=\CC_{(M, \mathfrak{o}_1)}$.
\end{lem}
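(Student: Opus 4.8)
The plan is to produce the isometry explicitly by a single reflection, using the ``distinguished'' hypothesis purely as a parity constraint rather than invoking Nikulin's machinery. First I would record the reformulation underlying Lemma \ref{lem:eventest}: for a fixed element $\mathfrak{o}$ with $(\mathfrak{o}.\mathfrak{o})=3$, the assignment $x\mapsto (x.x)-(\mathfrak{o}.x)\bmod 2$ is a homomorphism $M\to\mathbb{Z}/2$ (immediate from bilinearity modulo $2$), so $\mathfrak{o}$ is distinguished if and only if $(x.x)\equiv(\mathfrak{o}.x)\pmod 2$ for every $x\in M$. Applying this to both $\mathfrak{o}_1$ and $\mathfrak{o}_2$ and subtracting yields $(\mathfrak{o}_1.x)\equiv(\mathfrak{o}_2.x)\pmod 2$ for all $x$; equivalently, $(r.x)$ is even for every $x\in M$, where $r:=\mathfrak{o}_1-\mathfrak{o}_2$. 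Taking $x=\mathfrak{o}_1$ also shows $a:=(\mathfrak{o}_1.\mathfrak{o}_2)$ is odd.

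Next, since $M$ is positive definite, Cauchy--Schwarz gives $|a|\le\sqrt{(\mathfrak{o}_1.\mathfrak{o}_1)(\mathfrak{o}_2.\mathfrak{o}_2)}=3$, so $a\in\{-3,-1,1,3\}$. After replacing $\mathfrak{o}_2$ by $-\mathfrak{o}_2$ (which is again distinguished) and composing the eventual isometry with $-\mathrm{id}\in\mathrm{O}(M)$, I may assume $a>0$, i.e. $a\in\{1,3\}$. If $a=3$, then equality in Cauchy--Schwarz forces $\mathfrak{o}_2=\mathfrak{o}_1$ and $f=\mathrm{id}$ works. If $a=1$, then $(r.r)=6-2a=4$, and I set $f:=s_r$, the reflection $s_r(x)=x-\tfrac{2(x.r)}{(r.r)}r=x-\tfrac{(x.r)}{2}r$. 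A direct computation gives $s_r(\mathfrak{o}_1)=\mathfrak{o}_1-\tfrac{2(\mathfrak{o}_1.r)}{(r.r)}r=\mathfrak{o}_1-r=\mathfrak{o}_2$, since $(\mathfrak{o}_1.r)=3-a=2$.

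The step I expect to carry the proof is integrality: $s_r$ is automatically an isometry of $M\otimes\mathbb{Q}$, but it preserves the lattice $M$ precisely because $(x.r)$ is even for all $x$, so that the coefficient $(x.r)/2$ is always an integer and $s_r(x)\in M$. Thus the distinguished hypothesis enters exactly once, through this parity; the sign bookkeeping via $-\mathrm{id}$ and the degenerate case $a=3$ are routine.

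Finally, for the identity $\mathcal{C}_M=\mathcal{C}_{(M,\mathfrak{o}_1)}$ I would note the inclusion $\mathcal{C}_{(M,\mathfrak{o}_1)}\subseteq\mathcal{C}_M$ is trivial, and for the reverse take $[X]\in\mathcal{C}_M$ with a primitive embedding $\iota\colon M\hookrightarrow A(X)$ satisfying $h_X^2\in\iota(M)$. Setting $\mathfrak{o}':=\iota^{-1}(h_X^2)$, one checks $\mathfrak{o}'$ is distinguished in $M$: it has square $3$, and $\iota$ carries $\langle\mathfrak{o}'\rangle^{\perp}_M$ isometrically into the even lattice $\langle h_X^2\rangle^{\perp}_{A(X)}$, so $\langle\mathfrak{o}'\rangle^{\perp}_M$ is even. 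By the first part there is $f\in\mathrm{O}(M)$ with $f(\mathfrak{o}_1)=\mathfrak{o}'$, whence $\iota\circ f$ is a primitive embedding with $(\iota\circ f)(\mathfrak{o}_1)=h_X^2$, witnessing $[X]\in\mathcal{C}_{(M,\mathfrak{o}_1)}$.
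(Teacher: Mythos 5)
Your proof is correct, and it establishes the key step by a genuinely different mechanism than the paper. Both arguments begin the same way: the distinguished hypothesis forces $a:=(\mathfrak{o}_1.\mathfrak{o}_2)$ to be odd, positive definiteness forces $|a|\le 3$, and after replacing $\mathfrak{o}_2$ by $-\mathfrak{o}_2$ one reduces to $a\in\{1,3\}$ with $a=3$ trivial; moreover, for $a=1$ both approaches produce in effect the same isometry (swap $\mathfrak{o}_1\leftrightarrow\mathfrak{o}_2$, identity on $\langle\mathfrak{o}_1-\mathfrak{o}_2\rangle^{\perp}$). The difference lies in how one shows this map is defined on all of $M$. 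The paper sets $L=\langle\mathfrak{o}_1,\mathfrak{o}_2\rangle\cong K_8$, $N=L^{\perp}_M$, realizes $M$ as a glued lattice $L'\oplus_{\phi}N$ with $L'$ the primitive closure of $L$, and runs a case analysis on the glue group: the order-$8$ case is ruled out by evenness of $\langle\mathfrak{o}_2\rangle^{\perp}_M$, and in the remaining cases the swap isometry acts trivially on the discriminant group, so it glues with $\mathrm{id}_N$. You instead upgrade Lemma \ref{lem:eventest} to the parity statement $(x.x)\equiv(\mathfrak{o}_i.x)\pmod 2$ for all $x\in M$ (valid since $x\mapsto (x.x)-(\mathfrak{o}_i.x)\bmod 2$ is a homomorphism vanishing on a basis through $\mathfrak{o}_i$; such a basis exists because $(\mathfrak{o}_i.\mathfrak{o}_i)=3$ forces $\mathfrak{o}_i$ primitive), so that $(r.x)$ is even for $r=\mathfrak{o}_1-\mathfrak{o}_2$; since $(r.r)=4$, the reflection $s_r(x)=x-\tfrac{(x.r)}{2}r$ is integral and sends $\mathfrak{o}_1$ to $\mathfrak{o}_2$. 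This single divisibility observation replaces the paper's entire overlattice/gluing casework, making your argument shorter and more elementary; it also handles the degenerate possibilities $\mathfrak{o}_2=\pm\mathfrak{o}_1$ explicitly, which the paper's rank-two Gram matrix setup silently excludes. What the paper's route buys is uniformity with the glue-extension technique used elsewhere (Propositions \ref{exist-primi-embed} and \ref{unique-primi-embed}), where the required isometries are not reflections and no such integrality shortcut is available. Your deduction of $\mathcal{C}_M=\CC_{(M,\mathfrak{o}_1)}$ — checking that $\iota^{-1}(h_X^2)$ is distinguished and precomposing with $f$ — is exactly the intended use of the first part, which the paper states without detail.
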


\begin{proof}
We consider the lattice $L:=\langle \mathfrak{o}_{1}, \mathfrak{o}_{2} \rangle\subset M$.
Its Gram matrix as follows:
$$
\left(\begin{array}{cc} 
         3 & a \\
         a & 3 
\end{array} \right).
$$
By definiteness, $a=\pm 2, \pm1, 0$.
Using Lemma \ref{lem:eventest}, we can rule out $a=0, \pm 2$.
Hence, $a=\pm 1$, and we have $L\cong K_{8}$. Replacing $\mathfrak{o}_2$ by $-\mathfrak{o}_2$ if necessary, we may and will assume $a=1$.

Let $N:=L^{\perp}_M$. Let $L^\prime:=N^\perp_M$. Then $L^\prime$ is the primitive closure of $L$ in $M$. Thus, either $[L^\prime:L]=2$ or $L^\prime=L$. Note that  $L^\prime \oplus_\phi N=M$ for some gluing map $\phi: H_{1}\longrightarrow H_{2}$, where $H_{1}\subset G(L^\prime)$ and $H_{2}\subset G(N)$. Next,  our goal is to find an isometry $f\in \mathrm{O}(M)$ such that $f(\mathfrak{o}_{1})=\mathfrak{o}_{2}$ by extending an isometry of $L^\prime \oplus N$ to that of $L^\prime\oplus_{\phi} N=M$.
Since the possible orders of the subgroup $H_{i}$ are $1, 2, 4, 8$, the arguments can be divided into three cases:

({\bf Case 1}) If the order $|H_{1}|=8$, then $L^\prime=L$, $H_{1}=G(L)=\langle \overline{\frac{-3\mathfrak{o}_{1}+\mathfrak{o}_{2}}{8}}\rangle\cong \Z/ 8\Z$. 
Thus, by (\ref{eq:glue}), there exists an element $\alpha\in N$ such that 
$$
\frac{\alpha}{8}\in N^{\vee}\; \textrm{and}\;  \frac{-3\mathfrak{o}_{1}+\mathfrak{o}_{2}}{8}+\frac{\alpha}{8}\in M.
$$
Set $\beta:=\frac{-3\mathfrak{o}_{1}+\mathfrak{o}_{2}+\alpha}{8}\in M$.
Then we have $(\mathfrak{o}_{1}.\beta)=-1$, $(\mathfrak{o}_{2}.\beta)=0$ (i.e., $\beta\in \langle \mathfrak{o}_{2}\rangle^{\perp}$) and $\beta+\mathfrak{o}_{2}\in \langle\mathfrak{o}_{1} \rangle^{\perp}_M$.
Therefore, we get
$$
(\beta+\mathfrak{o}_{2}.\beta+\mathfrak{o}_{2})=(\beta.\beta)+2(\beta.\mathfrak{o}_{2})+(\mathfrak{o}_{2}.\mathfrak{o}_{2})=(\beta.\beta)+3 \in 2\Z.
$$
This contradicts $\beta\in \langle \mathfrak{o}_{2}\rangle^{\perp}_M$, an even lattice.

({\bf Case 2}) Suppose $|H_{1}|<8$ and $L^\prime=L$. Then $|H_1|=1,2$ or $4$, and
we have
$$
H_{1}=\langle 0\rangle, \langle\overline{\frac{\mathfrak{o}_{1}+\mathfrak{o}_{2}}{2}}\rangle \;\textrm{or}\; \langle\overline{\frac{\mathfrak{o}_{1}+\mathfrak{o}_{2}}{4}}\rangle.
$$
We choose an isometry $f_1\in \mathrm{O}(L)$ such that $f_1(\mathfrak{o}_{1})=\mathfrak{o}_{2}$ and $f_1(\mathfrak{o}_{2})=\mathfrak{o}_{1}$.
Thus, $\bar{f}_1|{G(L)}=\Id_{G(L)}$. 
Consider $f_{2}:=\Id_{N}\in \mathrm{O}(N)$.
Then the isometry $f_{1}\oplus f_{2} \in \mathrm{O}(L\oplus N)$ extends to an isometry $f:=f_1\oplus_{\phi}f_2\in \mathrm{O}(L\oplus_{\phi} N)$ such that $f(\mathfrak{o}_{1})=\mathfrak{o}_{2}$.

({\bf Case 3}) Assume $|H_{1}|<8$ and $[L^\prime:L]=2$. Then $\disc(L^\prime)=2$, $L^\prime=\langle \mathfrak{o}_1,\frac{\mathfrak{o}_1+\mathfrak{o}_2}{2}\rangle$, $G(L^\prime)=\langle \frac{\overline{\mathfrak{o}_1+\mathfrak{o}_2}}{4}\rangle\cong \mathbb{Z}/2\mathbb{Z}$.
we have
$$
H_{1}=\langle 0\rangle\;\textrm{or}\; \langle\overline{\frac{\mathfrak{o}_{1}+\mathfrak{o}_{2}}{4}}\rangle.
$$
Agian, there exists an isometry $f_1\in \mathrm{O}(L^\prime)$ such that $f_1(\mathfrak{o}_{1})=\mathfrak{o}_{2}$ and $f_1(\mathfrak{o}_{2})=\mathfrak{o}_{1}$. As in (Case 2), the isometry $f_1\oplus_{\phi}{\rm id}_{G(N)}$ is what we want.
\end{proof}

\begin{rem}
For higher rank cases, $\CC_M=\CC_{(M,\mathfrak{o})}$ is in general not equal to $\CC_{(M,\mathfrak{o},\phi)}$ since $M$ might have inequivalent primitive embeddings into $\Lambda$ modulo the action of ${\rm O}(\Lambda,h^2)$. However, it turns out that $\CC_M =\CC_{(M,\mathfrak{o})}=\CC_{(M,\mathfrak{o},\phi)}$ in many higher rank cases (see Theorem \ref{mainthm-one-equal}).
\end{rem}

Next we recall the Torelli theorem for cubic fourfolds.  
Let $\mathcal{D}^{'}$ be one of the two connected components of 
$$
\mathcal{D}_{\Lambda}
:=
\{[\omega] \in \mathbb{P}(\Lambda_{0}\otimes \mathbb{C}) \mid (\omega.\omega)=0,\; (\omega.\bar{\omega})>0\}. 
$$
Note that $\Lambda_{0}=\langle h^{2}\rangle^{\perp}\subset \Lambda$. Then $\mathcal{D}^{'}$ is a bounded symmetric domain of type $\mathrm{IV}$ of dimension $20$. According to the Hodge diamond \eqref{Hodge-diamond},
it is well-known that $\mathcal{D}^\prime$ may be viewed as the classifying space of Hodge structures 
for cubic fourfolds. 
Denote by $\Gamma:=\mathrm{O}(\Lambda, h^{2})$ the orthogonal group of $\Lambda$ preserving the distinguished element $h^{2}$, 
and $\Gamma^{+}\subset \Gamma$ the subgroup stabilizing $\mathcal{D}^{'}$.
By Voisin's global Torelli theorem \cite{Voi86}, 
the period map 
$$
\begin{array}{cccl}
\wp:&\CC &\longrightarrow& \CD:= \Gamma^{+}\backslash \CD^{'} \\ 
&[X]&\longmapsto& H^{1}(X, \Omega_{X}^{3})
\end{array}
$$
is an open immersion of analytic spaces. 
Moreover, the global period domain $\CD$ is a quasi-projective variety of dimension $20$ and $\wp$ is an algebraic map (see \cite[Proposition 2.2.2]{Has00}). 
The image of the period map $\wp$ has been described explicitly in the work of Laza \cite[Theorem 1.1]{Laz10} and Looijenga \cite[Theorem 4.1]{Loo09}.

Building on \cite{Voi86,Has00,Laz10,Loo09},
Hassett gave the following structural result (\cite[Proposition 12 and p. 43]{Has16}); see also \cite[Proposition 5]{YY20}.

\begin{prop}\label{pre-version}
Let $M$ be a positive definite lattice of rank $2\leq r(M)\leq 21$ containing a distinguished element $\mathfrak{o}$ such that $\mathcal{E}_{(M,\mathfrak{o})}$ is non-empty. 
Suppose $\phi\in \mathcal{E}_{(M,\mathfrak{o})}$. 
Then the following statements hold:
\begin{enumerate}
\item[(i)] If $\CC_{(M,\mathfrak{o},\phi)}$ is non-empty, then $\CC_{(M,\mathfrak{o},\phi)}\subset \mathcal{C}$ has codimension $r(M)-1$ and
               there exists a cubic fourfold $[X]\in \CC_{(M,\mathfrak{o},\phi)}$ 
               such that $M\cong A(X)$; 
\item[(ii)]  $\CC_{(M,\mathfrak{o},\phi)}$ is non-empty 
                if and only if there is no sublattice 
                $K\subset M$, $\mathfrak{o}\in K$, with $K=K_{2}$ or $K_{6}$, 
where 
$
K_{2}=\small{\left(\begin{array}{cc} 
3 & 1 \\
1 & 1 
\end{array} \right)}
$ 
and
$
K_{6}=\small{\left(\begin{array}{cc} 
3 & 0 \\
0 & 2 
\end{array} \right)}$.
\end{enumerate}
\end{prop}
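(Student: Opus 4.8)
The plan is to adapt Hassett's original argument to arbitrary rank, taking as the two essential inputs Voisin's Torelli theorem (so that $\wp$ is an open immersion) and the Laza--Looijenga description of the image of the period map. First I would fix the relevant subdomain. Set
\[
N:=\phi(M)^{\perp}_{\Lambda},\qquad \mathcal{D}'_{N}:=\mathbb{P}(N\otimes\mathbb{C})\cap\mathcal{D}',\qquad \pi\colon \mathcal{D}'\longrightarrow \mathcal{D}=\Gamma^{+}\backslash\mathcal{D}'.
\]
Since $\phi(M)$ is positive definite of rank $r(M)$ with $h^{2}\in\phi(M)$, the complement $N$ has signature $(21-r(M),2)$ and is contained in $\Lambda_{0}$; hence $\mathcal{D}'_{N}$ is a nonempty connected type $\mathrm{IV}$ domain of dimension $21-r(M)$. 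For $[\omega]\in\mathcal{D}'$ put $A_{\omega}:=\{v\in\Lambda\mid (v.\omega)=0\}$; by the Hodge diamond \eqref{Hodge-diamond} one has $h^{4,0}(X)=0$, so for a marked cubic fourfold $\varphi(A(X))=A_{\omega}$, where $[\omega]$ is its period. With this language the first step is the identification
\[
\wp(\mathcal{C}_{(M,\mathfrak{o},\phi)})=\pi(\mathcal{D}'_{N})\cap\wp(\mathcal{C}),
\]
which holds because $[X]\in\mathcal{C}_{(M,\mathfrak{o},\phi)}$ precisely when some marking makes the period orthogonal to $\phi(M)$, i.e. when the $\Gamma^{+}$-orbit of the period meets $\mathcal{D}'_{N}$.

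The technical heart is a genericity lemma that I would isolate and prove first, since both parts use it: for a \emph{very general} $[\omega]\in\mathcal{D}'_{N}$ one has $A_{\omega}=\phi(M)$. The inclusion $A_{\omega}\supseteq N^{\perp}_{\Lambda}=\phi(M)$ is automatic (the equality uses that $\phi$ is a primitive embedding, so $(\phi(M)^{\perp})^{\perp}=\phi(M)$). For the reverse inclusion, each nonzero $w\in N$ cuts out a proper analytic hypersurface $\{[\omega]\mid (\omega.w)=0\}$ of $\mathcal{D}'_{N}$; deleting these countably many hypersurfaces is a Baire-dense condition and leaves a dense set of $[\omega]$ with $A_{\omega}\cap N=0$. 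Using $\Lambda\otimes\mathbb{Q}=(N\otimes\mathbb{Q})\oplus(\phi(M)\otimes\mathbb{Q})$ together with $\omega\in N\otimes\mathbb{C}$, any $v\in A_{\omega}$ has its $N$-component killed (after clearing denominators it lands in $A_{\omega}\cap N=0$), forcing $v\in\phi(M)$ and hence equality.

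Granting the lemma, part $(i)$ follows quickly. If $\mathcal{C}_{(M,\mathfrak{o},\phi)}$ is nonempty, then $\pi(\mathcal{D}'_{N})\cap\wp(\mathcal{C})$ is a nonempty open subset of the irreducible variety $\pi(\mathcal{D}'_{N})$ of dimension $21-r(M)$; since $\wp$ is an open immersion, $\mathcal{C}_{(M,\mathfrak{o},\phi)}$ has codimension $20-(21-r(M))=r(M)-1$ in $\mathcal{C}$. Choosing a very general point of the intersection and applying the lemma produces a cubic $X$ with $\varphi(A(X))=A_{\omega}=\phi(M)\cong M$.

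For part $(ii)$ I would invoke the Laza \cite{Laz10}--Looijenga \cite{Loo09} theorem that $\wp(\mathcal{C})=\mathcal{D}\setminus(\mathcal{D}_{2}\cup\mathcal{D}_{6})$, where $\mathcal{D}_{d}$ is the image of the hyperplanes orthogonal to discriminant-$d$ labellings. Using $\Gamma^{+}$-invariance, $[\omega]\in\pi^{-1}(\mathcal{D}_{d})$ if and only if $A_{\omega}$ contains a primitive rank-$2$ sublattice $K\ni h^{2}$ with $\disc K=d$, and by Corollary \ref{cor:rank2} such a $K$ is isometric to $K_{2}$ when $d=2$ and to $K_{6}$ when $d=6$. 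If $M$ contains such a $K$ with $\mathfrak{o}\in K$ (necessarily primitive in $M$, as $2$ and $6$ are squarefree, whence $\phi(K)$ is primitive in $\Lambda$), then $\phi(K)\subseteq A_{\omega}$ for \emph{every} $[\omega]\in\mathcal{D}'_{N}$, so $\mathcal{D}'_{N}\subseteq\pi^{-1}(\mathcal{D}_{2}\cup\mathcal{D}_{6})$ and the intersection with $\wp(\mathcal{C})$ is empty. Conversely, if $M$ contains no such $K$, a very general $[\omega]\in\mathcal{D}'_{N}$ has $A_{\omega}=\phi(M)$, which then contains no discriminant-$2$ or $-6$ labelling, so $[\omega]\notin\pi^{-1}(\mathcal{D}_{2}\cup\mathcal{D}_{6})$ and $\pi([\omega])\in\wp(\mathcal{C})$, giving $\mathcal{C}_{(M,\mathfrak{o},\phi)}\neq\emptyset$. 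The hard part will not be any single estimate but the bookkeeping in this last translation: confirming that $\phi(K)$ remains primitive in $\Lambda$, that membership in $\pi^{-1}(\mathcal{D}_{d})$ is genuinely a condition on the single lattice $A_{\omega}$, and that the genericity lemma is applied on the (possibly positive-codimension) subdomain $\mathcal{D}'_{N}$ rather than on all of $\mathcal{D}'$.
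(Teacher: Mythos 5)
Your overall strategy is the intended one: the paper itself does not reprove this proposition (it is quoted from Hassett \cite{Has16} and \cite{YY20}), and any proof must combine Voisin's Torelli theorem, the Laza--Looijenga description of $\wp(\mathcal{C})$, and a genericity argument showing $A_\omega=\phi(M)$ for very general $[\omega]$ in the subdomain. Your genericity lemma is sound, and so is the translation of the divisors $\mathcal{D}_2\cup\mathcal{D}_6$ into the lattices $K_2$, $K_6$ via Corollary \ref{cor:rank2} (with primitivity of $K$ forced by squarefreeness of $2$ and $6$).

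There is, however, a genuine gap at your central identity $\wp(\mathcal{C}_{(M,\mathfrak{o},\phi)})=\pi(\mathcal{D}'_{N})\cap\wp(\mathcal{C})$, and both parts of your argument use it in exactly the direction in which it fails. A marking $\varphi$ with $\phi(M)\subset\varphi(A(X))$ places $[\varphi_{\mathbb{C}}(\omega_X)]$ in $\mathbb{P}(N\otimes\mathbb{C})\cap\mathcal{D}_{\Lambda}$, which has \emph{two} components, only one of which is $\mathcal{D}'_{N}\subset\mathcal{D}'$; if the period lands in the conjugate component, then $\wp([X])$ lies in $\pi(\mathcal{D}'_{N_2})$, where $N_2=(\gamma_0\phi(M))^{\perp}_{\Lambda}$ for $\gamma_0\in\Gamma\setminus\Gamma^{+}$, and in general not in $\pi(\mathcal{D}'_{N})$. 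The correct statement is $\wp(\mathcal{C}_{(M,\mathfrak{o},\phi)})=\bigl(\pi(\mathcal{D}'_{N})\cup\pi(\mathcal{D}'_{N_2})\bigr)\cap\wp(\mathcal{C})$; this is precisely the decomposition $Z_1\cup Z_2$ in the paper's Proposition \ref{prop:CMo}, and the remark following that proposition (a rank $21$ lattice for which $\mathcal{C}_{(M,h^2,\phi)}$ consists of exactly two points) is a counterexample to your equality. Concretely: in part (i) you infer from $\mathcal{C}_{(M,\mathfrak{o},\phi)}\neq\emptyset$ that $\pi(\mathcal{D}'_{N})\cap\wp(\mathcal{C})\neq\emptyset$, and in the ``only if'' half of part (ii) you infer from $\mathcal{D}'_{N}\subset\pi^{-1}(\mathcal{D}_2\cup\mathcal{D}_6)$ that $\mathcal{C}_{(M,\mathfrak{o},\phi)}=\emptyset$; both inferences require the inclusion $\wp(\mathcal{C}_{(M,\mathfrak{o},\phi)})\subseteq\pi(\mathcal{D}'_{N})$, which is the direction that can fail. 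The repair is short but must be said: the two subdomains are exchanged by $[\omega]\mapsto\gamma_0[\bar{\omega}]$, which changes $A_\omega$ only by the $\Gamma$-action, and both the Laza--Looijenga locus and the condition ``$A_\omega$ contains $K_2$ or $K_6$ through $h^2$'' are $\Gamma$-invariant and conjugation-invariant; hence the two pieces are simultaneously empty or nonempty and both have dimension $21-r(M)$, after which your conclusions go through. (Alternatively, the ``only if'' half of (ii) can bypass the period domain entirely: a sublattice $K_2$ or $K_6$ containing $\mathfrak{o}$ would give any $[X]\in\mathcal{C}_{(M,\mathfrak{o},\phi)}$ a labelling of discriminant $2$ or $6$, contradicting Theorem \ref{thm:Cd}.)
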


In \cite[Lemma 6]{YY20},
we gave the following useful criterion for checking the non-emptyness condition.

\begin{lem}\label{non-empty-criterion}
Let $M$ be a positive definite lattice of rank $2\leq r(M)\leq 21$ containing a distinguished element $\mathfrak{o}$ such that $\mathcal{E}_{(M,\mathfrak{o})}$ is non-empty.  
Suppose $\phi\in \mathcal{E}_{(M,\mathfrak{o})}$. 
Then the following conditions are equivalent:
\begin{enumerate}
\item[(1)] there is no sublattice $K\subset M$, $\mathfrak{o}\in K$, with $K=K_{2}$ or $K_{6}$;
\item[(2)]  $ M$ has no roots (i.e., $\nexists\; x\in M$ with $(x.x)=2$);
\item[(3)] $(x.x)\geq 3$ for all $0\neq x\in M$.
\end{enumerate}
If $M$ satisfies one of the above conditions, 
then there exists a cubic fourfold $[X]\in  \CC_{(M,\mathfrak{o},\phi)}$ such that $M\cong A(X)$; in particular, $\emptyset \neq \CC_{(M,\mathfrak{o},\phi)}\subset \CC_{(N,\mathfrak{o},\phi| N)}$ for any primitive sublattice $N\subset M$ with $\mathfrak{o}\in N$.
\end{lem}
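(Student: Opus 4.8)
The plan is to prove the three conditions equivalent via the cycle $(3)\Rightarrow(2)\Rightarrow(1)\Rightarrow(3)$, and then to read off the geometric conclusion directly from Proposition \ref{pre-version}. Throughout I will exploit the defining property of the distinguished element $\mathfrak{o}$, namely that $\langle\mathfrak{o}\rangle^{\perp}_{M}$ is even.

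The implication $(3)\Rightarrow(2)$ is immediate, since a root has self-intersection $2<3$. For $(2)\Rightarrow(1)$ I would argue by contraposition: both $K_{2}$ and $K_{6}$ already contain roots---in the standard bases $\mathfrak{o}-v$ has norm $3-2+1=2$ in $K_{2}$, while $v$ has norm $2$ in $K_{6}$---so if $M$ admits a sublattice isometric to either, then $M$ has a root and $(2)$ fails.

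The heart of the matter is $(1)\Rightarrow(3)$, which I again prove by contraposition. Suppose $0\neq x\in M$ has $(x.x)\le 2$; positive definiteness forces $(x.x)\in\{1,2\}$, and $\mathfrak{o},x$ are linearly independent since $x=c\mathfrak{o}$ would give $(x.x)=3c^{2}\notin\{1,2\}$ for rational $c$. The decisive input is a parity constraint furnished by $\mathfrak{o}$: the vector $3x-(\mathfrak{o}.x)\mathfrak{o}$ lies in $\langle\mathfrak{o}\rangle^{\perp}_{M}$, which is even, so its self-intersection $9(x.x)-3(\mathfrak{o}.x)^{2}$ is even, whence
$$(x.x)\equiv(\mathfrak{o}.x)\;(\mathrm{mod}\;2).$$
Combined with the Cauchy--Schwarz bound $(\mathfrak{o}.x)^{2}<3(x.x)$ this pins down $(\mathfrak{o}.x)$ in each case. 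If $(x.x)=1$, then $(\mathfrak{o}.x)$ is odd with $(\mathfrak{o}.x)^{2}<3$, forcing $(\mathfrak{o}.x)=\pm1$, so $\langle\mathfrak{o},x\rangle\cong K_{2}$ after possibly replacing $x$ by $-x$. If $(x.x)=2$, then $(\mathfrak{o}.x)$ is even with $(\mathfrak{o}.x)^{2}<6$, forcing $(\mathfrak{o}.x)\in\{0,\pm2\}$; the value $0$ gives $\langle\mathfrak{o},x\rangle\cong K_{6}$, and, after replacing $x$ by $-x$ if necessary so that $(\mathfrak{o}.x)=2$, the vector $y:=x-\mathfrak{o}$ satisfies $(\mathfrak{o}.y)=-1$ and $(y.y)=1$, so $\langle\mathfrak{o},x\rangle=\langle\mathfrak{o},y\rangle\cong K_{2}$. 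In every case $M$ contains a copy of $K_{2}$ or $K_{6}$ through $\mathfrak{o}$, so $(1)$ fails. The hard part will be exactly the root case $(x.x)=2$: without the parity constraint the value $(\mathfrak{o}.x)=\pm1$ would be allowed, yielding a rank-two lattice of discriminant $5$ that is neither $K_{2}$ nor $K_{6}$. It is precisely the evenness of $\langle\mathfrak{o}\rangle^{\perp}_{M}$ that excludes this, so establishing the congruence carefully is the crux of the argument.

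Finally, granting the equivalence, condition $(1)$ combined with Proposition \ref{pre-version}(ii) gives $\CC_{(M,\mathfrak{o},\phi)}\neq\emptyset$, and Proposition \ref{pre-version}(i) then yields some $[X]\in\CC_{(M,\mathfrak{o},\phi)}$ with $M\cong A(X)$. For the asserted inclusion, I would take a primitive sublattice $N\subset M$ with $\mathfrak{o}\in N$; since $\phi(M)$ is primitive in $\Lambda$ and $N$ is primitive in $M$, the restriction $\phi|N$ is a primitive embedding with $(\phi|N)(\mathfrak{o})=h^{2}$, hence $\phi|N\in\mathcal{E}_{(N,\mathfrak{o})}$. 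Any $[X]\in\CC_{(M,\mathfrak{o},\phi)}$ carries a marking $\varphi$ with $h^{2}\in\phi(M)\subset\varphi(A(X))$; since $h^{2}=\phi(\mathfrak{o})\in\phi(N)\subset\phi(M)$, the same marking shows $[X]\in\CC_{(N,\mathfrak{o},\phi|N)}$, proving $\CC_{(M,\mathfrak{o},\phi)}\subset\CC_{(N,\mathfrak{o},\phi|N)}$.
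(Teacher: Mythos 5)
Your proof is correct. Note that the paper does not actually prove this lemma itself---it quotes it verbatim from \cite[Lemma 6]{YY20}---so there is no in-paper argument to compare against; your write-up supplies a complete, self-contained proof, and it is the natural one. All the key steps check out: $(3)\Rightarrow(2)$ is trivial, $(2)\Rightarrow(1)$ follows since $K_2$ and $K_6$ visibly contain roots, and in $(1)\Rightarrow(3)$ you correctly identify the crux, namely the congruence $(x.x)\equiv(\mathfrak{o}.x)\ (\mathrm{mod}\ 2)$ obtained by evaluating the even lattice $\langle\mathfrak{o}\rangle^{\perp}_{M}$ on $3x-(\mathfrak{o}.x)\mathfrak{o}$, whose norm is $9(x.x)-3(\mathfrak{o}.x)^{2}$. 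This is exactly what excludes a root $x$ with $(\mathfrak{o}.x)=\pm1$ (which would span a discriminant-$5$ lattice fitting neither $K_2$ nor $K_6$), and together with the strict Cauchy--Schwarz bound $(\mathfrak{o}.x)^{2}<3(x.x)$ the case analysis lands precisely on $K_2$ (for $(x.x)=1$, and for $(x.x)=2$, $(\mathfrak{o}.x)=\pm2$ via $y=x-\mathfrak{o}$) or $K_6$ (for $(x.x)=2$, $(\mathfrak{o}.x)=0$). The closing geometric claims are likewise handled properly: Proposition \ref{pre-version}(ii) and (i) give the cubic $X$ with $M\cong A(X)$, your observation that a primitive sublattice of a primitive sublattice is primitive shows $\phi|N\in\mathcal{E}_{(N,\mathfrak{o})}$, and the same marking witnessing $[X]\in\CC_{(M,\mathfrak{o},\phi)}$ witnesses $[X]\in\CC_{(N,\mathfrak{o},\phi|N)}$.
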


\begin{rem}
(1) Let $M$ be a positive definite lattice of rank two containing a distinguished element. 
 By Lemma \ref{non-empty-criterion} and Corollary \ref{cor:rank2},  
 $M$ has no roots if and only if $\disc(M)\geq 8$.

(2)
It is in general nontrivial to determine existence/nonexistence of roots in positive definite lattices. In practice, we use the computer algebra system PARI/GP (see \cite{Th}) to find roots if roots do exist. On the other hand, our proofs for nonexistence of roots are free from computer (see the proofs of Theorems \ref{C20intC38}, \ref{subvar-Inter-all-HasD}).
\end{rem}

\begin{prop}\label{two-non-empty}
Let $M$ be a positive definite lattice containing a distinguished element $\mathfrak{o}$. Then the following two statements are equivalent:
\begin{enumerate}
\item $\CC_{(M, \mathfrak{o})}$ is non-empty;
\item $\mathcal{E}_{(M, \mathfrak{o})}$ is non-empty and $M$ has no roots.
\end{enumerate}
\end{prop}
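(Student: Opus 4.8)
The plan is to deduce the equivalence directly from the structural results already assembled, treating the two implications separately; in both directions the essential content is carried by Lemma \ref{lem:union}, Lemma \ref{non-empty-criterion}, and Proposition \ref{pre-version}.

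For the implication $(1)\Rightarrow(2)$, I would start from a cubic fourfold $[X]\in \CC_{(M,\mathfrak{o})}$, which by definition comes equipped with a primitive embedding $\iota:M\hookrightarrow A(X)$ satisfying $\iota(\mathfrak{o})=h_X^2$. Since $\iota$ is isometric and $A(X)$ has no roots (by Voisin \cite{Voi86} and Hassett \cite{Has00}, as recalled above), the lattice $M$ inherits rootlessness: any $x\in M$ with $(x.x)=2$ would produce a root $\iota(x)\in A(X)$, a contradiction. The non-emptyness of $\mathcal{E}_{(M,\mathfrak{o})}$ is then immediate from Lemma \ref{lem:union}(1): as $\CC_{(M,\mathfrak{o})}=\bigcup_{\phi\in\mathcal{E}_{(M,\mathfrak{o})}}\CC_{(M,\mathfrak{o},\phi)}$ is non-empty, the index set $\mathcal{E}_{(M,\mathfrak{o})}$ cannot be empty.

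For the reverse implication $(2)\Rightarrow(1)$, I would fix any $\phi\in\mathcal{E}_{(M,\mathfrak{o})}$ (one exists by hypothesis) and show that the single piece $\CC_{(M,\mathfrak{o},\phi)}$ is already non-empty, whence $\CC_{(M,\mathfrak{o})}\neq\emptyset$ by Lemma \ref{lem:union}(1). The key input is Lemma \ref{non-empty-criterion}: the hypothesis that $M$ has no roots is equivalent to the absence of any sublattice $K\subset M$ with $\mathfrak{o}\in K$ and $K\cong K_2$ or $K_6$. By Proposition \ref{pre-version}(ii) this is precisely the condition ensuring $\CC_{(M,\mathfrak{o},\phi)}\neq\emptyset$, which completes the argument.

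I expect the statement to follow formally once these cited results are invoked, so there is little genuine obstacle at this level; the real substance has been absorbed into the earlier machinery, namely Proposition \ref{pre-version} (which rests on Voisin's Torelli theorem together with the Laza--Looijenga description of the image of the period map) and the lattice-theoretic equivalence of Lemma \ref{non-empty-criterion}. The only point requiring a touch of care is the dependence on the marking $\phi$: non-emptyness of the union $\CC_{(M,\mathfrak{o})}$ need not be witnessed by every $\phi$, but since the criterion in Proposition \ref{pre-version}(ii) is intrinsic to $(M,\mathfrak{o})$ and independent of $\phi$, any single choice of embedding suffices.
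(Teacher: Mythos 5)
Your proposal is correct and takes essentially the same route as the paper's own proof: rootlessness of $M$ is inherited from $A(X)$ through the polarization embedding, and the converse chains Lemma \ref{non-empty-criterion} with Proposition \ref{pre-version}(ii) and Lemma \ref{lem:union}. The only cosmetic difference is that in the forward direction the paper exhibits $\varphi\circ\iota\in\mathcal{E}_{(M,\mathfrak{o})}$ explicitly by composing the polarization with a marking of $X$, whereas you deduce non-emptyness of $\mathcal{E}_{(M,\mathfrak{o})}$ from the union decomposition in Lemma \ref{lem:union}(1), which encodes the same composition.
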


\begin{proof}
Suppose $\CC_{(M, \mathfrak{o})}$ is non-empty. By definition, there is an $M$-polarized cubic fourfold $(X,\iota)$ with $\iota(\mathfrak{o})=h_{X}^{2}$. Since $A(X)$ has no roots, it follows that $M$ has no roots. Let $\varphi$ be a marking of $X$. Then $\varphi \iota \in \mathcal{E}_{(M, \mathfrak{o})}$. Thus, (2) is true.

Suppose $\mathcal{E}_{(M, \mathfrak{o})}$ is non-empty and $M$ has no roots. Let $\phi\in \mathcal{E}_{(M, \mathfrak{o})}$. By Proposition \ref{pre-version}, $\mathcal{C}_{\phi(M)}$ is non-empty. Then by Lemma \ref{lem:union}, $\CC_{(M, \mathfrak{o})}$ is non-empty.
\end{proof}

\begin{prop}\label{prop:CMo}
Let $M$ be a positive definite lattice containing a distinguished element $\mathfrak{o}$. Suppose $\CC_{(M, \mathfrak{o})}$ is non-empty. Then 
\begin{enumerate}
\item For any $\phi\in \mathcal{E}_{(M,\mathfrak{o})}$, $\mathcal{C}_{(M,\mathfrak{o},\phi)}$ is a closed subvariety of $\mathcal{C}$ of pure dimension $21-r(M)$ and it has at most two irreducible components;
\item $\mathcal{C}_{(M,\mathfrak{o})}$ is a closed subvariety of $\mathcal{C}$ of pure dimension $21-r(M)$.
\end{enumerate}
\end{prop}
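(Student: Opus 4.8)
The plan is to transport the problem to the period domain via the Torelli theorem and to analyze the special locus cut out by $\phi(M)$. Fix $\phi\in\mathcal{E}_{(M,\mathfrak{o})}$ and set $N:=\phi(M)\subset\Lambda$, a primitive positive definite sublattice of rank $r(M)$ with $h^2\in N$, so that $N^{\perp}\subset\Lambda_0$ has signature $(21-r(M),2)$. First I would record non-emptyness: since $\CC_{(M,\mathfrak{o})}$ is non-empty, Proposition \ref{two-non-empty} gives $\mathcal{E}_{(M,\mathfrak{o})}\neq\emptyset$ and $M$ without roots, whence Lemma \ref{non-empty-criterion} yields $\CC_{(M,\mathfrak{o},\phi)}\neq\emptyset$ for every $\phi\in\mathcal{E}_{(M,\mathfrak{o})}$. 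Next, writing $\pi:\mathcal{D}^{\prime}\to\CD$ for the quotient map, I would introduce the sub-period domain $\mathcal{D}^{\prime}_{N}:=\mathcal{D}^{\prime}\cap\mathbb{P}(N^{\perp}\otimes\mathbb{C})$. The set $\mathbb{P}(N^{\perp}\otimes\mathbb{C})\cap\mathcal{D}_{\Lambda}$ is the type $\mathrm{IV}$ domain attached to $N^{\perp}$; its two connected components are interchanged by complex conjugation, and exactly one lies in the fixed component $\mathcal{D}^{\prime}$. Thus $\mathcal{D}^{\prime}_{N}$ is a connected complex submanifold of dimension $21-r(M)$, matching the codimension $r(M)-1$ of Proposition \ref{pre-version}.

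Unwinding Definition \ref{def:CMo}, $[X]\in\CC_{(M,\mathfrak{o},\phi)}$ precisely when some marking $\varphi$ of $X$ satisfies $N\subset\varphi(A(X))$, i.e.\ the period attached to $\varphi$ is orthogonal to $N$. Since any two markings differ by an element of $\Gamma=\mathrm{O}(\Lambda,h^2)$ and the period of $X$ lifts to $\omega_0\in\mathcal{D}^{\prime}$, well-defined up to $\Gamma^{+}$, this means $\omega_0$ is $\Gamma$-equivalent to a point of $\mathcal{D}^{\prime}_{N}$. The crucial point is to split this condition along $\Gamma=\Gamma^{+}\sqcup\Gamma^{-}$, where $\Gamma^{-}$ interchanges the two components of $\mathcal{D}_{\Lambda}$. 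Fixing $h_0\in\Gamma^{-}$ and putting $Z_{\phi}:=\pi(\mathcal{D}^{\prime}_{N})$ and $Z_{h_0\phi}:=\pi(\mathcal{D}^{\prime}\cap\mathbb{P}((h_0N)^{\perp}\otimes\mathbb{C}))$, a bookkeeping of markings yields
\begin{equation*}
\wp(\CC_{(M,\mathfrak{o},\phi)})=\wp(\CC)\cap\big(Z_{\phi}\cup Z_{h_0\phi}\big),
\end{equation*}
the two summands accounting for markings realized by $\Gamma^{+}$ and by $\Gamma^{-}$ respectively.

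Each of $Z_{\phi}$ and $Z_{h_0\phi}$ is the image of a connected sub-period domain, hence an irreducible subvariety of $\CD$ of dimension $21-r(M)$; both are closed because the family $\{g\,\mathcal{D}^{\prime}_{N}:g\in\Gamma^{+}\}$ is locally finite, the same local finiteness that makes the divisors $\CC_d$ closed. As $\wp$ is an open immersion, $\CC_{(M,\mathfrak{o},\phi)}=\wp^{-1}(Z_{\phi}\cup Z_{h_0\phi})$ is closed in $\CC$ and isomorphic to the open subset $\wp(\CC)\cap(Z_{\phi}\cup Z_{h_0\phi})$ of $Z_{\phi}\cup Z_{h_0\phi}$. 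A non-empty open subset of an irreducible variety is dense of the same dimension; since $\CC_{(M,\mathfrak{o},\phi)}\neq\emptyset$, it is of pure dimension $21-r(M)$ with at most two irreducible components, giving (1). For (2), Lemma \ref{lem:union}(1) gives $\CC_{(M,\mathfrak{o})}=\bigcup_{\phi}\CC_{(M,\mathfrak{o},\phi)}$, and Lemma \ref{lem:union}(2) shows $\CC_{(M,\mathfrak{o},\phi)}$ depends only on the $\Gamma$-orbit of $\phi$. As primitive embeddings of a fixed lattice into $\Lambda$ form finitely many $\Gamma$-orbits (Nikulin), this is a finite union of non-empty closed subvarieties each of pure dimension $21-r(M)$, so $\CC_{(M,\mathfrak{o})}$ is closed of pure dimension $21-r(M)$.

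The main obstacle is the component analysis of the second paragraph: one must verify that $\mathcal{D}^{\prime}_{N}$ is genuinely connected, so that each $Z$ is irreducible, and correctly track the two orientations arising from $\Gamma/\Gamma^{+}\cong\Z/2\Z$; this is exactly what yields the bound ``at most two'' rather than outright irreducibility. By contrast, the closedness (local finiteness of the special loci) and the finiteness of embedding orbits are standard and may be cited.
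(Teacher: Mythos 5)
Your proof is correct and follows essentially the same route as the paper's: the same non-emptiness reduction via Proposition \ref{two-non-empty} and Lemma \ref{non-empty-criterion}, the same decomposition of $\CC_{(M,\mathfrak{o},\phi)}$ into two pieces indexed by $\Gamma/\Gamma^{+}$ (your $Z_{\phi}\cup Z_{h_0\phi}$ is exactly the paper's $Z_1\cup Z_2$ built from $\phi(M)$ and $(\gamma_0\circ\phi)(M)$), and the same finiteness of $\Gamma$-orbits of primitive embeddings for part (2). The only cosmetic differences are that the paper realizes each $Z_i$ as the image of an algebraic map $\Gamma^{+}_{M_i}\backslash \CD'_{M_i}\to\CD$ rather than invoking local finiteness, and it proves the orbit finiteness directly (finitely many isometry classes of complements plus finitely many gluing maps) instead of citing it.
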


\begin{proof} 
(1) By Proposition \ref{two-non-empty}, 
we may fix a primitive embedding $\phi: M\hookrightarrow \Lambda$ 
with $\phi(\mathfrak{o})=h^{2}$. 
Moreover, according to Lemma \ref{non-empty-criterion},
$\mathcal{C}_{(M,\mathfrak{o},\phi)}$ is non-empty.
Fix an element $\gamma_0\in \Gamma\setminus\Gamma^+$. We use $M_1$ and $M_2$ to denote the two sublattices $\phi(M)$ and $(\gamma_0\circ \phi)(M)$ of $\Lambda$ respectively.
The classifying space of Hodge structures on $M_i^{\perp}\subset \Lambda$ is determined by 
$\CD_{M_i}^{'}\subset \mathcal{D}^\prime$ which is one of the two connected components of
$$
\{[\omega] \in \mathbb{P}(M_i^{\perp} \otimes \CN) \mid (\omega.\omega)=0, (\omega.\bar{\omega})<0\}\subset \mathbb{P}(\Lambda_0 \otimes \CN).
$$
Following \cite[Subsection 2.2]{Has00}, 
one can show that  $\CD_{M_i}^{'}$ is a bounded symmetric domain of type $\mathrm{IV}$.
Let $\Gamma_{M_i}^{+}:=\{\gamma \in \Gamma^{+} \mid \gamma(M_i)=M_i\}$. 
Moreover,  the quotient $\Gamma_{M_i}^{+}\backslash \CD_{M_i}^{'}$
is a quasi-projective variety and the induced map
$$
\tau_{M_i}:
\Gamma_{M_i}^{+}\backslash \CD_{M_i}^{'} 
\longrightarrow
\Gamma^{+}\backslash \CD^{'}=\CD
$$
is an algebraically defined map.
Recall that $\mathcal{C}$ can be viewed as a Zariski open subset of $\CD$ via the period map $\wp$.  For $i=1,2$, we set $Z_i:=\Im(\tau_{M_i}) \cap \mathcal{C}$, where $\Im(\tau_{M_i})$ is the image of $\tau_{M_i}$. 
Note that $Z_i$ is an irreducible closed subvariety of $\mathcal{C}$ of dimension $21-r(M)$.

Let $[X]\in \mathcal{C}_{(M,\mathfrak{o},\phi)}$. 
By definition, $\exists $ an isometry $\varphi: H^4(X,\mathbb{Z})\longrightarrow \Lambda$ with $\varphi(h_X^2)=h^2$ such that $M_1\subset \varphi(A(X))$. Thus, we have $$h^2\in M_1\subset \varphi(A(X))\subset \Lambda$$ and $$h^2\in M_2\subset (\gamma_0\circ\varphi)(A(X))\subset \Lambda.$$ Let $\omega_X$ denote a nonzero element in  $ H^{1}(X, \Omega_{X}^{3})$. Then either $[(\varphi\otimes{\mathbb{C}})(\omega_X)]\in \CD^\prime$ or $[((\gamma_0\circ\varphi)\otimes{\mathbb{C}})(\omega_X)]\in \CD^\prime$.  Thus, either $[X]\in Z_1$ or $[X]\in Z_2$. Conversely, let $[X^\prime]\in Z_1\cup Z_2$. Reversing the above process, one can show that $[X^\prime]\in \mathcal{C}_{(M,\mathfrak{o},\phi)}$. Therefore, we have $ \mathcal{C}_{(M,\mathfrak{o},\phi)}=Z_1\cup Z_2$. This completes the proof of (1). For the proof of Corollary \ref{cor:irreducible} below, we remark that $Z_1$ is equal to $$\{[X]\in \mathcal{C} |\, \exists \text{ a marking }\varphi \text{ of } X \text{ such that }\; h^2\in \phi(M)\subset \varphi(A(X))\subset \Lambda,\, [\varphi_{\mathbb{C}}(\omega_X)]\in \CD^{\prime}\} .$$

(2) Primitive embeddings $\phi$ of $M$ into $\Lambda$ depends on two things: a) isomorphism classes of the orthogonal complement $\phi(M)^\perp\subset \Lambda$ and b) gluing maps between $G(M)$ and $G(\phi(M)^\perp)$. Since $\Lambda$ is unimodular, it follows that $\disc(\phi(M)^\perp)=\disc(M)$. The number of isomorphic classes of lattices with given rank and discriminant is finite (cf. \cite[Chapter 9, Theorem 1.1]{Cas78}). The number of all possible gluing maps in question is finite too. Thus, $\mathcal{E}_{(M,\mathfrak{o})}$ only contains finitely many orbits under the action $\phi\mapsto g\circ \phi $  by $\Gamma$. Then by (1) and Lemma \ref{lem:union}, $\mathcal{C}_{(M,\mathfrak{o})}$ is a finite union of closed subsets of $\mathcal{C}$ of pure dimension $21-r(M)$.
\end{proof}

\begin{rem}
Note that $\mathcal{C}_{(M,\mathfrak{o}, \phi)}$ and $\mathcal{C}_{(M,\mathfrak{o})}$ are in general not irreducible.
Let $T$ be the negative definite even lattice of rank two with Gram matrix 
$
\small{
\left(\begin{array}{cc} 
-14 & -5 \\
-5 & -58 
\end{array} \right)}
$.
Note that $\disc(T)=787$ is a prime 
and $\mathrm{O}(T)=\{\mathrm{id}, -\mathrm{id}\}$.
One can show that there is a primitive embedding $T\hookrightarrow \Lambda$ such that its orthogonal complement $M:=T^{\perp}\stackrel{\phi}{\hookrightarrow}  \Lambda$ is a positive definite rank $21$ primitive sublattice containing $h^{2}$ and $M$ has no roots.
Namely, we have $\phi\in \mathcal{E}_{(M, h^{2})}\neq \emptyset$ and $M$ has no roots.
Hence, Proposition \ref{two-non-empty} implies $\mathcal{C}_{(M, h^{2})}$ is non-empty.
Since $\mathrm{O}(T)=\{\mathrm{id}, -\mathrm{id}\}$, 
by Proposition \ref{prop:CMo}, 
$\mathcal{C}_{(M, h^{2}, \phi)}$ has exactly two cubic fourfolds.
Note that $r(M)+\ell(M)=21+1=22$ (compare with Theorem \ref{mainthm-one-equal}).
\end{rem}

\begin{cor}\label{cor:irreducible}
Let $M$ be a positive definite lattice containing a distinguished element $\mathfrak{o}$. Suppose $\CC_{(M, \mathfrak{o})}$ is non-empty. Then  $\mathcal{C}_{(M,\mathfrak{o})}$ is irreducible if and only if for any two $\phi_i\in \mathcal{E}_{(M,\mathfrak{o})}$, $i=1,2$, there exist $f\in {\rm O}(M, \mathfrak{o})$ and $\gamma\in \Gamma^+$ such that there is a commutative diagram 
$$
\xymatrix{
M \ar[d]_{\phi_1} \ar[r]^{f} & M \ar[d]^{\phi_2}\\
\Lambda \ar[r]^{\gamma} &\Lambda.
}
$$
\end{cor}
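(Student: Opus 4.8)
The plan is to characterize irreducibility of $\mathcal{C}_{(M,\mathfrak{o})}$ by controlling how its irreducible pieces, produced in Proposition \ref{prop:CMo}, overlap as $\phi$ ranges over $\mathcal{E}_{(M,\mathfrak{o})}$. Recall from Lemma \ref{lem:union} that $\mathcal{C}_{(M,\mathfrak{o})}=\bigcup_{\phi\in\mathcal{E}_{(M,\mathfrak{o})}}\mathcal{C}_{(M,\mathfrak{o},\phi)}$, and from the proof of Proposition \ref{prop:CMo} that each $\mathcal{C}_{(M,\mathfrak{o},\phi)}=Z_1^\phi\cup Z_2^\phi$, where $Z_1^\phi=\Im(\tau_{\phi(M)})\cap\mathcal{C}$ is the irreducible closed subvariety cut out by the marking condition together with $[\varphi_{\mathbb{C}}(\omega_X)]\in\mathcal{D}'$, and $Z_2^\phi$ is its analogue for $\gamma_0\circ\phi$. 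Since $\mathcal{C}_{(M,\mathfrak{o},\phi)}=\mathcal{C}_{(M,\mathfrak{o},\gamma_0\circ\phi)}$ by Lemma \ref{lem:union}(2) and $Z_1^{\gamma_0\circ\phi}=Z_2^{\phi}$, the collection of all irreducible components of $\mathcal{C}_{(M,\mathfrak{o})}$ is exactly $\{Z_1^\phi : \phi\in\mathcal{E}_{(M,\mathfrak{o})}\}$, each of dimension $21-r(M)$. Because all these pieces have the same dimension, $\mathcal{C}_{(M,\mathfrak{o})}$ is irreducible if and only if all the $Z_1^\phi$ coincide as subvarieties of $\mathcal{C}$.

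The core of the argument is then to translate the equality $Z_1^{\phi_1}=Z_1^{\phi_2}$ into the commutative diagram in the statement. First I would note that $\Gamma^+$ acts on $\mathcal{E}_{(M,\mathfrak{o})}$ by $\phi\mapsto\gamma\circ\phi$, and that precomposing by $f\in\mathrm{O}(M,\mathfrak{o})$ (isometries of $M$ fixing $\mathfrak{o}$, hence $h^2$) does not change the image sublattice nor the period condition, so $Z_1^{\phi\circ f}=Z_1^\phi$. The existence of the diagram says precisely that $\phi_2=\gamma\circ\phi_1\circ f^{-1}$ for some $\gamma\in\Gamma^+$ and $f\in\mathrm{O}(M,\mathfrak{o})$, i.e., that $\phi_1$ and $\phi_2$ lie in the same orbit under the $\Gamma^+\times\mathrm{O}(M,\mathfrak{o})$-action. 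For the "if" direction I would check directly that $\gamma\in\Gamma^+$ descends to an isomorphism $\tau_{\phi_1(M)}\to\tau_{\phi_2(M)}$ of the associated quotients commuting with the maps to $\mathcal{D}$, carrying $Z_1^{\phi_1}$ onto $Z_1^{\phi_2}$; the key point is that $\gamma$ stabilizes $\mathcal{D}'$, so it respects the choice of connected component that distinguishes $Z_1$ from $Z_2$, which is exactly why $\gamma$ must be taken in $\Gamma^+$ rather than all of $\Gamma$.

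For the "only if" direction, the plan is to exploit the Torelli theorem. Suppose $Z_1^{\phi_1}=Z_1^{\phi_2}$. Using Proposition \ref{pre-version}(i), I would pick a cubic fourfold $[X]$ in this common locus with $A(X)\cong M$, so that both $\phi_1$ and $\phi_2$ realize the full algebraic lattice: there are markings $\varphi_i$ of $X$ with $\varphi_i\circ\iota_i=\phi_i$ on $M=A(X)$ and with $[(\varphi_i)_{\mathbb{C}}(\omega_X)]\in\mathcal{D}'$ for both $i$. The composite $\gamma:=\varphi_2\circ\varphi_1^{-1}$ is then an isometry of $\Lambda$ fixing $h^2$ (so $\gamma\in\Gamma$) and, since it sends one period point in $\mathcal{D}'$ to another point in $\mathcal{D}'$, it stabilizes $\mathcal{D}'$, whence $\gamma\in\Gamma^+$. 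Setting $f:=\iota_2^{-1}\circ\iota_1\in\mathrm{O}(M,\mathfrak{o})$ yields $\gamma\circ\phi_1=\phi_2\circ f$, which is the desired diagram. I expect the main obstacle to be the careful bookkeeping ensuring the right connected component of the period domain is preserved — that is, rigorously pinning down why the relevant isometry lands in $\Gamma^+$ and not merely $\Gamma$, and arranging that the generic point of $Z_1^{\phi_1}$ can be chosen with $A(X)$ exactly equal to $M$ so that the markings genuinely match up on all of $M$; once a cubic with $A(X)\cong M$ is secured via Proposition \ref{pre-version}(i), the Torelli theorem makes the rest formal.
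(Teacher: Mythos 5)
Your proposal is correct and takes essentially the same approach as the paper: both reduce irreducibility of $\mathcal{C}_{(M,\mathfrak{o})}$ to the coincidence of the irreducible pieces $Z_1^\phi$ from the proof of Proposition \ref{prop:CMo}, prove the forward direction by choosing a cubic with $A(X)\cong M$ (via Proposition \ref{pre-version}(i)) so that the two markings give $\gamma=\varphi_2\varphi_1^{-1}\in\Gamma^+$ and $f=\phi_2^{-1}\gamma\phi_1\in\mathrm{O}(M,\mathfrak{o})$, and prove the converse by noting that the orbit condition, applied in particular to the pair $(\phi,\gamma_0\phi)$, forces $Z_1=Z_2$.
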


\begin{proof}
Suppose $\mathcal{C}_{(M,\mathfrak{o})}$ is irreducible. Let $\phi_i\in \mathcal{E}_{(M,\mathfrak{o})}$, $i=1,2$. By Lemma \ref{lem:union} and Proposition \ref{prop:CMo}, we have $\mathcal{C}_{(M,\mathfrak{o})}=\mathcal{C}_{(M,\mathfrak{o},\phi_1)}=\mathcal{C}_{(M,\mathfrak{o},\phi_2)}$. Choose $[X]\in \mathcal{C}_{(M,\mathfrak{o})}$ with $r(A(X))=r(M)$. By the remark in the end of the proof of Proposition \ref{prop:CMo} (1), there exists a marking $\varphi_i$ of $X$ such that $$h^2\in \phi_i(M)=\varphi_i(A(X))\subset \Lambda \text{ and } [(\varphi_i\otimes \mathbb{C})(\omega_X)]\in \CD^{\prime}.$$ Then $\gamma:=\varphi_{2}\varphi_1^{-1}\in \Gamma^+$, $f:=\phi_2^{-1}\gamma \phi_1\in {\rm O}(M,\mathfrak{o})$, and $\gamma \phi_1=\phi_2 f$.

Conversely, suppose for any two $\phi_i\in \mathcal{E}_{(M,\mathfrak{o})}$, $i=1,2$, there exist $f\in {\rm O}(M, \mathfrak{o})$ and $\gamma\in \Gamma^+$ such that $\gamma \phi_1=\phi_2 f$. By Lemma \ref{lem:union} (2), we have $\mathcal{C}_{(M,\mathfrak{o},\phi_1)}=\mathcal{C}_{(M,\mathfrak{o},\gamma \phi_1)}$. 
Clearly, $\mathcal{C}_{(M,\mathfrak{o},\phi_2)}=\mathcal{C}_{(M,\mathfrak{o},\phi_{2} f)}$. 
Thus, $\mathcal{C}_{(M,\mathfrak{o},\phi_1)}=\mathcal{C}_{(M,\mathfrak{o},\phi_2)}$. Then by Lemma \ref{lem:union} (1), we have $\mathcal{C}_{(M,\mathfrak{o})}=\mathcal{C}_{(M,\mathfrak{o},\phi)}$ for any $\phi\in  \mathcal{E}_{(M,\mathfrak{o})}$. Following the proof of Proposition \ref{prop:CMo} (1), one can show that $\mathcal{C}_{(M,\mathfrak{o},\phi)}$ is irreducible.
In fact, notice that both $\phi$ and $\gamma_0\phi$ are elements of $\mathcal{E}_{(M,\mathfrak{o})}$, and then $Z_1=Z_2$ by the existence of $f^\prime \in {\rm O}(M, \mathfrak{o})$ and $\gamma^\prime\in \Gamma^+$ such that $\gamma^\prime \phi=(\gamma_0\phi) f^\prime$. 
Therefore, $\mathcal{C}_{(M,\mathfrak{o})}$ is irreducible.
\end{proof}


\section{Non-emtpyness and irreducibility}

This section is mainly dedicated to the proof of Theorem \ref{mainthm-one}. Along the way, we obtain some results (Corollaries \ref{non-empty-CM}, \ref{non-empty-CM-special}) about the structure of the algebraic cohomologies of cubic fourfolds. 

For a rank-two positive definite lattice $K_d$ of discriminant $d$ containing a distinguished element, Hassett (Theorem \ref{thm:Cd}, Corollary \ref{cor:rank2}) gave very precise description of both non-emptyness and irriducibility of the moduli space $\mathcal{C}_{K_d}$ of $K_d$-polarizable cubic fourfolds. 
It is important, 
especially for studying irreducible components of intersection of Hassett divisors 
and finding new rational cubic fourfolds (see Section \ref{sec:intersection}), to generalize such precise description to higher rank positive definite lattices. The main result of this section is stated as follows:

\begin{thm}[see Theorem \ref{mainthm-one}]\label{mainthm-one-equal}
Let  $M$ be a positive definite lattice of rank $r(M)\ge 2$ containing a distinguished element $\mathfrak{o}$.
If the following conditions hold
\begin{enumerate}
\item $r(M)+\ell(M)\leq 20$; and
\item $M$ has no roots,
\end{enumerate}
then $\CC_{(M,\mathfrak{o})}$ is a non-empty irreducible closed subvariety of codimension $r(M)-1$ in $\CC$.
Moreover, $\mathcal{C}_M=\CC_{(M,\mathfrak{o})}=\CC_{(M,\mathfrak{o}, \phi)}$ for any $\phi\in \mathcal{E}_{(M,\mathfrak{o})}$.
\end{thm}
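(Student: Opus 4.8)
The plan is to run the same strategy Hassett used in the rank-two case, pushing everything through the Torelli theorem so that the geometric assertions about $\CC_{(M,\mathfrak{o})}$ become purely lattice-theoretic statements about primitive embeddings $\phi:M\hookrightarrow\Lambda$ with $\phi(\mathfrak{o})=h^2$. All the general structural scaffolding is already in place: Proposition \ref{two-non-empty} reduces non-emptyness to the pair of conditions ``$\mathcal{E}_{(M,\mathfrak{o})}\neq\emptyset$ and $M$ has no roots''; Proposition \ref{prop:CMo} supplies closedness and pure dimension $21-r(M)$; and Corollary \ref{cor:irreducible} reduces irreducibility to a transitivity statement for the action of $\Gamma^+\times\mathrm{O}(M,\mathfrak{o})$ on $\mathcal{E}_{(M,\mathfrak{o})}$. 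Thus the theorem follows once I establish, from the two hypotheses, the \emph{existence} and the \emph{uniqueness} (up to $\Gamma^+$) of the relevant primitive embeddings.

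For non-emptyness, codimension and dimension I would first invoke the existence result Proposition \ref{exist-primi-embed}: condition (1), $r(M)+\ell(M)\le 20$, is exactly the numerical input for a Nikulin-type existence theorem, applied after splitting off $\mathfrak{o}$ to the even lattice $\langle\mathfrak{o}\rangle^{\perp}_{M}$ and the even lattice $\Lambda_{0}\cong E_{8}^{\oplus 2}\oplus U^{\oplus 2}\oplus A_{2}$ and then reglued to $\Lambda$, so that $\mathcal{E}_{(M,\mathfrak{o})}\neq\emptyset$. Since condition (2) says $M$ has no roots, Proposition \ref{two-non-empty} gives that $\CC_{(M,\mathfrak{o})}$ is non-empty, whereupon Proposition \ref{prop:CMo} immediately yields that it is closed of pure dimension $21-r(M)$, i.e.\ of codimension $r(M)-1$.

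The heart of the proof, and the step I expect to be hardest, is irreducibility. By Corollary \ref{cor:irreducible} it suffices to show that any two $\phi_{1},\phi_{2}\in\mathcal{E}_{(M,\mathfrak{o})}$ satisfy $\gamma\phi_{1}=\phi_{2}f$ for some $f\in\mathrm{O}(M,\mathfrak{o})$ and some $\gamma\in\Gamma^{+}$. The uniqueness result Proposition \ref{unique-primi-embed}, again with condition (1) furnishing the numerical hypotheses for Nikulin's uniqueness theorem, gives such a relation with $\gamma\in\Gamma=\mathrm{O}(\Lambda,h^{2})$. The genuine difficulty is upgrading $\gamma$ from $\Gamma$ to the index-two subgroup $\Gamma^{+}$: if $\gamma\notin\Gamma^{+}$, I would replace $\gamma$ by $\delta\gamma$ for an isometry $\delta\in\Gamma\setminus\Gamma^{+}$ fixing $\phi_{2}(M)$ pointwise, since then $\delta\gamma\in\Gamma^{+}$ and $(\delta\gamma)\phi_{1}=\delta\phi_{2}f=\phi_{2}f$. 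Such a $\delta$ fixes $h^{2}=\phi_{2}(\mathfrak{o})$ and is produced from an isometry of the orthogonal complement $T:=\phi_{2}(M)^{\perp}_{\Lambda}=\langle\mathfrak{o}\rangle^{\perp}_{M}$-complement inside $\Lambda_{0}$, acting trivially on the discriminant group $G(T)$ and reversing the orientation of negative-definite $2$-planes (equivalently of negative real spinor norm). Because $T$ is even and indefinite of signature $(21-r(M),2)$, hence of rank at least $3$ (here $r(M)\le 20$ is used), such an orientation-reversing element of the stable orthogonal group exists, for instance the reflection in a $(-2)$-vector of $T$. Pinning down this orientation/spinor-norm statement, together with marshalling the exact numerical hypotheses so that both the existence and the uniqueness theorems apply under condition (1), is where the real work lies.

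Finally, the ``moreover'' clause is formal: Lemma \ref{unique-CM-lem} identifies $\mathcal{C}_{M}$ with $\CC_{(M,\mathfrak{o})}$ because all distinguished elements of $M$ are $\mathrm{O}(M)$-equivalent, while the irreducibility just established via Corollary \ref{cor:irreducible} forces all the $\CC_{(M,\mathfrak{o},\phi)}$ to coincide, so that $\CC_{(M,\mathfrak{o})}=\bigcup_{\phi}\CC_{(M,\mathfrak{o},\phi)}=\CC_{(M,\mathfrak{o},\phi)}$ by Lemma \ref{lem:union}(1).
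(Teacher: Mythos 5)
Your overall architecture coincides exactly with the paper's proof: non-emptyness from Proposition \ref{exist-primi-embed} plus the no-roots hypothesis (the paper packages this as Corollary \ref{non-empty-CM}), closedness and codimension from Proposition \ref{prop:CMo}, irreducibility by feeding the uniqueness of embeddings into Corollary \ref{cor:irreducible}, and the ``moreover'' clause from Lemmas \ref{unique-CM-lem} and \ref{lem:union}. One structural remark: the step you single out as the hard part, upgrading $\gamma$ from $\Gamma$ to $\Gamma^{+}$, is not something you need to supply on top of Proposition \ref{unique-primi-embed} --- that proposition as stated already produces $\gamma\in\Gamma^{+}$ (the upgrade is carried out inside its proof), so Corollary \ref{cor:irreducible} applies directly with $f=\Id_M$.

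The genuine flaw is in the justification you give when you redo that upgrade yourself. You claim that because $T:=\phi_2(M)^{\perp}_{\Lambda}$ is even, indefinite of signature $(21-r(M),2)$ and hence of rank at least $3$, it contains a $(-2)$-vector whose reflection gives the required $\delta\in\Gamma\setminus\Gamma^{+}$ acting trivially on $G(T)$. This implication is false: an even indefinite lattice of rank $3$ need not contain any $(-2)$-vector (for example $U(2)\oplus\langle -4\rangle$, in which every norm is divisible by $4$), and more generally the existence of orientation-reversing isometries acting trivially on the discriminant group is a delicate Miranda--Morrison type question that does not follow from rank and indefiniteness alone. What actually rescues the argument is hypothesis (1), not $r(M)\le 20$: from $r(M)+\ell(M)\le 20$ one gets $r(T)=23-r(M)\ge \ell(T)+3$, so Nikulin's Corollary 1.13.5 splits $T\cong U\oplus S$; then $T$ does contain $(-2)$-vectors (inside the $U$-summand), and your reflection argument goes through --- alternatively one can take $\chi:=(-\Id_U)\oplus\Id_S$, which acts trivially on $G(T)$ because $U$ is unimodular and whose extension by $\Id_{\phi_2(M)}$ lies in $\Gamma\setminus\Gamma^{+}$, which is precisely what the paper does inside the proof of Proposition \ref{unique-primi-embed}. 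So the route is the same as the paper's; the correction needed is to replace ``rank at least $3$'' by the splitting $T\cong U\oplus S$ furnished by condition (1).
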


As a direct consequence this theorem, we have the following

\begin{thm}\label{prop:main}
Let  $M$ be a positive definite lattice containing a distinguished element $\mathfrak{o}$ with $2\le r(M)\le 10$.
If $M$ has no roots, then $\mathcal{C}_M=\CC_{(M,\mathfrak{o})}$ is a non-empty irreducible closed subvariety of codimension $r(M)-1$ in $\CC$.
\end{thm}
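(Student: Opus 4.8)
The plan is to deduce Theorem \ref{prop:main} directly from Theorem \ref{mainthm-one-equal} by verifying that the hypotheses of the latter are automatically satisfied when $2 \le r(M) \le 10$. Since $M$ is assumed to have no roots, condition (2) of Theorem \ref{mainthm-one-equal} holds for free. Thus the entire content reduces to showing that condition (1), namely the inequality $r(M) + \ell(M) \le 20$, is automatic in this rank range.

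First I would recall that $\ell(M)$, the minimal number of generators of the glue group $G(M) = M^\vee/M$, is bounded above by the rank: we always have $\ell(M) \le r(M)$. This is a standard fact, since $G(M)$ is the cokernel of the Gram matrix regarded as a map $M \to M^\vee$, and a rank-$r(M)$ abelian-group presentation produces at most $r(M)$ invariant factors. Granting this bound, for any lattice with $r(M) \le 10$ we obtain
\begin{equation*}
r(M) + \ell(M) \;\le\; r(M) + r(M) \;=\; 2\, r(M) \;\le\; 20.
\end{equation*}
Hence condition (1) of Theorem \ref{mainthm-one-equal} is satisfied whenever $r(M) \le 10$.

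With both hypotheses verified, Theorem \ref{mainthm-one-equal} applies verbatim and yields that $\CC_{(M,\mathfrak{o})}$ is a non-empty irreducible closed subvariety of codimension $r(M) - 1$, together with the identification $\mathcal{C}_M = \CC_{(M,\mathfrak{o})}$. This is exactly the assertion of Theorem \ref{prop:main}, so the proof concludes.

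Regarding the single nontrivial ingredient: the only step requiring justification is the inequality $\ell(M) \le r(M)$, and the main obstacle—if any—is simply to state this cleanly rather than to prove it, since it is elementary. I expect the proof to be essentially a one-line corollary, the substance having been done in Theorem \ref{mainthm-one-equal}; the point of isolating it as a separate statement is its clean hypothesis (a pure rank bound with no reference to $\ell(M)$), which is the form most convenient for the applications to intersections of Hassett divisors in Section \ref{sec:intersection}.
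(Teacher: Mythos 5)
Your proof is correct and matches the paper's own argument: the paper derives this theorem as a direct consequence of Theorem \ref{mainthm-one-equal}, using exactly the bound $r(M)+\ell(M)\le 2\,r(M)\le 20$ (the inequality $\ell(M)\le r(M)$ being standard, as $G(M)$ is a quotient of the free $\Z$-module $M^\vee$ of rank $r(M)$). Nothing is missing.
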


To prove Theorem \ref{mainthm-one-equal}, 
the issues on existence and uniqueness of primitive embeddings will be studied correspondently. Thanks to Nikulin \cite{Nik80} and technique of glue, 
it turns out that quite strong results (Propositions \ref{exist-primi-embed} and \ref{unique-primi-embed}) for these two issues hold. 

\begin{prop}\label{exist-primi-embed}
Let $M$ be a positive definite lattice containing a distinguished element $\mathfrak{o}$. 
If $r(M)+\ell(M)< 23$ and $r(M)\leq 21$,
then there exists a primitive embedding $\iota: M\hookrightarrow \Lambda$
such that $\iota(\mathfrak{o})=h^{2}$.
\end{prop}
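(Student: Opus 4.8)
The plan is to reduce the problem to a primitive embedding of the even lattice $N:=\langle\mathfrak{o}\rangle^\perp_M$ into the even lattice $\Lambda_0=\langle h^2\rangle^\perp_\Lambda\cong E_8^{\oplus 2}\oplus U^{\oplus 2}\oplus A_2$, and then to reconstruct $\iota$ by gluing back the distinguished element. First I would record two structural facts. On the one hand, since $(\mathfrak{o}.\mathfrak{o})=3$ is square-free, $\mathfrak{o}$ is primitive, so $\langle\mathfrak{o}\rangle\oplus N$ has index $1$ or $3$ in $M$ and $M$ is an overlattice of it glued along the $\mathbb{Z}/3\mathbb{Z}$ generated by $\overline{\mathfrak{o}/3}$. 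On the other hand, $\Lambda$ is itself the index-$3$ overlattice $\langle h^2\rangle\oplus_\gamma\Lambda_0$, where $\gamma$ sends $\overline{h^2/3}$ (of square $1/3$) to a generator $g_0$ of $G(\Lambda_0)=G(A_2)\cong\mathbb{Z}/3\mathbb{Z}$ (of square $2/3$), so that a glue vector such as $\tfrac13(h^2+w_0)\in\Lambda$ for a representative $w_0$ of $g_0$. Since $N$ is even positive definite with $r(N)=r(M)-1$, and since passing to a prime-index overlattice drops the length by at most $2$, I would bound $\ell(N)\le\ell(M)+1$, whence $r(N)+\ell(N)\le r(M)+\ell(M)\le 22=r(\Lambda_0)$.

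Next I would produce a primitive embedding $j\colon N\hookrightarrow\Lambda_0$ with prescribed behaviour on the glue, using Nikulin's existence criterion \cite{Nik80}, which parametrises primitive embeddings by a gluing datum (a subgroup $H_N\subset G(N)$, an anti-isometry onto a subgroup of $G(\Lambda_0)$, and the complement $K$ of signature $(21-r(M),2)$). The decisive point is that such a $K$ exists as soon as its discriminant form satisfies $\ell(q_{G(K)})\le r(K)=23-r(M)$, together with the usual congruence on signatures. The hypothesis $r(M)+\ell(M)<23$ is exactly what forces this inequality; in the length-critical borderline cases one gains the missing unit of room by choosing the gluing nontrivial, i.e.\ by gluing a $\mathbb{Z}/3\mathbb{Z}$-subgroup of $G(N)$ onto $G(\Lambda_0)$, which lowers $|G(K)|$ and hence $\ell(q_{G(K)})$. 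Here the two situations diverge: if $M=\langle\mathfrak{o}\rangle\oplus N$ I would arrange $j$ so that $w_0\notin j(N)$, whereas if $[M:\langle\mathfrak{o}\rangle\oplus N]=3$, with glue class $\delta\in G(N)$, I would take $H_N=\langle\delta\rangle$ and match the $\mathbb{Z}/3\mathbb{Z}$-glue of $M$ to that of $\Lambda$.

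Finally I would reconstruct $\iota$ and verify primitivity. Setting $\iota(\mathfrak{o})=h^2$ and $\iota|_N=j$ gives an isometry on $\langle\mathfrak{o}\rangle\oplus N$ into $\langle h^2\rangle\oplus\Lambda_0\subset\Lambda$; the compatibility arranged above guarantees that the remaining fractional generator of $M$ (in the index-$3$ case) maps into $\Lambda$, so $\iota$ extends to all of $M$. A computation on discriminant groups then shows that the Nikulin complement $K$ of $j(N)$ in $\Lambda_0$ is simultaneously the orthogonal complement $\iota(M)^\perp_\Lambda$ (both have signature $(21-r(M),2)$), from which $\iota(M)$ is primitive: in the split case this is precisely the condition $w_0\notin j(N)$, which prevents the $\Lambda$-glue vector from lying in $\iota(M)\otimes\mathbb{Q}$, and in the index-$3$ case it follows because the $\mathbb{Z}/3\mathbb{Z}$-glue of $M$ has been matched with that of $\Lambda$.

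The hard part will be the middle step: steering the gluing map in Nikulin's theorem so that it \emph{simultaneously} (i) keeps $\ell(q_{G(K)})\le r(K)$ in the length-critical cases allowed by $r(M)+\ell(M)<23$, and (ii) realises the $\mathbb{Z}/3\mathbb{Z}$-compatibility forcing $\iota(\mathfrak{o})=h^2$. Reconciling the anti-isometry needed for (ii) with the existence of the complement in (i) comes down to a delicate check that the relevant discriminant-form values agree modulo $2\mathbb{Z}$, using that $\mathfrak{o}$ is distinguished (so $N$ is even) and that $M$ is a legitimate overlattice; this congruence bookkeeping, rather than the existence statements themselves, is the crux of the argument.
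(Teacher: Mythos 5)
Your strategy---embed $N=\langle\mathfrak{o}\rangle^{\perp}_M$ primitively into $\Lambda_0$ with controlled glue and then reassemble $\iota$---is genuinely different from the paper's proof and could in principle be completed, but as written it has a real gap at exactly the step you yourself call the crux, and that step is where the hypothesis $r(M)+\ell(M)<23$ must do its work. To invoke Nikulin's existence results you must first write down, separately in the split and index-$3$ cases, a concrete gluing datum and the resulting candidate discriminant form $q_{G(K)}$ of the complement $K$ of signature $(21-r(M),2)$, and then verify $\ell(q_{G(K)})<r(K)=23-r(M)$ together with the signature congruence; you never produce this datum. Moreover, the one mechanism you offer for the borderline cases is a non sequitur: lowering $|G(K)|$ does not by itself lower $\ell(q_{G(K)})$ (order and minimal number of generators are independent invariants). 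What actually saves the count is that for the \emph{correctly matched} glue one gets $G(K)\cong G(M)$---because $K$ must end up being $\iota(M)^{\perp}_{\Lambda}$ inside the unimodular lattice $\Lambda$---whence $\ell(q_{G(K)})=\ell(M)\le 22-r(M)<r(K)$; with an arbitrary $\Z/3\Z$-glue you only get $\ell(q_{G(K)})\le\ell(N)$, which can equal $23-r(M)=r(K)$, i.e.\ land exactly on the bad borderline. Proving that your glue yields this $q_{G(K)}$, and that the reassembled $\iota(M)$ is primitive, is precisely the deferred bookkeeping; note also that your split-case condition ``$w_0\notin j(N)$'' is too weak (if $w_0\in j(N)+3\Lambda_0$ primitivity already fails), the correct necessary condition being that the glue subgroup $\Lambda_0/(j(N)\oplus K)$ projects \emph{onto} all of $G(N)$. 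A smaller issue: $\ell(N)\le\ell(M)+1$ is true, but it does not follow from ``a prime-index overlattice changes the length by at most $2$'' (that only gives $\ell(N)\le\ell(M)+2$); one needs the refinement at $p=3$ using that the glue class of $M$ over $\langle\mathfrak{o}\rangle\oplus N$ projects injectively onto $G(\langle\mathfrak{o}\rangle)$.

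For comparison, the paper sidesteps all of this $3$-adic glue-matching with a swap trick: since $G(\langle\mathfrak{o}\rangle)$ and $G(A_2(-1))$ are both $\Z/3\Z$ with the same discriminant \emph{bilinear} form, one replaces $\langle\mathfrak{o}\rangle$ by $A_2(-1)$ inside $M$ (transporting the isotropic glue subgroup), obtaining an even lattice $M'$ of signature $(r(M)-1,2)$ with $\ell(M')=\ell(M)$; Nikulin's Corollary 1.12.3 for the even \emph{unimodular} target $\mathrm{II}_{20,4}$ then applies verbatim under $23-r(M)>\ell(M)$; swapping $A_2(-1)$ back for $\langle\mathfrak{o}\rangle$ inside $\mathrm{II}_{20,4}$ produces an odd unimodular lattice $J$ of signature $(21,2)$ containing $M$ primitively with $\langle\mathfrak{o}\rangle^{\perp}_J$ even, whence $(J,\mathfrak{o})\cong(\Lambda,h^2)$. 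To finish your route you would either have to carry out the Proposition 1.15.1 bookkeeping in both cases as described above, or adopt the swap, which replaces the entire glue analysis by one transport of isotropic subgroups and a single appeal to the unimodular embedding theorem.
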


\begin{proof}
First of all, let us setup some notations for later use:
\begin{itemize}
\item the lattice $L_{1}:=\langle \mathfrak{o} \rangle$ is odd and of signature $(1,0)$;
\item the lattice $L_{2}:=A_{2}(-1)$ is even and of signature $(0,2)$; and 
\item the lattice $N:=\langle \mathfrak{o} \rangle^{\perp}_M$ is even and of signature $(r(M)-1,0)$.
\end{itemize}

For one thing,
we notice that the glue group $G(L_{1})=\langle \overline{\frac{\mathfrak{o}}{3}}\rangle\cong \mathbb{Z}/3\mathbb{Z}$ 
and its symmetric bilinear form
$$
b_{G(L_{1})}(\overline{\frac{\mathfrak{o}}{3}}, \overline{\frac{\mathfrak{o}}{3}})\equiv \frac{1}{3}\; \mathrm{mod}\, \Z.
$$
Combining with Example \ref{A_2-example},
there exists an isometry
$$ 
\begin{array}{cccl}
\psi:&G(L_{1})&\longrightarrow& G(L_{2})\\
&\overline{\frac{\mathfrak{o}}{3}}&\longmapsto& \overline{\frac{-2e_{1}+e_{2}}{3}}
\end{array}.
$$ 
For another,
we have inclusions $$L_{1}\oplus N\subset M\subset (L_{1}\oplus N)^{\vee}=L_1^\vee\oplus N^\vee,$$
so the subgroup
$$
H_{12}:=\frac{M}{L_{1}\oplus N}
\subset \frac{L_{1}^{\vee}\oplus N^{\vee}}{L_{1}\oplus N}
=G(L_{1})\oplus G(N)
$$
is isotropic since $M$ is an overlattice of $L_{1}\oplus N$.
Next, we consider the isometry 
$$\psi\oplus \Id_{G(N)}: G(L_{1})\oplus G(N) \longrightarrow G(L_{2})\oplus G(N).$$
Hence, the subgroup
$$
H_{12}^{'}:=(\psi\oplus \Id_{G(N)})(H_{12}) 
\subset G(L_{2})\oplus G(N)
= \frac{(L_{2}\oplus N)^{\vee}}{L_{2}\oplus N}
$$
is aslo isotropic.
As a consequence, there exists an overlattice  
$M^{'} \subset (L_{2}\oplus N)^{\vee}$ of $L_{2}\oplus N$
such that 
$$
H_{12}^{'}=\frac{M^{'}}{L_{2}\oplus N}.
$$
Note that $G(M)\cong H_{12}^{\perp}/ H_{12}\cong H_{12}^{'\, \perp}/H_{12}^{'}\cong G(M^{'})$ 
and hence $\ell(M)=\ell(M^{'})$.
In addition, the lattice $M^{'}$ is even since the lattice $L_{2}\oplus N$ is even and $[M^\prime: L_2\oplus N]=|H^\prime_{12}|=|H_{12}|\in\{1,3\}$. 
Since $\sign(L_2)=(0,2)$ and $\sign(N)=(r(M)-1,0)$, it follows that $\sign(M^{'})=\sign(L_2\oplus N)=(r(M)-1, 2)$.
By the hypothesis $r(M)+\ell(M)<23$ and $r(M)\leq 21$,
we have $20-(r(M)-1)\geq 0$ and $(20-(r(M)-1))+(4-2)>\ell(M^{'})$.
Therefore, by \cite[Corollary 1.12.3]{Nik80},
there exists a primitive embedding 
\begin{equation}\label{eq:Mprime}
\iota^{'}: M^{'}\hookrightarrow \mathrm{II}_{20, 4}.
\end{equation}
We denote by $P:=\iota^{'}(M^{'})^{\perp}_{\mathrm{II}_{20, 4}}$ the orthogonal complement of $\iota^{'}(M^{'})$ in $\mathrm{II}_{20, 4}$. We may and will view $M^\prime$ as a sublattice of $\mathrm{II}_{20, 4}$ via $\iota^{'}$.
Then there are inclusions 
$$
L_{2}\oplus N\oplus P
\subset M^{'}\oplus P
\subset \mathrm{II}_{20, 4}
\subset (M^{'}\oplus P)^{\vee}
\subset (L_{2}\oplus N\oplus P)^{\vee}=L_{2}^\vee\oplus N^\vee\oplus P^\vee.
$$
We set the isotropic subgroup 
$$
H_{123}:=
\frac{\mathrm{II}_{20, 4}}{ L_{2}\oplus N\oplus P}
\subset G(L_{2})\oplus G(N)\oplus G(P)
=\frac{(L_{2}\oplus N\oplus P)^{\vee}}{L_{2}\oplus N\oplus P}. 
$$
Likewise, considering the isometry
$$
f:=\psi\oplus \Id_{G(N)}\oplus \Id_{G(P)}: G(L_{1})\oplus G(N)\oplus G(P) \longrightarrow G(L_{2})\oplus G(N)\oplus G(P),
$$
we have that the subgroup 
$$
H_{123}^{'}:=f^{-1}(H_{123}) \subset G(L_{1})\oplus G(N)\oplus G(P)=\frac{(L_{1}\oplus N\oplus P)^{\vee}}{L_{1}\oplus N\oplus P}
$$ 
is isotropic.
Consequently, there is an overlattice $J\subset (L_{1}\oplus N\oplus P)^\vee$ of $L_{1}\oplus N\oplus P$ such that 
$$
H_{123}^{'}=\frac{J}{L_{1}\oplus N\oplus P}\; .
$$
Note that $\sign(J)=(21,2)$, and $J$ is an odd lattice since the sublattice $L_{1}\subset J$ is odd.
Moreover, we have

$$
|\disc(J)|=\frac{|\disc(L_{1}\oplus N\oplus P)|}{|f^{-1}(H_{123})|^{2}}
=\frac{|\disc(L_{2}\oplus N\oplus P)|}{|H_{123}|^{2}}=\disc(\mathrm{II}_{20, 4})=1,
$$
so $J$ is unimodular. 
Thus, $J$ is isometric to $\Lambda$ by the theorem of Milnor.
Noticng that the primitive closure $L_3$ of $N\oplus P$ in $J$ is isometric to  
the primitive closure $L_4$ of $N\oplus P$ in $\mathrm{II}_{20,4}$.
Moreover, the primitive closure of $L_{1}\oplus N$ in $J$ is $M$. 
Then the lattice $L_3$ is even since the lattice $L_4\subset \mathrm{II}_{20,4}$ is even. 
We observe that $\langle \mathfrak{o}\rangle^\perp_J=L_3$. 
Therefore, there exists an isometry 
$$
\varphi: J\longrightarrow \Lambda
$$
such that $\varphi(\mathfrak{o})=h^{2}$. Then the restriction 
$$
\iota:=\varphi | M: M\longrightarrow \Lambda
$$ is a primitive emedding such that $\iota(\mathfrak{o})=h^2$.
\end{proof}

\begin{rem}
The condition $r(M)+\ell(M)\leq 23$ is necessary for the existence of a primitive embedding $M\hookrightarrow \Lambda$.
In fact, for such an embedding,
one has
$$
\ell(M)=\ell(M_{\Lambda}^{\perp})\leq \mathrm{min}\{r(M), 23-r(M) \},
$$
and hence $r(M)+\ell(M)\leq 23$. 
The remaining case $r(M)+\ell(M)= 23$ can be handled in a similar way. In fact, one may use \cite[Theorem 1.12.2]{Nik80} instead of \cite[Corollary 1.12.3]{Nik80} to determine the existence of the primitive imbedding $\iota^\prime$ in (\ref{eq:Mprime}). But this would be more involved and we will not use it in the sequel. So we would like to leave it to interested readers.
\end{rem}

\begin{cor}\label{non-empty-CM}
Let $M$ be a positive definite lattice containing a distinguish element $\mathfrak{o}$. If the following conditions hold
\begin{enumerate} 
\item $r(M)+\ell(M)<23$ and $2\leq r(M)\leq 21$; and
\item $M$ has no roots,
\end{enumerate}
then there exists a cubic fourfold $X$ 
and an isometry $f: M\longrightarrow A(X)$ such that $f(\mathfrak{o})=h_{X}^{2}$.
\end{cor}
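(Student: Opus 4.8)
The plan is to deduce this corollary formally from the two preceding results, namely Proposition \ref{exist-primi-embed} (existence of the primitive embedding) and Lemma \ref{non-empty-criterion} (the no-roots criterion for non-emptyness), so that essentially all the genuine work has already been carried out upstream.

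First I would apply Proposition \ref{exist-primi-embed}. Since $M$ is positive definite and contains the distinguished element $\mathfrak{o}$, and hypothesis (1) supplies $r(M)+\ell(M)<23$ together with $r(M)\le 21$, the proposition yields a primitive embedding $\iota\colon M\hookrightarrow \Lambda$ with $\iota(\mathfrak{o})=h^2$. In particular the set $\mathcal{E}_{(M,\mathfrak{o})}$ is non-empty, and I would fix $\phi:=\iota\in \mathcal{E}_{(M,\mathfrak{o})}$. Next, using hypothesis (2) that $M$ has no roots, I would invoke Lemma \ref{non-empty-criterion}: its standing hypotheses ($2\le r(M)\le 21$, the presence of a distinguished element, and $\mathcal{E}_{(M,\mathfrak{o})}\neq\emptyset$) are now all in force, and the no-roots condition is precisely condition (2) there. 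The lemma then produces a cubic fourfold $[X]\in \CC_{(M,\mathfrak{o},\phi)}$ with $M\cong A(X)$.

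It then remains only to extract from $[X]\in \CC_{(M,\mathfrak{o},\phi)}$ an isometry $f\colon M\to A(X)$ sending $\mathfrak{o}$ to $h_X^2$. By Definition \ref{def:CMo}(iii) there is a marking $\varphi$ of $X$ with $h^2\in \phi(M)\subset \varphi(A(X))\subset \Lambda$. Because $M\cong A(X)$, both $\phi(M)$ and $\varphi(A(X))$ are primitive sublattices of $\Lambda$ of the same rank $r(M)$, so the inclusion $\phi(M)\subseteq \varphi(A(X))$ is forced to be an equality. Hence $f:=\varphi^{-1}\circ\phi\colon M\to A(X)$ is a well-defined isometry onto $A(X)$, and since $\varphi$ is a marking one computes $f(\mathfrak{o})=\varphi^{-1}(\phi(\mathfrak{o}))=\varphi^{-1}(h^2)=h_X^2$, as required.

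I do not anticipate any serious obstacle here, as the corollary is a clean assembly of the two results just established; the only points requiring care are purely bookkeeping. These are, first, checking that the numerical hypotheses of Corollary \ref{non-empty-CM} feed correctly into both Proposition \ref{exist-primi-embed} and Lemma \ref{non-empty-criterion}, and second, upgrading the inclusion $\phi(M)\subseteq \varphi(A(X))$ to an equality via the rank coincidence coming from $M\cong A(X)$, after which tracking the distinguished element through the marking is immediate. All the genuine difficulty is concentrated upstream, in the Nikulin-plus-gluing argument behind Proposition \ref{exist-primi-embed} and in the Torelli-theoretic content behind Lemma \ref{non-empty-criterion}.
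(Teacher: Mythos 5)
Your proposal is correct and follows exactly the paper's route: the paper's own proof is the one-line statement that the corollary is a direct consequence of Proposition \ref{exist-primi-embed} and Lemma \ref{non-empty-criterion}, which is precisely the assembly you carry out. Your additional bookkeeping (upgrading $\phi(M)\subseteq\varphi(A(X))$ to equality via primitivity and rank, then setting $f=\varphi^{-1}\circ\phi$) is a valid and welcome filling-in of details the paper leaves implicit.
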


\begin{proof}
This is a direct consequence of Proposition \ref{exist-primi-embed} 
and Lemma \ref{non-empty-criterion}.
\end{proof}

In particular, we have

\begin{cor}\label{non-empty-CM-special}
Let $M$ be a positive definite lattice of rank $2\leq r(M)\leq 11$ containing a distinguished element $\mathfrak{o}$.
If $M$ has no roots, then there exists a cubic fourfold $X$ and an isometry $f: M\longrightarrow A(X)$ such that $f(\mathfrak{o})=h_{X}^{2}$.
\end{cor}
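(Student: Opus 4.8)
The plan is to deduce this statement directly from Corollary \ref{non-empty-CM} by checking that the latter's numerical hypothesis is automatically satisfied in the present range of ranks. The only condition appearing in Corollary \ref{non-empty-CM} that is not already among our assumptions is the inequality $r(M)+\ell(M)<23$; root-freeness is given, and the bound $2\le r(M)\le 11$ clearly implies $2\le r(M)\le 21$.

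First I would invoke the elementary estimate $\ell(M)\le r(M)$, valid for any non-degenerate lattice $M$. Indeed, $M$ and its dual $M^\vee$ both have rank $r(M)$, so the finite abelian group $G(M)=M^\vee/M$ is a quotient of the free $\Z$-module $M^\vee$ of rank $r(M)$. By the structure theorem for finitely generated abelian groups (equivalently, by applying Smith normal form to the inclusion $M\hookrightarrow M^\vee$), $G(M)$ decomposes as a direct sum of at most $r(M)$ cyclic groups, whence $\ell(M)\le r(M)$.

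Combining this bound with the hypothesis $r(M)\le 11$ gives
$$
r(M)+\ell(M)\le 2\,r(M)\le 22<23,
$$
so both conditions (1) and (2) of Corollary \ref{non-empty-CM} are met. Applying that corollary yields a cubic fourfold $X$ together with an isometry $f\colon M\longrightarrow A(X)$ satisfying $f(\mathfrak{o})=h_{X}^{2}$, which is precisely the desired conclusion.

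Since the argument is a direct reduction, there is no genuine obstacle to overcome; the one point worth recording explicitly is the inequality $\ell(M)\le r(M)$, which is exactly what allows the stronger rank restriction $r(M)\le 11$ (in place of the weaker $r(M)\le 21$ of Corollary \ref{non-empty-CM}) to force the hypothesis $r(M)+\ell(M)<23$.
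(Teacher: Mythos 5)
Your proposal is correct and matches the paper's (implicit) argument exactly: the paper states Corollary \ref{non-empty-CM-special} as an ``in particular'' consequence of Corollary \ref{non-empty-CM}, the point being precisely the bound $\ell(M)\le r(M)$, so that $r(M)\le 11$ forces $r(M)+\ell(M)\le 22<23$.
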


\begin{rem}
An analogous result for the structure of N\'{e}ron--Severi groups of K3 surfaces holds (see \cite[Corollary 2.9 (i), Remark 2.11]{Mor84}).
\end{rem}

The following lemma  will be used in the proof of Proposition \ref{unique-primi-embed}.

\begin{lem}[{cf. \cite[Proposition 1.2.3]{Nik80}}]\label{lem:q2}
Let $L_{1}$ and $L_{2}$ be two even lattices and let $f:G(L_1)\longrightarrow G(L_2)$ be an isomorphism of abelian groups. 
Then the following two statements are equivalent:
\begin{enumerate}
\item $f$ is an isomorphism of quadratic forms;
\item $f$ is an isometry and $q_{G(L_2)}(f(x))=q_{G(L_1)}(x)$ for all $x\in G(L_1)_2$.
\end{enumerate}
\end{lem}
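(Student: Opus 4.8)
The plan is to prove the two implications separately, with essentially all of the content living in $(2)\Rightarrow(1)$. For the direction $(1)\Rightarrow(2)$ I would argue as follows. Condition $(2)$ of Definition \ref{def:quadratic} gives the compatibility $2b_{G(L_i)}(x,y)\equiv q_{G(L_i)}(x+y)-q_{G(L_i)}(x)-q_{G(L_i)}(y)\pmod{2\mathbb{Z}}$, so if $f$ preserves the quadratic forms then it preserves $2b_{G(L_i)}$. Since multiplication by $2$ is an injective homomorphism $\mathbb{Q}/\mathbb{Z}\hookrightarrow\mathbb{Q}/2\mathbb{Z}$, preserving $2b_{G(L_i)}$ forces $f$ to preserve $b_{G(L_i)}$; hence $f$ is an isometry. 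The equality $q_{G(L_2)}(f(x))=q_{G(L_1)}(x)$ on $G(L_1)_2$ is then a special case of $(1)$, so this direction is immediate.

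For $(2)\Rightarrow(1)$ I would first decompose each glue group into primary components, $G(L_i)=\bigoplus_p G(L_i)_p$. Elements of coprime order pair trivially under $b_{G(L_i)}$ (their pairing is killed by both orders in $\mathbb{Q}/\mathbb{Z}$), so distinct primary components are $b$-orthogonal; combined with the compatibility relation this shows $q_{G(L_i)}$ is additive across primary components, giving an orthogonal decomposition $q_{G(L_i)}=\bigoplus_p q_{G(L_i)_p}$. Because $f$ is a group isomorphism it carries $G(L_1)_p$ isomorphically onto $G(L_2)_p$, so writing $f=\bigoplus_p f_p$ it suffices to prove that each $f_p$ preserves $q_{G(L_i)_p}$: then for a general $x=\sum_p x_p$ one gets $q_{G(L_2)}(f(x))=\sum_p q_{G(L_2)_p}(f_p(x_p))=\sum_p q_{G(L_1)_p}(x_p)=q_{G(L_1)}(x)$. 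The case $p=2$ is exactly the hypothesis in $(2)$.

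The crux, and the step I expect to be the main obstacle, is the case of odd $p$, where I would show that the quadratic form on the odd-primary part is already \emph{determined} by the bilinear form, so that the isometry property of $f$ suffices. Substituting $y=x$ in the compatibility relation and using $q(2x)=4q(x)$ yields $2q_{G(L_i)}(x)\equiv 2b_{G(L_i)}(x,x)\pmod{2\mathbb{Z}}$, hence $q_{G(L_i)}(x)\equiv b_{G(L_i)}(x,x)\pmod{\mathbb{Z}}$. For $x$ of odd order $p^a$ the value $q_{G(L_i)}(x)$ lies in the $p$-primary part of $\mathbb{Q}/2\mathbb{Z}$, on which the reduction map $\mathbb{Q}/2\mathbb{Z}\to\mathbb{Q}/\mathbb{Z}$ is a bijection: on that part it is $\tfrac{2s}{p^a}\mapsto\tfrac{2s}{p^a}\bmod\mathbb{Z}$, which is injective since $2$ is a unit modulo $p^a$. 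Thus $q_{G(L_i)}(x)$ is the unique lift of $b_{G(L_i)}(x,x)$ to that $p$-primary part. Since $f$ preserves $b$ and order, $f_p(x)$ satisfies $b_{G(L_2)}(f_p(x),f_p(x))=b_{G(L_1)}(x,x)$, so by uniqueness of the lift $q_{G(L_2)_p}(f_p(x))=q_{G(L_1)_p}(x)$. This settles the odd-prime case and completes the argument; the only genuinely delicate point is the invertibility of $2$ modulo odd prime powers, which is what makes the quadratic form redundant away from $p=2$.
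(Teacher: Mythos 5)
Your proof is correct and takes essentially the same route as the paper's: reduce to the Sylow-$p$ components, take the $p=2$ case from the hypothesis, and for odd $p$ use that $2$ is invertible on $p$-primary torsion, so the quadratic form there is already determined by the bilinear form. The only differences are presentational — you explicitly justify the orthogonal primary decomposition (which the paper takes for granted) and phrase the odd-$p$ step as uniqueness of a lift under the reduction $\mathbb{Q}/2\mathbb{Z}\to\mathbb{Q}/\mathbb{Z}$, whereas the paper argues via the congruence $2q_{G(L_1)}(x)\equiv 2q_{G(L_2)}(f(x))\pmod{2\mathbb{Z}}$ and the coprimality of $p$ and $2$.
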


\begin{proof}
Suppose that (1) holds. 
Note that $q_{G(L_i)}$, $i=1,2$, 
are quadratic forms on $G(L_i)$ with symmetric bilinear forms $b_{G(L_i)}$. 
Since $f$ preserves the quadratic forms $q_{G(L_i)}$, 
it follows that $f$ preserves the bilinear forms $b_{G(L_i)}$ (see Definition \ref{def:quadratic} (2)). 
Thus, $f$ is an isometry. 
Moreover, $f$ preserves the restriction of  $q_{G(L_i)}$ to Sylow-$2$ subgroups $G(L_i)_2$. Therefore, (2) holds.

Suppose that (2) holds. 
Let $p$ be an odd prime number. 
We only need to show that $f$ preserves the restriction of  $q_{G(L_i)}$ to Sylow-$p$ subgroups $G(L_i)_p$ of $G(L_i)$. 
Note that $2q_{G(L_i)}(x_i)\equiv 2b_{G(L_i)}(x_i,x_i)$ (mod $2\mathbb{Z}$) for any $x_i\in G(L_i)$. Let $x\in G(L_1)_p$. 
Since $f$ is an isometry, it follows that 
$$
2q_{G(L_1)}(x)\equiv 2b_{G(L_1)}(x,x)\equiv 2b_{G(L_2)}(f(x),f(x))\equiv 2 q_{G(L_2)}(f(x))\; ({\rm mod}\; 2\mathbb{Z}).
$$ 
Then $q_{G(L_1)}(x)=q_{G(L_2)}(f(x))$ since $p$ and $2$ are coprime. 
Thus, (1) holds.
\end{proof}

Inspired by \cite[Proposition 3.2.4]{Has00},
we now obtain the following crucial result on uniqueness of the primitive embedding.

\begin{prop}\label{unique-primi-embed}
Let $M$ be a positive definite lattice containing a distinguished element $\mathfrak{o}$.
Suppose that $\iota_{i}: M\hookrightarrow \Lambda$ ($i=1, 2$) are two primitive embeddings with $\iota_{i}(\mathfrak{o})=h^{2}$.
If $r(M)+\ell(M)\leq 20$,
then there exists an element $\gamma\in \Gamma^{+}\subset {\rm O}(\Lambda)$ such that
the following diagram commutes
$$
\xymatrix@C=0.5cm{
& M \ar[ld]_{\iota_{1}} \ar[rd]^{\iota_{2}}&  \\
\Lambda \ar[rr]_{\gamma} && \Lambda.
}
$$ 
\end{prop}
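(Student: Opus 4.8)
\emph{Proof strategy.} The plan is to push the question from the odd unimodular lattice $\Lambda$ onto its even orthogonal complement $\Lambda_0=\langle h^2\rangle^{\perp}_{\Lambda}$, settle it there by Nikulin's embedding theory, and then refine the resulting isometry so that it lands in $\Gamma^{+}$. First I would note that, since $\iota_i(\mathfrak{o})=h^2$, each $\iota_i$ restricts to a primitive embedding $j_i:=\iota_i|_{N}\colon N\hookrightarrow\Lambda_0$ of the even lattice $N:=\langle\mathfrak{o}\rangle^{\perp}_{M}$, and conversely $\iota_i(M)$ is the primitive closure of $\langle h^2\rangle\oplus j_i(N)$ in $\Lambda$, so that $\iota_i$ is completely determined by $j_i$. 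Because $\Lambda$ is obtained from $\langle h^2\rangle\oplus\Lambda_0$ by gluing the two copies of $\Z/3\Z$, an isometry $g\in\mathrm{O}(\Lambda_0)$ extends to an element of $\Gamma=\mathrm{O}(\Lambda,h^2)$ fixing $h^2$ exactly when $g$ acts trivially on $G(\Lambda_0)\cong\Z/3\Z$; thus $\Gamma$ is identified with the stable orthogonal group $\widetilde{\mathrm{O}}(\Lambda_0)$ and $\Gamma^{+}$ with its subgroup $\widetilde{\mathrm{O}}^{+}(\Lambda_0)$ preserving the component $\mathcal{D}^{\prime}$. Hence it suffices to produce $g\in\widetilde{\mathrm{O}}^{+}(\Lambda_0)$ with $g\circ j_1=j_2$: its extension $\gamma\in\Gamma^{+}$ then agrees with $\iota_2$ on the finite-index sublattice $\langle\mathfrak{o}\rangle\oplus N\subset M$ and fixes $\mathfrak{o}$, so $\gamma\circ\iota_1=\iota_2$.

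\emph{Conjugacy up to $\mathrm{O}(\Lambda_0)$.} Following the device of Proposition \ref{exist-primi-embed}, I would glue $A_2(-1)$ to $\Lambda_0$ along the anti-isometric discriminant forms on $\Z/3\Z$, realizing $\Lambda_0$ as a primitive sublattice of $\mathrm{II}_{20,4}$ with complement $A_2(-1)=\Lambda_0^{\perp}$. Adjoining this fixed $A_2(-1)$ turns $j_1,j_2$ into primitive embeddings $\widetilde{\iota}_1,\widetilde{\iota}_2$ of the even companion $\widetilde{M}$ of $M$ (of signature $(r(M)-1,2)$, with $\ell(\widetilde{M})=\ell(M)$), built so as to restrict to the \emph{same} inclusion on $A_2(-1)$. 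Their common orthogonal complement $P:=\widetilde{\iota}_2(\widetilde{M})^{\perp}_{\mathrm{II}_{20,4}}=j_2(N)^{\perp}_{\Lambda_0}$ has signature $(21-r(M),2)$, discriminant form $-q_{G(\widetilde{M})}$, and $\ell(P)=\ell(M)$, so the two complements lie in a single genus; here $r(M)+\ell(M)\le 20$ together with the absence of roots (which forces $\ell(M)\ge 1$, hence $r(M)\le 19$) makes $P$ \emph{indefinite of rank $\ge 3$} with $\rank(P)\ge\ell(P)+2$. By Nikulin's uniqueness theorem \cite[Theorem 1.14.4]{Nik80} there is then $g\in\mathrm{O}(\mathrm{II}_{20,4})$ with $g\circ\widetilde{\iota}_1=\widetilde{\iota}_2$; as both companions agree on $A_2(-1)$, such a $g$ fixes $A_2(-1)$ pointwise, preserves $\Lambda_0$, and restricts to $g_0:=g|_{\Lambda_0}\in\mathrm{O}(\Lambda_0)$ with $g_0\circ j_1=j_2$. (Lemma \ref{lem:q2} is what ensures that the discriminant-group identifications occurring in the glue are genuine isomorphisms of discriminant forms.)

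\emph{Refinement into $\Gamma^{+}$.} It remains to correct $g_0$ so that it becomes trivial on $G(\Lambda_0)$ and preserves $\mathcal{D}^{\prime}$. The lattice $P$ carries the two negative directions, and under our hypotheses it has $\rank(P)\ge\ell(P)+2$ and is indefinite, so $\mathrm{O}(P)\twoheadrightarrow\mathrm{O}(q_{G(P)})$ by \cite[Theorem 1.14.2]{Nik80}, while its real spinor norm is surjective. I would use these two independent freedoms to build an isometry $h$ of $\Lambda_0$ fixing $j_2(N)$ pointwise whose restriction to $P$ induces on $G(\Lambda_0)$ the inverse of the action of $g_0$ and reverses or preserves the orientation of the negative $2$-plane as required. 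Then $g:=h\,g_0$ satisfies $g\circ j_1=j_2$ and lies in $\widetilde{\mathrm{O}}^{+}(\Lambda_0)$, and its extension $\gamma\in\Gamma^{+}$ is the desired isometry.

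\emph{Main obstacle.} The crux is precisely this last refinement. Nikulin's theorems deliver conjugacy only up to the full group $\mathrm{O}(\mathrm{II}_{20,4})$, equivalently up to $\mathrm{O}(\Lambda_0)$, whereas the statement demands conjugacy inside the far smaller $\Gamma^{+}$, i.e.\ by an isometry that is simultaneously trivial on the discriminant group and orientation-preserving on the negative-definite plane. Carrying out both corrections at once, through an isometry that moreover fixes $j_2(N)$, is exactly where one needs the complement $P$ to be indefinite of rank $\ge 3$ with surjective map onto $\mathrm{O}(q_{G(P)})$; this is the arithmetic content of the hypothesis $r(M)+\ell(M)\le 20$, one unit stronger than the bound $r(M)+\ell(M)\le 21$ that already secures conjugacy up to $\mathrm{O}(\Lambda_0)$, and two units stronger than the bound $\le 22$ sufficing for existence in Proposition \ref{exist-primi-embed}.
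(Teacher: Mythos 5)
Your architecture is viable and, for its first two thirds, genuinely different from the paper's proof. The paper works directly inside the odd lattice $\Lambda$: it shows the complements $P_i=\iota_i(M)^{\perp}_{\Lambda}$ are isometric (genus argument plus \cite[Theorem 1.14.2]{Nik80}), and then must verify, via Lemma \ref{lem:q2} and a $2$-Sylow analysis exploiting the evenness of $\langle h^2\rangle^{\perp}_{\Lambda}$, that the induced map $G(P_1)\to G(P_2)$ preserves the quadratic (not merely bilinear) discriminant forms before gluing. Your detour through $\mathrm{II}_{20,4}$ and Nikulin's uniqueness theorem for primitive embeddings into even unimodular lattices bypasses that delicate comparison entirely, at the cost of checking that the primitive closures of $A_2(-1)\oplus j_i(N)$ in $\mathrm{II}_{20,4}$ for $i=1,2$ are both isomorphic to one fixed lattice $\widetilde M$ compatibly with the common inclusion of $A_2(-1)$; this requires the gluing maps to satisfy $\varphi'\circ\psi=\varphi$ (in the notation of Proposition \ref{exist-primi-embed}), which can be arranged by a sign change, so that part is fillable. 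Note also that your $g_0=g|_{\Lambda_0}$ is \emph{automatically} trivial on $G(\Lambda_0)$: since $g$ fixes $A_2(-1)$ pointwise and $\mathrm{II}_{20,4}$ is unimodular, glue compatibility forces $\bar g_0=\mathrm{id}$; so no correction of the discriminant action is needed, and asking your $h$ to induce a prescribed nontrivial action on $G(\Lambda_0)$ while fixing $j_2(N)$ pointwise would only create extra compatibility problems.

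The genuine gap is in the final refinement, which is also the crux of the proposition. To pass from $\Gamma$ to $\Gamma^{+}$ you need an isometry of $\Lambda$ fixing $h^2$ and $j_2(N)$ pointwise, trivial on the discriminant group, and reversing the orientation of negative-definite $2$-planes. You assert this exists because ``the real spinor norm of $\mathrm{O}(P)$ is surjective,'' citing only that $P$ is indefinite with $r(P)\ge \ell(P)+2$; but that bound buys you \cite[Theorem 1.14.2]{Nik80} (uniqueness in the genus and surjectivity of $\mathrm{O}(P)\to \mathrm{O}(q_{G(P)})$), not an orientation-reversing isometry trivial on $G(P)$ --- indeed $-\Id_P$ \emph{preserves} the orientation of a negative $2$-plane, and reflections in negative-norm vectors of $P$ need not lie in $\mathrm{O}(P)$, so surjectivity of the real spinor norm is precisely what must be proved. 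The correct argument, and the actual reason the hypothesis is $r(M)+\ell(M)\le 20$ rather than $21$, is that then $r(P)=23-r(M)\ge \ell(P)+3$, so \cite[Corollary 1.13.5]{Nik80} gives $P\cong U\oplus S$; the isometry $(-\Id_{U})\oplus\Id_{S}$ reverses the orientation of negative $2$-planes, acts trivially on $G(P)$, and hence glues with $\Id_{\iota_2(M)}$ to an element $\sigma\in\Gamma\setminus\Gamma^{+}$, exactly as in the paper's proof. Your closing paragraph even acknowledges that the $\le 20$ bound must be what powers the refinement, contradicting the ``$\ell(P)+2$ suffices'' claim in the body. A minor additional slip: you invoke absence of roots to bound $r(M)$, but no-roots is not a hypothesis of this proposition; fortunately $r(M)\le 20$, the indefiniteness of $P$, and $r(P)\ge\ell(P)+3$ all follow from $r(M)+\ell(M)\le 20$ alone.
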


\begin{proof}
We denote by $P_i$ ($i=1,2$)  the even lattices $\iota_{i}(M)^{\perp}_\Lambda$.
Then the signatures 
$$
\sign(P_{1})=\sign(P_{2})=(21-r(M), 2).
$$ 
Since $\Lambda$ is unimodular, it follows that  
$$
b_{G(P_{1})}\cong -b_{G(M)}\cong b_{G(P_{2})}.
$$ 
As a result, by \cite[Theorem 1.11.3]{Nik80}, 
the two quadratic forms $q_{G(P_{1})}$ and $q_{G(P_{2})}$ are isomorphic. Thus, $P_{1}$ and $P_{2}$ have the same genus.
Since $r(P_{i})=23-r(M)$ and $\ell(P_{i})=\ell(M)$,
by the hypothesis $r(M)+\ell(M)\leq 20$, 
we have 
$$
r(P_{i})\geq \ell(P_{i})+3 \geq \ell(P_{i})+2.
$$
According to Nikulin \cite[Theorem 1.14.2]{Nik80},
the two even indefinite lattices $P_{1}$ and $P_{2}$ are isometric.
Note that we have the primitive extensions $\iota_{i}(M)\oplus P_{i}\subset \Lambda$. 
Since the orders $|G(\iota_{i}(M))|=|G(P_{i})|$,
there are gluing maps $\phi_{i}:G(\iota_{i}(M)) \longrightarrow G(P_{i})$ such that
$$
\iota_{i}(M)\oplus_{\phi_{i}} P_{i}=\Lambda.
$$
Besides, there is an isomorphism $\psi: G(P_{1}) \stackrel{\simeq}{\longrightarrow} G(P_{2})$ 
of abelian groups
such that the following diagram commutes
\begin{equation}\label{diagram4.8-1}
\xymatrix{
G(\iota_{1}(M)) \ar[d]_{\bar{g}} \ar[r]^{\phi_{1}} & G(P_{1}) \ar[d]^{\psi}\\
G(\iota_{2}(M)) \ar[r]^{\phi_{2}} & G(P_{2}),
}
\end{equation}
where $\bar{g}$ is indued by the isometry $g: \iota_{1}(M)\longrightarrow \iota_{2}(M)$ with $g\circ \iota_{1}=\iota_{2}$. Since $\phi_{1}$ and $\phi_{2}$ are gluing maps and $\bar{g}$ is an isometry, it follows that $\psi$ is an isometry.

We claim that $\psi$ is an isomorphism of discriminant forms. 
In fact, by Lemma \ref{lem:q2}, it suffices to show that $\psi$ preserves the restriction of $q_{G(P_i)}$ to Sylow-2 subgroups $G(P_i)_2$.
Let $L:=\langle \iota_{i}(\mathfrak{o}) \rangle=\langle h^2 \rangle$ and $N_{i}:=L^{\perp}_{\iota_{i}(M)}$, $i=1,2$. Note that the Sylow-2 subgroup $G(L)_2$ is trivial since $\disc(L)=3$. Moreover, $[\iota_i(M): L\oplus N_i]\in \{1,3\}$ (in other words, $\iota_i(M)$ is obtained by gluing $L$ and $N_i$ along finite abelian groups of order $1$ or $3$). Thus, there is a well-defined isometry
$$
\varphi_{i}: G(N_{i})_{2} \longrightarrow G(\iota_{i}(M))_{2}
$$
such that $\varphi_i(\bar{x})=\bar{x}$ for all $x\in N_i^\vee$ with $\bar{x}\in G(N_{i})_{2}$ (see the proof of \cite[Lemma 4.1]{OY20}). We denote by $\overline{N_{i}\oplus P_{i}}$ the primitive closure of $N_{i}\oplus P_{i}$ in $\Lambda$. Note that 
$$
\overline{N_{i}\oplus P_{i}}=L^\perp_\Lambda=\langle h^2 \rangle^\perp_\Lambda
$$ 
is an even lattice.
Let $x\in N_{i}^{\vee}$ with $\bar{x}\in G(N_{i})_{2}$.
Let $y\in P_{i}^{\vee}$ with $(\phi_{i}\circ \varphi_{i})(\bar{x})=\bar{y}\in G(P_i)_2$.
Then $(x, y)\in \overline{N_{i}\oplus P_{i}}=\langle h^2 \rangle^\perp_\Lambda$,
which implies that $((x,y).(x,y))$ is even.
Since $(x.x)_{N_{i}^{\vee}}+(y.y)_{P_{i}^{\vee}}=((x,y).(x,y))$ is even, 
it follows that
$$
q_{G(N_{i})_{2}}(\bar{x})+q_{G(P_{i})_{2}}(\bar{y})= 0\in \mathbb{Q}/ 2\Z.
$$
So we have
$\phi_{i}\circ \varphi_{i}: q_{G(N_{i})_{2}} \stackrel{\simeq}{\longrightarrow}-q_{G(P_{i})_{2}}$.
Note that we have the following commutative diagram
\begin{equation}\label{diagram4.8-2}
\xymatrix{
G(N_{1})_{2} \ar[d]_{\overline{g|N_1}} \ar[r]^{\varphi_{1}} & G(\iota_{1}(M))_2 \ar[d]_{\bar{g}} \ar[r]^{\phi_{1}} &  G(P_{1})_2 \ar[d]^{\psi} \\
G(N_{2})_{2} \ar[r]^{\varphi_{2}} & G(\iota_{1}(M))_2 \ar[r]^{\phi_{2}} & G(P_{2})_2\; . 
}
\end{equation}
Since $\overline{g|N_1}$ is induced from the isometry $g|N_1: N_1\longrightarrow N_2$, it follow that $\overline{g|N_1}$ is an isomorphism of discriminant forms. 
Thus, $\psi$ preserves the restriction of $q_{G(P_i)}$ to Sylow-2 subgroups $G(P_i)_2$ by the commutativity of the diagram (\ref{diagram4.8-2}). 
This completes the proof of the claim.

Recall that we have proved that $P_1$ and $P_2$ are isometric. 
Let $\mu: P_1\longrightarrow P_2$ be an isometry. 
Then $\psi\circ \bar{\mu}^{-1}: G(P_2)\longrightarrow G(P_2)$ is an isomorphism of discriminant forms, i.e., $\lambda:=\psi\circ \bar{\mu}^{-1}\in {\rm O}(q_{G(P_2)})$. 
Since we have
$$
r(P_{2})\geq \ell(P_{2})+3 \geq \ell(P_{2})+2,
$$
by \cite[Theorem 1.14.2]{Nik80}, 
the induced homomorphism $\mathrm{O}(P_{2}) \longrightarrow \mathrm{O}(q_{G(P_{2})})$ given by $f\mapsto \bar{f}$ is surjective.
By this, there is an isometry $\tilde{\lambda}\in \mathrm{O}(P_{2})$ 
such that $\overline{\tilde{\lambda}}=\lambda$. 
Let $\tilde{\psi}:=\tilde{\lambda}\circ \mu$.
Considering the following commutative diagrams
$$
\xymatrix@C=0.5cm{
& P_{1}\ar[ld]_{\mu} \ar[rd]^{\tilde{\psi}}&  \\
P_{2} \ar[rr]_{\tilde{\lambda}} && P_{2}
}
\textrm{and}
\xymatrix@C=0.5cm{
& q_{G(P_{1})}\ar[ld]_{\bar{\mu}} \ar[rd]^{\psi}&  \\
q_{G(P_{2})}\ar[rr]_{\lambda} && q_{G(P_{2})}\; ,
}
$$
we see that $\overline{\tilde{\psi}}=\psi$.
By the diagram \eqref{diagram4.8-1},
the isometry 
$$
g\oplus \tilde{\psi}: \iota_{1}(M)\oplus P_{1}\longrightarrow \iota_{2}(M)\oplus P_{2}
$$ 
extends to an isometry
$$
\delta: 
\iota_{1}(M)\oplus_{\phi_{1}} P_{1}
\longrightarrow 
\iota_{2}(M)\oplus_{\phi_{2}} P_{2}.
$$
As a result, $\delta\in \mathrm{O}(\Lambda)$ with $ \delta \circ \iota_{1}=\iota_{2}$ such that $\delta(h^{2})=h^{2}$. 
Since $r(P_{2})\geq \ell(P_{2})+3$, 
by Nikulin \cite[Corollary 1.13.5]{Nik80},
there is a lattice $S$ such that $P_{2}\cong U\oplus S$.
Then the isometries 
$$
\chi:=(-\Id_{U})\oplus \Id_{S} \in \mathrm{O}(U\oplus S)=\mathrm{O}(P_{2}) \text{ and } \Id_{\iota_{2}(M)}\in {\rm O}(\iota_2(M))
$$
extend to 
$$
\sigma:=\Id_{\iota_{2}(M)}\oplus_{\phi_{2}} \chi \in \mathrm{O}(\iota_{2}(M)\oplus_{\phi_{2}} P_{2}).
$$ 
Note that $\sigma \in \Gamma \setminus \Gamma^{+}$.
Then we have the following two cases:
\begin{enumerate}
\item if $\delta \in \Gamma^{+}$, 
then $\gamma:=\delta$ is what we wanted; 
\item if $\delta \in \Gamma \setminus  \Gamma^{+}$,
then $\gamma:= \sigma \circ \delta \in \Gamma^{+}$ as in the following diagram
$$
\xymatrix@C=0.5cm{
&M \ar[ld]_{\iota_{1}}  \ar[d]_{\iota_{2}} \ar[rd]^{\iota_{2}} &&  \\
\Lambda \ar[r]^{\delta} & \Lambda \ar[r]^{\sigma} & \Lambda.
}
$$ 
\end{enumerate}
This completes the proof of Proposition \ref{unique-primi-embed}.
\end{proof}

\begin{rem}
If $r(M)+\ell(M)=21$ and $\iota_{i}(M)_{\Lambda}^{\perp}$ contains $U(m)$ ($m\in \{1, 2\}$) as a direct summand,
then we can also obtain $\gamma \in \Gamma^{+}$ such that $ \gamma \circ \iota_{1}=\iota_{2}$.
In fact, if $r(M)+\ell(M)=21$, by the same arguments, 
we can obtain $\delta\in \Gamma$ such that $\delta \circ \iota_{1}=\iota_{2}$. 
The second condition guarantees the existence of $\chi$ and $\sigma$ in the proof of Proposition \ref{unique-primi-embed} (cf. \cite[Proposition 5.6]{Dol96}).
For instance, if $M$ is the positive definite lattice of rank $11$ with the Gram matrix $(a_{ij})$ such that $a_{ii}=3$ and $a_{ij}=1$ for $i\neq j$, then $r(M)+\ell(M)=21$;
moreover, for any primitive embedding $M\hookrightarrow \Lambda$, 
the orthogonal complement contains $U(2)$ as a direct summand (see \cite[Lemma 7.1]{DM19}).
\end{rem}

Finally, we are in the position to finish the proof of Theorem \ref{mainthm-one}.
Actually, we will give the proof of Theorem \ref{mainthm-one-equal} which clearly implies Theorem \ref{mainthm-one}.

\begin{proof}[Proof of Theorem \ref{mainthm-one-equal}]
Under the hypothesis of the theorem, 
Corollary \ref{non-empty-CM} yields the non-emptyness of $\CC_{(M, \mathfrak{o})}$.
By Propositions \ref{prop:CMo}, \ref{unique-primi-embed} and Corollary \ref{cor:irreducible}, $\CC_{(M, \mathfrak{o})}\subset \mathcal{C}$ is an irreducible closed subvariety of codimension $r(M)-1$, and $\CC_{(M, \mathfrak{o})}=\CC_{(M, \mathfrak{o},\phi)}$ for any $\phi\in\mathcal{E}_{(M, \mathfrak{o})}$. Moreover, by Lemma \ref{unique-CM-lem}, $\CC_M=\CC_{(M, \mathfrak{o})}$.
\end{proof}


\section{Fermat cubic fourfold in Hassett divisors}

In this section, we investigate the algebraic cohomology of the Fermat cubic fourfold. In particualr, we give an explicit description (Proposition \ref{algcohom-Fermat}) of this lattice in terms of $21$ planes. Then we show that the Fermat cubic fourfold is contained in all Hassett divisors (Theorem \ref{mainthm-two}), and hence we give a new proof of Hassett's existence theorem of special cubic fourfolds.

\subsection{Algebraic cohomology of Fermat cubic fourfold}
Let
$$
X_{F}:=\{(z_{1}:z_{2}: \cdots: z_{6})\in \mathbb{P}^{5} \mid z_{1}^{3}+z_{2}^{3}+z_{3}^{3}+z_{4}^{3}+z_{5}^{3}+z_{6}^{3}=0\}
$$ 
be the Fermat cubic fourfold.
It is an important concrete cubic fourfold, 
and its algebraic cohomology attains the maximal rank $21$ (cf. \cite[Proposition 11]{Bea14}).
Planes on $X_F$ turn out to be crucial for our study of the algebraic cohomology $A(X_F)$. 
Segre \cite{Seg44} showed that the Fermat quartic surface contains exactly $48$ lines. 
Analogously, we have the following observation.

\begin{lem}\label{Fermat-405planes}
Fermat cubic fourfold $X_{F}$ contains exactly $405$ planes.
\end{lem}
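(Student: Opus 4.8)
The plan is to first produce an explicit family of planes and then prove that it exhausts all planes on $X_F$. For the construction, I would fix a partition of the six coordinates into three unordered pairs and, on each pair $\{i,j\}$, impose a relation $z_i=\zeta z_j$ with $\zeta^3=-1$. The three resulting linear equations involve pairwise disjoint coordinates, so they cut out a $\mathbb{P}^2\subset \mathbb{P}^5$, and since $z_i^3=\zeta^3 z_j^3=-z_j^3$ on each pair we get $z_i^3+z_j^3=0$, whence summing over the three pairs shows this $\mathbb{P}^2$ lies on $X_F$. There are $\tfrac{6!}{2^3\,3!}=15$ partitions and $3^3=27$ choices of cube roots, giving $405$ planes. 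A short argument recovering the matching and the ratios from the proportionality relations among the restricted coordinate functions (on such a plane exactly the matched coordinates are proportional) shows these $405$ planes are pairwise distinct.

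The heart of the matter is to show there are no other planes. I would translate ``$\Pi\subset X_F$'' into linear algebra: writing a plane as the image of $[s:t:u]\mapsto[L_1:\cdots:L_6]$ for linear forms $L_i$ in $s,t,u$ whose coefficient matrix has rank $3$, the containment becomes the identity $\sum_{i=1}^6 L_i^3=0$ in the $10$-dimensional space of ternary cubics. Grouping the $L_i$ into proportionality classes $c_1,\dots,c_r$ with distinct representatives $M_1,\dots,M_r$ in the dual plane, and setting $\mu_k=\sum_{i\in c_k}\lambda_i^3$ where $L_i=\lambda_i M_{k}$, this identity becomes a relation $\sum_{k=1}^r \mu_k M_k^3=0$ among the distinct points $v_3(M_k)$ of the cubic Veronese surface $v_3(\mathbb{P}^2)$, while the rank-$3$ (plane) condition says the $M_k$ span the dual plane.

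I would then run the analysis according to $r$. The key input is the Cayley--Bacharach-type fact that at most six points of $\mathbb{P}^2$ fail to impose independent conditions on cubics only when at least five of them are collinear; equivalently, any four distinct points of $v_3(\mathbb{P}^2)$ are linearly independent, and a relation with full support on five or six spanning points is impossible. Since the $M_k$ span and $r\le 5$, this forces all $\mu_k=0$; as a singleton class has $\mu_k=\lambda^3\neq 0$, no class can have size one, which leaves only $r=3$ with all three classes of size $2$ --- precisely the standard planes, each pair being forced to satisfy $\lambda_i^3+\lambda_j^3=0$, i.e.\ a cube-root-of-$(-1)$ ratio. The case $r=6$ is handled by noting that six spanning points carry a relation only when five are collinear, in which case that relation annihilates the sixth coordinate, contradicting $\mu_k\neq 0$. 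The same analysis applied to fewer than six nonzero forms shows that no $L_i$ may vanish, so all six coordinates are genuinely used; hence the $405$ standard planes are all of them.

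The main obstacle is this last classification step --- controlling which configurations of at most six points on the cubic Veronese surface are linearly dependent, and in particular ruling out an ``exotic'' dependent configuration with full support. I expect to reduce it entirely to restriction-to-a-line computations: ternary cubics restrict to the $4$-dimensional space of binary cubics on any line, so five points on a line are the only source of dependence among at most six Veronese points, and points in general position on a twisted cubic yield relations of full support. This keeps the argument self-contained and avoids invoking the general Cayley--Bacharach machinery.
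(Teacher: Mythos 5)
Your proposal is correct, but its exhaustion step runs along a genuinely different route from the paper's. The paper follows Segre: it writes an arbitrary plane as a graph $z_{l}=a_{1}z_{i}+a_{2}z_{j}+a_{3}z_{k}$, $z_{m}=\cdots$, $z_{n}=\cdots$ over three of the coordinates, so containment in $X_{F}$ becomes an identity of ternary cubics $z_{i}^{3}+z_{j}^{3}+z_{k}^{3}=-(A^{3}+B^{3}+C^{3})$, and then it takes Hessian determinants of both sides; since the Hessian of a sum of three cubes of linear forms equals (up to the square of the coefficient determinant) the product of those forms, this yields $z_{i}z_{j}z_{k}=\mathrm{C}\cdot ABC$, and unique factorization forces $A,B,C$ to be scalar multiples of $z_{i},z_{j},z_{k}$ in some order, whence the cube-root-of-$(-1)$ ratios and the $405$ planes. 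You replace this covariant trick by linear algebra on the cubic Veronese: grouping the six forms $L_{i}$ into proportionality classes and classifying which configurations of at most six distinct points $M_{k}^{3}$ can be linearly dependent. Your case analysis (all $\mu_{k}=0$ when $r\le 5$ and the points span, exclusion of full-support relations when $r=6$, and the count $6\ge 2r\ge 6$ forcing three classes of size two, plus the same argument showing no $L_{i}$ vanishes) is sound; but the whole weight rests on the Cayley--Bacharach-type input that at most six points impose dependent conditions on cubics only if five are collinear. That statement is true and elementary--provable, e.g., by exhibiting for each point a totally reducible cubic (three lines) through the other five that misses it, or by the restriction-to-a-line/residuation argument you sketch--but it is only sketched in your plan and must be written out in full. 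What the paper's approach buys is brevity: one covariant computation plus unique factorization. What yours buys is robustness and generality: it uses no special identity tied to sums of three cubes, it treats all degenerations (vanishing or proportional coordinate forms) uniformly in one framework, and the same scheme classifies linear subspaces on other Fermat hypersurfaces where no comparably clean covariant argument is available.
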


\begin{proof}
It follows from the equation that $X_{F}$ contains $405$ planes as follows:
$$
\{z_{l}-\epsilon_{1}z_{i}=z_{m}-\epsilon_{2}z_{j}=z_{n}-\epsilon_{3}z_{k}=0\},
$$
where $\epsilon_{1}^{3}=\epsilon_{2}^{3}=\epsilon_{3}^{3}=-1$ and $[\{l, i\}; \{m, j\}; \{n, k\}]$ is an unordered partition of the index set $\{1, 2, 3, 4, 5, 6\}$ into three unordered pairs.

Next we will show that $X_{F}$ contains only $405$ planes.
Following Segre \cite[Section 2]{Seg44},
a plane 
$$
z_{l}=a_{1}z_{i}+a_{2}z_{j}+a_{3}z_{k},\;
z_{m}=b_{1}z_{i}+b_{2}z_{j}+b_{3}z_{k},\;
z_{n}=c_{1}z_{i}+c_{2}z_{j}+c_{3}z_{k},
$$
lies on the Fermat cubic fourfold if and only if 
\begin{equation}\label{equ-exact405P}
z_{i}^{3}+z_{j}^{3}+z_{k}^{3}
=
-(a_{1}z_{i}+a_{2}z_{j}+a_{3}z_{k})^{3}-(b_{1}z_{i}+b_{2}z_{j}+b_{3}z_{k})^{3}-(c_{1}z_{i}+c_{2}z_{j}+c_{3}z_{k})^{3}
\end{equation}
where $\{i, j, k, l, m, n\}$ is a permutation of $\{1, 2, 3, 4, 5, 6\}$.
By direct computation of the determinants of Hessian matrices for both sides of the equation \eqref{equ-exact405P},
we have
$$
z_{i}z_{j}z_{k}
=\mathrm{C}\cdot(a_{1}z_{i}+a_{2}z_{j}+a_{3}z_{k})(b_{1}z_{i}+b_{2}z_{j}+b_{3}z_{k}) (c_{1}z_{i}+c_{2}z_{j}+c_{3}z_{k})
$$
where $\mathrm{C}:=(a_{3}b_{2}c_{1}-a_{2}b_{3}c_{1}-a_{3}b_{1}c_{2}+a_{1}b_{3}c_{2}+a_{2} b_{1}c_{3}-a_{1}b_{2}c_{3})^2$.
As a result, the claim follows.
\end{proof}

Based on the above lemma,
we will show that the algebraic cohomology of the Fermat cubic fourfold is generated by the cohomology classes of planes. 
For this goal, we need the following useful result.

\begin{lem}[{cf. Voisin \cite[\S 3, Appendix]{Voi86}}]\label{intersect-planes}
If a cubic fourfold $X$ contains two distinct planes $P_{1}$ and $P_{2}$,
then one of the following conditions holds
\begin{enumerate}
\item $([P_{1}].[P_{2}])=0$ if $P_{1}\cap P_{2}$ is empty;
\item $([P_{1}].[P_{2}])=1$ if $P_{1}\cap P_{2}$ is a point; 
\item $([P_{1}].[P_{2}])=-1$ if $P_{1}\cap P_{2}$ is a line.
\end{enumerate}
\end{lem}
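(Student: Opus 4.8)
The plan is to reduce the statement to elementary projective geometry together with a normal-bundle computation. Two distinct planes $P_1,P_2\subset\mathbb{P}^5$ are linear subspaces, so $P_1\cap P_2$ is again a linear subspace; since $P_1\neq P_2$ and $\dim P_i=2$, the only possibilities are the empty set, a single point, or a line. This matches exactly the three cases in the statement, and the value $([P_1].[P_2])\in H^8(X,\mathbb{Z})\cong\mathbb{Z}$ will be computed case by case.

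In the disjoint case the two cycles have disjoint support, so $([P_1].[P_2])=0$ immediately. In the point case I would show that $P_1$ and $P_2$ meet transversally inside $X$: writing $p=P_1\cap P_2$ and passing to an affine chart centered at $p$, the tangent spaces $T_pP_i\subset T_pX$ are identified with the underlying vector subspaces, so $T_pP_1\cap T_pP_2$ equals the linear part of $P_1\cap P_2=\{p\}$, hence is $0$. Since $\dim T_pP_1+\dim T_pP_2=4=\dim X$, we get $T_pP_1\oplus T_pP_2=T_pX$, i.e. the intersection is transverse; as $P_1,P_2$ are complex submanifolds this forces $([P_1].[P_2])=+1$.

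The substantive case is when $Z:=P_1\cap P_2$ is a line $\ell\cong\mathbb{P}^1$, where the intersection inside the fourfold $X$ is excess (the expected dimension is $2+2-4=0$, but $\dim\ell=1$). Here I would invoke the excess intersection formula: viewing $\ell\hookrightarrow P_2\hookrightarrow X$ against $P_1\hookrightarrow X$, one has a subbundle inclusion $N_{\ell/P_2}\hookrightarrow N_{P_1/X}|_\ell$ with excess bundle $E:=(N_{P_1/X}|_\ell)/N_{\ell/P_2}$ of rank $1$, giving $([P_1].[P_2])=\int_\ell c_1(E)=\deg(N_{P_1/X}|_\ell)-\deg(N_{\ell/P_2})$. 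The two degrees come from standard normal-bundle sequences: from $0\to N_{P_1/X}\to N_{P_1/\mathbb{P}^5}\to N_{X/\mathbb{P}^5}|_{P_1}\to 0$ together with $N_{P_1/\mathbb{P}^5}\cong\mathcal{O}_{P_1}(1)^{\oplus 3}$ and $N_{X/\mathbb{P}^5}|_{P_1}\cong\mathcal{O}_{P_1}(3)$ one obtains, with $H$ the hyperplane class on $P_1\cong\mathbb{P}^2$, the vanishing $c_1(N_{P_1/X})=3H-3H=0$, so $\deg(N_{P_1/X}|_\ell)=0$; and since a line in $\mathbb{P}^2$ has normal bundle $\mathcal{O}_{\mathbb{P}^1}(1)$, we have $\deg(N_{\ell/P_2})=1$. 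Hence $([P_1].[P_2])=0-1=-1$, as claimed.

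The main obstacle is this line case: one must recognize it as an excess intersection, set up the excess bundle $E$ with the correct sub/quotient orientation (with $N_{\ell/P_2}$ as the subbundle), and then carry out the normal-bundle computation. I note that the excess formula also recovers the point case uniformly, since there $Z$ is a point, the excess bundle has rank $0$ and $c_0(E)=1$, again giving $+1$; so the exposition could be streamlined to treat the two nonempty cases together. The only genuinely nontrivial input is the vanishing $c_1(N_{P_1/X})=0$, which is precisely what forces the line contribution to equal $-\deg N_{\ell/P_2}=-1$.
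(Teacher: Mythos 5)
Your proof is correct, but in the essential case (the line) it takes a genuinely different route from the paper. The paper argues elementarily with a residual plane: if $P_1\cap P_2=\ell$ is a line, the linear span $W\cong\mathbb{P}^3$ of $P_1\cup P_2$ cuts $X$ in $P_1\cup P_2\cup P_3$ for a third plane $P_3$ (smoothness of $X$ is what forces the residual cubic form to factor as $z_3z_4H$ with $H$ not divisible by $z_3$ or $z_4$), whence $h_X^2=[P_1]+[P_2]+[P_3]$; pairing this relation with each $[P_i]$, and using $([P_i].h_X^2)=1$ together with the standard self-intersection $([P_i].[P_i])=3$, gives a linear system whose unique solution is $([P_i].[P_j])=-1$ for $i\neq j$. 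You instead invoke Fulton's excess (clean) intersection formula, with excess bundle $E=(N_{P_1/X}|_\ell)/N_{\ell/P_2}$, and reduce everything to $c_1(N_{P_1/X})=0$, which you extract from the normal bundle sequence $0\to N_{P_1/X}\to \mathcal{O}_{P_1}(1)^{\oplus 3}\to\mathcal{O}_{P_1}(3)\to 0$; this is legitimate since the intersection is clean ($T\ell=TP_1|_\ell\cap TP_2|_\ell$ because the $P_i$ are linear, and the scheme-theoretic intersection is reduced). Note that both arguments ultimately rest on the same normal-bundle computation $c(N_{P/X})=1+3H^2$: the paper uses its degree-two part $[P]^2=3$ (implicitly), you use its degree-one part $c_1=0$. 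What the paper's approach buys is self-containedness (no intersection-theoretic machinery beyond pairing cohomology classes) plus the extra output of the third plane and the relation $h_X^2=[P_1]+[P_2]+[P_3]$, which is reused in spirit elsewhere (e.g.\ in the Fermat computations); what your approach buys is uniformity (the point and line cases are the same formula with excess $0$ and $1$) and a cleaner identification of the one numerical input that forces the answer $-1$.
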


\begin{proof}
The first is clear. For the second, since $P_{1}$ and $P_{2}$ meet only at a point $p$, it follows that $P_{1}$ and $P_{2}$ meet transversely at $p$ and thus $([P_{1}].[P_{2}])=1$.
For the third one, suppose $P_{1}\cap P_{2}=l$ is a line.
Suppose that $W\subset \mathbb{P}^{5}$ is the unique 3 dimensional linear subspace generated by $P_{1}$ and $P_{2}$.
By suitable linear transformation of the coordinates, 
we may assume $W=\{z_{1}=z_{2}=0\}\subset \mathbb{P}^{5}$, $P_{1}=\{z_{1}=z_{2}=z_{3}=0\}$ and $P_{2}=\{z_{1}=z_{2}=z_{4}=0\}$.
Let $X:=\{F=0\}$ be the cubic fourfold.
Then the homogenous polynomial $F=z_{1}F_{1}+z_{2}F_{2}+G(z_{3}, z_{4}, z_{5}, z_{6})$
and $P_{1}, P_{2}\subset \{ G=0\}$.
Hence, $G=z_{3}z_{4}H$, where $H$ is a homogenous polynomial of degree 1.
By the smoothness of $X$, $z_{3}\nmid H$ and $z_{4}\nmid H$.
Let $P_{2}=\{z_{1}=z_{2}=H=0\}$. 
We have $W\cap X=P_{1}\cup P_{2}\cup P_{3}$ and $h_{X}^{2}=[P_{1}]+[P_{2}]+[P_{3}]$.
Set $a=([P_{1}].[P_{2}])$, $b=([P_{1}].[P_{3}])$ and $c=([P_{2}].[P_{3}])$.
Since $([P_{i}].h_{X}^{2})=1$, so we have $a=b=c=-1$.
\end{proof}

Now we can determine explicitly the generators for the algebraic cohomology of the Fermat cubic fourfold.

\begin{prop}\label{algcohom-Fermat}
The algebraic cohomology of the Fermat cubic fourfold is generated by the cohomology classes of $21$ planes $P_i$ (i=1,...,21) whose defining equations are given in the Appendix \eqref{21-planes}.
\end{prop}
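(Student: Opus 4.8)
The plan is to realize the $21$ plane classes as a $\Z$-basis of $A(X_F)$ through a single discriminant computation. Recall that by Beauville \cite[Proposition 11]{Bea14} the positive definite lattice $A(X_F)$ attains the maximal rank $21$, so its orthogonal complement $T(X_F):=A(X_F)^{\perp}$ in the unimodular lattice $H^{4}(X_F,\Z)$ has rank $2$. Every plane $P\subset X_F$ is an algebraic surface, hence $[P]\in A(X_F)$; in particular the sublattice $L:=\langle [P_1],\dots,[P_{21}]\rangle$ lies in $A(X_F)$. As both are positive definite of rank $21$, it remains only to show that $[P_1],\dots,[P_{21}]$ are linearly independent and that the resulting finite-index inclusion $L\subseteq A(X_F)$ is an equality.

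First I would write down the Gram matrix $G=\big(([P_i].[P_j])\big)_{1\le i,j\le 21}$. The diagonal entries are $([P_i].[P_i])=3$, the standard self-intersection of a plane in a cubic fourfold (together with $([P_i].h_X^2)=1$). The off-diagonal entries are obtained from the explicit equations recorded in the Appendix by applying Lemma \ref{intersect-planes}: for each of the $\binom{21}{2}=210$ pairs one decides, directly from the linear forms cutting out $P_i$ and $P_j$, whether $P_i\cap P_j$ is empty, a single point, or a line, giving $([P_i].[P_j])=0$, $1$, or $-1$ respectively. This reduces the whole matrix to a finite incidence computation among the planes enumerated in Lemma \ref{Fermat-405planes}.

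With $G$ determined, I would evaluate $\det G=\disc(L)$. Since $L\subseteq A(X_F)$ are both of rank $21$, one has
\[
\disc(L)=[A(X_F):L]^{2}\cdot\disc(A(X_F)),
\]
so $[A(X_F):L]^{2}$ divides $\det G$. The decisive observation is that the computed value of $\det G$ is \emph{square-free} (indeed one expects to find $\det G=3$); consequently $[A(X_F):L]=1$ and $L=A(X_F)$, as desired. This is consistent with the identification $T(X_F)\cong A_{2}(-1)$, which one reads off from the Jacobi-sum description of the Fermat Hodge structure and which forces $\disc(A(X_F))=\disc(A_{2}(-1))=3$; the relation $h_X^2=[P_i]+[P_j]+[P_k]$ from the proof of Lemma \ref{intersect-planes}, applied to three planes spanning a common $\mathbb{P}^{3}$, also confirms that $h_X^2\in L$.

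The main obstacle will be the honest determination of the $210$ pairwise intersection types from the explicit plane equations, together with the selection of $21$ planes among the $405$ so that $G$ is nonsingular with square-free determinant. Although each individual incidence is elementary to verify by hand from the linear equations, organizing all of them and evaluating the $21\times 21$ determinant is most safely carried out with the computer algebra system PARI/GP.
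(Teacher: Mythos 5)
Your overall strategy --- compute the Gram matrix of the $21$ plane classes via Lemma \ref{intersect-planes}, take its determinant, and compare with $\disc(A(X_F))$ --- is exactly how the paper's proof begins, but your decisive step rests on a false numerical expectation. The determinant of the $21\times 21$ Gram matrix is $27=3^3$, not $3$, and in particular is not square-free. Since $\disc(L)=[A(X_F):L]^2\cdot\disc(A(X_F))$, the computation only tells you $[A(X_F):L]\in\{1,3\}$, and your argument provides no way to exclude the index-$3$ case. Relatedly, your claim $T(X_F)\cong A_2(-1)$ is incorrect: the transcendental lattice of the Fermat cubic has discriminant $27$, consistent with Remark \ref{rem:FermatC14}, where the Hodge-theoretically associated K3 surface has transcendental lattice $\left(\begin{smallmatrix}6&3\\3&6\end{smallmatrix}\right)$ of discriminant $27$. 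So there is no independent reason to expect $\disc(A(X_F))=3$, and assuming it would beg the question.

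The paper closes precisely the gap you leave open with a lattice-theoretic argument: it computes the glue group $G(L)\cong \Z/3\Z\oplus\Z/9\Z$ of the rank-$21$ lattice $L$ spanned by the plane classes, checks that this group has a unique nonzero isotropic subgroup, hence $L$ has a unique nontrivial overlattice $\overline{L}$ (which would have to equal $A(X_F)$ if the index were $3$), and then exhibits an explicit vector $e\in\overline{L}$ with $(e.e)=2$ and $(h^2_{X_F}.e)=0$. Since the algebraic cohomology of a cubic fourfold contains no such roots by Voisin's theorem (\cite[\S 4, Proposition 1]{Voi86}), the overlattice is excluded and $L=A(X_F)$ with $\disc(A(X_F))=27$. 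Your proposal needs this additional overlattice-plus-roots step; as written, it would terminate with the discovery that $\det G=27$ and no means to finish.
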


\begin{proof}
We denote by $M:=\langle \xi_{1}, \xi_{2}, \cdots, \xi_{21} \rangle \subset A(X_{F})$
the sublattice generated by the cohomology classes $\xi_{i}$ of the $21$ planes $P_{i}$.
According to Lemma \ref{intersect-planes}, 
the intersection matrix $((\xi_{i}.\xi_{j}))$ is \eqref{intersect-matrix-21-planes}.
By direct computation, the determinant of this matrix is $27$. 
Thus, $r(M)=21$ and $(\xi_{1}, \xi_{2}, \cdots, \xi_{21})$ is a basis of $M$.
Since the rank of $A(X_{F})$ is not bigger than $21$, we also have $r(A(X_{F}))=21$. By $(h_{X_F}^2.\xi_{i})=1$ ($i=1,...,21$), the square $h_{X_F}^{2}$ of hyperplane class is expressed as 
\begin{equation}\label{eq:h2Fermat}
h_{X_F}^{2}=(0,-1, -1, 1, 0, -1, 0, 1, -1, 1, -1, 1, 1, 1, -1, 0, -1, 0, 2, 2, 0)
\end{equation}
under the basis $(\xi_1,...,\xi_{21})$ of $M$.

Next, we will show that $M=A(X_{F})$. Suppose otherwise. Then $A(X_F)$ is an overlattice of $M$ with $l:=[A(X_F):M]>1$. Since $\disc(M)=27$, it follows that $l=3$ and $\disc(A(X_F))=3$.

We claim that $M$ has only one nontrivial overlattice.
To show this, 
we pick up a basis of the dual $M^{\vee}$ of $M$, 
$$
M^{\vee}=\langle \xi_{1}, \xi_{2}, \cdots, \xi_{19}, \eta, \theta \rangle\subset M\otimes \mathbb{Q},
$$
where 
$\eta=-\frac{2}{3}\xi_{2}-\frac{2}{3}\xi_{3}-\frac{2}{3}\xi_{4}-\frac{1}{3}\xi_{5}-\frac{2}{3}\xi_{6}-\frac{2}{3}\xi_{7}-\frac{1}{3}\xi_{8}-\frac{2}{3}\xi_{10}-\frac{2}{3}\xi_{14}-\frac{1}{3}\xi_{15}-\frac{1}{3}\xi_{16}-\frac{1}{3}\xi_{17}+\frac{1}{3}\xi_{20}$
and  
$\theta
=-\frac{4}{9}\xi_{1}-\frac{5}{9}\xi_{2}-\frac{2}{9}\xi_{3}+\frac{1}{9}\xi_{4}+\frac{1}{9}\xi_{5}+\frac{2}{9}\xi_{7}+\frac{1}{9}\xi_{8}-\frac{1}{3}\xi_{9}+\frac{2}{9}\xi_{10}-\frac{2}{9}\xi_{11}-\frac{4}{9}\xi_{12}-\frac{1}{9}\xi_{13}-\frac{2}{3}\xi_{14}-\frac{2}{9}\xi_{15}-\frac{4}{9}\xi_{16}-\frac{7}{9}\xi_{17}-\frac{7}{9}\xi_{18}-\frac{5}{9}\xi_{19}-\frac{1}{9}\xi_{20}+\frac{1}{9}\xi_{21}$.
Hence, the glue group of $M$,
$$
G(M)=M^{\vee}/ M=\langle \bar{\eta}, \bar{\theta} \rangle\cong \Z/3\Z \oplus \Z/9\Z,
$$
and its intersection matrix
$$
\left(
\begin{array}{cc}
b_{G(M)}(\bar{\eta},\bar{\eta}) & b_{G(M)}(\bar{\eta},\bar{\theta}) \\
b_{G(M)}(\bar{\theta},\bar{\eta}) & b_{G(M)}(\bar{\theta},\bar{\theta})
\end{array}
\right)
=
\left(\begin{array}{cc}
\frac{2}{3} & \frac{1}{3} \\
\frac{1}{3} & \frac{5}{9}
\end{array}\right).
$$
We shall determine the isotropic vectors $n\bar{\eta}+m\bar{\theta}$ ($0\leq n\leq 2$ and $0\leq m\leq 8$), 
that is
$$
\left(\begin{array}{cc}
n & m
\end{array}
\right)
\left(\begin{array}{cc}
\frac{2}{3} & \frac{1}{3} \\
\frac{1}{3} & \frac{5}{9}
\end{array}
\right)
\left(\begin{array}{c}
n \\
m
\end{array}
\right)
\equiv 0 \;\mathrm{mod}\; \Z.
$$
So the isotropic vectors are $0$, $3\bar{\theta}$ and $6\bar{\theta}$.
Therefore, 
the glue group $G(M)$ of $M$ has only one non-trivial isotropic subgroup generated by 
$3\bar{\theta}$ and thus 
$M$ has only one nontrivial overlattice 
$$
\overline{M}:=
\langle \xi_{1}, \xi_{2}, \cdots, \xi_{19}, \xi_{20}, 3\theta \rangle.
$$
Thus, we have $A(X_{F})=\overline{M}$. 
However, the lattice $\overline{M}$ contains a root $e$ such that $(h_{X_{F}}^{2}.e)=0$, 
e.g., $e:=2\xi_{1}+2\xi_{2}+\xi_{3}-\xi_{4}-\xi_{7}-\xi_{8}+\xi_{9}-\xi_{10}+\xi_{11}+\xi_{12}+2\xi_{14}+\xi_{15}+\xi_{16}+3\xi_{17}+3\xi_{18}+\xi_{19}+3\theta$ 
with $(e.e)=2$.
This contradicts to \cite[\S 4, Proposition 1]{Voi86}.
\end{proof}

\begin{rem}\label{rem:FermatC14}
(1) In fact, Voisin proved that the algebraic cohomology of a special cubic fourfold can be generated either by smooth surfaces (cf. \cite[the proof of Theorem 5.6]{Voi17}) or by possibly singular rational surfaces (\cite{Voi07} and \cite[Remark 5.9]{Voi17}). 
A natural question arises (cf. \cite[Question 15]{Has16} and \cite[\S 6.1]{Nue17}): 
{\it is the algebraic cohomology of a special cubic fourfold generated by the classes of smooth rational surfaces}?
Our Proposition \ref{algcohom-Fermat} implies that this question holds for Fermat cubic fourfold.

(2) Let $X$ be a cubic fourfold containing a plane $P$.
Then there is a primitive sublattice $\langle h_X^2, [P]\rangle \subset A(X)$ of discriminant $8$ and thus $[X]\in \CC_{8}$.
Conversely, if $[X]\in \CC_{8}$ then $X$ contains a plane by Voisin \cite[\S 3]{Voi86}. 
Let $X$ be a cubic fourfold containing two disjoint planes $P$ and $P^\prime$.
Then $X$ is rational and the primitive sublattice $\langle h_X^2, h_{X}^{2}-[P]-[P^\prime]\rangle \subset A(X)$ is of discriminant $14$.
Therefore, $[X]\in \CC_{8}\cap \CC_{14}$ and $X$ has a Hodge-theoretically associated K3 surface. 
In particular, the Fermat cubic fourfold $[X_F]\in\CC_{8}\cap \CC_{14}$ is rational, and it has a (unique) Hodge-theoretically associated K3 surface $S$ with transcendental lattice 
$
T(S)=\left(\begin{array}{cc} 
6 & 3 \\
3 & 6 
\end{array} \right)
$. 
\end{rem}

\subsection{Proof of Theorem \ref{mainthm-two}}
This subsection is going to show Theorem \ref{mainthm-two}, 
that is, the Fermat cubic fourfold $[X_{F}]\in \CC_{d}$ for all integers $d$ satisfying \eqref{onestar}.
To this end, 
we would like to apply two classical results in number theory: Lagrange's four-square theorem and an analogue result of Ramanujan.

\begin{lem}[Lagrange]\label{Lagrange}
Any positive integer can be expressed by the form
$x^{2}+y^{2}+z^{2}+u^{2}$
for some integers $x, y, z, u$.
\end{lem}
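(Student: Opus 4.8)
The plan is to prove this classical theorem by the standard two-stage argument: reduce to the case of primes by means of a multiplicative identity, and then settle the prime case by Fermat's method of descent. Nothing from the earlier lattice-theoretic part of the paper is needed; the proof is self-contained number theory.

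First I would record Euler's four-square identity, which expresses the product $(a_1^2+a_2^2+a_3^2+a_4^2)(b_1^2+b_2^2+b_3^2+b_4^2)$ as a sum of four squares $c_1^2+c_2^2+c_3^2+c_4^2$, where each $c_i$ is an explicit bilinear expression in the $a_j$ and the $b_k$ (conceptually, this is the multiplicativity of the quaternion norm). Since $1=1^2$ and $2=1^2+1^2$ are visibly sums of four squares and every positive integer factors into primes, this identity reduces the theorem to showing that every odd prime $p$ is a sum of four squares.

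For the prime case, I would first produce a \emph{small} positive multiple of $p$ that is already a sum of four squares. A pigeonhole argument does this: the two subsets $\{x^2 \bmod p\}$ and $\{-1-y^2 \bmod p\}$ of $\mathbb{Z}/p\mathbb{Z}$ each have size $(p+1)/2$, and since their sizes add up to $p+1>p$ they must intersect. Hence there are integers $x,y$ with $x^2+y^2+1\equiv 0 \pmod{p}$, and controlling the sizes $|x|,|y|\le (p-1)/2$ gives $mp=x^2+y^2+1^2+0^2$ for some $m$ with $1\le m<p$. Now let $m$ be the \emph{least} positive integer such that $mp$ is a sum of four squares, say $mp=x_1^2+x_2^2+x_3^2+x_4^2$; the aim is to force $m=1$.

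The main obstacle is this descent step, which I would carry out by a case analysis on the parity of $m$. If $m$ is even, then an even number of the $x_i$ are odd, so after reordering both $x_1\pm x_2$ and $x_3\pm x_4$ are even; the four numbers $\tfrac{x_1\pm x_2}{2},\tfrac{x_3\pm x_4}{2}$ are integers whose squares sum to $(m/2)p$, contradicting the minimality of $m$. Hence the minimal $m$ is odd, and if $m>1$ I would choose $y_i\equiv x_i \pmod{m}$ with $|y_i|<m/2$; then $\sum y_i^2\equiv 0\pmod m$, so $\sum y_i^2=mn$ with $0\le n<m$. One checks $n\neq 0$, for otherwise $m\mid x_i$ for all $i$, whence $m^2\mid mp$, so $m\mid p$ and $m=1$. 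Applying Euler's identity to the product $(mp)(mn)=m^2\,np$ expresses $m^2 np$ as a sum of four squares, and the key verification is that each of these four squares is in fact divisible by $m$; dividing through then exhibits $np$ as a sum of four squares with $0<n<m$, contradicting the minimality of $m$. Therefore $m=1$, so $p$ itself is a sum of four squares, and combined with Euler's identity this proves the theorem.
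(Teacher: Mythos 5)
Your proof is correct: this is the standard classical argument for Lagrange's four-square theorem, namely Euler's four-square identity (multiplicativity of the quaternion norm) to reduce to primes, followed by the pigeonhole construction of a small multiple $mp$ that is a sum of four squares and Fermat's descent on the minimal such $m$, with the parity argument for even $m$ and the congruence choice $|y_i|<m/2$ for odd $m>1$. All the delicate points are handled properly — in particular the check that $n\neq 0$ (else $m\mid p$ forces $m=1$) and the key verification that the four bilinear expressions produced by Euler's identity are each divisible by $m$, so that dividing by $m^2$ exhibits $np$ as a sum of four squares with $0<n<m$.

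Note, however, that the paper itself gives no proof of this lemma: it is stated with attribution to Lagrange and used as a black box (together with Ramanujan's analogous result on $2x^2+2y^2+2z^2+3u^2$) in the proof of Theorem \ref{mainthm-two}. So there is no argument in the paper to compare against; your self-contained proof simply supplies the classical justification that the authors take for granted.
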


\begin{lem}[{Ramanujan \cite[Section 7]{Ram}}]\label{Ramanujan}
Any positive integer $l$ except for $l=1$ and $17$ can be expressed as the form
$2x^{2}+2y^{2}+2z^{2}+3u^{2}$ 
for some integers $x, y, z, u$.
\end{lem}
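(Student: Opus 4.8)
The plan is to reduce the assertion to the classical Gauss--Legendre three-square theorem, which says that a non-negative integer $m$ is a sum of three integer squares if and only if $m$ is not of the form $4^{a}(8b+7)$. Writing the form as $2(x^{2}+y^{2}+z^{2})+3u^{2}$, it suffices to exhibit, for each positive $l\neq 1,17$, an integer $u$ with $3u^{2}\le l$ such that $l-3u^{2}$ is even and $m:=(l-3u^{2})/2$ is a sum of three squares. Since $3u^{2}\equiv u \pmod 2$, the parity of $l-3u^{2}$ is governed by $u$ alone, so I would always take $u\equiv l\pmod 2$, making $l-3u^{2}$ even for free. The even integers are then quickly dispatched: writing $l=2l_{1}$ and $u=2v$ gives $m=l_{1}-6v^{2}$, and the choice $v=0$ works unless $l_{1}$ is of the forbidden shape, in which case a congruence computation modulo $8$ shows that $v=1$ (legitimate because every such $l_{1}$ is at least $7$) pushes $m=l_{1}-6$ into an allowed residue class.

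For odd $l$ I would take $u$ odd, so $u^{2}\equiv 1\pmod 8$ and $3u^{2}\equiv 3\pmod 8$, and analyse $l$ modulo $16$. When $l\equiv 5,7\pmod 8$, the value $(l-3)/2$ already lies in a residue class modulo $8$ that is never of the form $4^{a}(8b+7)$, so $u=1$ succeeds at once. When $l\equiv 1\pmod 8$, the alternatives $u\equiv\pm1$ and $u\equiv\pm3\pmod 8$ produce values of $m$ differing by $4$ modulo $8$, exactly one of which avoids the residue $7\pmod 8$; the good choice is $u=1$ if $l\equiv 9\pmod{16}$ and $u=3$ if $l\equiv 1\pmod{16}$. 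This last subcase is precisely where the exceptions arise, because the rescue $u=3$ demands $27=3u^{2}\le l$: the only odd $l\equiv 1\pmod{16}$ with $l<27$ are $l=1$ and $l=17$, for which no admissible $u$ exists, and one checks directly that neither is representable (the minimal positive value of the form is $2$, and $l=17$ forces $u=\pm1$, hence $x^{2}+y^{2}+z^{2}=7$, which is impossible).

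The genuinely delicate case, and the one I expect to be the main obstacle, is $l\equiv 3\pmod 8$. Here $3u^{2}\equiv 3\pmod 8$ forces $m\equiv 0\pmod 4$ for every odd $u$, so the crude residue test modulo $8$ is inconclusive; instead one must peel off powers of $4$, writing $m=4^{a}t$ with $4\nmid t$ and arranging $t\not\equiv 7\pmod 8$. I would control this by refining the choice of $u$ modulo a higher power of $2$, separating $l\equiv 3$ and $l\equiv 11\pmod{16}$ and, if needed, descending once more. The crux is to prove that for every $l\equiv 3\pmod 8$ beyond a small explicit bound some admissible $u$ makes $t$ escape the class $7\pmod 8$, while checking that this case introduces \emph{no} new exception. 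I would then close the argument by verifying the remaining finitely many small $l$ in each residue class by hand, together with the non-representability of $l=1$ and $l=17$ already noted above.
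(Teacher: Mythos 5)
The paper itself does not prove this lemma: it is quoted verbatim from Ramanujan \cite[Section 7]{Ram}, so any self-contained argument must stand entirely on its own. Your route through the Gauss--Legendre three-square theorem is the natural elementary one, and most of what you wrote is correct and complete: the parity reduction $u\equiv l\pmod 2$ is right; in the even case $l=2l_1$ the rescue $v=1$ works because a forbidden $l_1\equiv 0,4,7\pmod 8$ gives $l_1-6\equiv 2,6,1\pmod 8$, all safe classes; for $l\equiv 5,7\pmod 8$ the choice $u=1$ yields $m\equiv 1,2\pmod 4$, never of the form $4^a(8b+7)$; and for $l\equiv 1\pmod 8$ your split modulo $16$, with $u=3$ requiring $l\ge 27$, correctly isolates $l=1$ and $l=17$ as the only exceptions, whose non-representability you check properly.

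The genuine gap is the case $l\equiv 3\pmod 8$, which you yourself flag as ``the crux'' and then leave as a plan (``I would control this by refining the choice of $u$\dots if needed, descending once more'') rather than a proof. The difficulty is real: for every odd $u$ one has $3u^2\equiv 3\pmod 8$, hence $m=(l-3u^2)/2\equiv 0\pmod 4$, so the mod-$8$ test is vacuous and nothing you wrote rules out $m=4^a(8b+7)$. The step can be closed, but it needs an actual computation at level $32$: odd squares modulo $32$ take exactly the values $1,9,17,25$, each realized by some $u\in\{1,3,5,7\}$, so $3u^2$ runs over all residues $\equiv 3\pmod 8$ modulo $32$; choosing $u\in\{1,3,5,7\}$ with $3u^2\equiv l-8\pmod{32}$ forces $m\equiv 4\pmod{16}$, i.e.\ $m=4t$ with $t\equiv 1\pmod 4$, and such $t$ is never of the shape $4^a(8b+7)$. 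This choice requires $l\ge 3\cdot 7^2=147$, so the finitely many $l\equiv 3\pmod 8$ with $l\le 139$ must then be verified directly (e.g.\ $3=3\cdot 1^2$, $11=2\cdot 2^2+3$, $19=2\cdot 2^2+2\cdot 2^2+3$, and so on) --- a finite check you also deferred. Until the $l\equiv 3\pmod 8$ analysis and these residual verifications are actually carried out, the proposal is an outline with a hole at exactly the point it identifies as hardest, not a complete proof.
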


Before entering into the detailed arguments, 
we first sketch the basic idea for the proof of Theorem \ref{mainthm-two}.
By definition, a cubic fourfold $X$ is a special cubic fourfold of discriminant $d$,
i.e., $[X]\in \CC_{d}$, 
if and only if there exists an element $v \in A(X)$ such that the rank $2$ sublattice $\langle h_{X}^{2}, v \rangle \subset A(X)$ is primitive and moreover its discriminant
\begin{equation}\label{idea-equation}
\disc\, \langle h_{X}^{2}, v \rangle
=3(v.v)-(h_{X}^{2}.v)^{2}=d.
\end{equation}
In other words,
to smainthm-twohow that $[X]\in \CC_{d}$ for any integer $d=6k$ ($k\geq 2$) or $d=6k+2$ ($k\geq 1$), 
it is enough to find an element $v\in A(X)$ such that the rank $2$ sublattice $\langle h_{X}^{2}, v \rangle \subset A(X)$ is primitive and $d$ is represented by the {\it integral quadratic form} as \eqref{idea-equation}.

Now we are ready to prove Theorem \ref{mainthm-two}.

\begin{proof}[Proof of Theorem \ref{mainthm-two}]
From now on, we would consider the Fermat cubic fourfold $X_{F}$.
First of all, we pick the basis $(\xi_{1}, \xi_{2}, \cdots, \xi_{21})$ of $A(X_{F})$ as in the proof Proposition \ref{algcohom-Fermat}. Recall that $h_{X_F}^2$ is given by (\ref{eq:h2Fermat}).

Inspired by Lagrange's four-square theorem (see Lemma \ref{Lagrange}) and the result of Ramanujan (see Lemma \ref{Ramanujan}),
we will pick five pairs of planes $(P'_{11}, P'_{12}), \cdots, (P'_{51}, P'_{52})$ and another plane $P'$; see Appendix (\ref{5+1-planes}) for the explicit defining equations.
Under the basis $(\xi_{1}, \xi_{2}, \cdots, \xi_{21})$, 
their cohomology classes are expressed as \eqref{11planes}.
We denote by $\alpha_{i}:=[P'_{i1}]-[P'_{i2}]$ ($i=1,2,3,4,5$) and $\beta:=[P']$ the cohomology classes.
Next, we shall show the primitivity of the sublattice $\langle h_{X_F}^{2}, \alpha_{1}, \alpha_{2}, \alpha_{3}, \alpha_{4}, \alpha_{5}, \beta\rangle \subset A(X_{F})$.
For this goal,
we consider the coordinates of $h_{X_F}^{2}$, $\alpha_{1}$, $\alpha_{2}$, $\alpha_{3}$, $\alpha_{4}$, $\alpha_{5}, \beta$ under the basis $(\xi_{1}, \xi_{2}, \cdots, \xi_{21})$.
In fact, the coordinates form a $7\times 21$ matrix as follows:
$$
\tiny{
\left(
\begin{array}{ccccccccccccccccccccc}
 0& -1& -1& 1&	0& -1&  0&1&  -1&  1& -1& 1& 1& 1& -1& 0 & -1& 0&	2& 2& 0 \\
 0& -1& -1& 1&-1& -1&  0& 1&  -1& 1& -1& 1& 1& 1&	 -1&-1& -1& 0&	2& 1& 0\\
-1& -1& -1& 0& 0&  0& -1& 1&  -1& 1&  0& 0& 0& 0&	  0&	1& 0& 0&	1& 1& 0 \\
-2& -4& -5& 2& 0& -2& -1& 3&	-4& 3& -4& 5& 4& 3& -2&	1& -4& -2& 5&	5& -1\\
-1& -3& -3& 2& 0& -2&  0& 2&	-3& 3& -2& 3& 2& 3& -2&	0& -4& -1& 4&	3& -1\\
-2& -3& -3& 2& 0& -3& -1& 3&	-3& 2& -3& 3& 3& 2&  -2&	0& -3& -1& 4&	5& 0 \\
-3& -6& -8& 3& 1& -4& 1& 3& -6& 6& -6& 6& 7& 4& -3& 1& -7& -3& 9& 8& -2
\end{array}
\right).
}
$$
Considering the columns $1, 2, 3, 4, 5, 6, 8$ of the above matrix,
they form a $7\times 7$ matrix and 
its determinant is $1$, which implies the primitivity we wanted. 
By Lemma \ref{intersect-planes}, 
its Gram matrix is as follows: 
\begin{equation}\label{matrix}
\left(
\begin{array}{ccccccc} 
3 & 0 & 0 & 0 & 0 & 0 & 1 \\
0 & 4 & 0 & 0 & 0 & 0 & 0 \\
0 & 0 & 4 & 0 & 0 & 0 & 0 \\
0 & 0 & 0 & 4 & 0 & 0 & 0 \\
0 & 0 & 0 & 0 & 4 & 0 & 0 \\
0 & 0 & 0 & 0 & 0 & 6 & 0 \\
1 & 0 & 0 & 0 & 0 & 0 & 3  
\end{array}
\right).
\end{equation}

Now we will discuss the core of the proof.
Suppose
$v:=x_{1}\alpha_{1}+x_{2}\alpha_{2}+x_{3}\alpha_{3}+x_{5}\alpha_{5}+y\beta$ for integers $y$ and $x_{i}$ ($i=1,2, \cdots ,5$).
Then its self-intersection
$$
(v.v)=4x_{1}^{2}+4x_{2}^{2}+4x_{3}^{2}+4x_{4}^{2}+6x_{5}^{2}+3y^{2},
\;\;
\textrm{and}
\;\;
(h_{X_F}^{2}.v)=y.
$$
Considering the rank $2$ sublattice 
$$
\langle h_{X_F}^{2}, v \rangle\subset A(X_{F}),
$$ 
and then its discriminant 
\begin{equation}\label{our-form-for-d}
\disc\, \langle h_{X_F}^{2}, v \rangle
= 3(4x_{1}^{2}+4x_{2}^{2}+4x_{3}^{2}+4x_{4}^{2}+6x_{5}^{2})+8y^{2}.
\end{equation}

In the next, the proof is concluded by the following two cases:

\;

\paragraph{\bf{Case 1}: $d=6k$ for $k\geq 2$.}

Let $y:=0$ in \eqref{our-form-for-d}.
We shall find integers $x_{i}$ ($i=1,2,3,4,5$) such that 
$3(4x_{1}^{2}+4x_{2}^{2}+4x_{3}^{2}+4x_{4}^{2}+6x_{5}^{2})=6k=d$.
That means, we have
\begin{equation}\label{Ram-Lag-form}
k=2x_{1}^{2}+2x_{2}^{2}+2x_{3}^{2}+2x_{4}^{2}+3x_{5}^{2}.
\end{equation}
Considering the following two detailed cases:
\begin{itemize}
\item Suppose $k=2m$ ($m\geq 1$) and $x_{1}=1$. 
Hence the rank-two sublattice $\langle h_{X_F}^{2}, v \rangle\subset A(X_{F})$ is primitive.
By the result of Ramanujan (cf. Lemma \ref{Ramanujan}), 
there exist integers $x_{2}, x_{3}, x_{4}, x_{5}$ such that $k=2m$ is expressed as the form \eqref{Ram-Lag-form}
\item Suppose $k=2m+1$ ($m\geq 1$)  and $x_{5}=1$. 
Therefore, the rank-two sublattice $\langle h_{X_F}^{2}, v \rangle\subset A(X_{F})$ is also primitive.
By Lagrange's four-square theorem (cf. Lemma \ref{Lagrange}),
there exist integers $x_{1}, x_{2}, x_{3}, x_{4}$ such that $k=2m+1$ is expressed as the form \eqref{Ram-Lag-form}
\end{itemize}
In summary, for any $d=6k$ with $k\geq 2$, 
there exists an element $v\in A(X_{F})$ such that the sublattice $\langle h_{X_F}^{2}, v \rangle\subset A(X_{F})$ is primitive
and its discriminant $\disc \, \langle h_{X_F}^{2}, v \rangle=d=6k$.

\;

\paragraph{\bf{Case 2}: $d=6k+2$ for $k\geq 1$.}

Since the Fermat cubic fourfold $[X_{F}]\in \CC_{8} \cap \CC_{14}$, 
we may assume $k\geq 3$.
Let $y=1$.
Then the sublattice $\langle h_{X_F}^{2}, v \rangle\subset A(X_{F})$ is primitive and
its discriminant
$$
\disc\, \langle h_{X_F}^{2}, v \rangle
= 3(4x_{1}^{2}+4x_{2}^{2}+4x_{3}^{2}+4x_{4}^{2}+6x_{5}^{2})+8.
$$
Set $l:=k-1\geq 2$. 
Then $d=6k+2=6l+8$ and hence {\bf Case 2} follows immediately from {\bf Case 1}.
\end{proof}

Theorem \ref{mainthm-two} yields immediately a new proof of the Hassett's existence theorem  \cite[Theorem 4.3.1]{Has00} of special cubic fourfolds.

\begin{cor}[Hassett]
For any integer $d$ satisfying \eqref{onestar},
there exists a special cubic fourfold of discriminant $d$.
\end{cor}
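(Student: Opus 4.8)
The plan is to derive the corollary directly from Theorem \ref{mainthm-two}, so that essentially no new argument is needed. First I would unwind the definition: a special cubic fourfold of discriminant $d$ is a cubic fourfold $X$ whose algebraic cohomology $A(X)$ contains a rank $2$ primitive sublattice $K$ with $h_X^2 \in K$ and $\disc(K) = d$; equivalently, $[X] \in \CC_d$. Hence producing a special cubic fourfold of discriminant $d$ is the same as showing $\CC_d \neq \emptyset$.

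Next I would apply Theorem \ref{mainthm-two}, whose proof shows that for every integer $d$ satisfying \eqref{onestar} there is a class $v \in A(X_F)$ for which $\langle h_{X_F}^2, v \rangle$ is a primitive rank $2$ sublattice of discriminant $d$. This is exactly a labelling of discriminant $d$ on the Fermat cubic fourfold $X_F$, so $[X_F] \in \CC_d$; in particular $X_F$ itself is a special cubic fourfold of discriminant $d$. Since $d$ was an arbitrary integer satisfying \eqref{onestar}, this settles the corollary.

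I do not expect any obstacle at the level of the corollary itself: all of its content is carried by Theorem \ref{mainthm-two}, and the only genuinely nontrivial work occurs upstream, in the proof of that theorem. There the key inputs are the explicit description of $A(X_F)$ as the lattice generated by the classes of $21$ planes (Proposition \ref{algcohom-Fermat}), together with the representability theorems of Lagrange (Lemma \ref{Lagrange}) and Ramanujan (Lemma \ref{Ramanujan}), which guarantee that the relevant quadratic form $3(4x_1^2 + 4x_2^2 + 4x_3^2 + 4x_4^2 + 6x_5^2) + 8y^2$ represents every value $d$ allowed by \eqref{onestar}. Accordingly, the proof of the corollary reduces to citing these facts and reading off the definition of $\CC_d$.
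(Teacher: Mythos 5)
Your proposal is correct and matches the paper exactly: the corollary is stated there as an immediate consequence of Theorem \ref{mainthm-two}, since $[X_F]\in\CC_d$ for every $d$ satisfying \eqref{onestar} means $\CC_d\neq\emptyset$, i.e., the Fermat cubic itself is a special cubic fourfold of every admissible discriminant. Your identification of the upstream ingredients (Proposition \ref{algcohom-Fermat}, Lemmas \ref{Lagrange} and \ref{Ramanujan}) is also the same as the paper's.
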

 
Notice that our new proof is quite different from Hassett's proof. His approach \cite[Section 4]{Has00} uses the Torelli theorem for cubic fourfolds, the surjectivity of the period map for K3 surfaces, limiting Hodge structure, etc. 
Our approach here uses an explicit description (Proposition \ref{algcohom-Fermat}) of the algebraic cohomology of the Fermat cubic fourfold and two results (Lemmas \ref{Lagrange}, \ref{Ramanujan}) in number theory.  
An advantage of our approach is the following: 
for any $\CC_{d}$ with $d$ satisfying \eqref{onestar}, 
we give an explicit example of cubic fourfold contained in $\CC_{d}$.

As a final remark,  we noted that, in \cite{Awa20}, Awada studied to what extent one can intersect Hassett divisors in $\mathcal{C}$ and obtained non-emptyness of intersection of 20 Hassett divisors $\mathcal{C}_{d_i}$ ($i=1,...,20$) under some arithmetic conditions for the indexes $d_i$.

\section{Intersection of Hassett divisors}\label{sec:intersection}

In this section, based on Theorems \ref{mainthm-one} and \ref{mainthm-two}, we study intersection of 
Hassett divisors. 
We give an algorithm to determine all irreducible components of intersection of any two Hassett divisors (Algorithm \ref{alg:int}).
In particular, we use Algorithm \ref{alg:int} to determine all irreducible components of $\CC_{20}\cap\CC_{38}$ (Theorem \ref{C20intC38}), and we find new examples of rational cubic fourfolds parametrized by two divisors in $\CC_{20}$ (Corollary \ref{new-rat-cubics}) using explicit description of such components.

\subsection{Irreducible components of $\CC_{d_1}\cap \CC_{d_2}$: an algorithm}

\begin{lem}\label{lem:subset}
Let $M_i$, $i=1,2$ be two positive definite lattices of rank $r(M_i)\ge 2$ containing distinguished elements $\mathfrak{o}_i$ such that $\mathcal{C}_{M_i}\neq \emptyset$. 
Then $\mathcal{C}_{M_1}\subset \mathcal{C}_{M_2}$ if and only if there exists a primitive embedding $\varphi: M_2 \hookrightarrow M_1$ such that $\varphi(\mathfrak{o}_2)=\mathfrak{o}_1$.
\end{lem}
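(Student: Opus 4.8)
The plan is to translate the geometric inclusion $\mathcal{C}_{M_1} \subset \mathcal{C}_{M_2}$ into the lattice-theoretic existence of a primitive embedding, exploiting two ingredients established earlier: the normalization of distinguished elements (Lemma \ref{unique-CM-lem}, which gives $\mathcal{C}_{M_i} = \mathcal{C}_{(M_i,\mathfrak{o}_i)}$ and lets me carry any distinguished element onto $h_X^2$), and the realizability statement (Lemma \ref{non-empty-criterion}, which produces a cubic fourfold whose algebraic cohomology is isometric to $M_1$). Throughout I will use the elementary fact that primitive embeddings compose: if $A \hookrightarrow B$ and $B \hookrightarrow C$ are primitive, then so is $A \hookrightarrow C$, since if $x \in C$ has $nx \in A$ for some positive integer $n$, then $nx \in B$ forces $x \in B$ by primitivity of $B$ in $C$, and then $x \in A$ by primitivity of $A$ in $B$. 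In particular, pre- or post-composing a primitive embedding with an isometry preserves primitivity.

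For the implication ($\Leftarrow$), I would start with $[X] \in \mathcal{C}_{M_1}$, giving a primitive embedding $\iota: M_1 \hookrightarrow A(X)$ with $h_X^2 \in \iota(M_1)$. Since $\iota^{-1}(h_X^2)$ is a distinguished element of $M_1$, Lemma \ref{unique-CM-lem} supplies $f \in \mathrm{O}(M_1)$ with $f(\mathfrak{o}_1) = \iota^{-1}(h_X^2)$, so after replacing $\iota$ by $\iota \circ f$ I may assume $\iota(\mathfrak{o}_1) = h_X^2$. Then $\iota \circ \varphi: M_2 \hookrightarrow A(X)$ is primitive by the composition fact, and $(\iota \circ \varphi)(\mathfrak{o}_2) = \iota(\varphi(\mathfrak{o}_2)) = \iota(\mathfrak{o}_1) = h_X^2 \in (\iota \circ \varphi)(M_2)$, so $[X] \in \mathcal{C}_{M_2}$, giving $\mathcal{C}_{M_1} \subset \mathcal{C}_{M_2}$.

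For the implication ($\Rightarrow$), I would first use that $\mathcal{C}_{M_1} = \mathcal{C}_{(M_1,\mathfrak{o}_1)} \neq \emptyset$ to deduce, via Proposition \ref{two-non-empty}, that $M_1$ has no roots and $\mathcal{E}_{(M_1,\mathfrak{o}_1)} \neq \emptyset$; Lemma \ref{non-empty-criterion} then yields a cubic fourfold $[X] \in \mathcal{C}_{M_1}$ together with an isometry $\psi: M_1 \xrightarrow{\sim} A(X)$, which I normalize (again by Lemma \ref{unique-CM-lem}) so that $\psi(\mathfrak{o}_1) = h_X^2$. The inclusion hypothesis gives $[X] \in \mathcal{C}_{M_2}$, hence a primitive embedding $\iota_2: M_2 \hookrightarrow A(X)$ with $h_X^2 \in \iota_2(M_2)$, which I likewise normalize so that $\iota_2(\mathfrak{o}_2) = h_X^2$. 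Then $\varphi := \psi^{-1} \circ \iota_2: M_2 \hookrightarrow M_1$ is primitive (an isometry composed with a primitive embedding) and satisfies $\varphi(\mathfrak{o}_2) = \psi^{-1}(h_X^2) = \mathfrak{o}_1$, as required.

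The one place where the geometric hypotheses are genuinely used — and hence the conceptual heart of the argument — is the forward direction's reliance on Lemma \ref{non-empty-criterion} to realize $M_1$ as the \emph{full} algebraic cohomology $A(X)$, rather than merely as a proper primitive sublattice of it. This is exactly what allows a primitive embedding of $M_2$ into $A(X)$ to be pulled back to a primitive embedding into $M_1$ through the isometry $\psi$; had $A(X)$ been strictly larger than $\psi(M_1)$, the composition $\psi^{-1}\circ\iota_2$ would not land in $M_1$ at all. Apart from this point, the proof is a formal manipulation of primitive embeddings together with the distinguished-element normalization supplied by Lemma \ref{unique-CM-lem}, so I do not anticipate further obstacles.
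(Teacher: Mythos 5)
Your proof is correct and follows essentially the same route as the paper's: realize $M_1$ as the full algebraic cohomology $A(X)$ of some cubic fourfold (via the non-emptyness machinery of Proposition \ref{two-non-empty} and Lemma \ref{non-empty-criterion}), then pull the primitive embedding $\iota_2:M_2\hookrightarrow A(X)$ back through the isometry $\psi$ to get $\psi^{-1}\circ\iota_2:M_2\hookrightarrow M_1$. You simply make explicit the normalizations via Lemma \ref{unique-CM-lem} and the converse direction, which the paper dispatches with ``clearly.''
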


\begin{proof}
By Lemma \ref{unique-CM-lem},  $\mathcal{C}_{M_i}=\mathcal{C}_{(M_i,\mathfrak{o}_i)}$, $i=1,2$. Suppose $\mathcal{C}_{(M_1,\mathfrak{o}_1)}\subset \mathcal{C}_{(M_2,\mathfrak{o}_2)}$. 
Since $\mathcal{C}_{(M_1,\mathfrak{o}_1)}\ne \emptyset$, there exists a $M_{1}$-polarizable cubic fourfold $X$ and an isometry $\psi_1: M_1\longrightarrow A(X)$ such that $\psi_1 (\mathfrak{o}_1)=h_X^2$. Then $[X]\in \mathcal{C}_{(M_2,\mathfrak{o}_2)}$, which implies that there exists a primitive embedding $\psi_2: M_2\hookrightarrow A(X)$ such that $\psi_2 (\mathfrak{o}_2)=h_X^2$. Then $\psi_1^{-1}\psi_2$ is a primitive embedding from $M_2$ to $M_1$ mapping $\mathfrak{o}_2$ to $\mathfrak{o}_1$. Conversely, if such a primitive embedding exists, then clearly $\mathcal{C}_{(M_1,\mathfrak{o}_1)}\subset \mathcal{C}_{(M_2,\mathfrak{o}_2)}$.
\end{proof}

\begin{rem}
Note that the condition $\varphi(\mathfrak{o}_2)=\mathfrak{o}_1$ is necessary. Let $M_1:=\langle \mathfrak{o}_1, e_1,e_2 \rangle$ with Gram matrix $\left(
\begin{array}{cccc} 
3 & 1 & 4 \\
1& 3 &3  \\
4 & 3 & 10
\end{array}
\right)$, and let $M_2\subset M_1$ be the primitive sublattice generated by $\{e_1, 2e_2+\mathfrak{o}_1\}$ with Gram matrix $\left(\begin{array}{cc} 
         3 & 7 \\
         7 & 59 
         \end{array} \right)$. By Lemma \ref{lem:eventest}, $e_1$ is a distinguished element of $M_2$ (but not $M_1$). Then one can show that $\mathcal{C}_{M_1}$ is not contained in $\mathcal{C}_{M_2}=\mathcal{C}_{128}.$
\end{rem}

\begin{prop}\label{prop:int}
Let $M_i$, $i=1,...,k$ be positive definite lattices of rank $r(M_i)\ge 2$ containing distinguished elements $\mathfrak{o}_i$ such that the intersection 
$$
Z:=\bigcap_{i=1}^{i=k}\mathcal{C}_{M_i}\subset \CC
$$ 
is non-empty. Then there exist a positive integer $n$ and positive definite lattices $M_j^\prime$, $j=1,...,n$ containing distinguished elements $\mathfrak{o}_j^\prime$ with $\mathcal{C}_{M_j^\prime}\ne \emptyset$ such that  
\begin{enumerate}
\item $Z$ is the union of $\mathcal{C}_{M_j^\prime}$, $j=1, 2,\ldots,n$; 

\item $r(M_j^\prime)\leq r(M_1)+...+r(M_k)-k+1$, for any $1\leq j \leq n$; and

\item for any $i\neq j$, there exists no primitive embedding $\varphi: M_i^\prime \hookrightarrow M_j^\prime$ with $\varphi(\mathfrak{o}^\prime_i)=\mathfrak{o}^\prime_j$.
\end{enumerate}
Moreover, if $r(M_j^\prime)+l(M_j^\prime)\leq 20$ for all $1\leq j\leq n$, then $Z$ has exactly $n$ irreducible components $ \mathcal{C}_{M_1^\prime},..., \mathcal{C}_{M_n^\prime}$. 
\end{prop}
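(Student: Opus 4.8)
The plan is to attach to every point $[X]\in Z$ a single positive definite lattice that by itself cuts out a subvariety of $Z$ through $[X]$, and then to organize the finitely many lattices so obtained by a partial order coming from Lemma \ref{lem:subset}.

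First I would fix, for each $[X]\in Z$, primitive embeddings $\psi_i\colon M_i\hookrightarrow A(X)$ with $\psi_i(\mathfrak{o}_i)=h_X^2$ (these exist since $[X]\in\mathcal{C}_{M_i}$ for all $i$), and set $M_X:=\overline{\langle\psi_1(M_1),\ldots,\psi_k(M_k)\rangle}$, the primitive closure in $A(X)$ of the lattice generated by all the images. Then $M_X$ is a primitive sublattice of $A(X)$ containing $h_X^2$, so $h_X^2$ is a distinguished element of $M_X$ and $[X]\in\mathcal{C}_{M_X}$. Moreover each $\psi_i(M_i)$ is already primitive in $A(X)$, hence in $M_X$, so Lemma \ref{lem:subset} gives $\mathcal{C}_{M_X}\subset\mathcal{C}_{M_i}$ for every $i$, whence $\mathcal{C}_{M_X}\subset Z$. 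Since $\langle\psi_1(M_1),\ldots,\psi_k(M_k)\rangle$ is spanned by $k$ sublattices all containing the single class $h_X^2$, passing to $\mathbb{Q}$-spans shows $r(M_X)\le\sum_i r(M_i)-k+1$, which is the rank bound (2).

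The crux is a finiteness step: as $[X]$ ranges over $Z$ the lattices $M_X$ fall into only finitely many isometry classes. I would argue this by taking a basis of each $M_i$ to produce a generating set of $N_X:=\langle\psi_1(M_1),\ldots,\psi_k(M_k)\rangle$, whose diagonal Gram entries are the fixed norms inside the $M_i$; since $A(X)$ is positive definite, Cauchy--Schwarz bounds every cross term $(\psi_i(u).\psi_j(v))$ by $\sqrt{(u.u)(v.v)}$, a fixed constant. Hence only finitely many integral Gram matrices occur, so $N_X$ takes finitely many isometry classes and in particular $|\disc(N_X)|$ is bounded; as the overlattices of a fixed lattice are finite in number, $M_X=\overline{N_X}$ also takes finitely many isometry classes, and by Lemma \ref{unique-CM-lem} the distinguished element is then determined up to $\mathrm{O}(M_X)$. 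Choosing representatives $L_1,\ldots,L_N$ of these classes, each with $\mathcal{C}_{L_a}\neq\emptyset$ and $\mathcal{C}_{L_a}\subset Z$, we obtain $Z=\bigcup_{a}\mathcal{C}_{L_a}$.

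Finally I would pass to an irredundant sublist. On isometry classes of lattices with distinguished element declare $(L,\mathfrak{p})\preceq(L',\mathfrak{p}')$ iff there is a primitive embedding $L\hookrightarrow L'$ with $\mathfrak{p}\mapsto\mathfrak{p}'$; by Lemma \ref{lem:subset} this is the reverse inclusion $\mathcal{C}_{L'}\subset\mathcal{C}_{L}$. Composition of primitive embeddings is primitive (transitivity), and a primitive embedding between lattices of equal rank is an isomorphism (antisymmetry), so $\preceq$ is a partial order on $\{L_1,\ldots,L_N\}$. Taking $M_1',\ldots,M_n'$ to be its minimal elements: every $L_a$ dominates some minimal $M_j'$, so $\mathcal{C}_{L_a}\subset\mathcal{C}_{M_j'}$ and $Z=\bigcup_j\mathcal{C}_{M_j'}$, giving (1); minimality forbids a primitive embedding $M_i'\hookrightarrow M_j'$ respecting distinguished elements for $i\neq j$, giving (3); and (2) is inherited. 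For the last assertion, note each $M_j'$ has no roots (it embeds in some rootless $A(X)$ because $\mathcal{C}_{M_j'}\neq\emptyset$), so if in addition $r(M_j')+\ell(M_j')\le 20$ then Theorem \ref{mainthm-one-equal} makes each $\mathcal{C}_{M_j'}$ a non-empty irreducible closed subvariety; condition (3) with Lemma \ref{lem:subset} gives $\mathcal{C}_{M_i'}\not\subset\mathcal{C}_{M_j'}$ for $i\neq j$, so no member is contained in the union of the others, and the $\mathcal{C}_{M_j'}$ are precisely the $n$ irreducible components of $Z$. The main obstacle is the finiteness step, where positive definiteness (via Cauchy--Schwarz) is exactly what keeps the amalgamated lattices $M_X$ in finitely many isometry classes.
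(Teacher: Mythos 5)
Your proof is correct and follows essentially the same approach as the paper: both attach to each $[X]\in Z$ the primitive closure $M_X$ of the amalgam of the $\psi_i(M_i)$ inside $A(X)$, establish finiteness of the resulting isometry classes from positive definiteness plus bounded diagonal Gram entries (the paper's version of your Cauchy--Schwarz step), prune redundant lattices via Lemma \ref{lem:subset}, and invoke Theorem \ref{mainthm-one-equal} for the final irreducibility claim. The only differences are organizational: the paper reduces to $k=2$ by induction and works with a linearly independent generating subset, whereas you treat all $k$ uniformly and formalize the pruning as taking minimal elements of a partial order.
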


\begin{proof}
By induction, it suffices to prove the case $k=2$. From now on, we will assume $k=2$. We set $\mathcal{L}$ to be the set of pairs $(M,\mathfrak{o})$ consisting of a positive definite lattice $M$ and a distinguished element $\mathfrak{o}\in M$ such that 
\begin{enumerate}
\item[(a)] $\mathcal{C}_M$ is non-empty;
\item[(b)] $M$ has two primitive sublattices $N_i$ containing $\mathfrak{o}$, $i=1,2$;
\item[(c)] there exist isometries $\psi_i$ from $M_i$ to $N_i$ mapping $\mathfrak{o}_i$ to $\mathfrak{o}$; and
\item[(d)] $M$ is the primitive closure (inside $M$) of the sublattice generated by $N_1$ and $N_2$.
\end{enumerate}
Let $r_i=r(M_i)$, $i=1,2$. 
Note that $r(M)\leq r_1+r_2-1$ for any $(M,\mathfrak{o})\in \mathcal{L}$. 
Next we show that $\mathcal{L}$ is a finite set (up to isometries). 
Choose a basis $\{\alpha_{i1},...,\alpha_{ir_i}\}$ of $M_i$ such that $\alpha_{i1}=\mathfrak{o}_i$. 
Let $(M,\mathfrak{o})\in \mathcal{L}$. 
By (c) and (d), there exists a subset $\{\beta_1,...,\beta_s \}$ of $\{\alpha_{22},...,\alpha_{2r_2}\}$ such that 
$$
\psi_1(\alpha_{11}),...,\psi_1(\alpha_{1r_1}),\psi_2(\beta_1),...,\psi_2(\beta_s)
$$
is a $\mathbb{Q}$-basis of $M\otimes\mathbb{Q}$. Let $N$ denote the sublattice of $M$ generated by the elements 
$$
\psi_1(\alpha_{11}),...,\psi_1(\alpha_{1r_1}),\psi_2(\beta_1),...,\psi_2(\beta_s).
$$ 
Let $A$ denote the Gram matrix of $N$ with respect to this basis. Since $A$ is positive definite and the diagonal entries are bounded, it follows that $A$ has only finitely many possibilities, which implies both $N$ and its overlattice $M$ have only finitely many possibilities, up to isometry. 
Therefore, we may assume $\mathcal{L}=\{(M_1^\prime,\mathfrak{o}_1^\prime)$,.., $(M_n^\prime,\mathfrak{o}_n^\prime)\}$ for some positive integer $n$. 
Then 
$$
\bigcup_{j=1}^{n} \mathcal{C}_{(M_j^\prime,\mathfrak{o}_j^\prime)}\subset Z=\mathcal{C}_{(M_1,\mathfrak{o_1})}\cap \mathcal{C}_{(M_2,\mathfrak{o}_2)}.
$$ 
Let $[X]\in Z$. Then there exist primitive embeddings $\phi_i: M_i\hookrightarrow A(X)$ with $\phi_i(\mathfrak{o}_i)=h_X^2$. 
Let $M_X$ be the primitive closure of the sublattice $\phi_1(M_1)+\phi_2(M_2)$ of $A(X)$. 
Clearly, up to isometry, the pair $(M_X,h_X^2)$ is contained in $\mathcal{L}$. Then we have
$$
[X]\in \mathcal{C}_{(M_X,h_X^2)}\subset \bigcup_{j=1}^{n} \mathcal{C}_{(M_i^\prime,\mathfrak{o}_i^\prime)}.
$$
Thus, we get
$$
Z= \bigcup_{j=1}^{n} \mathcal{C}_{(M_j^\prime,\mathfrak{o}_j^\prime)}= \bigcup_{j=1}^{n} \mathcal{C}_{M_j^\prime}.
$$ 
By Lemma \ref{lem:subset}, after removing superfluous lattices we may and will assume that the statement (3) in the proposition holds. 
If $r(M_j^\prime)+l(M_j^\prime)\leq 20$ for all $1\leq j\leq n$, then by Theorem \ref{mainthm-one}, all $\mathcal{C}_{M_j^\prime}$ are irreducible, which implies that $Z$ has exactly $n$ irreducible components $ \mathcal{C}_{M_1^\prime},..., \mathcal{C}_{M_n^\prime}$.
\end{proof}

\begin{rem}
By Theorem \ref{mainthm-two},
for any given finitely many Hassett divisors $C_{d_{i}}$, $i=1,2, \cdots, k$, 
the intersection $Z:=C_{d_{1}}\cap C_{d_{2}} \cap \cdots \cap C_{d_{k}}$ is non-empty. 
Therefore, one can use Proposition \ref{prop:int} to determine the irreducible components.
\end{rem}

In particular, we have the following 

\begin{cor}\label{cor:twoCdint}
Let $Z\subset \mathcal{C}$ be the intersection of two different Hassett divisors $\mathcal{C}_{d_1}$ and $\mathcal{C}_{d_2}$. 
Then there exist a positive integer $n$ and  rank $3$ positive definite lattices $M_j$, $j=1,...,n$  with $\mathcal{C}_{M_j}\ne \emptyset$ such that $Z$ has exactly $n$ irreducible components $ \mathcal{C}_{M_1},..., \mathcal{C}_{M_n}$.
\end{cor}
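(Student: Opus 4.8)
The plan is to obtain this corollary as the rank-two specialization of Proposition \ref{prop:int}. By Corollary \ref{cor:rank2}, for $i=1,2$ we may write $\mathcal{C}_{d_i}=\mathcal{C}_{K_{d_i}}$, where $K_{d_i}$ is a positive definite rank-two lattice of discriminant $d_i\geq 8$ containing a distinguished element $\mathfrak{o}_i$. Theorem \ref{mainthm-two} guarantees that the Fermat cubic fourfold lies in $Z=\mathcal{C}_{d_1}\cap\mathcal{C}_{d_2}$, so $Z$ is non-empty and Proposition \ref{prop:int} applies with $k=2$, $M_1=K_{d_1}$, and $M_2=K_{d_2}$. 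This immediately produces positive definite lattices $M_j^\prime$ ($j=1,\dots,n$) with $\mathcal{C}_{M_j^\prime}\neq\emptyset$ and $Z=\bigcup_{j=1}^{n}\mathcal{C}_{M_j^\prime}$, satisfying the rank bound $r(M_j^\prime)\leq r(M_1)+r(M_2)-1=3$ together with the no-embedding condition (3) of that proposition.

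Two points then remain. First I would pin down the rank of each $M_j^\prime$ to be exactly $3$. By the construction in the proof of Proposition \ref{prop:int}, each $M_j^\prime$ is the primitive closure of a sublattice generated by primitive copies $N_1\cong K_{d_1}$ and $N_2\cong K_{d_2}$, with the distinguished elements both mapping to $\mathfrak{o}_j^\prime$. If $r(M_j^\prime)=2$, then $N_1$ and $N_2$ would be rank-two primitive sublattices of a rank-two lattice, hence both equal to $M_j^\prime$; this would force $K_{d_1}\cong M_j^\prime\cong K_{d_2}$, so $d_1=\disc(K_{d_1})=\disc(K_{d_2})=d_2$, contradicting $d_1\neq d_2$. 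Since also $r(M_j^\prime)\geq r(N_1)=2$, we conclude $r(M_j^\prime)=3$ for every $j$.

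Second, I would verify the irreducibility hypothesis of the final sentence of Proposition \ref{prop:int}, namely $r(M_j^\prime)+\ell(M_j^\prime)\leq 20$. This is automatic once the rank equals $3$: for any lattice $L$ the discriminant group $G(L)=L^\vee/L$ is generated by at most $r(L)$ elements (it is a quotient of two free $\Z$-modules of rank $r(L)$), so $\ell(L)\leq r(L)$, whence $r(M_j^\prime)+\ell(M_j^\prime)\leq 2\,r(M_j^\prime)=6\leq 20$. Proposition \ref{prop:int} then yields that $Z$ has exactly $n$ irreducible components $\mathcal{C}_{M_1^\prime},\dots,\mathcal{C}_{M_n^\prime}$, all of rank three, which is the claim.

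The genuinely hard work has already been carried out in Proposition \ref{prop:int} (finiteness of the gluing family $\mathcal{L}$), in Theorem \ref{mainthm-one} (irreducibility of $\mathcal{C}_M$), and in Theorem \ref{mainthm-two} (non-emptyness of $Z$); here the only substantive step is excluding rank two, and that obstruction dissolves precisely because $d_1\neq d_2$. I therefore expect no real difficulty. The one thing to be careful about is that the distinguished elements are preserved under the embeddings producing the $M_j^\prime$, so that the rank-two rigidity recorded in Corollary \ref{cor:rank2} can legitimately be invoked to force $d_1=d_2$ in the excluded case.
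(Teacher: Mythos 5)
Your proposal is correct and takes essentially the same route as the paper, which obtains this corollary directly by applying Proposition \ref{prop:int} with $k=2$ to the rank-two lattices $K_{d_1},K_{d_2}$ (via Corollary \ref{cor:rank2}) and invoking Theorem \ref{mainthm-two} for non-emptyness of $Z$. Your two supplementary checks --- that $d_1\neq d_2$ rules out rank-two components, and that $\ell(M_j')\leq r(M_j')=3$ makes the hypothesis $r+\ell\leq 20$ of Proposition \ref{prop:int} automatic --- are precisely the details the paper leaves implicit.
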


The following algorithm is based on Corollary \ref{cor:twoCdint} and the proof of Proposition \ref{prop:int}.

\begin{alg}\label{alg:int}
Input: a pair of distinct integers $(d_1,d_2)$ such that both $\mathcal{C}_{d_1}$ and $\mathcal{C}_{d_2}$ are non-empty. To find all irreducible components of $\mathcal{C}_{d_1}\cap \mathcal{C}_{d_2}$, proceed as follows.

(1) Let $M_\tau$ denote a rank 3 lattice with a basis $(\mathfrak{o}_{\tau},\mu_{\tau},\nu_{\tau})$ such that the corresponding Gram matrix (by abuse of notation, we use $M_\tau$ to denote this matrix too) is: 
\begin{enumerate}
\item[(i)] If  $d_1=6n_{1}+2, d_2=6n_{2}+2, 1\leq n_1<n_2$, then 
$$
M_{\tau}=\left(
\begin{array}{cccc} 
3 & 1 & 1 \\
1& 2n_{1}+1 & \tau  \\
1 & \tau & 2n_{2}+1
\end{array}
\right),
$$
where $\tau\in \mathbb{Z}$;

\item[(ii)] If $d_1=6n_{1}+2, d_2=6n_{2}, 1\leq n_1,2\leq n_2$, then 
$$
M_{\tau}=\left(
\begin{array}{cccc} 
3 & 1 & 0 \\
1& 2n_{1}+1 & \tau  \\
0 & \tau & 2n_{2}
\end{array}
\right),
$$
where $\tau\geq 0$ and $\tau\in \mathbb{Z}$;

\item[(iii)] If  $d_1=6n_{1}, d_2=6n_{2}, 2\leq n_1<n_2$, then  
$$
M_{\tau}=\left(
\begin{array}{cccc} 
3 & 0 & 0 \\
0& 2n_{1} & \tau  \\
0 & \tau & 2n_{2}
\end{array}
\right),
$$
where $\tau\geq 0$ and $\tau\in \mathbb{Z}$.
\end{enumerate}

(2) Find the set $\mathcal{T}_1$ of integers $\tau$ such that $\disc(M_{\tau})>0$ (this implies that $M_{\tau}$ is positive definite). Note that $\mathcal{T}_{1}$ is a finite set.

(3) Find the set $\mathcal{T}_2:=\{\tau \in \mathcal{T}_1 \mid M_{\tau} \text{ has no roots}\}$. Suppose $\mathcal{T}_2$ has exactly $k$ elements: $\tau_1,...,\tau_k$. Let $\mathcal{M}:=\{M_{\tau_1},...,M_{\tau_k} \}$. 

(4) For each $\tau\in\mathcal{T}_2$, find the set $\mathcal{N}_{\tau}:=\{N |\, N \text{ is a nontrivial overlattice of }M_{\tau}$, $K_{\tau,1} \text{ and } K_{\tau,2} \text{ are primitive sublattices of }N, \langle \mathfrak{o}_{\tau}\rangle_{N}^\perp \text{ is even, and }N \text{ has no roots}\}.$ Here $K_{\tau,1}:=\langle\mathfrak{o}_{\tau},\mu_{\tau}\rangle\subset M_{\tau}$ and $K_{\tau,2}:=\langle\mathfrak{o}_{\tau},\nu_{\tau}\rangle\subset M_{\tau}$.

(5) Find the set $\mathcal{M}^\prime=\{M_1^\prime,...,M_n^\prime\}$ of representatives of isometric classes of lattices in the union $\mathcal{M}\cup \mathcal{N}_{\tau_1}\cup...\cup \mathcal{N}_{\tau_k}$. Then output that $\mathcal{C}_{d_1}\cap \mathcal{C}_{d_2}$ has exactly $n$ irreducible components $\mathcal{C}_{M_1^\prime},...,\mathcal{C}_{M_n^\prime}$.
\end{alg}

\begin{rem}\label{alg:int-rem}
(a) By Lemma \ref{lem:eventest}, $\mathfrak{o}_{\tau}$ is a distinguished element of $M_{\tau}$ in all the three cases (i)-(iii) of the step (1). 
Note that in cases (ii) and (iii), there exists an isometry from $M_{\tau}$ to $M_{-\tau}$ mapping $\mathfrak{o}_{\tau},\mu_{\tau},\nu_{\tau}$ to $\mathfrak{o}_{-\tau},-\mu_{-\tau},\nu_{-\tau}$ respectively. Thus, we may require $\tau\geq 0$ in these two cases.

(b) In step (3), the $k$ lattices in $\mathcal{M}$ are of mutually different discriminants. By Theorem \ref{mainthm-one}, $\mathcal{C}_{M_{\tau_i}}$ is non-empty and irreducible for any $1\leq i\leq k$. Thus, in step (5), $n\ge k$, and we may choose $M_{\tau_i}$ as $M_{i}^\prime$ for $i=1,...,k$. 

(c) By Theorem \ref{prop:main}, 
Algorithm \ref{alg:int} can be adapted to compute irreducible components of $\mathcal{C}_M\cap \mathcal{C}_{d_2}$ for any positive definite lattice $M$ of rank $r(M)<10$ with $\mathcal{C}_M\ne \emptyset$.
\end{rem}

The following lemma is useful for determination of overlattices in the step (4) of Algorithm \ref{alg:int}.

\begin{lem}\label{lem:detMrank3}
Let $M$ be a rank three positive definite lattice containing a distinguished element $\mathfrak{o}$. Then $\disc(M)\equiv 0,1\; ({\rm mod}\; 4)$.
\end{lem}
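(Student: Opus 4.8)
The plan is to reduce everything to the even rank-two orthogonal complement of the distinguished element. Set $N:=\langle \mathfrak{o}\rangle^{\perp}_M$; by the distinguished hypothesis $N$ is an even positive definite lattice, and since $r(M)=3$ it has rank $2$. Because $(\mathfrak{o}.\mathfrak{o})=3$ is squarefree, $\mathfrak{o}$ is a primitive vector, so $\langle\mathfrak{o}\rangle$ is a primitive rank-one sublattice of $M$. Hence $M$ is a primitive extension of the orthogonal direct sum $\langle\mathfrak{o}\rangle\oplus N$ (both summands being primitive in $M$), and by the glue-theoretic description of overlattices in Section \ref{sec:lattice} the glue group $H:=M/(\langle\mathfrak{o}\rangle\oplus N)$ injects into $G(\langle\mathfrak{o}\rangle)\cong \Z/3\Z$. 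Therefore the index $[M:\langle\mathfrak{o}\rangle\oplus N]=|H|\in\{1,3\}$, and combining $\disc(\langle\mathfrak{o}\rangle\oplus N)=3\disc(N)$ with the index relation $[M:\langle\mathfrak{o}\rangle\oplus N]^2\disc(M)=\disc(\langle\mathfrak{o}\rangle\oplus N)$ gives
$$
3\,\disc(N)=[M:\langle\mathfrak{o}\rangle\oplus N]^2\,\disc(M).
$$

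Next I would record the mod-$4$ congruence for $N$. Writing the Gram matrix of the even binary lattice $N$ as $\left(\begin{smallmatrix} 2\alpha & \beta \\ \beta & 2\gamma\end{smallmatrix}\right)$, one computes $\disc(N)=4\alpha\gamma-\beta^2\equiv -\beta^2\pmod 4$, so that $\disc(N)\equiv 0$ or $3\pmod 4$ according as $\beta$ is even or odd. This is the only place the evenness of $N$ enters, and it is exactly what produces the desired congruence on $M$.

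The conclusion then follows from a short case analysis on the index. If $[M:\langle\mathfrak{o}\rangle\oplus N]=1$, then $\disc(M)=3\disc(N)$, which is $\equiv 0\pmod 4$ when $\disc(N)\equiv 0$ and $\equiv 3\cdot 3\equiv 1\pmod 4$ when $\disc(N)\equiv 3$. If the index is $3$, then $\disc(N)=3\disc(M)$; since $3$ is invertible modulo $4$, the two possibilities $\disc(N)\equiv 0,3\pmod 4$ force $\disc(M)\equiv 0,1\pmod 4$ respectively. In every case $\disc(M)\equiv 0,1\pmod 4$, as claimed.

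I do not expect any genuine obstacle here: the only structural input is the identification of the index as $1$ or $3$, which rests on the primitivity of $\mathfrak{o}$ (forced by $(\mathfrak{o}.\mathfrak{o})=3$) and the glue computation for $\langle\mathfrak{o}\rangle$; everything else is the elementary parity computation for an even binary form. The substance of the lemma is precisely that the evenness of $\langle\mathfrak{o}\rangle^{\perp}_M$ propagates, through the factor $3$ and the bounded index, to the mod-$4$ constraint on $\disc(M)$. A direct expansion of the $3\times 3$ determinant using the parities $(e_i.e_i)\equiv(\mathfrak{o}.e_i)\pmod 2$ from Lemma \ref{lem:eventest} would give an alternative proof, but the orthogonal-complement route avoids that bookkeeping.
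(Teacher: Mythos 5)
Your proof is correct and is essentially the paper's own argument: both decompose $M$ as a glued extension of $\langle\mathfrak{o}\rangle\oplus\langle\mathfrak{o}\rangle^{\perp}_M$, use the glue group's injection into $G(\langle\mathfrak{o}\rangle)\cong\Z/3\Z$ to get index $1$ or $3$, and invoke the fact that a rank-two positive definite even lattice has discriminant $\equiv 0,3\ (\mathrm{mod}\ 4)$. The only difference is cosmetic: you verify that last congruence explicitly from the Gram matrix $\left(\begin{smallmatrix}2\alpha & \beta\\ \beta & 2\gamma\end{smallmatrix}\right)$, whereas the paper simply asserts it.
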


\begin{proof}
Let $L:=\langle \mathfrak{o} \rangle_M^\perp $. Then $M\cong \langle \mathfrak{o} \rangle\oplus_{\psi} L$, where $\psi $ is a gluing map from $H_1\subset G(\langle \mathfrak{o} \rangle)\cong \mathbb{Z}/3 \mathbb{Z}$ to some subgroup $H_2\subset G(L)$ with $|H_1|=|H_2|\in\{1,3\}$. Thus, either $3\disc(L)=\disc(M)$ or $\disc(L)=3 \disc(M)$. The discriminant of any rank two positive definite even lattice must be either $0$ or $3$ $({\rm mod}\;4)$. Note that $L$ is such a lattice, which implies $\disc(M)\equiv 0, 1$ (mod 4).
\end{proof}

\begin{thm}\label{C20intC38}
The intersection $\CC_{20}\cap \CC_{38}$ has exactly $17$ 
irreducible components $\CC_{M_{\tau}}$ which are given by the rank $3$ lattices
$$
M_{\tau}:=
\left(
\begin{array}{cccc} 
3 & 1 & 1 \\
1& 7 & \tau  \\
1 & \tau & 13
\end{array}
\right)
$$
where $-8\leq \tau\leq 8$. Moreover, the discriminants $\disc(M_{-8})$, $\disc(M_{-7})$, ... , $\disc(M_{8})$ are $45$, $ 92$, $133$, $168$, $197$, $220$, $237$, $248$, $253$, $252$, $245$, $232$, $213$, $188$, $157$, $120$, $77$ respectively.
\end{thm}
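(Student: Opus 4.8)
The plan is to run Algorithm \ref{alg:int} on the pair $(d_1,d_2)=(20,38)$. Since $20=6\cdot 3+2$ and $38=6\cdot 6+2$ with $n_1=3<n_2=6$, we are in case (i), so the candidate lattices are exactly the
$$
M_\tau=\begin{pmatrix} 3 & 1 & 1 \\ 1 & 7 & \tau \\ 1 & \tau & 13\end{pmatrix},\qquad \tau\in\Z,
$$
with basis $(\mathfrak{o}_\tau,\mu_\tau,\nu_\tau)$. By Lemma \ref{lem:eventest}, $\mathfrak{o}_\tau$ is a distinguished element of $M_\tau$, and by Theorem \ref{mainthm-two} the intersection $\CC_{20}\cap\CC_{38}$ is non-empty. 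The first task (steps (2)--(3) of the algorithm) is a determinant computation: $\disc(M_\tau)=-3\tau^2+2\tau+253$, which is positive exactly for $-8\le \tau\le 9$, so $\mathcal{T}_1=\{-8,\dots,9\}$.

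Next I would isolate $\mathcal{T}_2$ by deciding which $M_\tau$ have no roots. The value $\tau=9$ is discarded because $\mu_9-\nu_9=(0,1,-1)$ has norm $7+13-2\cdot 9=2$, i.e. it is a root; for the remaining $-8\le\tau\le 8$ one verifies, via the equivalent condition $(x.x)\ge 3$ for all $0\ne x$ from Lemma \ref{non-empty-criterion} (a finite search over short vectors of the ternary form $3x^2+7y^2+13z^2+2xy+2xz+2\tau yz$), that $M_\tau$ has no roots. This leaves $\mathcal{T}_2=\{-8,\dots,8\}$, i.e. the $17$ lattices of the statement; substituting into $-3\tau^2+2\tau+253$ yields the discriminants $45,92,\dots,77$, which are pairwise distinct.

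The crux is step (4): showing $\mathcal{N}_\tau=\emptyset$ for every $\tau\in\mathcal{T}_2$. Any $N\in\mathcal{N}_\tau$ is a rank $3$ positive definite overlattice in which $\mathfrak{o}_\tau$ is still distinguished, so $\disc(N)=\disc(M_\tau)/t^2$ with $t=[N:M_\tau]>1$, and Lemma \ref{lem:detMrank3} forces $\disc(N)\equiv 0,1\ (\mathrm{mod}\ 4)$. Running this congruence filter against the $17$ discriminants eliminates every $\tau$ except $\tau=-8$ (with $t=3$, $\disc(N)=5$), $\tau=2$ (with $t=7$, $\disc(N)=5$), and $\tau=1$ (with $t=3$, $\disc(N)=28$). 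For the discriminant-$5$ candidates, Hermite's inequality with $\gamma_3=2^{1/3}$ gives minimum $\le 10^{1/3}<3$, so there is a nonzero $v$ with $(v.v)\in\{1,2\}$; since $N$ contains a distinguished element, $v$ forces a root (directly if $(v.v)=2$, and if $(v.v)=1$ then $(\mathfrak{o}.v)=\pm1$ by Cauchy--Schwarz and evenness of $\langle\mathfrak{o}\rangle^\perp_N$, whence $\mathfrak{o}-(\mathfrak{o}.v)v$ has norm $3-(\mathfrak{o}.v)^2=2$), contradicting the no-roots requirement. For $\tau=1$, the $3$-torsion of $G(M_1)$ is generated by $w=\tfrac13(\mu_1-\nu_1)$ with $(w.w)=\tfrac19\cdot 18=2$, so the unique index-$3$ overlattice $N=\langle M_1,w\rangle$ already contains the root $w$ and is excluded. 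Hence $\mathcal{N}_\tau=\emptyset$ throughout and $\mathcal{M}'=\{M_{-8},\dots,M_8\}$.

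Finally, since $r(M_\tau)+\ell(M_\tau)\le 3+3=6\le 20$ and $M_\tau$ has no roots, Theorem \ref{mainthm-one} makes each $\CC_{M_\tau}$ a non-empty irreducible closed subvariety of codimension $2$; as the $17$ discriminants are distinct the lattices are pairwise non-isometric, so Corollary \ref{cor:twoCdint} identifies these as precisely the $17$ irreducible components of $\CC_{20}\cap\CC_{38}$. I expect the main obstacle to be step (4), and within it the discriminant-$28$ case $\tau=1$: there Hermite's bound only yields minimum $\le 56^{1/3}\approx 3.83$ and does not by itself exclude a root-free overlattice, so one must analyze $G(M_1)$ explicitly, show its $3$-torsion is cyclic of order $3$ (whence the index-$3$ overlattice is unique), and exhibit the root $w$ inside it.
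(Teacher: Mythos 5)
Your proposal is correct and follows essentially the same route as the paper's proof: run Algorithm \ref{alg:int}, compute $\mathcal{T}_1=\{-8,\dots,9\}$ and $\mathcal{T}_2=\{-8,\dots,8\}$, use the mod-$4$ discriminant filter of Lemma \ref{lem:detMrank3} together with square-freeness to reduce the overlattice question to $\tau\in\{-8,2,1\}$, kill the two discriminant-$5$ candidates by a lattice-minimum bound (the paper cites Mordell where you invoke Hermite, and your norm-$1$ analysis just reproves part of Lemma \ref{non-empty-criterion}), and handle $\tau=1$ by an explicit analysis of the $3$-part of $G(M_1)$ — your root $w=\tfrac13(\mu_1-\nu_1)$ is exactly the negative of the paper's root $-2\mathfrak{o}_1+3\eta$. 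The one assertion you flag but do not carry out, that the $3$-torsion (kernel of multiplication by $3$) of $G(M_1)$ is cyclic of order $3$ so the index-$3$ overlattice is unique, is true and routine (e.g.\ by the Smith normal form of the Gram matrix, or as in the paper by exhibiting $\eta=\tfrac23\mathfrak{o}_1-\tfrac19\mu_1+\tfrac19\nu_1$ of order $9$ in $G(M_1)_3\cong \Z/9\Z$), so there is no genuine gap.
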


\begin{proof}
We follow Algorithm \ref{alg:int}. Let $d_1=20$ and $d_2=38$. Then $n_1=3$, $n_2=6$, and as in case (i) of step (1), $$M_{\tau}=\left(
\begin{array}{cccc} 
3 & 1 & 1 \\
1& 7 & \tau  \\
1 & \tau & 13
\end{array}
\right)$$ and $\tau\in \mathbb{Z}$. 
Thus, 
$$
\disc(M_\tau)=253+2\tau-3\tau^2,
$$ 
and we have $\mathcal{T}_1=\{\tau|\,\tau\in\mathbb{Z},\, -8\leq \tau\leq 9\}$. 
To find $\mathcal{T}_2$,  
we consider nonzero element $v:=x \mathfrak{o}_\tau+y\mu_{\tau}+z\nu_{\tau}$, $x, y, z\in \mathbb{Z}$.
Then 
\begin{eqnarray*}
(v.v)&=& 3x^{2}+7y^{2}+13z^{2}+2xy+2xz+2\tau yz\\
      &=& x^2+(x+y)^2+(x+z)^2+(6y^2+12z^2+2\tau yz)\geq 3
\end{eqnarray*}
for each integer $-8\leq \tau\leq 8$ (in fact, for these values of $\tau$, we have that $(6y^2+12z^2+2\tau yz)\geq 2$ if either $y$ or $z$ is nonzero). Note that if $\tau=9$ then $(v.v)=2$ for $x=0$, $y=-1$ and $z=1$. Thus, we have $\mathcal{T}_2=\{\tau|\,\tau\in\mathbb{Z},\, -8\leq \tau\leq 8\}$. 
To complete the proof of the theorem, we only need to show that $\mathcal{N}_\tau=\emptyset$ for all $\tau\in \mathcal{T}_2$. 

{\bf Cases $\tau\in \{-8,2\}$}: 
Suppose $\tau=-8$. 
Then $\disc(M_{-8})=45=3^{2}\cdot 5$. 
Suppose $N\in \mathcal{N}_{-8}$.  
Since $N$ is a nontrivial overlattice of $M_{-8}$, it follows that $[N:M_{-8}]=3$ and $\disc(N)=5$. 
Then the minimum of $N$ is less than $3$ (cf. \cite{Mo44}), contradiction. 
Thus, $\mathcal{N}_{-8}=\emptyset$. Similarly, $ \mathcal{N}_{\tau}=\emptyset$ for $\tau=2$.

{\bf Cases $\tau\in \{\pm 7,\pm 5,\pm 3,-1\}$}: 
Suppose $\tau=-7$.  
Then $\disc(M_{-7})=92=2^{2}\cdot 23$. 
Suppose $N\in \mathcal{N}_{-7}$.  
Then $[N:M_{-7}]=2$ and $\disc(N)=23$, which contradicts Lemma \ref{lem:detMrank3}. 
Thus, $\mathcal{N}_{-7}=\emptyset$. 
Similarly, $\mathcal{N}_{\tau}=\emptyset$ for $\tau=7,\pm 5,\pm 3,-1$.

{\bf Cases $\tau\in \{\pm 6,\pm 4,-2,0,8\}$}: 
Suppose $\tau=-6$. 
Then $\disc(M_{-6})=92=7\cdot19$, a square free integer, which implies $\mathcal{N}_{-6}=\emptyset$. 
Similarly, $\mathcal{N}_{\tau}=\emptyset$ for $\tau=6,\pm 4,-2,0,8$.

{\bf Case $\tau=1$}: 
This is the most complicated case. 
Note that $\disc(M_{1})=252=2^{2} \cdot 3^{2}\cdot 7$. 
Suppose $N\in \mathcal{N}_{1}$.  
Then $[N:M_{1}]=2$, $3$ or $6$, and $\disc(N)=63$, $28$, or $7$ respectively. 
By Lemma \ref{lem:detMrank3}, $\disc(N)$ cannot be $63$ or $7$. 
Then $[N:M_{1}]=3$ and $\disc(N)=28$. 
Let $\eta:=\frac{2}{3}\mathfrak{o}_{1}-\frac{1}{9}\mu_{1}+\frac{1}{9}\nu_1\in M_{1}^{\vee}$. 
Then $G(M_{1})_3=\langle \bar{\eta}\rangle=\mathbb{Z}/9\mathbb{Z}$, 
and $b_{G(M_{1})}(\bar{\eta},\bar{\eta})=\frac{5}{9}\in \mathbb{Q}/\mathbb{Z}$. 
Note that $G(M_{1})_3$ has exactly $2$ nonzero isotropic vectors $3\bar{\eta}$ and $6\bar{\eta}$. 
Then $(\mathfrak{o}_1,\mu_1, 3\eta)$ is a basis of $N$, and $N$ has a root $-2\mathfrak{o}_1+3\eta$, a contradiction. Thus, $\mathcal{N}_{1}=\emptyset$.
\end{proof}

\begin{rem}\label{rem:Bound}
Using the steps (1)-(3) of  Algorithm \ref{alg:int}, we can give lower bounds for the number of irreducible components of $\mathcal{C}_{d_1}\cap \mathcal{C}_{d_2}$.

Case (i): $d_1=6n_1+2$, $d_2=6n_2+2$, $n_{2}>n_{1}\geq 1$. 
Then $M_{\tau}$ is positive definite if and only if 
its discriminant $12n_{1}n_{2}+4n_{1}+4n_{2}+1+2\tau-3\tau^{2}>0$,
i.e., 
$$
\frac{1-\sqrt{36n_{1}n_{2}+12n_{1}+12n_{2}+4}}{3}
< \tau <
\frac{1+\sqrt{36n_{1}n_{2}+12n_{1}+12n_{2}+4}}{3}.
$$
We set $A:=\sqrt{36n_{1}n_{2}+12n_{1}+12n_{2}+4}$, $B:=\sqrt{36n_{1}n_{2}+18n_{1}+12n_{2}+6}$, $D:=\lfloor \frac{B}{3}-1\rfloor $. Note that 
$$
\frac{1-B}{3}<\frac{1-A}{3}<1-\frac{B}{3}<0<\frac{B}{3}-1<\frac{1+A}{3}<\frac{1+B}{3},
$$ and if $|\tau|\leq \frac{B}{3}-1$, then $M_{\tau}$ has no roots. 
Then the set $\mathcal{T}_2$ is equal to one of the following four sets (depending on the values of $n_1$ and $n_2$): $\{-D,-D+1,..., D\},\; \{-D-1,-D,...,D\}$,  $\{-D,-D+1,...,D+1\}$, $\{-D-1,-D,...,D+1\}$. In particular, 
$$
2D+1\le |\mathcal{T}_2|\le 2D+3,
$$ and the intersection $\mathcal{C}_{6n_1+2}\cap \mathcal{C}_{6n_2+2}$ has at least $2D+1$ irreducible components.

Case (ii): $d_1=6n_1+2$, $d_2=6n_2$, $n_1\geq 1$, $n_2\geq 2$. 
Then $M_{\tau}$ is positive definite if and only if 
$\disc(M_{\tau})=12n_{1}n_{2}+4n_{2}-3\tau^{2}>0$. 
We set $D:=\lfloor\sqrt{4n_1n_2+\frac{4n_2}{3}}-1\rfloor$. Then the set $\mathcal{T}_2$ is equal to one of the following two sets: $\{0,1,..., D\},\; \{0,1,...,D+1\}$. In particular, 
$$
D+1\le |\mathcal{T}_2|\le D+2,
$$ 
and $\mathcal{C}_{6n_1+2}\cap \mathcal{C}_{6n_2}$ has at least $D+1$ irreducible components.

Case (iii): $d_1=6n_1$, $d_2=6n_2$, $n_2>n_1\geq 2$. 
Then $M_{\tau}$ is positive definite if and only if 
$\disc(M_{\tau})=3(4n_{1}n_{2}-\tau^{2})>0$. 
We set $D:=\lfloor2\sqrt{n_1n_2}-1\rfloor$. 
Then the set $\mathcal{T}_2$ is equal to one of the following two sets: $\{0,1,..., D\},\; \{0,1,...,D+1\}$. 
In particular, 
$$
D+1\le |\mathcal{T}_2|\le D+2,
$$ 
and $\mathcal{C}_{6n_1}\cap \mathcal{C}_{6n_2}$ has at least $D+1$ irreducible components.
\end{rem}

The following theorem shows that new irreducible components might be found by computing overlattices in the step (4) of Algorithm \ref{alg:int}.

\begin{thm}\label{C20intC12}
The intersection $\CC_{20}\cap \CC_{12}$ has exactly $6$ irreducible components 
which are given by the following rank $3$ lattices
$$
M_{\tau}:=
\left(
\begin{array}{cccc} 
3 & 1 & 0 \\
1& 7 & \tau  \\
0 & \tau & 4
\end{array}
\right)
\;\;
\textrm{and} 
\;\;
N_0:=
\left(
\begin{array}{cccc} 
3 & 1 & 1 \\
1& 3 &-3  \\
1 & -3 & 7
\end{array}
\right)
$$
where $0\leq \tau \leq 4$ and $N_{0}$ is an overlattice of $M_{0}$. Moreover, the discriminants $\disc(M_0)$, ... , $\disc(M_4)$, $\disc(N_0)$ are $80$, $77$, $68$, $53$, $32$, $20$ respectively.
\end{thm}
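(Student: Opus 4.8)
The plan is to run Algorithm \ref{alg:int} with $(d_1,d_2)=(20,12)$. Since $20=6\cdot 3+2$ and $12=6\cdot 2$, we land in case (ii) of step (1) with $n_1=3$, $n_2=2$, so the relevant family is the rank-three lattice $M_\tau$ with Gram matrix $\left(\begin{smallmatrix}3&1&0\\1&7&\tau\\0&\tau&4\end{smallmatrix}\right)$, $\tau\ge 0$, whose distinguished element is $\mathfrak{o}_\tau$ (Remark \ref{alg:int-rem}(a)). First I would compute $\disc(M_\tau)=80-3\tau^2$, which is positive exactly for $0\le\tau\le 5$; hence $\mathcal{T}_1=\{0,1,2,3,4,5\}$, and the corresponding discriminants are $80,77,68,53,32,5$.

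Next, for step (3) I would determine $\mathcal{T}_2$ by a sum-of-squares identity. Writing $v=x\mathfrak{o}_\tau+y\mu_\tau+z\nu_\tau$, one checks $(v.v)=2x^2+(x+y)^2+(6y^2+2\tau yz+4z^2)$. The binary form $6y^2+2\tau yz+4z^2$ takes only even values and is positive definite precisely when $\tau^2<24$, i.e. $\tau\le 4$; hence for $\tau\in\{0,1,2,3,4\}$ every nonzero $v$ has $(v.v)\ge 3$ and $M_\tau$ has no roots, whereas for $\tau=5$ the vector $\mathfrak{o}_5-\mu_5+\nu_5$ has norm $2$. Thus $\mathcal{T}_2=\{0,1,2,3,4\}$ and $\mathcal{M}=\{M_0,\dots,M_4\}$.

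The heart of the argument is step (4), the overlattice computation. Since $\disc(M_1)=77=7\cdot 11$ and $\disc(M_3)=53$ are square-free, neither admits a nontrivial overlattice, so $\mathcal{N}_1=\mathcal{N}_3=\emptyset$. For $\tau\in\{0,2,4\}$ I would record two elementary facts: a ternary positive definite lattice of discriminant $\delta$ represents a nonzero value $\le\sqrt[3]{2\delta}$ (cf. \cite{Mo44}), and a lattice with distinguished $\mathfrak{o}$ and even $\langle\mathfrak{o}\rangle^\perp$ has no norm-$1$ vector (if $w$ had norm $1$ then $(\mathfrak{o}.w)\in\{0,\pm1\}$ by positive-definiteness of $\langle\mathfrak{o},w\rangle$; the value $0$ contradicts evenness of $\langle\mathfrak{o}\rangle^\perp$, and $\pm 1$ makes $\mathfrak{o}\mp w$ a root). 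Consequently any member of $\mathcal{N}_\tau$ has minimum $\ge 3$, hence discriminant $\ge 14$. This immediately kills the index-$4$ overlattices of $M_0$ and $M_4$ (discriminants $5$ and $2$) and the index-$2$ overlattice of $M_4$ (discriminant $8$), so $\mathcal{N}_4=\emptyset$. The only surviving candidates are the index-$2$ overlattices of discriminant $20$ for $M_0$ and of discriminant $17$ for $M_2$, which I would analyze by enumerating the order-$2$ isotropic elements of the discriminant groups $G(M_0)$ and $G(M_2)$ (both with $2$-part $\cong(\mathbb{Z}/2)^{2}$ up to the odd factor) and testing the three membership conditions: primitivity of $K_{\tau,1},K_{\tau,2}$, evenness of $\langle\mathfrak{o}_\tau\rangle^\perp$, and absence of roots. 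For $M_2$ each of the three order-$2$ gluings fails---one makes $\langle\mathfrak{o}_2\rangle^\perp$ odd and the others produce a root---so $\mathcal{N}_2=\emptyset$; for $M_0$ exactly one gluing (along $\overline{(\mathfrak{o}_0+\mu_0+\nu_0)/2}$) survives and is isometric to $N_0$, while the other two are excluded because they respectively destroy the primitivity of $K_{0,1}$ and make $\langle\mathfrak{o}_0\rangle^\perp$ odd.

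Finally I would assemble $\mathcal{M}'=\{M_0,M_1,M_2,M_3,M_4,N_0\}$. These six lattices have pairwise distinct discriminants $80,77,68,53,32,20$, hence are non-isometric, and since all have rank $3$ no primitive embedding sending distinguished element to distinguished element can exist between two distinct ones (it would be an isometry), so condition (3) of Proposition \ref{prop:int} holds automatically; moreover $r(M)+\ell(M)\le 3+3=6\le 20$ for each. By Proposition \ref{prop:int} together with Theorem \ref{mainthm-one}, $\CC_{20}\cap\CC_{12}$ therefore has exactly the six irreducible components $\CC_{M_0},\dots,\CC_{M_4},\CC_{N_0}$, each of codimension $2$. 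I expect the main obstacle to be step (4): correctly enumerating the isotropic subgroups of $G(M_0)$ and $G(M_2)$ and verifying that precisely one overlattice ($N_0$) passes all three conditions while $\mathcal{N}_2$ is empty. This is exactly where the genuinely new overlattice contribution appears, in contrast with the $\CC_{20}\cap\CC_{38}$ case of Theorem \ref{C20intC38} where all $\mathcal{N}_\tau$ vanish.
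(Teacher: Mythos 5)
Your proposal is correct and follows essentially the same route as the paper: it runs Algorithm \ref{alg:int} on the same family $M_{\tau}$, obtains $\mathcal{T}_2=\{0,1,2,3,4\}$, kills overlattices for $\tau\in\{1,2,3,4\}$ by square-freeness, the minimum bound, and enumeration of order-$2$ isotropic gluings, and identifies the unique surviving overlattice of $M_0$ as $N_0$ via the gluing along $\overline{(\mathfrak{o}_0+\mu_0+\nu_0)/2}$, exactly as the paper does. Your explicit handling of $\mathcal{N}_2$ (the discriminant-$17$ candidate, which neither Lemma \ref{lem:detMrank3} nor the minimum bound excludes) and of the small-discriminant exclusions simply fills in the details the paper compresses into ``like in the proof of Theorem \ref{C20intC38}.''
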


\begin{proof}
Again we follow Algorithm \ref{alg:int}. Let $d_1=20$ and $d_2=12$. Then $n_1=3$, $n_2=4$, and as in case (ii) of step (1),
$$
M_{\tau}=
\left(
\begin{array}{cccc} 
3 & 1& 0 \\
1& 7 & \tau  \\
0 & \tau & 4
\end{array}
\right),
$$
where $\tau\ge 0$ and $\tau\in\mathbb{Z}$. Then $\disc(M_\tau)=80-3\tau^2$, $\mathcal{T}_1=\{0,1,2,3,4,5\}$, and $\mathcal{T}_2=\{0,1,2,3,4\}$. Note that if $\tau=5$ and $v=\mu_\tau-\nu_\tau$, then $(v.v)=1<3$ and $M_\tau$ contains a root (see Lemma \ref{non-empty-criterion}). Like in the proof of Theorem \ref{C20intC38}, one can show that $\mathcal{N}_{\tau}=\emptyset$ for $\tau\in \{1,2,3,4\}$. Then we only need to consider the case $\tau=0$. Suppose  $N\in\mathcal{N}_0$. Since $\disc(M_0)=80=2^4\cdot 5$, it follows that $\disc(N)=20$ and $[N:M_0]=2$. Let $\eta:=\frac{1}{4}\mathfrak{o}_0+\frac{1}{4}\mu_0$, $\theta:=\frac{1}{4}\nu_0$. Then $G(M_0)_2=\langle\overline{\eta},\overline{\theta} \rangle\cong (\mathbb{Z}/4\mathbb{Z})^{\oplus 2}$. 
Note that $G(M_0)_2$ has exactly three order $2$ isotropic vectors: $2\overline{\eta}$, $2\overline{\theta}$, $2\overline{\eta}+2\overline{\theta}$. Since $K_{\tau,1}$ and $K_{\tau,2}$ must be primitive sublattices of $N$, it follows that $N/M_0=\langle 2\overline{\eta}+2\overline{\theta} \rangle$. Then $(\mathfrak{o}_0,2\eta+2\theta-\mu_0,\mu_0)$ is a basis of $N_0:=N$ with the desired Gram matrix. Since the six lattices $M_{\tau}$ ($\tau=0,1,2,3,4$) and $N_0$ have different discriminants, we have that $\mathcal{M}^\prime=\{M_0,M_1,M_2,M_3,M_4,N_0\}$. Thus, $\CC_{20}\cap \CC_{12}$ has exactly 6 irreducible components.
\end{proof}

\subsection{New rational cubic fourfolds}
In \cite{FL20} a birational involution $\sigma_V$ of $\CC_{20}$ was produced via the Cremona transformation defined by the Veronese surface. 
By considering the action of $\sigma_V$ on the three intersections $\CC_{20}\cap \CC_{26}$, $\CC_{20}\cap \CC_{38}$ and $\CC_{20}\cap \CC_{42}$, Fan--Lai \cite[Theorem 3.13]{FL20} found examples of rational cubics which were unknown before parametrized by three divisors in $\CC_{20}$.
Similarly, using Theorem \ref{C20intC38}, 
we find new rational cubics parametrized by two other divisors in $\CC_{20}$:

\begin{cor}\label{new-rat-cubics}
Let $M^\prime_{-4}$ and $M_4^\prime$ be the positive definite lattices with the Gram matrices  
$$
\left(
\begin{array}{cccc} 
3 & 4 & 7 \\
4& 12 & -3 \\
7 & -3 & 49
\end{array}
\right)
\;
\textrm{and} 
\;
\left(
\begin{array}{cccc} 
3 & 4 & -1 \\
4& 12 & 5 \\
-1 & 5 & 17
\end{array}
\right)
$$
of determinants $197$ and $213$ respectively.
Then $\mathcal{C}_{M^\prime_{-4}}$ and $\mathcal{C}_{M_4^\prime}$ are two non-empty irreducible divisors in $\CC_{20}$. Moreover, $\mathcal{C}_{M^\prime_{-4}}$ and $\mathcal{C}_{M_4^\prime}$ parametrize rational cubic fourfolds and are not in $\CC_{d}$ for all $d\in \{8,14,18,26,38,42\}$.
\end{cor}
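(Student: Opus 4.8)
The plan is to treat the two lattices uniformly, writing $M'_\tau$ for $\tau\in\{-4,4\}$, and to extract rationality from the component structure of $\mathcal{C}_{20}\cap\mathcal{C}_{38}$ in Theorem \ref{C20intC38} together with the Fan--Lai birational involution of $\mathcal{C}_{20}$. First I would record the elementary facts by direct computation on the two Gram matrices: each is positive definite (all leading principal minors $3$, $20$, and $197$ resp. $213$ are positive), the first basis vector $\mathfrak{o}'$ satisfies $(\mathfrak{o}'.\mathfrak{o}')=3$, and $(e_i.e_i)-(\mathfrak{o}'.e_i)$ is even for the other two basis vectors, so by Lemma \ref{lem:eventest} $\mathfrak{o}'$ is a distinguished element. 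Moreover the sublattice spanned by $\mathfrak{o}'$ and the second basis vector has Gram matrix $\left(\begin{smallmatrix}3&4\\4&12\end{smallmatrix}\right)$ of discriminant $20$ in both cases, and it is primitive; by Lemma \ref{lem:subset} this yields $\mathcal{C}_{M'_{-4}},\mathcal{C}_{M'_4}\subset\mathcal{C}_{20}$.

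Next I would show each $\mathcal{C}_{M'_\tau}$ is a non-empty irreducible divisor of $\mathcal{C}_{20}$. Since $r(M'_\tau)=3\le 10$, once I verify that $M'_\tau$ has no roots — equivalently $(v.v)\ge 3$ for all nonzero $v$ by Lemma \ref{non-empty-criterion}, checked either by completing the square on the ternary form or with PARI/GP — Theorem \ref{prop:main} gives that $\mathcal{C}_{M'_\tau}$ is a non-empty irreducible closed subvariety of codimension $r(M'_\tau)-1=2$ in $\mathcal{C}$. Combining this with $\mathcal{C}_{M'_\tau}\subset\mathcal{C}_{20}$ and $\operatorname{codim}_{\mathcal{C}}\mathcal{C}_{20}=1$ exhibits $\mathcal{C}_{M'_\tau}$ as an irreducible divisor of $\mathcal{C}_{20}$.

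For rationality I would use that $M_{-4}$ and $M_4$ (the off-diagonal entry being $\tau=\mp4$) are two of the seventeen components of $\mathcal{C}_{20}\cap\mathcal{C}_{38}$ in Theorem \ref{C20intC38}; since every cubic in $\mathcal{C}_{38}$ is rational \cite{RS18}, the cubics parametrized by $\mathcal{C}_{M_{\pm4}}$ are rational. The Cremona transformation attached to the Veronese surface induces the birational involution $\sigma_V$ of $\mathcal{C}_{20}$ of \cite{FL20}, and for a cubic $X$ the cubic $\sigma_V(X)$ is birational to $X$; hence rationality is preserved, and the transcendental lattice (a birational invariant of fourfolds) satisfies $T(\sigma_V(X))\cong T(X)$, so $A(\sigma_V(X))$ lies in the same genus as $A(X)$ — this is exactly why $M'_{\pm4}$ shares the discriminant of $M_{\pm4}$ yet is a different isometry class. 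The crux is to verify $\sigma_V(\mathcal{C}_{M_{\pm4}})=\mathcal{C}_{M'_{\pm4}}$: writing $\langle w\rangle:=(K_{20})^{\perp}_{M_{\pm4}}$ and using the explicit isometry $\rho$ of $K_{20}^{\perp}\subset\Lambda$ realizing $\sigma_V$ on $K_{20}$-polarized periods, one computes the primitive closure of $K_{20}\oplus\rho(\langle w\rangle)$ in $\Lambda$ and checks that it is isometric to $M'_{\pm4}$. Granting this, a general member of $\mathcal{C}_{M'_{\pm4}}$ is birational to a rational member of $\mathcal{C}_{M_{\pm4}}$, hence rational.

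Finally, for the non-containment I would invoke Lemma \ref{lem:subset}: an inclusion $\mathcal{C}_{M'_\tau}\subset\mathcal{C}_d$ would force a primitive rank-two sublattice $K=\langle\mathfrak{o}',v\rangle\subset M'_\tau$ with $\disc K=3(v.v)-(\mathfrak{o}'.v)^2=d$. Because $M'_\tau$ is positive definite, for each fixed $d\in\{8,14,18,26,38,42\}$ only finitely many $v$ (those with $(v.v)$ bounded in terms of $d$) can occur, so one checks directly — by hand or with PARI/GP — that none of these six discriminants is represented by a primitive such sublattice; in particular the exclusion of $d=38$ distinguishes $M'_{-4}$ from the component $M_{-4}$ of $\mathcal{C}_{20}\cap\mathcal{C}_{38}$. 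I expect the identification $\sigma_V(\mathcal{C}_{M_{\pm4}})=\mathcal{C}_{M'_{\pm4}}$ in the third step to be the main obstacle, as it requires the precise lattice-level description of the Cremona involution rather than a formal moduli argument; all remaining steps reduce to finite lattice computations.
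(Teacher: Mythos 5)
Your overall strategy coincides with the paper's: realize $\CC_{M'_{\pm 4}}$ as the images of the components $\CC_{M_{\pm 4}}$ of $\CC_{20}\cap\CC_{38}$ (Theorem~\ref{C20intC38}) under the Fan--Lai Cremona involution, import rationality from $\CC_{38}$, and exclude the six discriminants by a finite lattice check; the non-emptiness/irreducibility step via rootlessness and Theorem~\ref{prop:main} is also exactly the paper's (which cites Theorem~\ref{mainthm-one}). However, the step you yourself flag as ``the main obstacle'' --- verifying $\sigma_V(\CC_{M_{\pm4}})=\CC_{M'_{\pm4}}$ --- is precisely the content you have not supplied, and it does not require the elaborate computation you sketch (an isometry $\rho$ of $K_{20}^{\perp}\subset\Lambda$ and primitive closures of $K_{20}\oplus\rho(\langle w\rangle)$). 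The paper instead uses the explicit formula \cite[(3.12)]{FL20}: if $A(X)$ has Gram matrix $A$ with respect to a basis $(h_X^2,\mu,\nu)$ whose upper-left block is $\left(\begin{smallmatrix}3&4\\4&12\end{smallmatrix}\right)$, then $A(F_V(X))$ has an explicitly given Gram matrix $A'$. One first changes basis in $M_\tau$ to $(\mathfrak{o}_\tau,\mathfrak{o}_\tau+\mu_\tau,\nu_\tau)$ to reach that normal form, applies (3.12), and reads off exactly the two matrices in the statement for $\tau=\pm 4$. Note also that your justification ``the transcendental lattice is a birational invariant of fourfolds, hence $A(\sigma_V(X))$ lies in the same genus as $A(X)$'' is not an available fact and is not how the paper argues; and the paper additionally checks $\CC_{M_\tau}\cap\CC_8\neq\CC_{M_\tau}$ before applying the transformation to a very general member, a condition your write-up omits.

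There is a second genuine gap: your argument concludes only that a \emph{general} member of $\CC_{M'_{\pm4}}$ is rational, whereas the corollary asserts that $\CC_{M'_{\pm4}}$ parametrize rational cubic fourfolds, i.e.\ every member is rational. The paper closes this by Kontsevich--Tschinkel's specialization of birational types \cite[Theorem 1]{KT19}, which upgrades rationality of the very general member of the irreducible family to all members. Without this (or an equivalent specialization argument), your proof establishes a strictly weaker statement. The final non-containment step, checking that no $d\in\{8,14,18,26,38,42\}$ is the discriminant of a primitive rank-two sublattice of $M'_{\pm4}$ containing the distinguished element, is correct as you describe it and agrees with the paper.
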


\begin{proof}
By Theorem \ref{mainthm-one},  $\mathcal{C}_{M^\prime_{-4}}$ and $\mathcal{C}_{M_4^\prime}$ are two non-empty irreducible divisors in $\CC_{20}$. 
Next we show that  $\mathcal{C}_{M^\prime_{-4}}$ and $\mathcal{C}_{M_4^\prime}$ parametrize rational cubics. 
The idea is the same as the proof of \cite[Theorem 3.13]{FL20}. Let $\tau=\pm 4$ and $M_{\tau}$ be the positive definite lattice  with a basis $(\mathfrak{o}_{\tau},\mu_{\tau},\nu_{\tau})$ such that the Gram matrix as in Theorem \ref{C20intC38}.
By Theorem \ref{C20intC38}, $\CC_{M_{\tau}}$ is an irreducible component of $\CC_{20}\cap\CC_{38}$. Note that $\CC_{M_{\tau}}\cap \CC_{8}\neq \CC_{M_{\tau}}$, and  the Gram matrix of $M_{\tau}$ with respect to the basis $(\mathfrak{o}_{\tau},\mathfrak{o}_{\tau}+\mu_{\tau},\nu_{\tau})$ is 
$$
A:=\left(
\begin{array}{cccc} 
3 & 4 & 1 \\
4& 12 & \tau+1  \\
1 & \tau+1 & 13
\end{array}
\right).
$$
Let $X_\tau$ be a very general cubic in $\CC_{M_{\tau}}$ such that $A(X_\tau)\cong M_{\tau}$. 
Then the Gram matrix of $A(X_\tau)$ with respect to a suitable basis $(h_{X_\tau}^2,\mu,\nu)$ (for some $\mu,\nu\in A(X_\tau)$) is $A$. 
By \cite[(3.12)]{FL20},  $A(X_\tau^\prime)$ of $X_\tau^\prime:=F_V(X_\tau)$ has a basis $(h^2_{X_\tau^\prime},\mu^\prime,\nu^\prime)$ with the following Gram matrix 
$$
A^\prime:=\left(\begin{array}{ccc}
3 & 4 & 3-\tau \\
4 & 12 & \tau+1 \\
3-\tau & \tau+1 & 13+(2-\tau)^2
\end{array}\right),
$$
which implies that $[X_\tau^\prime]\in \CC_{M_{\tau}^\prime}$ ($\tau=\pm 4$). 
Note that $X_\tau$ is rational since $[X_\tau]\in \CC_{38}$ (\cite{RS19a}). 
Then $X_\tau^\prime$ is rational. 
Therefore, a very general cubic in $\CC_{M_{\tau}^\prime}$ is rational. 
From \cite[Theorem 1]{KT19} we deduce that all cubics in $\CC_{M_{\tau}^\prime}$ are rational. 
By computation (cf. the proof of Theorem \ref{mainthm-two}), for $d\in \{8,14,18,26,38,42\}$, there exists no rank two primitive sublattice $K_d\subset A(X_\tau^\prime)$, $h^2_{X_\tau^\prime}\in K_d$, of discriminant $d$, which implies that $[X^\prime_\tau]\notin \CC_d$.
\end{proof}

\subsection{Intersection of all Hassett divisors} 
Let $\mathcal{Z}\subset \CC$ be the intersection of all Hassett divisors, 
i.e.,
$$ 
\mathcal{Z}=\bigcap_{d>6, \;d\equiv 0, 2\; (\mathrm{mod}\; 6)} \;\mathcal{C}_{d}.
$$
Note that all cubic fourfolds in $\mathcal{Z}$ are rational. By Theorem \ref{mainthm-two}, $\mathcal{Z}$ is non-empty. Suprisingly, the dimension ${\rm dim}(\mathcal{Z})$ turns out to be large.

\begin{thm}\label{subvar-Inter-all-HasD}
${\rm dim}(\mathcal{Z})\ge 13$.
\end{thm}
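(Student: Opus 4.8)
The plan is to produce a single positive definite lattice $M'$ of rank $8$ whose moduli space $\CC_{M'}$ is an irreducible subvariety of $\mathcal{C}$ contained in every Hassett divisor; since $\CC_{M'}$ has codimension $r(M')-1=7$, this forces $\dim(\mathcal{Z})\ge 13$. The starting observation is that the rank $7$ lattice $M:=\langle h_{X_F}^2,\alpha_1,\dots,\alpha_5,\beta\rangle$ of \eqref{matrix}, which underlies the proof of Theorem \ref{mainthm-two}, already represents almost every admissible discriminant: by \eqref{our-form-for-d} the discriminants $\disc\langle\mathfrak{o},v\rangle$ with $\mathfrak{o}=h_{X_F}^2$ run through $8y^2+12A+18B$ ($A$ a sum of four squares, $B$ a square), and Lemmas \ref{Lagrange} and \ref{Ramanujan} show this set is exactly $\{8\}\cup\{6k:k\ge 2\}\cup\{6k+2:k\ge 3\}$. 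Thus the only value satisfying \eqref{onestar} that $M$ fails to represent is $d=14$, and repairing this single defect is the whole problem.

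First I would enlarge $M$ by gluing on a copy of $K_{14}$ along $\mathfrak{o}$. Concretely, let $M'$ be the rank $8$ lattice with basis $(\mathfrak{o},\alpha_1,\dots,\alpha_4,\alpha_5,\beta,w)$ and Gram matrix
\[
\begin{pmatrix}
3&0&0&0&0&0&1&1\\
0&4&0&0&0&0&0&0\\
0&0&4&0&0&0&0&0\\
0&0&0&4&0&0&0&0\\
0&0&0&0&4&0&0&0\\
0&0&0&0&0&6&0&0\\
1&0&0&0&0&0&3&0\\
1&0&0&0&0&0&0&5
\end{pmatrix}.
\]
Here $M\subset M'$ is primitive (as $M'/M\cong\Z$), and $\langle\mathfrak{o},w\rangle$ is a primitive rank $2$ sublattice with $\disc\langle\mathfrak{o},w\rangle=3\cdot 5-1^2=14$, so $\langle\mathfrak{o},w\rangle\cong K_{14}$. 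Lemma \ref{lem:eventest} shows $\mathfrak{o}$ is a distinguished element of $M'$, since $4-0$, $6-0$, $3-1$ and $5-1$ are all even.

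Next I would check the hypotheses of Theorem \ref{prop:main}. Writing a general vector of $M'$ as $a\mathfrak{o}+\sum_{i=1}^4x_i\alpha_i+x_5\alpha_5+b\beta+cw$, its norm equals $4\sum_{i=1}^4x_i^2+6x_5^2+Q(a,b,c)$ with $Q(a,b,c)=3a^2+3b^2+5c^2+2ab+2ac$; completing the square gives $Q=3\bigl(a+\tfrac{b+c}{3}\bigr)^2+\tfrac13(8b^2-2bc+14c^2)$, and since $8b^2-2bc+14c^2\ge 8$ whenever $(b,c)\ne(0,0)$ while $Q(a,0,0)=3a^2$, every nonzero element of $M'$ has norm $\ge 3$. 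Hence $M'$ has no roots (Lemma \ref{non-empty-criterion}), and because $r(M')=8\le 10$ Theorem \ref{prop:main} gives that $\CC_{M'}$ is a non-empty irreducible closed subvariety of codimension $7$, i.e. of dimension $13$. To conclude $\CC_{M'}\subset\mathcal{Z}$ I would check, for each $d$ satisfying \eqref{onestar}, that $M'$ contains a primitive rank $2$ sublattice $K_d\ni\mathfrak{o}$ of discriminant $d$: for $d=14$ take $\langle\mathfrak{o},w\rangle$, for $d=8$ take $\langle\mathfrak{o},\beta\rangle$, and for all other $d$ take the vector $v\in M\subset M'$ furnished by the proof of Theorem \ref{mainthm-two} (whose coordinate vector, having a $1$ in a slot where $\mathfrak{o}$ vanishes, yields a unimodular $2\times 2$ minor and hence primitivity of $\langle\mathfrak{o},v\rangle$ in $M$, thus in $M'$). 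By Corollary \ref{cor:rank2} each such $K_d$ satisfies $\CC_{K_d}=\CC_d$, so Lemma \ref{lem:subset} gives $\CC_{M'}\subset\CC_d$ for every admissible $d$; intersecting yields $\CC_{M'}\subset\mathcal{Z}$ and $\dim(\mathcal{Z})\ge 13$.

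The hard part is not any single step but the simultaneous balancing of two opposing requirements on $M'$: it must be rich enough to represent every discriminant in \eqref{onestar} --- where the exceptional small values $d=8,14$ and the even/odd split in the Lagrange--Ramanujan argument are the delicate points --- yet sparse enough to have no roots and rank at most $10$, so that Theorem \ref{prop:main} applies and the codimension stays as small as $7$. This tension is exactly what pins the bound at $13$: the rank $7$ lattice $M$ of \eqref{matrix} misses only $d=14$, and recovering that one discriminant costs precisely one extra generator, hence one unit of dimension.
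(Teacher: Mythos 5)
Your proof is correct and takes essentially the same approach as the paper: the paper likewise builds an explicit rootless rank $8$ positive definite lattice with a distinguished element that contains the rank $7$ lattice \eqref{matrix} together with a primitive rank $2$ sublattice of discriminant $14$, applies Theorem \ref{mainthm-one} (your appeal to Theorem \ref{prop:main} is equivalent here since $r=8\le 10$) to obtain a non-empty irreducible subvariety of dimension $13$, and concludes exactly as you do that it lies in every Hassett divisor. The only difference is the choice of Gram matrix: your extension by a single vector $w$ with $(w.w)=5$, $(\mathfrak{o}.w)=1$, orthogonal to $\alpha_1,\dots,\alpha_5,\beta$, is a somewhat simpler realization than the paper's rank $8$ matrix, but the verification steps (distinguished element, no roots, primitivity of the $\langle\mathfrak{o},v\rangle$, and the containment via Lemma \ref{lem:subset}) are otherwise identical.
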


\begin{proof}
We consider a rank $8$ lattice 
$
M=\langle \mathfrak{o}, \delta_{1}, \delta_{2}, \delta_{3}, \delta_{4}, \delta_{5}, \delta_{6}, \delta_{7} \rangle
$
with the following Gram matrix:
$$
\left(
\begin{array}{cccccccc} 
3 & 1 & 1 & 0 & 0 & 0 & 0 & 1 \\
1 & 3 & 1 & 0 & 0 & 0 & -2 & 0 \\
1 & 1 & 3 & 0 & 0 & 0 & -2 & 0 \\
0 & 0 & 0 & 4 & 0 & 0 & 0 & 0  \\
0 & 0 & 0 & 0 & 4 & 0 & 0 & 0  \\
0 & 0 & 0 & 0 & 0 & 4 & 0 & 0  \\
0 & -2 & -2 & 0 & 0 & 0 &  6 & 0 \\
1 & 0 & 0 & 0 & 0 & 0 & 0 & 3 
\end{array}
\right).
$$
First of all,
by Sylvester's criterion,
the lattice $M$ is positive definite. By Lemma \ref{lem:eventest}, $\mathfrak{o}$ is a distinguished element of $M$.
For any nonzero element 
$$
v=x\mathfrak{o}+\sum_{i=1}^{7}y_{i}\delta_{i}
$$
of $M$, the value 
\begin{eqnarray*}
(v.v)&=&
3x^{2}+3y_{1}^{2}+3y_{2}^{2}+4y_{3}^{2}+4y_{4}^{2}+4y_{5}^{2}+6y_{6}^{2}+3y_{7}^{2}+2xy_{1}+2xy_{2}+2y_{1}y_{2} \\
&+& 2xy_{7}-4y_{1}y_{6}-4y_{2}y_{6}\\
&=&(x+y_{1})^{2}+(x+y_{2})^{2}+(x+y_{7})^{2}+4y_3^2+4y_4^2+4y_5^2+2y_7^2\\
&+& (2y_1^2+2y_2^2+6y_6^2+2y_1y_2-4y_1y_6-4y_2y_6)\\
&\geq& 3.
\end{eqnarray*}
Here we use the fact that the quadratic form $ 2y_1^2+2y_2^2+6y_6^2+2y_1y_2-4y_1y_6-4y_2y_6$ is positive definite.  
Note that $r(M)+l(M)\le r(M)+r(M)=16<20$. 
Based on the above observations, according to Theorem \ref{mainthm-one}, 
$\CC_{M}\subset \CC$ is a non-empty irreducible subvariety of codimension $7$.

Note that the primitive sublattice $\langle \mathfrak{o}, \delta_{1}+\delta_{7} \rangle \subset M$ is of discriminant $14$. 
Moreover, the Gram matrix of the primitive sublattice $\langle \mathfrak{o}, \delta_{1}-\delta_{2}, \delta_{3}, \delta_{4}, \delta_{5}, \delta_{6}, \delta_{7}\rangle\subset M$ is the same as the matrix (\ref{matrix}). According to the proof of Theorem \ref{mainthm-two}, for any integer $d>6$ and $d\equiv 0,2\; ({\rm mod}\; 6)$, there exists $v\in M$ such that $\langle \mathfrak{o},v\rangle\subset M$ is a rank two primitive sublattice of discriminant $d$. Therefore, $\CC_M\subset \mathcal{Z}$. Then ${\rm dim}(\mathcal{Z})\ge {\rm dim}(\CC_M)=13$. This completes the proof of Theorem \ref{subvar-Inter-all-HasD}.
\end{proof}

\begin{quest}
What is the dimension of $\mathcal{Z}$?
\end{quest}

\begin{rem}
According to Proposition \ref{prop:int} and Remark \ref{alg:int-rem} (c), 
with the aid of computer, 
we can obtain that $\mathcal{C}_{8}\cap \mathcal{C}_{12}\cap \mathcal{C}_{18}$ has exactly $38$ irreducible components which are given by $38$ rank-four lattices.
Moreover, those irreducible components are not contained in $\mathcal{Z}$.
Combining with Theorem \ref{subvar-Inter-all-HasD}, we have $13\leq \dim(\mathcal{Z})\leq 16$.
Based on computer experiments, 
we speculate that the dimension of $\mathcal{Z}$ is $16$.
\end{rem}

By Proposition \ref{prop:int}, there exist positive definite lattices $M_i$, $i=1,...,n$ containing distinguished elements $\mathfrak{o}_i$ with $\CC_{M_i}\neq \emptyset$ such that $\mathcal{Z}=\bigcup_{i=1}^{n}\CC_{M_i}$ and for any $i\neq j$, there exists no primitive embeding $\varphi: M_{i} \hookrightarrow M_{j}$ with $\varphi(\mathfrak{o}_i)=\mathfrak{o}_j$. It is an interesting challenge to find these lattices.


\section{Admissible lattices}\label{sec:admissible}
 In this section, we introduce the notion of admissible lattices (Definition \ref{def:admissiblelattice}) and obtain a new criterion for admissibility (Theorems \ref{thm:admM1}, \ref{thm:admM2}). 
 As an illustration, we give explicit description of rank $3$ admissible lattices (Propositions \ref{prop:admMrank3-1}, \ref{prop:admMrank3-2}, Corollaries \ref{cor:C8divisors}, \ref{cor:C18divisors}). 
Moreover, we find infinitely many examples of rank $11$, the maximal value, nonadmissible lattices $M$ which can be realized as the algebraic cohomologies of cubic fourfolds (Corollary \ref{cor:anwserLaza}). 
Existence of such $M$ was asked by Laza \cite{Laz18}.

\begin{defn}\label{def:admissiblelattice}
Let $M$ be a positive definite lattice of rank $r(M)\ge 2$ with a distinguished element $\mathfrak{o}$. We say $M$ is an {\it admissible} lattice if $M$ has a rank $2$ primitive sublattice $K$, $\mathfrak{o}\in K$ such that the discriminant $\disc(K)$ satisfies the property \eqref{twostar}.
\end{defn}

Admissible lattices are closely related to geometry of lattice polarizable cubic fourfolds. 
According to Hassett \cite{Has00}, a cubic fourfold has a Hodge-theoretically associated K3 surface if and only if its algebraic cohomology is an admissible lattice.  
Kuznetsov's conjecture means that the rationality of a cubic fourfold is equivalent to the admissibility of its algebraic cohomology. 
Rank $2$ admissible lattices were classified by Hassett \cite{Has00}. 
However, classification of admissible lattices of higher rank is unknown. 
In order to characterize the admissibility of higher rank lattices in terms of some numerical conditions,
we will intensively use the notion of integral quadratic forms (in fact, the basic idea has been adopted in the proof of Theorem \ref{mainthm-two}).

Next we recall some basics on integral quadratic form for later use. 
By an {\it integral quadratic form} $f=f(x_1,...,x_n)$ in $n$ variables $x_1,...,x_n$ we shall mean a function
$$
f(x_1,...,x_n)=\sum_{i=1}^{n}a_{ii}x_i^{2}+\sum_{1\le i<j\le n}a_{ij}x_{i}x_{j}
$$
where $a_{ij}\in\mathbb{Z}$ $(1\le i\le j\le n)$. 
We say that $f$ is {\it primitive} if the greatest common divisor (gcd) of the coefficients $a_{ij}$ $(1\le i\le j\le n)$ is $1$. 
If $f(x_{1},...,x_{n})>0$ whenever at least one of $x_{1},...,x_{n}$ is nonzero, we call $f$ is {\it positive definite}. 
An integer $d$ is {\it represented} by a form $f$ if $f(y_{1},...,y_{n})=d$ for some $(y_{1},...,y_{n})\in \mathbb{Z}^n$. 
If ${\rm gcd}(y_{1},...,y_{n})=1$, we say that $d$ is {\it properly represented} by $f$. 
Two forms $f(x_{1},...,x_{n})$ and $g(x_{1},...,x_{n})$ are {\it equivalent} if there is an matrix $B=(b_{ij})\in{\rm GL}(n,\Z)$ with $\det(B)=\pm 1$ such that 
$$
f(\sum_{i=1}^nb_{1i}x_{i},...,\sum_{i=1}^nb_{ni}x_{i})=g(x_{1},...,x_{n}).
$$

\begin{defn}\label{associated-form}
Let $M$ be a positive definite lattice of rank $r\ge 2$ with a distinguished element $\mathfrak{o}$. Let $\mathbf{b}:=(\mathfrak{o}, \mu_1,...,\mu_n)$ be a basis of $M$, where $n=r-1$. 
We say the positive definite integral quadratic form
$$
f_{(M,\mathbf{b})}(x_1,...,x_n):=\disc (\langle \mathfrak{o}, \sum_{i=1}^{n}x_i \mu_i\rangle)
$$
is the {\it associated form} of the pair $(M, \mathbf{b})$.
\end{defn} 
Note that if $\mathbf{b}^\prime:=(\mathfrak{o}, \mu_1^\prime,...,\mu_n^\prime)$ is another basis of $M$, then the forms $f_{(M,\mathbf{b})}(x_1,...,x_n)$ and $f_{(M,\mathbf{b}^\prime)}(x_1,...,x_n)$ are equivalent.

\begin{lem}\label{lem:Mandform}
Let $M$ be a positive definite lattice of rank $r\ge 2$ with a basis $\mathbf{b}:=(\mathfrak{o},\mu_1,...,\mu_n)$ such that $\mathfrak{o}$ is a distinguished element, where $n=r-1$. 
Let $f:=f_{(M,\mathbf{b})}(x_1,...,x_n)$ be the associated form of $(M,\mathbf{b})$. 
Then the following statements are equivalent:
\begin{enumerate}
\item The lattice $M$ is admissible.
\item The form $f$ properly represents an integer satisfying  \eqref{twostar}.
\item The form $f$ represents an integer satisfying  \eqref{twostar}.
\end{enumerate}
\end{lem}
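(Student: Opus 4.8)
The plan is to establish the cycle of implications $(1)\Rightarrow(2)\Rightarrow(3)\Rightarrow(1)$, of which $(2)\Rightarrow(3)$ is immediate because a proper representation is in particular a representation. Throughout I would work with the explicit shape of the associated form: for $v=\sum_{i=1}^n x_i\mu_i$ one has
\[
f(x_1,\dots,x_n)=\disc(\langle\mathfrak{o},v\rangle)=3(v.v)-(\mathfrak{o}.v)^2,
\]
so that a value of $f$ is exactly the discriminant of the rank $2$ sublattice $\langle\mathfrak{o},v\rangle$. Note this sublattice really has rank $2$ whenever $v\neq 0$, since $\mathfrak{o},\mu_1,\dots,\mu_n$ is a basis and $v$ has no $\mathfrak{o}$-component, hence is linearly independent from $\mathfrak{o}$.

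For $(1)\Rightarrow(2)$, I would start from a rank $2$ primitive sublattice $K\subset M$ with $\mathfrak{o}\in K$ and $\disc(K)$ satisfying \eqref{twostar}. Since $(\mathfrak{o}.\mathfrak{o})=3$ is not divisible by any square $>1$, the element $\mathfrak{o}$ is primitive in $M$, hence primitive in $K$, so it extends to a basis $K=\langle\mathfrak{o},w\rangle$. Writing $w=c_0\mathfrak{o}+\sum_i x_i\mu_i$ and replacing $w$ by $v:=w-c_0\mathfrak{o}$ gives $K=\langle\mathfrak{o},v\rangle$ with $\disc(K)=f(x_1,\dots,x_n)$. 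The key observation is that primitivity of $K$ forces ${\rm gcd}(x_1,\dots,x_n)=1$: if some $g>1$ divided all the $x_i$, then $v/g\in M\setminus K$ would lie in the primitive closure of $K$, contradicting primitivity. Hence $f$ properly represents $\disc(K)$, an integer satisfying \eqref{twostar}.

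For $(3)\Rightarrow(1)$, suppose $f$ represents some $d$ satisfying \eqref{twostar}, say $d=f(x_1,\dots,x_n)$ with the $x_i$ not all zero, and set $v=\sum x_i\mu_i$. Then $K:=\langle\mathfrak{o},v\rangle$ is a rank $2$ sublattice with $\mathfrak{o}\in K$ and $\disc(K)=d$, but it need not be primitive. I would pass to its primitive closure $\bar K\subset M$, which is again rank $2$, is primitive, and still contains $\mathfrak{o}$; writing $m=[\bar K:K]$ one has $\disc(K)=m^2\disc(\bar K)$, whence $\disc(\bar K)=d/m^2$. The crux of the argument, which I expect to be the real content rather than bookkeeping, is the arithmetic observation that property \eqref{twostar} is preserved under dividing out a perfect square: each clause of \eqref{twostar} has the form ``$q\nmid d$'' for $q\in\{4,9\}\cup\{p\ \text{odd prime},\ p\equiv 2\,(\mathrm{mod}\,3)\}$, and if $q\mid (d/m^2)$ then a fortiori $q\mid d$, so $d$ satisfying \eqref{twostar} forces $d/m^2$ to satisfy it as well. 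Thus $\disc(\bar K)$ satisfies \eqref{twostar}, and $\bar K$ witnesses the admissibility of $M$.

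Finally I would record that the meaning of statements (2) and (3) is independent of the chosen basis $\mathbf{b}$: by the remark following Definition \ref{associated-form}, changing the basis (keeping $\mathfrak{o}$) replaces $f$ by a $\mathrm{GL}(n,\mathbb{Z})$-equivalent form, and equivalent forms represent, respectively properly represent, exactly the same integers. The main obstacle is genuinely the implication $(3)\Rightarrow(1)$, where one must rule out that clearing a square factor from the discriminant could destroy \eqref{twostar}; once this elementary but essential closure property of \eqref{twostar} under square divisors is isolated, the lattice-theoretic passage to the primitive closure is routine, and the $(1)\Rightarrow(2)$ direction reduces to the primitivity-versus-gcd dictionary.
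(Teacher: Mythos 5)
Your proposal is correct and takes essentially the same approach as the paper: both rest on the single arithmetic observation that property \eqref{twostar} survives division of the discriminant by a perfect square, the paper treating the equivalence $(1)\Leftrightarrow(2)$ as the definitional dictionary and proving $(3)\Rightarrow(2)$ by dividing the representing vector by $c:=\gcd(y_1,\dots,y_n)$. Your passage to the primitive closure in $(3)\Rightarrow(1)$ is the same operation in lattice language, since the primitive closure of $\langle \mathfrak{o},v\rangle$ is exactly $\langle \mathfrak{o},v/c\rangle$ with index $c$; the only difference is that you spell out the primitivity-versus-gcd dictionary that the paper leaves implicit.
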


\begin{proof}
The equivalence between (1) and (2) follows from the definition of $f_{(M,\mathbf{b})}(x_1,...,x_n)$. Obviously, (2) implies (3). 
Conversely, suppose (3) holds. 
Then there exists $(y_1,...,y_n)\in \Z^n$ such that $d:=f(y_1,...,y_n)$ satisfies  \eqref{twostar}. 
Let $c:={\rm gcd}(y_1,...,y_n)$. 
Then 
$$
f(\frac{y_1}{c},...,\frac{y_n}{c})=\frac{d}{c^2}
$$ 
also satisfies \eqref{twostar}, which implies (2).
\end{proof}

The following number theoretical result will play an important role in the proof of Theorems \ref{thm:admM1}, \ref{thm:admM2}.

\begin{prop}\label{prop:representprime}
Let $f=f(x_1,...,x_n)$ ($n\ge 2$) be a primitive positive definite integral quadratic form. 
Suppose  $f(y_1,...,y_n)\equiv 1\;  (\mathrm{mod}\, 3)$ for some $(y_1,...,y_n)\in \mathbb{Z}^n$. 
Then $f$ represents a prime $p \equiv 1\;  (\mathrm{mod}\, 3)$.
\end{prop}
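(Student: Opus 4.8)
The plan is to reduce the statement to the classical fact that a primitive positive definite binary quadratic form represents infinitely many primes, and to arrange matters so that every prime so represented (except possibly $3$) automatically lies in the residue class $1 \pmod 3$.

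First I would fix an integer vector $v = (y_1, \ldots, y_n)$ with $m := f(v) \equiv 1 \pmod 3$; in particular $3 \nmid m$, so no auxiliary gcd reduction is needed. I then want to restrict $f$ to a rank-two sublattice through $v$ on which it becomes a primitive binary form. For a second vector $w$ linearly independent from $v$, set $h_w(x,y) := f(xv + yw) = m x^2 + B xy + C y^2$ with $B = f(v+w) - f(v) - f(w)$ and $C = f(w)$ integers; positive definiteness of $f$ makes $h_w$ positive definite. The technical heart is to choose $w$ so that $h_w$ is primitive. Since $f$ is primitive, for each prime $\ell \mid m$ the reduction $f \bmod \ell$ is not identically zero, so there is a residue for $w$ with $f(w) \not\equiv 0 \pmod \ell$; by the Chinese Remainder Theorem over the finitely many primes dividing $m$ (and using $n \ge 2$ to keep $w$ independent from $v$) I can find a single $w$ with $\ell \nmid C$ for every $\ell \mid m$. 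Then $\gcd(m,B,C)$ divides $m$ yet is coprime to every prime factor of $m$, so it equals $1$, i.e. $h_w$ is primitive.

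Next I would build in the congruence mod $3$ by the substitution $y \mapsto 3y$, i.e. consider $\tilde h(x,y) := h_w(x,3y) = m x^2 + 3B xy + 9C y^2$. Because $3 \nmid m$ and $h_w$ is primitive, $\tilde h$ is again a primitive positive definite binary form. The point is its mod-$3$ behaviour: if $3 \mid x$ then $\tilde h(x,y) \equiv 0 \pmod 3$, whereas if $3 \nmid x$ then $\tilde h(x,y) \equiv m x^2 \equiv m \equiv 1 \pmod 3$. Invoking the classical theorem that a primitive positive definite binary quadratic form represents infinitely many primes, I pick a prime $p \ne 3$ represented by $\tilde h$, say $\tilde h(x_0, y_0) = p$. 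Such a $p$ forces $3 \nmid x_0$, hence $p \equiv 1 \pmod 3$; and $p = f(x_0 v + 3 y_0 w)$ is represented by $f$, as desired.

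I expect the obstacle to be twofold. The genuinely non-elementary ingredient is the cited theorem that every primitive positive definite binary quadratic form represents infinitely many primes (equivalently, at least one prime other than $3$); this rests on Dirichlet's theorem together with the theory of the class group of binary forms, and I would simply quote it. The main place where care is required inside the argument is the CRT construction of $w$ securing primitivity of $h_w$: one must handle simultaneously all primes dividing $m$, use that the relevant reductions of $f$ are nonzero (which is exactly where primitivity of $f$ and $n \ge 2$ enter), and retain linear independence of $v$ and $w$. The subsequent $y \mapsto 3y$ twist is then a clean device converting \emph{represents a prime} into \emph{represents a prime} $\equiv 1 \pmod 3$.
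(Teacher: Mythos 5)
Your proposal is correct and follows essentially the same route as the paper's own proof: both arguments use CRT over the primes dividing $m=f(v)$ to produce a second vector $w$ with $f(w)$ coprime to $m$, pass to the binary form $f(xv+3yw)=mx^2+3Bxy+9Cy^2$, and invoke the classical theorem (Cox, \emph{Primes of the Form} $x^2+ny^2$, Theorem 9.12) that a primitive positive definite binary quadratic form represents infinitely many primes, with the mod~$3$ analysis then forcing any represented prime $p\neq 3$ to satisfy $p\equiv 1\ (\mathrm{mod}\ 3)$. Your explicit attention to choosing $w$ linearly independent of $v$ (so that the binary form is genuinely positive definite, not degenerate) is a detail the paper leaves implicit, and is a worthwhile refinement.
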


\begin{proof}
Let $u:=(y_1,...,y_n)\in \Z^n$ and let $a:=f(u)$. 
By the assumption, we have $a\equiv 1\;(\mathrm{mod}\, 3)$. 
Consider the prime factorization 
$$
a= p_{1}^{m_{1}}\cdots p_{s}^{m_{s}}.
$$
Since the form $f$ is primitive, it follows that for any $1\le i\le s$, there exists $(b_{i1},...,b_{in})\in \Z^n$ such that $p_i\nmid f(b_{i1},...,b_{in})$. 
By Chinese remainder theorem, for each $1\le j\le n$, there exists $z_j\in \Z$ such that $z_j\equiv b_{ij} \;  (\mathrm{mod}\, p_i)$, $i=1,...,s$. Let $v:=(z_1,...,z_n)$ and let $c:= f(v)$. 
Then we have $p_i \nmid c$ for all $1\le i\le s$. 
Thus, $a$ and $c$ are coprime. 
Let $b:=f(u+v)-f(u)-f(v)$. 
Since ${\rm gcd}(a,9c)=1$, the form  
$$
g(x,y):=f(xu+3yv)=ax^{2}+3bxy +9cy^{2}
$$ 
is a primitive positive definite integral quadratic form in two variables $x$ and $y$. 
Then by \cite[Theorem 9.12]{Cox89}, $g$ represents infinitely many prime numbers. Note that $g(x,y)\equiv 0,1\;  (\mathrm{mod}\, 3)$ for any $(x,y)\in \Z^2$. 
Thus, there exists $(d,e)\in\Z^2$ such that $f(du+3ev)=g(d,e)$ is a prime $p \equiv 1\;  (\mathrm{mod}\, 3)$.
\end{proof}

\begin{rem}
Addington--Thomas \cite[Proposition 3.3]{AT14} proved case $n=2$ of Proposition \ref{prop:representprime} based on \cite[Theorem 9.12]{Cox89}.  
It is also a key ingredient in the proof of \cite[Theorem 3.1]{AT14} characterizing the admissibility for $A(X)$ of a cubic fourfold $X$ via the algebraic Mukai lattice of $\mathcal{A}_{X}$ (cf. Remark \ref{more--AT-rem}).
\end{rem}

The following elementary observation on the lattice invariant $\ell(N)$ will be useful.

\begin{lem}\label{lem:rl}
Let $N$ be a rank $r$ lattice.
Then there is a unique (up to isometry) rank $r$ lattice $N_0$ such that $\ell(N_0)<r$ and $N\cong N_0(m)$ for some positive integer $m$. Moreover, $m$ is characterized by the following equivalent conditions:
\begin{enumerate}
\item $m$ is the greatest positive integer such that $m | (u.v)$ for all $u,v\in N$.
\item $m$ is equal to the greatest common divisor of all entries $a_{ij}$ for any Gram matrix $A=(a_{ij})_{1\le i,j\le r}$ of $N$.
\item $m=s_r$, where  $G(N)\cong \Z/s_1\Z\oplus ...\oplus \Z/s_{r} \Z$ for some positive integers $s_i$ ($1\le i\le r$) with $s_{i+1} | s_{i}$, $i=1,...,r-1$.
\end{enumerate} 
\end{lem}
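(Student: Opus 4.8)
The plan is to reduce the entire statement to the Smith normal form of a Gram matrix, using the classical dictionary between the invariant factors of the inclusion $N\hookrightarrow N^{\vee}$ and the determinantal divisors of a Gram matrix $A$ of $N$. First I would establish the equivalence of the three characterizations of $m$. For (1)\,$\Leftrightarrow$\,(2): fixing a basis $(e_1,\dots,e_r)$ of $N$, bilinearity shows that the subgroup of $\Z$ generated by $\{(u.v)\mid u,v\in N\}$ coincides with the subgroup generated by the entries $(e_i.e_j)$, namely $\gcd_{i,j}(e_i.e_j)\cdot\Z$. Since the description in (1) is manifestly independent of the basis, the gcd in (2) is the same for every Gram matrix and equals the integer $m$ of (1).

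For (2)\,$\Leftrightarrow$\,(3) I would choose $U,V\in\mathrm{GL}(r,\Z)$ putting $A$ into Smith normal form $UAV=\mathrm{diag}(d_1,\dots,d_r)$ with $d_1\mid d_2\mid\cdots\mid d_r$. Then $G(N)=\mathrm{coker}(A\colon N\to N^{\vee})\cong\bigoplus_i \Z/d_i\Z$, so setting $s_i:=d_{r+1-i}$ yields the presentation in (3) with $s_{i+1}\mid s_i$. The key classical input is that the first determinantal divisor $d_1$ equals the gcd of all entries of $A$; hence $s_r=d_1=m$, which is exactly (3).

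Next comes existence. Since $m\mid(u.v)$ for all $u,v$ by (1), the rescaled form $\tfrac1m(-.-)$ is integral, so I would define $N_0$ to be the same $\Z$-module equipped with this form, giving $N\cong N_0(m)$ tautologically. A Gram matrix of $N_0$ is $A/m$, whose entries have gcd $1$; by (2)\,$\Leftrightarrow$\,(3) applied to $N_0$ this forces the smallest invariant factor $s_r(N_0)=1$, and since $\ell(N_0)$ counts the invariant factors exceeding $1$ and these form a divisibility chain $s_r\mid s_{r-1}\mid\cdots\mid s_1$, the condition $s_r(N_0)=1$ is equivalent to $\ell(N_0)<r$. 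This bridge $\ell(N_0)<r\iff\gcd(\text{entries})=1$ is really the crux of the lemma.

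Finally, for uniqueness, suppose $N\cong N_0(m)\cong N_1(m')$ with $\ell(N_0),\ell(N_1)<r$. By the bridge above, the gcd of the Gram entries of each $N_i$ is $1$, so the gcd of the entries of $N$ equals both $m$ and $m'$; hence $m=m'$, and $N_0(m)\cong N_1(m)$ forces $N_0\cong N_1$ after dividing both Gram matrices by $m$ and using the same $\mathrm{GL}(r,\Z)$ equivalence. The only genuinely nontrivial ingredient is the determinantal-divisor fact identifying $d_1$ with the entrywise gcd, which I expect to be the main point to invoke cleanly; the remainder is formal manipulation of Smith normal form.
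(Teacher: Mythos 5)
Your proof is correct and takes essentially the same approach as the paper: the paper applies the elementary divisor theorem to the inclusion $N\subset N^{\vee}$ (citing Cassels) and dismisses the equivalence of (1)--(3) as a ``direct computation,'' whereas you carry out exactly that computation explicitly via the Smith normal form of the Gram matrix and the identification of the first determinantal divisor $d_1$ with the entrywise gcd. The existence step (rescaling the form by $1/m$) is identical in both arguments, and your uniqueness paragraph supplies a detail the paper leaves implicit.
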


\begin{proof}
Since both $N$ and its dual $N^\vee$ are free $\Z$-modules of rank $r$, it follows that there is a $\Z$-basis $\alpha_1,...,\alpha_r$ of $N^\vee$ and positive integers $s_1,...,s_r$ with $s_{j+1}|s_j$ ($1\le j<r$) such that $s_1\alpha_1,...,s_r\alpha_r$ is a $\Z$-basis of $N$ (cf. \cite[Chapter 11, Theorem 5.1]{Cas78}). Then $G(N)\cong \Z/s_1\Z\oplus ...\oplus \Z/s_{l} \Z$ and $s_j=1$ for $j\ge l+1$,
where $l:=\ell(N)$. From these observations, one can obtain equivalence among (1), (2), (3) by direct computation. Let $N_0$ be the same free $\Z$-module as $N$ but with the bilinear form $(-.-)_{N_0}:=\frac{1}{m}(-.-)_{N}$. Then $N_0$ is a well-defined rank $r$ lattice with $\ell(N_0)<r$ and $N\cong N_0(m)$.
\end{proof}

\begin{set}\label{setup}
Let $M$ be a positive definite lattice of rank $r\ge 2$ with a basis $\mathbf{b}:=(\mathfrak{o},\mu_1,...,\mu_n)$ such that $\mathfrak{o}$ is a distinguished element, where $n=r-1$. 
Let $f:=f_{(M,\mathbf{b})}(x_1,...,x_n)$ be the associated form of $(M,\mathbf{b})$. 
We may write 
\begin{equation}\label{eq:fandg}
f(x_1,...,x_n)=\lambda g(x_1,...,x_n),
\end{equation}
where $\lambda$ is the greatest common divisor of all the coefficients of the form $f$ and $g(x_1,...,x_n)$ is a primitive integral quadratic form. 
Let $N:=\langle \mathfrak{o}\rangle^\perp_M$. 
By Lemma \ref{lem:rl}, there is a positive integer $m$ and a lattice $N_0$ such that $N\cong N_0(m)$ and $\ell(N_0)<n$. 
We set $\displaystyle m^\prime:=\frac{m}{3^a}$, where $a$ is the non-negative integer such that $3^a |m$ but $3^{a+1}\nmid m$.
\end{set}

Our numerical criterion (Theorem \ref{mainthm-intro-3}) for the admissibility of a positive definite lattice $M$ is divided into two cases:
(i) $M\neq \langle \mathfrak{o}\rangle \oplus N$;  (ii) $M= \langle \mathfrak{o}\rangle \oplus N$.

\begin{thm}\label{thm:admM1}
Under Setup \ref{setup}, we suppose that $M\neq \langle \mathfrak{o}\rangle \oplus N$. 
Then the following statements are equivalent:
\begin{enumerate}
\item The lattice $M$ is admissible.
\item The form $f$ represents an integer satisfying \eqref{twostar}.
\item The integer $\lambda$ satisfies \eqref{twostar}.
\end{enumerate}
Moreover, $\lambda=2m^\prime$ (resp. $\lambda=m^\prime$) if the lattice $N_0$ is even (resp. odd).
\end{thm}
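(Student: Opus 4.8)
The plan is to take the equivalence (1)$\Leftrightarrow$(2) for granted from Lemma \ref{lem:Mandform}, so that the whole statement reduces to the equivalence (2)$\Leftrightarrow$(3) together with the formula for $\lambda$. Setting $v=\sum_i x_i\mu_i$, the computational backbone is the congruence
\[
f(x_1,\dots,x_n)=3(v.v)-(\mathfrak{o}.v)^2\equiv-(\mathfrak{o}.v)^2\pmod 3 ,
\]
which shows that $f$ represents only integers $\equiv 0,2\pmod 3$. I would first record the dichotomy governed by $\mathfrak{o}$: in the split case every $v\in M$ has $3\mid(\mathfrak{o}.v)$, forcing $3\mid f$ and hence $3\mid\lambda$; in the present non-split case there is a glue vector $v_\ast\in M$ with $(\mathfrak{o}.v_\ast)=1$, so $f$ represents a value $\equiv 2\pmod 3$ and therefore $3\nmid\lambda$. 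This structural fact is exactly what separates Theorem \ref{thm:admM1} from the split case treated in Theorem \ref{thm:admM2}. (One must check that $v_\ast$ really yields a value of the associated form: $\disc\langle\mathfrak{o},v\rangle$ is unchanged under $v\mapsto v+k\mathfrak{o}$, so $f(c_1,\dots,c_n)=\disc\langle\mathfrak{o},v_\ast\rangle$ where $v_\ast=c_0\mathfrak{o}+\sum_i c_i\mu_i$.)

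For the formula I would argue prime by prime. Writing $\delta_N:=\gcd\{(y.y):y\in N\}$, the values of $f$ are exactly $\{3(w.w)\}$ where $w$ runs over the projection of $M$ to $\langle\mathfrak{o}\rangle^\perp\otimes\mathbb{Q}$, which is an overlattice of $N$ of index dividing $3$. Since this overlattice agrees with $N$ after tensoring with $\mathbb{Z}_p$ for every $p\neq 3$, one gets $v_p(\lambda)=v_p(\delta_N)$ for $p\neq 3$, while $v_3(\lambda)=0$ by the first paragraph; hence $\lambda=\delta_N/3^{a}$ with $a=v_3(m)$. Finally, the content-one lattice $N_0$ of Lemma \ref{lem:rl} satisfies $\gcd\{(y.y):y\in N_0\}=1$ or $2$ according as $N_0$ is odd or even, so $\delta_N=m$ or $2m$ and $\lambda=m'$ or $2m'$, as claimed.

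With the formula in hand, the implication (2)$\Rightarrow$(3) is immediate: if $f$ represents some $d$ satisfying \eqref{twostar} then $\lambda\mid d$, and each of the three defining conditions of \eqref{twostar} passes to divisors, so $\lambda$ satisfies \eqref{twostar}. The substance is (3)$\Rightarrow$(2), and here I would exploit the evenness of $N=\langle\mathfrak{o}\rangle^\perp_M$: since every $(y.y)$ is even, $\delta_N$ is even, whence $\lambda=\delta_N/3^{a}$ is even. Combining this with $3\nmid\lambda$ and the hypothesis that $\lambda$ satisfies \eqref{twostar} gives $2\,\|\,\lambda$ with odd part a product of primes $\equiv 1\pmod 3$, so $\lambda\equiv 2\pmod 3$. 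Consequently, at a vector where $f$ is coprime to $3$ (hence $\equiv 2$), the value $g=f/\lambda$ is $\equiv 2\cdot 2^{-1}\equiv 1\pmod 3$. Now Proposition \ref{prop:representprime} produces a prime $p\equiv 1\pmod 3$ represented by $g$, so $f$ represents $\lambda p$; as $\lambda$ satisfies \eqref{twostar} and $p\equiv 1\pmod 3$ is odd and prime to $3$, the integer $\lambda p$ satisfies \eqref{twostar}, and Lemma \ref{lem:Mandform} yields admissibility.

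The main obstacle is precisely this last step, i.e. guaranteeing that $g$ represents an integer $\equiv 1\pmod 3$. A naive attempt fails, because $f$ (and hence $g$ up to the scalar $\lambda$) can represent only two residue classes mod $3$, and examples show that $g$ may genuinely miss $1\pmod 3$ when $\lambda$ is even but $4\mid\lambda$ (so that \eqref{twostar} already fails). The resolution is the three-way interplay described above: evenness of $N$ forces $\lambda$ even, the non-split hypothesis forces $3\nmid\lambda$, and condition \eqref{twostar} forces $4\nmid\lambda$; together these pin down $\lambda\equiv 2\pmod 3$ and unlock Proposition \ref{prop:representprime}. I expect the bookkeeping that identifies $\delta_N$ with $m$ or $2m$ and localizes the content of $f$ away from the prime $3$ to be the most delicate part of writing this out carefully.
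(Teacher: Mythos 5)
Your proposal is correct, and its logical skeleton coincides with the paper's: reduce admissibility to representation via Lemma \ref{lem:Mandform}; show $2\mid\lambda$ and $3\nmid\lambda$, so that \eqref{twostar} forces $\lambda\equiv 2\pmod 3$; exhibit a value of $f$ prime to $3$, hence a value of $g$ congruent to $1\pmod 3$; and invoke Proposition \ref{prop:representprime} to get a prime $p\equiv 1\pmod 3$ so that $f$ represents $\lambda p$, which satisfies \eqref{twostar}. Where you genuinely differ is in how the supporting facts and the formula $\lambda=2m'$ (resp.\ $m'$) are obtained. The paper normalizes the basis via Corollary \ref{cor:rank2} so that the Gram matrix has the shape \eqref{essential-matrix-one}, reads off $f$ explicitly, takes $f(1,0,\dots,0)=3a_{22}-1$ as the value $\equiv 2\pmod 3$, and compares the resulting gcd expression for $\lambda$ with the Gram matrix \eqref{matrix-one-cor} of $N$; evenness of $\lambda$ and $3\nmid\lambda$ are visible from the entries. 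You instead work invariantly: the identity $f=3(\pi(v).\pi(v))$ for the orthogonal projection $\pi$ onto $\langle\mathfrak{o}\rangle^{\perp}\otimes\mathbb{Q}$, a glue vector with $3\nmid(\mathfrak{o}.v_*)$ supplying both $3\nmid\lambda$ and the needed value $\equiv 2\pmod 3$, and localization at primes $p\neq 3$ (where $\pi(M)\otimes\mathbb{Z}_p=N\otimes\mathbb{Z}_p$, since $[\pi(M):N]\mid 3$) yielding the $\lambda$-formula. Your route is coordinate-free and isolates exactly where the hypothesis $M\neq\langle\mathfrak{o}\rangle\oplus N$ enters; its price is that you must justify two content facts that the explicit computation never needs, namely that the content of an integral form equals the gcd of its values (so that it localizes), and that $\gcd\{(y.y):y\in N_0\}$ equals $1$ or $2$ according as $N_0$ is odd or even. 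Both are routine (a common prime divisor of all norms divides every $2(x.y)=(x+y.x+y)-(x.x)-(y.y)$, contradicting content one), but they should be written out. One small caveat, which your write-up shares with the paper's: Proposition \ref{prop:representprime} is stated for $n\geq 2$ variables, so the case $r(M)=2$ deserves a separate remark (there $f=\lambda x_1^2$ represents $\lambda$ itself, and nothing more is needed).
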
    

\begin{proof}
Let $A=(a_{ij})$ be the Gram matrix of $M$ with respect to the basis $\mathbf{b}=(\mathfrak{o},\mu_{1},\cdots,\mu_{n})$. 
Without loss of generality, 
by Corollary \ref{cor:rank2},
we may assume that $a_{12}=a_{21}=1$ and $a_{1j}=a_{j1}=0$ ($j=3,\cdots,r$),
i.e.,
\begin{equation}\label{essential-matrix-one}
A=\begin{pNiceMatrix}
3 & 1&  0& \cdots  &0 \\
1 & a_{22} & a_{23}&\cdots  &a_{2r} \\
0 & a_{32} & a_{33} & \ddots & \vdots  \\
\vdots & \vdots & \vdots  & \ddots & a_{nr} \\ 
0 & a_{r2}& \cdots &a_{rn} & a_{rr} 
\end{pNiceMatrix},
\end{equation}
where $a_{22}$ is odd and $a_{jj}$ is even ($j=3,\cdots,r$).
Then the associated form
\begin{equation}\label{eq:f1}
f=f_{(M,\mathbf{b})}(x_1,...,x_n)=(3a_{22}-1)x_1^2+3(\sum_{i=3}^r a_{ii}x_{i-1}^2+\sum_{2\le i<j\le r}2a_{ij}x_{i-1}x_{j-1}).
\end{equation}
Thus, we have 
\begin{equation}\label{eq:lambda}
\lambda={\rm gcd}(3a_{22}-1, 3a_{33},...,3a_{rr},6a_{23},...,6a_{2r},6a_{34},...,6a_{3r},...,6a_{nr}).
\end{equation}
Note that $\mathbf{b}^\prime:=(3\mu_1-\mathfrak{o}, \mu_2,...,\mu_r)$ is basis of $N$, and the Gram matrix is
\begin{equation}\label{matrix-one-cor}
\begin{pNiceMatrix}
9a_{22}-3 & 3a_{23}&\cdots  &3a_{2r} \\
3a_{32} & a_{33} & \cdots & a_{3r} \\
\vdots & \vdots  & \ddots & \vdots  \\ 
3a_{r2}& a_{r3} & \cdots  & a_{rr} 
\end{pNiceMatrix}.
\end{equation}
By Lemma \ref{lem:rl}, the integer $m$ is equal to the greatest common divisor of all entries of the Gram matrix \eqref{matrix-one-cor} of $N$ with respect to $\mathbf{b}^\prime$, i.e., 
\begin{equation}\label{eq:m}
m={\rm gcd}(9a_{22}-3,a_{33},...,a_{rr},3a_{23},...,3a_{2r},a_{34},...,a_{3r},...,a_{nr}).
\end{equation}
Obviously $3\nmid \lambda$. 
Then from (\ref{eq:lambda}) and (\ref{eq:m}), one obtain that $\lambda=2m^\prime$ (resp. $\lambda=m^\prime$) if the lattice $N_0$ is even (resp. odd). 
Since $a_{22}$ is odd and $a_{33},...,a_{rr}$ are even,
it follows that $\lambda$ is even. 

Now suppose (3) holds. 
Then $\lambda\equiv 2 \, ({\rm mod}\, 3)$. 
By (\ref{eq:fandg}) and (\ref{eq:f1}), we have 
$$
\lambda g(1,0,...,0)=f(1,0,...,0)=3a_{22}-1,
$$
which implies $g(1,0,...,0)\equiv 1 \, ({\rm mod}\, 3)$. 
Thus, by Proposition \ref{prop:representprime}, the form $g$ represents a prime $p\equiv 1 \, ({\rm mod}\, 3)$. 
Then the form $f$ represents the integer $\lambda p$ which satisfies  \eqref{twostar}. 
Therefore, (3) implies (2). 
Conversely, it is clear that (2) implies (3). 
By Lemma \ref{lem:Mandform}, (1) and (2) are equivalent.                                                                                                                                                                                                                                                                                                                                                                                                                                                                                                                                                                     \end{proof}

\begin{thm}\label{thm:admM2}
Under Setup \ref{setup}, we suppose that $M= \langle \mathfrak{o}\rangle \oplus N$. 
Then the following statements are equivalent:

\begin{enumerate}
\item The lattice $M$ is admissible.
\item The form $f$ represents an integer satisfying  \eqref{twostar}.
\item The integer $\lambda$ satisfies  \eqref{twostar} and $g(y_1,...,y_n)\equiv 1\;  (\mathrm{mod}\, 3)$ for some $(y_1,...,y_n)\in \mathbb{Z}^n$.
\end{enumerate}
Moreover, $\lambda=6m$ (resp. $\lambda=3m$) if the lattice $N_0$ is even (resp. odd).
\end{thm}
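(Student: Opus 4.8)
The plan is to mirror the structure of the proof of Theorem \ref{thm:admM1}: the equivalence of (1) and (2) is already furnished by Lemma \ref{lem:Mandform}, so the genuine content is the equivalence (2) $\Leftrightarrow$ (3) together with the closed formula for $\lambda$. Throughout I would exploit the defining feature of the present case, namely that $\mathfrak{o}$ distinguished forces $N:=\langle\mathfrak{o}\rangle^\perp_M$ to be \emph{even}, and that $M=\langle\mathfrak{o}\rangle\oplus N$ makes the associated form split off a clean factor of $3$.

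First I would pin down the shape of $f$. Since $M=\langle\mathfrak{o}\rangle\oplus N$, I may choose the basis $\mathbf{b}$ so that $\mu_1,\dots,\mu_n$ lie in $N$; then for $v=\sum x_i\mu_i\in N$ one has $(\mathfrak{o}.v)=0$, so
$$f(x_1,\dots,x_n)=\disc\langle\mathfrak{o},v\rangle=3(v.v)-(\mathfrak{o}.v)^2=3\,q_N(x_1,\dots,x_n),$$
where $q_N$ is the quadratic form of $N$. Writing $N\cong N_0(m)$ as in Lemma \ref{lem:rl}, I factor $q_N=m\,q_{N_0}$ and distinguish parity: if $N_0$ is even then $q_{N_0}=2g_0$ with $g_0$ primitive, giving $f=6m\,g_0$; if $N_0$ is odd then $q_{N_0}$ is itself primitive, giving $f=3m\,q_{N_0}$. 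This proves the ``moreover'' clause $\lambda=6m$ (resp. $3m$) and identifies $g$. Two consequences I will use repeatedly: $3\mid\lambda$ always, and $\lambda$ is always \emph{even} --- in the even case because $\lambda=6m$, and in the odd case because evenness of $N$ together with $N_0$ odd forces $m$ to be even.

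Next I would prove (3) $\Rightarrow$ (2). Assuming $n\ge 2$, the form $g$ is primitive positive definite and represents some $c\equiv 1\ (\mathrm{mod}\ 3)$, so Proposition \ref{prop:representprime} yields a prime $q\equiv 1\ (\mathrm{mod}\ 3)$ represented by $g$. Then $f=\lambda g$ represents $\lambda q$; since $\lambda$ satisfies \eqref{twostar} and $q\ge 7$ contributes no factor of $2$, $3$, or any prime $\equiv 2\ (\mathrm{mod}\ 3)$, the product $\lambda q$ again satisfies \eqref{twostar}, which is (2). The degenerate case $n=1$ (i.e. $r(M)=2$) I would settle separately, where $N$ is rank one, $g=x^2$, and the assertion collapses to the rank-two classification of Corollary \ref{cor:rank2}.

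Finally, for (2) $\Rightarrow$ (3), suppose $f=\lambda g$ represents $d$ satisfying \eqref{twostar}. As a divisor of $d$, the integer $\lambda$ inherits \eqref{twostar}. Writing $d=\lambda c$ with $c=g(\dots)$, the facts $3\mid\lambda$ and $9\nmid d$ force $v_3(\lambda)=1$ and $3\nmid c$, so $c\equiv 1$ or $2\ (\mathrm{mod}\ 3)$; the task is to exclude $c\equiv 2$. This last step is the main obstacle and is purely $2$-adic/$3$-adic bookkeeping. If $c\equiv 2\ (\mathrm{mod}\ 3)$, then $c$ carries an odd number of prime factors $\equiv 2\ (\mathrm{mod}\ 3)$ (with multiplicity); since $d=\lambda c$ admits no odd prime $\equiv 2\ (\mathrm{mod}\ 3)$, the prime $2$ must be the only such factor of $c$, whence $v_2(c)$ is odd and in particular $v_2(c)\ge 1$. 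Combined with $v_2(d)=v_2(\lambda)+v_2(c)\le 1$ this forces $\lambda$ to be odd, contradicting the evenness of $\lambda$ established above. Hence $c\equiv 1\ (\mathrm{mod}\ 3)$, proving (3). The decisive structural input making this contradiction available is precisely that $\mathfrak{o}$ distinguished renders $N$ even, which is exactly what keeps $\lambda$ even.
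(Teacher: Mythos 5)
Your proposal is correct and takes essentially the same approach as the paper: (1)$\Leftrightarrow$(2) via Lemma \ref{lem:Mandform}, (3)$\Rightarrow$(2) via Proposition \ref{prop:representprime}, (2)$\Rightarrow$(3) via the divisibility $6\mid\lambda$ forced by the evenness of $N=\langle\mathfrak{o}\rangle^{\perp}_M$, and the same factorization yielding $\lambda=6m$ (resp. $3m$) for $N_0$ even (resp. odd). The only deviations are cosmetic: the paper deduces $d/\lambda\equiv 1\pmod 3$ directly (since $2\nmid d/\lambda$, $3\nmid d/\lambda$, and no prime $\equiv 2\pmod 3$ divides $d/\lambda$, every prime factor is $\equiv 1\pmod 3$) rather than by your parity-counting contradiction, and your separate treatment of $n=1$ makes explicit a degenerate case that the paper leaves implicit.
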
    

\begin{proof}
The proof is similar to that of Theorem \ref{thm:admM1}. 
Let $A=(a_{ij})$ be the Gram matrix of $M$ with respect to the basis $\mathbf{b}=(\mathfrak{o},\mu_1,...,\mu_n)$. 
We may assume that $a_{1j}=a_{j1}=0$ ($j=2,...,r$),
namely, 
\begin{equation}\label{essential-matrix-two}
A=\begin{pNiceMatrix}
3 & 0 &  0& \cdots  &0 \\
0 & a_{22} & a_{23}&\cdots  &a_{2r} \\
0 & a_{32} & a_{33} & \ddots & \vdots  \\
\vdots & \vdots & \vdots  & \ddots & a_{nr} \\ 
0 & a_{r2}& \cdots &a_{rn} & a_{rr} 
\end{pNiceMatrix},
\end{equation}
where $a_{jj}$ is even ($j=2,3,\cdots,r$); or equivalently, $\mathbf{b}^\prime:=(\mu_1,...,\mu_n)$ is a basis of $N$.
Then the associated form of $(M, \mathbf{b})$
\begin{equation}\label{eq:f2}
f=f_{(M,\mathbf{b})}(x_1,...,x_n)=3(\sum_{i=2}^r a_{ii}x_{i-1}^2+\sum_{2\le i<j\le r}2a_{ij}x_{i-1}x_{j-1}).
\end{equation}
Thus, we have 
\begin{equation}\label{eq:lambda2}
\lambda=3{\rm gcd}(a_{22}, a_{33},...,a_{rr},2a_{23},...,2a_{2r},2a_{34},...,2a_{3r},...,2a_{nr})
\end{equation}
and
\begin{equation}\label{eq:m2}
m={\rm gcd}(a_{22}, a_{33},...,a_{rr},a_{23},...,a_{2r},a_{34},...,a_{3r},...,a_{nr}).
\end{equation}
Then from (\ref{eq:lambda2}) and (\ref{eq:m2}), one obtain that $\lambda=6m$ (resp. $\lambda=3m$) if the lattice $N_0$ is even (resp. odd). 

Suppose (2) holds. 
Then the form $f$ represents an integer $d$ with the property  \eqref{twostar}. 
Thus, the form $g$ represents the integer $\displaystyle \frac{d}{\lambda}$. 
Since $a_{jj}$ is even ($j=2,3,\cdots,r$), it follows that $6|\lambda$. 
Hence, $\displaystyle 2\nmid \frac{d}{\lambda}$, $\displaystyle 3\nmid \frac{d}{\lambda}$, $\displaystyle p \nmid \frac{d}{\lambda} \text{ for any odd prime } p \equiv 2\;  (\mathrm{mod}\, 3)$. 
Then $\displaystyle \frac{d}{\lambda}\equiv 1\;  (\mathrm{mod}\, 3)$. 
Therefore, (2) implies (3). Again, the other implications are consequences of Proposition \ref{prop:representprime} and Lemma \ref{lem:Mandform}.
 \end{proof}

\begin{rem}\label{more--AT-rem}
(1) Let $T\subset\Lambda$ be a primitive even sublattice of signature $(20-n,2)$ such that $T\subset \langle h^2 \rangle^\perp_{\Lambda}$. By \cite[Theorem 3.1]{AT14}, $M$ is admissible if and only if $M^\prime$ contains the lattice $U$ (the lattice $\langle h^2 \rangle^\perp_{\Lambda}$ admits a unique primitive embedding into ${\rm II}_{20,4}$, which in turn gives the even lattice $M^\prime:=T^\perp_{{\rm II}_{20,4}}$ of signature $(n,2)$). Note that our Theorems \ref{thm:admM1}, \ref{thm:admM2} give a new criterion of admissibility of $M$ without referring to $M^\prime$, 
and in principle provide an approach for handling \cite[Question 2.1]{Laz18}.  

(2) Let $\mathcal{A}_X$ denote the Kuznetsov component of a cubic fourfold $X$. 
In \cite{AT14} the condition $[X]\in \CC_d$ with $d$ satisfying \eqref{twostar} was shown to be equivalent to the existence of a Hodge isometry between the Mukai lattice of $\mathcal{A}_{X}$ and that of some K3 surface $S$;
it is also equivalent to that the Fano variety of lines is birational to a moduli space of stable sheaves a K3 surface (see \cite{Add16}).
As a twisted version, Huybrechts \cite[Theorem 1.3]{Huy17} proved that the Mukai lattice of $\mathcal{A}_{X}$ is Hodge isometric to the twisted Mukai lattice of a twisted K3 surface $(S,\alpha)$ if and only if $[X]\in \CC_{d}$ with
\begin{equation}\tag{$**^\prime$}
d=k^2d_0, \text{ where } k, d_0 \text{ are integers and }  d_0 \text{ satisfies } \eqref{twostar}.
\end{equation}
It is of interest to find a numerical criterion for the condition ($**^\prime$) analogous to Theorems \ref{thm:admM1}, \ref{thm:admM2}.
\end{rem}

Let $M$ be a positive definite lattice containing a distinguished element $\mathfrak{o}$.
Noticing that, for any rank $r(M)$, 
Theorems \ref{thm:admM1}, \ref{thm:admM2} can be rephrased in terms of Gram matrix of $M$. 
From the proof of Theorems \ref{thm:admM1} and \ref{thm:admM2},
we can always choose a basis of $M$ with the Gram matrix as \eqref{essential-matrix-one} or \eqref{essential-matrix-two}.
Taking rank $3$ lattices for illustration, we have the following

\begin{prop}\label{prop:admMrank3-1}
Let $M$ be a rank $3$ positive definite lattice with a distinguished element $\mathfrak{o}$. 
Suppose $M\neq \langle\mathfrak{o} \rangle\oplus \langle\mathfrak{o} \rangle^\perp_M$. Then the lattice $M$ has a basis $(\mathfrak{o},\mu_1,\mu_2)$ with the Gram matrix of the following shape
$$
A=\left(
\begin{array}{cccc} 
3 & 1 & 0 \\
1& 2n_{1}+1 & \tau  \\
0 & \tau & 2n_{2}
\end{array}
\right),
$$ 
where $n_1\ge 0$, $\tau\ge 0$, $n_2>0$ are integers. Moreover, the lattice $M$ is admissible if and only if $\lambda:={\rm gcd}(6n_1+2, 6\tau, 6n_2)$ satisfies  \eqref{twostar}.
\end{prop}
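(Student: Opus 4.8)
The plan is to reduce everything to Theorem~\ref{thm:admM1}: once $M$ is put into the displayed normal form, the claimed value of $\lambda$ can be read off directly from the associated form, and the admissibility statement is then immediate. So the real work is to produce a basis realizing the stated Gram matrix, and this is where I would concentrate.

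First I would record the structural input. Since $(\mathfrak{o}.\mathfrak{o})=3$ is squarefree, $\langle\mathfrak{o}\rangle$ is primitive, so $N:=\langle\mathfrak{o}\rangle^\perp_M$ is a rank $2$ even lattice and $\langle\mathfrak{o}\rangle\oplus N$ is a finite-index sublattice of $M$. By the glue formalism of Section~\ref{sec:lattice}, the quotient $M/(\langle\mathfrak{o}\rangle\oplus N)$ embeds into $G(\langle\mathfrak{o}\rangle)\cong\mathbb{Z}/3\mathbb{Z}$; as $M\neq\langle\mathfrak{o}\rangle\oplus N$ by hypothesis, this index is exactly $3$. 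I would then pick $\mu_1\in M$ whose class generates $M/(\langle\mathfrak{o}\rangle\oplus N)$, so that $3\mu_1=a\mathfrak{o}+v$ with $v\in N$ and $3\nmid a$ (otherwise $\tfrac{1}{3}\mathfrak{o}\in M$, contradicting primitivity of $\langle\mathfrak{o}\rangle$). Replacing $\mu_1$ by $-\mu_1$ and then by $\mu_1+k\mathfrak{o}$ arranges $(\mathfrak{o}.\mu_1)=a=1$, so $3\mu_1=\mathfrak{o}+v$. Since $\bar v\neq 0$ in $N/3N\cong\mathbb{F}_3^2$, I can choose a basis $(f_1,f_2)$ of $N$ with $v\equiv f_1\ (\mathrm{mod}\ 3N)$ (using surjectivity of $\mathrm{GL}_2(\mathbb{Z})\to\mathrm{GL}_2(\mathbb{F}_3)$); subtracting the element $\tfrac{1}{3}(v-f_1)\in N$ from $\mu_1$ reduces to $\mu_1=\tfrac{1}{3}(\mathfrak{o}+f_1)$. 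Then $f_1=3\mu_1-\mathfrak{o}$ shows $(\mathfrak{o},\mu_1,f_2)$ is a basis of $M$.

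With respect to this basis one computes $(\mathfrak{o}.\mu_1)=1$ and $(\mathfrak{o}.f_2)=0$, together with $(\mu_1.\mu_1)=\tfrac{3+(f_1.f_1)}{9}$, $(\mu_1.f_2)=\tfrac{(f_1.f_2)}{3}$, and $(f_2.f_2)$; all are integers because $\mu_1,f_2\in M$. As $N$ is even, $(f_1.f_1)$ and $(f_2.f_2)$ are even, so $(f_2.f_2)=2n_2$; integrality of $(\mu_1.\mu_1)$ then forces $9(\mu_1.\mu_1)=3+(f_1.f_1)$ to be even, hence $(\mu_1.\mu_1)$ is odd, say $(\mu_1.\mu_1)=2n_1+1$. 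This parity check is the one delicate point, and it is exactly what distinguishes case~(i) from case~(ii) of Corollary~\ref{cor:rank2}: the hypothesis $M\neq\langle\mathfrak{o}\rangle\oplus N$ is precisely what guarantees the ``mixed'' glue giving $(\mathfrak{o}.\mu_1)=1$. Positive definiteness gives $n_1\geq 0$ and $n_2>0$, and replacing $f_2$ by $-f_2$ makes $\tau:=(\mu_1.f_2)\geq 0$ without disturbing the other entries; this yields the asserted Gram matrix.

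Finally I would compute the associated form of $(M,(\mathfrak{o},\mu_1,f_2))$. By Definition~\ref{associated-form},
\[
f(x_1,x_2)=\disc\langle\mathfrak{o},x_1\mu_1+x_2f_2\rangle=3\big((2n_1+1)x_1^2+2\tau x_1x_2+2n_2x_2^2\big)-x_1^2=(6n_1+2)x_1^2+6\tau x_1x_2+6n_2x_2^2,
\]
whence $\lambda=\gcd(6n_1+2,6\tau,6n_2)$, as claimed. Since $M\neq\langle\mathfrak{o}\rangle\oplus N$, Theorem~\ref{thm:admM1} applies verbatim and shows that $M$ is admissible if and only if $\lambda$ satisfies \eqref{twostar}, finishing the proof. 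The main obstacle is the normalization carried out above—arranging that the $\mathbb{Z}/3\mathbb{Z}$-glue is realized along $\mathfrak{o}$ with $(\mathfrak{o}.\mu_1)=1$ and checking that the resulting diagonal entries have the required parities; once the normal form is secured, the admissibility statement is a one-line appeal to Theorem~\ref{thm:admM1}.
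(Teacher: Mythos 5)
Your proof is correct, and at top level it runs exactly as the paper intends: produce the normal form, read off the associated form $(6n_1+2)x_1^2+6\tau x_1x_2+6n_2x_2^2$ and hence $\lambda$, and quote Theorem~\ref{thm:admM1}. Where you genuinely diverge is in the normal-form step, which is the only real content of Proposition~\ref{prop:admMrank3-1} beyond that citation. The paper never proves it: in the proof of Theorem~\ref{thm:admM1} it says ``without loss of generality, by Corollary~\ref{cor:rank2}'' one may assume the Gram matrix has the shape \eqref{essential-matrix-one}; implicitly this means taking a non-split rank~$2$ primitive sublattice $K\ni\mathfrak{o}$ (which exists because $M\neq\langle\mathfrak{o}\rangle\oplus N$), applying Hassett's rank~$2$ classification to get $(\mathfrak{o}.\mu_1)=1$ with $(\mu_1.\mu_1)$ odd, and then completing to a basis of $M$ whose third vector is pushed into $\langle\mathfrak{o}\rangle^{\perp}$. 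You instead reprove this from scratch by glue theory: the index of $\langle\mathfrak{o}\rangle\oplus N$ in $M$ is exactly $3$, a generator $\mu_1$ of the quotient can be normalized so that $3\mu_1=\mathfrak{o}+f_1$ with $f_1$ a basis vector of $N$ (via surjectivity of $\mathrm{GL}_2(\mathbb{Z})\to\mathrm{GL}_2(\mathbb{F}_3)$), and the parities of the diagonal entries fall out of the evenness of $N$. This is essentially the proof of Corollary~\ref{cor:rank2} carried out inside the rank~$3$ lattice; it is more self-contained than the paper and fills in the basis-extension step the paper leaves implicit.

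Two justifications in your write-up are misattached and should be swapped, though both underlying facts are true and immediate. First, $3\nmid a$ does not follow from ``$\tfrac13\mathfrak{o}\in M$'': if $3\mid a$ then $\tfrac13 v\in M\cap(N\otimes\mathbb{Q})=N$ by primitivity of $N$ in $M$, so $\mu_1\in\langle\mathfrak{o}\rangle\oplus N$, contradicting that $\mu_1$ generates the nontrivial quotient. The ``$\tfrac13\mathfrak{o}$'' argument is instead precisely what proves the claim you left unjustified, namely $\bar v\neq 0$ in $N/3N$: if $v=3w$ then $\mathfrak{o}=3(\mu_1-w)$, contradicting primitivity of $\langle\mathfrak{o}\rangle$. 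Second, ``$9(\mu_1.\mu_1)=3+(f_1.f_1)$ to be even'' should read ``odd'': since $(f_1.f_1)$ is even, the right-hand side is odd, whence $(\mu_1.\mu_1)$ is odd, which is the parity you need.
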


\begin{prop}\label{prop:admMrank3-2}
Let $M$ be a rank $3$ positive definite lattice with a distinguished element $\mathfrak{o}$. Suppose $M= \langle\mathfrak{o} \rangle\oplus \langle\mathfrak{o} \rangle^\perp_M$. Then the lattice $M$ has a basis $(\mathfrak{o},\mu_1,\mu_2)$ with the Gram matrix of the following shape
$$
A=\left(
\begin{array}{cccc} 
3 & 0 & 0 \\
0& 2n_{1} & \tau  \\
0 & \tau & 2n_{2}
\end{array}
\right),
$$ 
where $n_1> 0$, $\tau\ge 0$, $n_2>0$ are integers. 
Moreover, the lattice $M$ is admissible if and only if the following conditions hold
\begin{enumerate}
\item $\lambda:={\rm gcd}(6n_1, 6\tau, 6n_2)$ satisfies  \eqref{twostar}, and 
\item the mod $3$ reduction of the polynomial $\displaystyle \frac{6n_1}{\lambda}x^2+\frac{6\tau}{\lambda}x y+\frac{6n_2}{\lambda}y^2\in \Z[x,y]$ is not equal to $2x^2$, $2y^2$, $2x^2\pm 2xy+2y^2$. 
\end{enumerate}
\end{prop}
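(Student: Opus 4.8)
The plan is to specialize Theorem \ref{thm:admM2} to the rank-three case and then to settle the remaining arithmetic by a finite computation over $\mathbb{F}_3$.

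First I would produce the Gram matrix. Since $\mathfrak{o}$ is distinguished and $M=\langle\mathfrak{o}\rangle\oplus N$ with $N:=\langle\mathfrak{o}\rangle^\perp_M$ an even positive definite lattice of rank $2$, any basis of $N$ has Gram matrix $\left(\begin{smallmatrix} 2n_1 & \tau \\ \tau & 2n_2\end{smallmatrix}\right)$ with $n_1,n_2>0$; replacing the second basis vector by its negative if necessary, I may assume $\tau\ge 0$. Adjoining $\mathfrak{o}$ yields the basis $\mathbf{b}=(\mathfrak{o},\mu_1,\mu_2)$ and the asserted Gram matrix $A$.

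Next, by formula \eqref{eq:f2} in the proof of Theorem \ref{thm:admM2}, the associated form of $(M,\mathbf{b})$ is $f=f_{(M,\mathbf{b})}(x,y)=3(2n_1x^2+2\tau xy+2n_2 y^2)=6n_1x^2+6\tau xy+6n_2 y^2$, so that $\lambda=\gcd(6n_1,6\tau,6n_2)$ and the primitive part is $g=f/\lambda=\frac{6n_1}{\lambda}x^2+\frac{6\tau}{\lambda}xy+\frac{6n_2}{\lambda}y^2$. By Theorem \ref{thm:admM2}, $M$ is admissible if and only if $\lambda$ satisfies \eqref{twostar} and $g$ represents an integer $\equiv 1\;(\mathrm{mod}\,3)$; the former is condition (1), so everything reduces to showing that $g$ represents such an integer if and only if its mod-$3$ reduction $\bar g\in\mathbb{F}_3[x,y]$ is none of $2x^2$, $2y^2$, $2x^2\pm 2xy+2y^2$, which is condition (2).

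The last equivalence is the core of the argument and is a finite verification. Because the value $g(x,y)\bmod 3$ depends only on $(x,y)\bmod 3$, the form $g$ represents an integer $\equiv 1\;(\mathrm{mod}\,3)$ exactly when $\bar g$ attains the value $1$ on $\mathbb{F}_3^2$; and since $\lambda^2\equiv 1$ for $\lambda\in\mathbb{F}_3^\times$, the value $\bar g(v)$ depends only on the class of $v$ in $\mathbb{P}^1(\mathbb{F}_3)$. Writing $\bar g=\bar a x^2+\bar b xy+\bar c y^2$, the four points $[1:0],[0:1],[1:1],[1:2]$ give the values $\bar a,\ \bar c,\ \bar a+\bar b+\bar c,\ \bar a+2\bar b+\bar c$, so $\bar g$ fails to represent $1$ precisely when all four lie in $\{0,2\}$. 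Running through the $(\bar a,\bar b,\bar c)\in\mathbb{F}_3^3$ meeting this constraint, and discarding $\bar g=0$ (impossible since $g$ is primitive), leaves exactly $2x^2$, $2y^2$, $2x^2+2xy+2y^2$ and $2x^2+xy+2y^2$; the identity $xy\equiv -2xy\;(\mathrm{mod}\,3)$ rewrites the last two as $2x^2\pm 2xy+2y^2$. I expect the only delicate point to be the bookkeeping in this $\mathbb{F}_3$ enumeration, together with the use of primitivity to exclude the degenerate reduction $\bar g=0$; there is no conceptual obstacle once Theorem \ref{thm:admM2} is in hand.
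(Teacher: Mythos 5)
Your proof is correct and follows exactly the route the paper intends: Proposition \ref{prop:admMrank3-2} is stated there as a direct specialization of Theorem \ref{thm:admM2} via the Gram matrix \eqref{essential-matrix-two}, with the translation of ``$g$ represents an integer $\equiv 1 \;(\mathrm{mod}\;3)$'' into condition (2) left implicit. Your $\mathbb{F}_3$-enumeration over $\mathbb{P}^1(\mathbb{F}_3)$, including the use of primitivity of $g$ to exclude the reduction $\bar g=0$, is precisely the omitted verification and is carried out correctly.
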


\begin{rem}
(1) The moduli spaces $\CC_d$ of special cubic fourfolds with $d$ satisfying \eqref{twostar} are closely related to the moduli spaces of polarized K3 surfaces (see \cite{Has00}). 
For higher rank admissible lattices $M$, it is interesting to explore analogous relation between the moduli spaces $\CC_M$ of $M$-polarizable cubic fourfolds and the moduli spaces of lattice polarized K3 surfaces. 

(2) The admissibility of a rank-two positive definite lattice is determined by its discriminant.
For higher rank lattices, however, the discriminant is not enough.
For example, by Propositions \ref{prop:admMrank3-1} and \ref{prop:admMrank3-2},
we can find an admissible rank 3 lattice $M_{(1,2,2)}$ and a nonadmissible rank 3 lattice $M_{(2,0,1)}$ with the same discriminant $72$ and $\ell(M_{(1,2,2)})=\ell(M_{(2,0,1)})=2$ (see Corollary \ref{cor:C18divisors});
compare with \cite{Laz18}.
Moreover, for any concrete positive definite lattice, 
our Theorems \ref{thm:admM1} and \ref{thm:admM2} can effectively decide the admissibility. 
\end{rem}

As two examples, we classify all divisors $\CC_M$ in $\CC_8$ and $\CC_{18}$.

\begin{cor}\label{cor:C8divisors}
Let $\mathcal{S}_8\subset \Z^2$ be the set of pairs $(\tau,n)$ of integers satisfying 
\begin{equation}\label{eq:C8divisors}
0\le \tau\le 4,\, n\ge 2,\, (\tau, n)\neq (3,2), (4,2), (4,3).
\end{equation}
For each $(\tau,n)\in\mathcal{S}_8$, we denote by $M_{(\tau,n)}$ the rank $3$ positive definite lattice having a basis $(\mathfrak{o}_{(\tau,n)},\mu_{(\tau,n)},\nu_{(\tau,n)})$ with the Gram matrix 
$$
A_{(\tau,n)}=\left(
\begin{array}{cccc} 
3 & 1 & 0 \\
1& 3 & \tau  \\
0 & \tau & 2n
\end{array}
\right)
$$
and $\disc(M_{(\tau,n)})=16n-3\tau^2$. Let $(\tau,n), (\tau^\prime,n^\prime)\in\mathcal{S}_8$. Then the following statements hold:

\begin{enumerate}
\item $\CC_{M_{(\tau,n)}}$ is a non-empty irreducible divisor of $\CC_8$.
\item $\CC_{M_{(\tau,n)}}=\CC_{M_{(\tau^\prime,n^\prime)}}$ if and only if $(\tau,n)=(\tau^\prime,n^\prime)$.
\item $M_{(\tau,n)}$ is admissible if and only if one of the following conditions is true
\begin{enumerate}
\item $\tau=1,3$;
\item $\tau=0,2,4$ and $n$ is odd.
\end{enumerate}
\end{enumerate}
Moreover, if $M$ is a rank $3$ positive definite lattice with a distinguished element and $\emptyset \neq \CC_M\subset \CC_8$, then $\CC_M=\CC_{M_{(\tau,n)}}$ for some $(\tau,n)\in\mathcal{S}_8$.
\end{cor}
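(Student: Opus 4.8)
The plan is to treat $\CC_8$ as $\CC_{K_8}$ for the lattice $K_8=\left(\begin{smallmatrix}3&1\\1&3\end{smallmatrix}\right)$, which is the normal form from Corollary \ref{cor:rank2} for a rank-two distinguished lattice of discriminant $8$. The decisive tool throughout will then be Lemma \ref{lem:subset}: for a rank $3$ positive definite $M$ with distinguished $\mathfrak{o}$ and $\CC_M\neq\emptyset$, one has $\CC_M\subset\CC_8$ if and only if there is a primitive embedding $K_8\hookrightarrow M$ carrying the distinguished element to $\mathfrak{o}$. Assertions (1)--(3) and the final completeness clause each reduce to a lattice-theoretic statement about the family $M_{(\tau,n)}$.

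First I would dispose of assertion (1). For $(\tau,n)\in\mathcal{S}_8$, Lemma \ref{lem:eventest} shows $\mathfrak{o}_{(\tau,n)}$ is distinguished, since $3-1$ and $2n-0$ are even. The substantive point is root-freeness: writing $(v.v)=3x^2+2xy+3y^2+2\tau yz+2nz^2$ for $v=x\mathfrak{o}_{(\tau,n)}+y\mu_{(\tau,n)}+z\nu_{(\tau,n)}$, I would verify $(v.v)\ge 3$ for all nonzero $v$ when $(\tau,n)\in\mathcal{S}_8$, using that the $z=0$ part is the root-free form $K_8$ and that $\disc(M_{(\tau,n)})=16n-3\tau^2>0$. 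Granting this, Theorem \ref{prop:main} (rank $3\le 10$) yields that $\CC_{M_{(\tau,n)}}$ is non-empty, irreducible, and of codimension $2$ in $\CC$. Finally the upper-left block of $A_{(\tau,n)}$ exhibits $\langle\mathfrak{o}_{(\tau,n)},\mu_{(\tau,n)}\rangle$ as a primitive sublattice isometric to $K_8$, so $\CC_{M_{(\tau,n)}}\subset\CC_8$ by Lemma \ref{lem:subset}, whence it is a divisor of the codimension-one locus $\CC_8$.

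Next I would prove the completeness clause, which simultaneously explains the precise shape of $\mathcal{S}_8$. Let $(M,\mathfrak{o})$ be rank $3$ with $\emptyset\neq\CC_M\subset\CC_8$; non-emptiness forces $M$ to be root-free by Proposition \ref{two-non-empty}, and Lemma \ref{lem:subset} produces a primitive $\langle\mathfrak{o},\mu\rangle\cong K_8$. Primitivity lets me extend to a basis $(\mathfrak{o},\mu,\nu')$ of $M$; replacing $\nu'$ by $\nu=\nu'+s\mathfrak{o}+t\mu$ I arrange $(\nu.\mathfrak{o})=0$, and then a further choice of $s$ together with the sign change $\nu\mapsto-\nu$ reduces $\tau:=(\mu.\nu)$ to $0\le\tau\le 4$ (exploiting that $(\nu.\mu)$ is well-defined modulo $8$). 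Evenness of $\langle\mathfrak{o}\rangle^\perp_M$ forces $(\nu.\nu)=2n$. Thus $(M,\mathfrak{o})\cong(M_{(\tau,n)},\mathfrak{o}_{(\tau,n)})$, and positive definiteness plus root-freeness pin down exactly the constraints defining $\mathcal{S}_8$: one needs $n\ge 2$ (else $\nu$ itself is a root), $16n-3\tau^2>0$ (excluding $(4,2)$ and $(4,3)$), and the explicit root $-\mathfrak{o}_{(3,2)}+\mu_{(3,2)}-\nu_{(3,2)}$ of norm $2$ rules out $(3,2)$. For assertion (3) I would apply Proposition \ref{prop:admMrank3-1} with $n_1=1$, $n_2=n$: $M_{(\tau,n)}$ is admissible iff $\lambda=\gcd(8,6\tau,6n)$ satisfies \eqref{twostar}. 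As $\lambda\mid 8$ and $\lambda$ is even, $\lambda\in\{2,4,8\}$, and \eqref{twostar} holds exactly when $\lambda=2$; writing $\lambda=2\gcd(4,\tau,n)$, this occurs precisely when $\tau$ is odd or $n$ is odd, i.e. in cases (a) and (b).

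Finally, for assertion (2): if $\CC_{M_{(\tau,n)}}=\CC_{M_{(\tau',n')}}$, then since both lattices are root-free of equal rank $3$, the mutual primitive embeddings from Lemma \ref{lem:subset} are forced to be isometries, so $(M_{(\tau,n)},\mathfrak{o})\cong(M_{(\tau',n')},\mathfrak{o}')$. Such an isometry fixes the distinguished element and hence restricts to an isometry of orthogonal complements $\langle\mathfrak{o}\rangle^\perp_M$, whose Gram matrix (in the basis $\mathfrak{o}-3\mu,\nu$) is $\left(\begin{smallmatrix}24&-3\tau\\-3\tau&2n\end{smallmatrix}\right)$ of discriminant $3(16n-3\tau^2)$. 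Comparing $\disc=16n-3\tau^2$ over $0\le\tau\le 4$ shows the only possible coincidences are the pairs $(0,m)$ and $(4,m+3)$, and I expect the main obstacle to lie exactly here: one must show the associated complements $\langle 24,2m\rangle$ and $\left(\begin{smallmatrix}24&-12\\-12&2m+6\end{smallmatrix}\right)$ are never isometric, which I would settle by reducing these binary quadratic forms and observing that their reduced forms differ. This binary-form bookkeeping, rather than any of the geometry, is the delicate part of the argument; the root-free verification underlying (1) is elementary but likewise demands a careful finite case analysis.
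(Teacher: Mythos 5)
Your proposal is correct and takes essentially the same approach as the paper's proof: root-freeness plus Theorem \ref{prop:main} (the paper invokes Theorem \ref{mainthm-one} via Remark \ref{rem:Bound}) for (1), Proposition \ref{prop:admMrank3-1} with $\lambda=\gcd(8,6\tau,6n)$ for (3), the same change-of-basis normal form $(\mathfrak{o},\mu,\nu)\mapsto(\mathfrak{o},\mu,\pm\nu+k(\mathfrak{o}-3\mu))$ for the completeness clause, and the same discriminant comparison isolating the exceptional pair $(0,m)$, $(4,m+3)$ for (2). Where you go beyond the paper is precisely at its unproved parenthetical assertion $M_{(0,n)}\ncong M_{(4,n+3)}$, and your reduction to the orthogonal complements $\langle 24\rangle\oplus\langle 2m\rangle$ and $\left(\begin{smallmatrix}24&-12\\-12&2m+6\end{smallmatrix}\right)$ does close this: for instance, modulo $8$ the second form represents only the classes $0$ and $2m+6$ while the first represents only $0$ and $2m$, so an isometry would force both $8\mid 2m$ and $8\mid 2m+6$, a contradiction valid for every $m$.
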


\begin{proof}
Clearly $\mathfrak{o}_{(\tau,n)}$ is a distinguished element of $M_{(\tau,n)}$. Since $M_{(\tau,n)}$ has no roots (cf. Remark \ref{rem:Bound}),  Theorem \ref{mainthm-one} implies (1).  If $\CC_{M_{(\tau,n)}}=\CC_{M_{(\tau^\prime,n^\prime)}}$, then $M_{(\tau,n)}\cong M_{(\tau^\prime,n^\prime)}$ and $\disc(M_{(\tau,n)})=\disc(M_{(\tau^\prime,n^\prime)})$, which implies $(\tau,n)=(\tau^\prime,n^\prime)$ (notice that $\disc(M_{(0,n)})=\disc(M_{(4,n+3)})$ but $M_{(0,n)}\ncong M_{(4,n+3 )}$). 
Thus, (2) holds. By Proposition \ref{prop:admMrank3-1},  we have (3). For the last claim, note that $M$ has a basis, say $(\mathfrak{o},\mu,\nu)$, with the Gram matrix of the same shape as $A_{(\tau,n)}$. Moreover, the numerical condition \eqref{eq:C8divisors} can be achieved by using a combination of the following two types of change of basis: $(\mathfrak{o}, \mu,\nu)\longmapsto (\mathfrak{o}, \mu,\nu+k (\mathfrak{o}-3\mu))$ and $(\mathfrak{o}, \mu,\nu)\longmapsto (\mathfrak{o}, \mu,-\nu)$.
\end{proof}

\begin{rem}\label{rem:C8rationalcubics}
The discriminant $\disc(M)$ of divisors $\CC_M$ in $\CC_8$ may be any positive integer $m$ with
$$
m\ge 16, \,m\equiv 4, 5, 13, 16\, ({\rm mod}\, 16).
$$
Hassett \cite{Has99} constructed countably infinitely many divisors in $\CC_8$ which parametrize rational cubic fourfolds. Those divisors correspond to admissible lattices $M_{(\tau,n)}$ with $(\tau,n)\in\mathcal{S}_8$, $\tau=1,3$ (see \cite[Lemma 4.4]{Has99}).  
\end{rem}

The classification for all divisors $\CC_M$ in $\CC_{18}$ can be proved by similar arguments.

\begin{cor}\label{cor:C18divisors}
Let $\mathcal{S}_{18}\subset \Z^2$ be the set of pairs $(\tau,n)$ of integers satisfying 
\begin{equation}
0\le \tau\le 3,\, n\ge 1.
\end{equation}
For each $(\tau,n)\in\mathcal{S}_{18}$, we denote by $M_{(1,\tau,n)}$ (resp. $M_{(2,\tau,n)}$) the rank $3$ positive definite lattice having a basis $(\mathfrak{o}_{(1,\tau,n)},\mu_{(1,\tau,n)},\nu_{(1,\tau,n)})$ (resp. $(\mathfrak{o}_{(2,\tau,n)},\mu_{(2,\tau,n)},\nu_{(2,\tau,n)})$) with the Gram matrix 
$$
A_{(1, \tau,n)}=\left(
\begin{array}{cccc} 
3 & 1 & 0 \\
1& 2n+1 & \tau  \\
0 & \tau & 6
\end{array}
\right) (\text{resp. } 
A_{(2, \tau,n)}=\left(
\begin{array}{cccc} 
3 & 0 & 0 \\
0& 2n+2 & \tau  \\
0 & \tau & 6
\end{array}
\right))
$$
and $\disc(M_{(1, \tau,n)})=36n-3\tau^2+12$ (resp. $\disc(M_{(2, \tau,n)})=36n-3\tau^2+36$). Let $(\tau,n), (\tau^\prime,n^\prime)\in\mathcal{S}_{18}$ and $i,i^\prime\in \{1,2\}$. Then the following statements hold:

\begin{enumerate}
\item $\CC_{M_{(i, \tau,n)}}$ is a non-empty irreducible divisor of $\CC_{18}$.
\item $\CC_{M_{(i,\tau,n)}}=\CC_{M_{(i^\prime,\tau^\prime,n^\prime)}}$ if and only if $(i,\tau,n)=(i^\prime,\tau^\prime,n^\prime)$.
\item $M_{(i,\tau,n)}$ is admissible if and only if one of the following conditions is true
\begin{enumerate}
\item $i=1$;
\item $i=2$ and $\tau=1,2$;
\item $i=2$, $\tau=0, 3$, and $3\mid n$.
\end{enumerate}
\end{enumerate}
Moreover, if $M$ is a rank $3$ positive definite lattice with a distinguished element and $\emptyset \neq \CC_M\subset \CC_{18}$, then $\CC_M=\CC_{M_{(i,\tau,n)}}$ for some $(i,\tau,n)$ with $(\tau,n)\in\mathcal{S}_{18}, i\in\{1,2\}$.
\end{cor}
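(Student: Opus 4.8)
The plan is to mirror the proof of Corollary \ref{cor:C8divisors} step by step, the one genuinely new feature being that $18\equiv 0\pmod 6$ rather than $\equiv 2$. By Corollary \ref{cor:rank2}(ii) a rank-two primitive labelling of discriminant $18$ containing $\mathfrak{o}$ has Gram matrix $\mathrm{diag}(3,6)$, so its second generator $\nu$ is \emph{orthogonal} to $\mathfrak{o}$ (contrast with the $K_8$ case, where $\mathfrak{o}\cdot\mu=1$). Consequently a rank-three $M$ with $\emptyset\ne\CC_M\subset\CC_{18}$ falls into two structurally different types according to whether $M=\langle\mathfrak{o}\rangle\oplus\langle\mathfrak{o}\rangle^\perp_M$ or not; these produce the two families $M_{(2,\tau,n)}$ and $M_{(1,\tau,n)}$ respectively. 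This dichotomy is precisely why the statement needs both Theorem \ref{thm:admM1} and Theorem \ref{thm:admM2}, whereas the $\CC_8$ classification used only Proposition \ref{prop:admMrank3-1}.

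For the ``moreover'' (exhaustiveness) clause and part (1): given such an $M$, I would use Lemma \ref{lem:subset} to produce a primitive $K=\langle\mathfrak{o},\nu\rangle\cong\mathrm{diag}(3,6)$, extend to a basis $(\mathfrak{o},\mu,\nu)$, and bring the Gram matrix into the essential form of Setup \ref{setup}; the case $M\ne\langle\mathfrak{o}\rangle\oplus\langle\mathfrak{o}\rangle^\perp_M$ forces $\mathfrak{o}\cdot\mu=1$ (shape $A_{(1,\tau,n)}$) and the split case gives $A_{(2,\tau,n)}$. The entry $\tau$ is then reduced modulo $6$ by $\mu\mapsto\mu+k\nu$ (which fixes $\mathfrak{o}\cdot\mu$ and $\mathfrak{o}\cdot\nu$, since $\nu^2=6$) and the sign fixed by $\nu\mapsto-\nu$, yielding $0\le\tau\le 3$; the range $n\ge 1$ follows from positivity of the relevant diagonal entries. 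Conversely each $M_{(i,\tau,n)}$ is positive definite (Sylvester, using $\disc>0$ in the stated ranges), has $\mathfrak{o}$ distinguished (Lemma \ref{lem:eventest}), contains the primitive labelling $\langle\mathfrak{o},\nu\rangle$ of discriminant $18$ (whence $\CC_{M_{(i,\tau,n)}}\subset\CC_{18}$ by Lemma \ref{lem:subset}), and has no roots: completing the square writes $(v.v)$ as a sum of squares plus a positive definite binary form in the remaining variables, from which $(v.v)\ge 3$ follows for $0\le\tau\le3$, $n\ge 1$ exactly as in Remark \ref{rem:Bound}. Since $r(M)+\ell(M)\le 3+2=5\le 20$, Theorem \ref{mainthm-one} gives non-emptiness, irreducibility, and codimension $2$ in $\CC$, i.e. a divisor of $\CC_{18}$.

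For part (2) I would argue that $\CC_{M_{(i,\tau,n)}}=\CC_{M_{(i',\tau',n')}}$ forces a primitive embedding each way (Lemma \ref{lem:subset}), hence, the ranks being equal, an isometry $M_{(i,\tau,n)}\cong M_{(i',\tau',n')}$ carrying distinguished element to distinguished element. The splitting property ``$M=\langle\mathfrak{o}\rangle\oplus\langle\mathfrak{o}\rangle^\perp_M$'' is an isometry invariant, because all distinguished elements lie in one $\mathrm{O}(M)$-orbit (Lemma \ref{unique-CM-lem}), so $i=i'$; a direct check that $\langle\mathfrak{o}\rangle\oplus\langle\mathfrak{o}\rangle^\perp_M$ has index $3$ in $M_{(1,\tau,n)}$ confirms that the first family never splits. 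Within each family the discriminants $36n-3\tau^2+12$ and $36n-3\tau^2+36$ occupy mutually distinct residues modulo $36$ as $\tau$ runs over $\{0,1,2,3\}$, so $\disc$ recovers $(\tau,n)$, and the only cross-family coincidences (those $\equiv 0,9\pmod{36}$) are harmless since they are separated by $i$. For part (3) I would feed $A_{(1,\tau,n)}$ into Proposition \ref{prop:admMrank3-1}, where $\lambda=\gcd(6n+2,6\tau,18)=2$ always (as $3\nmid 6n+2$), so every $M_{(1,\tau,n)}$ is admissible; and $A_{(2,\tau,n)}$ into Proposition \ref{prop:admMrank3-2}, where $\lambda=6\gcd(n+1,\tau,3)$ satisfies \eqref{twostar} iff $\gcd(n+1,\tau,3)=1$ (otherwise $\lambda=18$ is divisible by $9$), and when $\lambda=6$ the primitive form reduces mod $3$ to $(n+1)x^2+\tau xy$; running through $\tau\in\{0,1,2,3\}$ then reproduces exactly cases (3b) and (3c).

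The main obstacle I anticipate is not any single computation but the bookkeeping in the exhaustiveness-and-uniqueness step: one must check that the normalization to $0\le\tau\le 3$ is canonical and that the two families remain disjoint as isometry classes despite the discriminant overlaps at $\equiv 0,9\pmod{36}$. The cleanest safeguard is the splitting invariant together with the $\mathrm{O}(M)$-uniqueness of the distinguished element from Lemma \ref{unique-CM-lem}, which decouples the $i=1$ and $i=2$ cases and reduces everything to the already-established rank-two and rank-three machinery.
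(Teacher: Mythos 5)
Your proposal is correct and follows essentially the same route as the paper, whose entire proof is the remark that the arguments of Corollary \ref{cor:C8divisors} carry over: you have worked out exactly the features specific to $d=18$ that this remark leaves implicit, namely the split/non-split dichotomy producing the two families (with the index-$3$ computation showing $M_{(1,\tau,n)}$ never splits), the use of both Propositions \ref{prop:admMrank3-1} and \ref{prop:admMrank3-2} for admissibility, and the cross-family discriminant coincidences at $0,9\pmod{36}$ resolved by the splitting invariant together with Lemma \ref{unique-CM-lem}. One negligible slip: $\ell(M_{(i,\tau,n)})$ can equal $3$ rather than $\le 2$ (e.g.\ $M_{(2,0,5)}\cong \mathrm{diag}(3,12,6)$ has glue group $(\Z/3\Z)^{\oplus 3}\oplus\Z/4\Z\oplus\Z/2\Z$), but the bound actually needed for Theorem \ref{mainthm-one}, $r(M)+\ell(M)\le 6\le 20$, holds regardless.
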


\begin{rem}\label{rem:C18rationalcubics}
The discriminant $\disc(M)$ of divisors $\CC_M$ in $\CC_{18}$ may be any positive integer $m$ with
$$
m=21 \text{ or } m\ge 36, \,m\equiv 0, 9, 12, 21, 24, 33\, ({\rm mod}\, 36).
$$
In \cite{AHTVA19} it was shown that there is a countably infinite union of divisors in $\CC_{18}$ which parametrize rational cubic fourfolds. 
Those rational cubic fourfolds are contained in divisors  $\CC_{K_{a,b}}\subset \CC_{18}$ with the rank $3$ positive definite lattices
$$
K_{a,b}=\left(
\begin{array}{cccc} 
3 & 6 & a \\
6& 18 & 1  \\
a & 1 & b
\end{array}
\right),
$$
where $a\equiv b\,({\rm mod}\, 2)$, $a=-1,0,1$. By comparing discriminants (or change of basis), one see that $K_{-1,b}$, $K_{0,b}$, $K_{1,b}$ are isometric to admissible lattices $M_{(1,3,\frac{b-1}{2})}$, $M_{(2,1,\frac{b-2}{2})}$, $M_{(1,1,\frac{b-1}{2})}$ respectively.
\end{rem}

Laza \cite[Proposition 2.5, Remark 2.6]{Laz18} observed that if a cubic fourfold $X$ satisfies $\ell(A(X))=r(A(X))$ (resp. $\ell(A(X))<r(A(X))-1$), then the transcendental lattice of $X$ cannot (resp. can) be embedded into the lattice ${\rm II}_{3,19}(-1)$ primitively. 
Combining \cite[Proposition 2.9]{Laz18} and \cite{AT14} (see also \cite{Add16}), 
for a cubic fourfold  $X$ with $r(A(X))\ge 12$, the algebraic cohomology $A(X)$ is an admissible lattice.
Laza \cite{Laz18} asked whether $r(A(X))=11$ is achieved for some cubic fourfold $X$ with nonadmissible $A(X)$. 
We give the following affirmative answer:

\begin{cor}\label{cor:anwserLaza}
Let $n_i$ ($i=1, 2, ... ,10$) be positive integers with $n_i\ge 2$ for all $i$. Let $M$ be a rank $11$ positive definite lattice with a basis $\mathbf{b}:=(\mathfrak{o},\mu_1,\mu_2, ..., \mu_{10})$ such that the Gram matrix is equal to the following diagonal matrix 
$$
{\rm diag}(3,2n_1,2n_2, ..., 2n_{10}).
$$
Then there exists a cubic fourfold $X$ and an isometry $\phi: M\longrightarrow A(X)$ such that $\phi(\mathfrak{o})=h_{X}^{2}$. 
Moreover, the lattice $M$ is not admissible if one of the  following three statements is true:
\begin{enumerate}
\item $3\; |\; n_i$ for all $1\le i\le 10$.
\item $2 \;|\; n_i$ for all $1\le i\le 10$.
\item There exists an odd prime $p\equiv 2\;({\rm mod}\; 3)$ such that $p\; | \;n_i$ for all $1\le i\le 10$.
\end{enumerate}
\end{cor}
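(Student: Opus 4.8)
The plan is to prove the two assertions separately: the existence of $X$ will come from Corollary \ref{non-empty-CM}, while the nonadmissibility will come from the criterion in Theorem \ref{thm:admM2}. In fact essentially all the real work has already been done in those two results, so the proof reduces to verifying hypotheses and a short divisibility check.

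For the existence of $X$, I would first confirm that $M$ meets the conditions of Corollary \ref{non-empty-CM}. Since the Gram matrix is diagonal with $(\mathfrak{o}.\mathfrak{o})=3$ and $\langle \mathfrak{o}\rangle^\perp_M$ carrying the even diagonal form ${\rm diag}(2n_1,\ldots,2n_{10})$, Lemma \ref{lem:eventest} shows that $\mathfrak{o}$ is a distinguished element. For any nonzero $v=x_0\mathfrak{o}+\sum_{i=1}^{10}x_i\mu_i$ one computes $(v.v)=3x_0^2+\sum_{i=1}^{10}2n_ix_i^2$, which is $\geq 3$ because $n_i\geq 2$; hence $M$ has no roots by Lemma \ref{non-empty-criterion}. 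Finally $r(M)=11$, and since $G(M)$ is generated by the classes of a dual basis we get $\ell(M)\leq 11$, so $r(M)+\ell(M)\leq 22<23$ with $2\leq r(M)\leq 21$. Corollary \ref{non-empty-CM} then yields the cubic fourfold $X$ and the isometry $\phi$ with $\phi(\mathfrak{o})=h_X^2$.

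For the nonadmissibility, the crucial structural point is that $M=\langle \mathfrak{o}\rangle\oplus N$ with $N=\langle \mathfrak{o}\rangle^\perp_M$, so Theorem \ref{thm:admM2} (not Theorem \ref{thm:admM1}) is the applicable criterion. I would compute the associated form directly from Definition \ref{associated-form}: the sublattice $\langle \mathfrak{o},\sum x_i\mu_i\rangle$ has diagonal Gram matrix, whence
$$
f(x_1,\ldots,x_{10})=\disc\langle \mathfrak{o},\textstyle\sum_{i=1}^{10} x_i\mu_i\rangle=6\sum_{i=1}^{10}n_ix_i^2 .
$$
Thus $\lambda={\rm gcd}(6n_1,\ldots,6n_{10})=6d$, where $d:={\rm gcd}(n_1,\ldots,n_{10})$; this agrees with the ``moreover'' clause $\lambda=3m$ of Theorem \ref{thm:admM2}, since $N\cong N_0(2d)$ with $N_0={\rm diag}(n_1/d,\ldots,n_{10}/d)$ odd (because ${\rm gcd}(n_i/d)=1$ forces some $n_i/d$ to be odd). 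By Theorem \ref{thm:admM2}, admissibility forces $\lambda$ to satisfy \eqref{twostar}, so it suffices to exhibit a failure of \eqref{twostar} for $\lambda=6d$ under each hypothesis.

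The concluding step is the case-by-case divisibility check. If $3\mid n_i$ for all $i$ then $3\mid d$ and hence $9\mid 6d=\lambda$, violating $9\nmid\lambda$. If $2\mid n_i$ for all $i$ then $2\mid d$ and hence $4\mid 6d=\lambda$, violating $4\nmid\lambda$. If some odd prime $p\equiv 2\,({\rm mod}\,3)$ divides every $n_i$, then $p\mid d$ and (as $p\neq 2,3$) $p\mid 6d=\lambda$, violating the last clause of \eqref{twostar}. In every case $\lambda$ fails \eqref{twostar}, so $M$ is not admissible by Theorem \ref{thm:admM2}; in particular one never needs to analyze whether the primitive part $g$ represents an integer $\equiv 1\,({\rm mod}\,3)$. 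The argument as a whole is routine; the only genuinely load-bearing inputs are $n_i\geq 2$ (guaranteeing no roots for the existence half) and the fact that the factor $6d$ in $\lambda$ is precisely what the three hypotheses are tailored to corrupt — so I anticipate no real obstacle beyond keeping the bookkeeping of \eqref{twostar} straight.
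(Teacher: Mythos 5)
Your proposal is correct and follows essentially the same route as the paper: existence via Corollary \ref{non-empty-CM} (the paper invokes its rank-$\le 11$ specialization, Corollary \ref{non-empty-CM-special}, which is exactly the hypothesis check you performed), and nonadmissibility via the computation $f_{(M,\mathbf{b})}=6\sum_i n_i x_i^2$, $\lambda=\gcd(6n_1,\ldots,6n_{10})$, combined with Theorem \ref{thm:admM2}. Your explicit case-by-case verification that $\lambda=6\gcd(n_1,\ldots,n_{10})$ violates \eqref{twostar} under each of the three hypotheses is precisely the step the paper leaves implicit.
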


\begin{proof}
Clearly $\mathfrak{o}$ is a distinguished element and $M$ has no roots. 
Thus, by Corollary \ref{non-empty-CM-special}, there exists a cubic fourfold $X$ and an isometry 
$\phi: M\longrightarrow A(X)$ such that $\phi(\mathfrak{o})=h_{X}^{2}$. 
Note that the form
$$
f_{(M,\mathbf{b})}(x_1,...,x_{10})=6(n_1x_1^2+n_2x_2^2+...+n_{10}x_{10}^2)=\lambda g(x_1,...,x_{10}),
$$
where $\lambda={\rm gcd}(6n_1,6n_2,...,6n_{10})$ and $g(x_1,...,x_{10})$ is a primitive integral quadratic form.
As a result, it follows Theorem \ref{thm:admM2} that $M$ is nonadmissible.
\end{proof}

\begin{rem}
(i) For $X$ in Corollary \ref{cor:anwserLaza}, if the statement (1) holds, then $\ell(A(X))=r(A(X))$. 
On the other hand, if (1) does not hold but (2) or (3) holds, then $\ell(A(X))=r(A(X))-1$. 

(ii) For other examples, we consider the positive definite lattice $M$ of rank $3\leq n \leq 11$ with the Gram matrix $(a_{ij})$ such that $a_{ii}=3$ and $a_{ij}=1$ for $i\neq j$.
We observe that $M$ contains a distinguished element and $M$ has no roots. 
According to Corollary \ref{non-empty-CM-special}, 
there exists an $M$-polarizable cubic fourfold $X$ containing mutually intersecting $n-1$ planes with $M\cong A(X)$.
By elementary computations, we get $\lambda=4$ and $\ell(A(X))=r(A(X))-1$.
By Theorem \ref{thm:admM1}, 
$A(X)$ is nonadmissible; see \cite[Remark 7.4]{DM19} for $n=11$.

(iii) In \cite{Laz18} Laza introduced the {\it algebraic index} $\kappa_X:=\frac{\rho_X}{d_X}$ of a cubic fourfold $X$, where $\rho_X:=r(A(X))-1$ and $d_X:=\disc(A(X))$. He showed that $\kappa_X\leq 1$ for {\it potentially irrational} cubic fourfolds $X$ (equivalently, cubic fourfolds $X$ with nonadmissible $A(X)$). Moreover, based on \cite{LPZ18},  he showed that $\kappa_{X_0}=1$ for some potentially irrational cubic fourfolds with $\rho_{X_0}=6$ and $G(A(X_0))\cong (\Z/2\Z)^{\oplus 6}$. Now we consider $M:=A(X_0)\oplus D_4(2)$, where $D_4$ denotes the root lattice given by the corresponding Dynkin diagram (in particular, $r(D_4)=4$, $\disc(D_4)=4$). Then by Corollary \ref{non-empty-CM-special} and Theorem \ref{thm:admM1}, there exists a  potentially irrational cubic fourfold $X_1$ with $A(X_1)\cong M$. Moreover, $\ell(A(X_1))=\rho_{X_1}=10$ and $\kappa_{X_1}=\frac{2^{10}}{2^6\cdot 2^{6}}=\frac{1}{4}$. 
\end{rem}


\appendix

\section{Planes on Fermat cubic fourfold}

\subsection*{Basis and Gram matrix of $A(X_{F})$}

The $21$ planes $P_i$ $(i=1,...,21)$ on Fermat cubic fourfold $X_{F}$ in Proposition \ref{algcohom-Fermat} are:
\begin{equation}\label{21-planes}
\begin{array}{rclcrcl}
P_{1}&:=& \{e^{-\frac{2\pi i}{3}}z_{1}+z_{5}=e^{-\frac{2\pi i}{3}}z_{2}+z_{4}=e^{-\frac{2\pi i}{3}}z_{3}+z_{6}=0 \} \\
P_{2}&:=&\{e^{-\frac{2\pi i}{3}}z_{1}+z_{5}=z_{2}+z_{4}=e^{-\frac{2\pi i}{3}}z_{3}+z_{6}=0\} \\
P_{3}&:=& \{e^{-\frac{2\pi i}{3}}z_{1}+z_{4}=e^{-\frac{2\pi i}{3}}z_{2}+z_{6}=e^{\frac{2\pi i}{3}}z_{3}+z_{5}=0 \} \\
P_{4}&:=&\{e^{-\frac{2\pi i}{3}}z_{1}+z_{4}=e^{-\frac{2\pi i}{3}}z_{2}+z_{6}=z_{3}+z_{5}=0 \} \\
P_{5}&:=& \{e^{-\frac{2\pi i}{3}}z_{1}+z_{4}=e^{\frac{2\pi i}{3}}z_{2}+z_{4}=e^{-\frac{2\pi i}{3}}z_{3}+z_{6}=0 \} \\
P_{6}&:=& \{e^{\frac{2\pi i}{3}}z_{1}+z_{4}=z_{2}+z_{6}=z_{3}+z_{5}=0 \} \\
P_{7}&:=& \{e^{\frac{2\pi i}{3}}z_{1}+z_{4}=e^{-\frac{2\pi i}{3}}z_{2}+z_{5}=z_{3}+z_{6}=0 \} \\
P_{8}&:=& \{z_{1}+z_{4}=e^{-\frac{2\pi i}{3}}z_{2}+z_{5}=e^{\frac{2\pi i}{3}}z_{3}+z_{6}=0 \} \\
P_{9}&:=& \{e^{\frac{2\pi i}{3}}z_{1}+z_{4}=e^{\frac{2\pi i}{3}}z_{2}+z_{3}=e^{-\frac{2\pi i}{3}}z_{5}+z_{6}=0 \} \\
P_{10}&:=& \{z_{1}+z_{4}=e^{-\frac{2\pi i}{3}}z_{2}+z_{3}=e^{-\frac{2\pi i}{3}}z_{5}+z_{6}=0 \} \\
P_{11}&:=& \{e^{\frac{2\pi i}{3}}z_{1}+z_{3}=e^{-\frac{2\pi i}{3}}z_{2}+z_{6}=z_{4}+z_{5}=0 \} \\
P_{12}&:=& \{e^{-\frac{2\pi i}{3}}z_{1}+z_{3}=e^{\frac{2\pi i}{3}}z_{2}+z_{5}=e^{\frac{2\pi i}{3}}z_{4}+z_{6}=0 \} \\
P_{13}&:=& \{z_{1}+z_{3}=e^{\frac{2\pi i}{3}}z_{2}+z_{5}=e^{\frac{2\pi i}{3}}z_{4}+z_{6}=0 \} \\
P_{14}&:=& \{e^{-\frac{2\pi i}{3}}z_{1}+z_{3}=e^{-\frac{2\pi i}{3}}z_{2}+z_{4}=e^{-\frac{2\pi i}{3}}z_{5}+z_{6}=0 \} \\
P_{15}&:=& \{e^{-\frac{2\pi i}{3}}z_{1}+z_{3}=z_{2}+z_{4}=e^{\frac{2\pi i}{3}}z_{5}+z_{6}=0 \} \\
P_{16}&:=& \{e^{\frac{2\pi i}{3}}z_{1}+z_{3}=e^{-\frac{2\pi i}{3}}z_{2}+z_{4}=z_{5}+z_{6}=0 \} \\
P_{17}&:=& \{e^{-\frac{2\pi i}{3}}z_{1}+z_{2}=z_{3}+z_{6}=e^{-\frac{2\pi i}{3}}z_{4}+z_{5}=0 \} \\
P_{18}&:=& \{z_{1}+z_{2}=z_{3}+z_{6}=e^{-\frac{2\pi i}{3}}z_{4}+z_{5}=0 \} \\
P_{19}&:=& \{z_{1}+z_{2}=z_{3}+z_{6}=e^{\frac{2\pi i}{3}}z_{4}+z_{5}=0 \} \\
P_{20}&:=& \{z_{1}+z_{2}=e^{-\frac{2\pi i}{3}}z_{3}+z_{5}=e^{-\frac{2\pi i}{3}}z_{4}+z_{6}=0 \} \\
P_{21}&:=& \{z_{1}+z_{2}=e^{-\frac{2\pi i}{3}}z_{3}+z_{4}=e^{-\frac{2\pi i}{3}}z_{5}+z_{6}=0 \}.
\end{array}
\end{equation}

We denote by $\xi_{i}:=[P_{i}]\in A(X_{F})$ the cohomology class of the plane $P_{i}$.
According to Lemma \ref{intersect-planes},
the intersection matrix $((\xi_i.\xi_j))_{1\le i,j\le 21}$ is as follows:
\begin{equation}\label{intersect-matrix-21-planes}
\tiny{
\left( 
\begin{array}{ccccccccccccccccccccc} 
3 & -1 & 0 & 0 & 0 & 0 & 0 & 0 & 0 & 0 & 0 & 1 & 0 & -1 & 0 & 1 & 0 & 0 & 0 & 0 & 0 \\
-1 & 3 & 0 & 1 & 0 & 1 & 0 & 1 & 1 & 1 & 0 & 0 & 1 & 1 & 1 & 0 & 0 & 1 & 0 & 1 & 0 \\
0 & 0 & 3 & -1 & 1 & 0 & 0 & 1 & 0 & 1 & -1 & 0 & 0 & 1 & 1 & 1 & 0 & 0 & 1 & 0 & 0 \\
0 & 1 & -1 & 3 & 1 & 1 & 1 & 0 & 0 & 0 & 1 & 1 & 0 & 0 & 0 & 0 & 1 & 0 & 0 & 0 & 1 \\
0 & 0 & 1 & 1 & 3 & 0 & 0 & 1 & 0 & 0 & 0 & 0 & 0 & 0 & 0 & 0 & 0 & 0 & 1 & -1 & 1 \\
0 & 1 & 0 & 1 & 0 & 3 & 1 & 1 & -1 & 0 & 1 & 0 & 1 & 0 & 0 & 0 & 0 & 1 & 0 & 1 & 0 \\
0 & 0 & 0 & 1 & 0 & 1 & 3 & 1 & -1 & 0 & 1 & 1 & 0 & 0 & 0 & 1 & 1 & 1& -1 & 1 & 0 \\
0 & 1 & 1 & 0 & 1 & 1 & 1 & 3 & 0 & 1 & 0 & 0 & 1 & 0 & 1 & 0 & 0 &1 & 0 & 0 & 0 \\
0 & 1 & 0 & 0 & 0 & -1 & -1 & 0 & 3 & 1 & 0 & 0  & 1 & 1 & 1 & 0 & 0 & 0 & 1 & 0 & 1 \\
0 & 1 & 1 & 0 & 0 & 0 & 0 & 1 & 1 & 3 & 0 & 0 & 1 & 1 & 1 & 0 & 1 & 0 & 0 & 0 & 1\\
0 & 0 & -1 & 1 & 0 & 1 & 1 & 0 & 0 & 0 & 3 & 1 & 1 & 0 & 0 & -1 & 1 & 0 & 0 & 1 & 1 \\
1 & 0 & 0 & 1 & 0 & 0 & 1 & 0 & 0 & 0 & 1 & 3 & -1 & -1 & 1 & 0 & 1 & 0 & 0 & 1 & 0 \\
0 & 1 & 0 & 0 & 0 & 1 & 0 & 1 & 1 & 1 & 1 & -1 & 3 & 1 & 0 & 0 & 0 & 1 & 0 & 0 & 1 \\
-1 & 1 & 1 & 0 & 0 & 0 & 0 & 0 & 1 & 1 & 0 & -1 & 1 & 3 & 1 & 1 & 1 & 0 & 1 & 0 & 1 \\
0 & 1 & 1 & 0 & 0 & 0 & 0 & 1 & 1 & 1 & 0 & 1 & 0 & 1 & 3 & 0 & 1 & 0 & 1 & 1 & 0 \\
1 & 0 & 1 & 0 & 0 & 0 & 1 & 0 & 0 & 0 & -1 & 0 & 0 & 1 & 0 & 3 & 0 & 1 & 0 & 0 & 0 \\
0 & 0 & 0 & 1 & 0 & 0 & 1 & 0 & 0 & 1 & 1 & 1 & 0 & 1 & 1 & 0 & 3 & -1 & 1 & 0 & 1 \\
0 & 1 & 0 & 0 & 0 & 1 & 1 & 1 & 0 & 0 & 0 & 0 & 1 & 0 & 0 & 1 & -1 & 3 & -1 & 1 & -1 \\
0 & 0 & 1 & 0 & 1 & 0 & -1 & 0 & 1 & 0 & 0 & 0 & 0 & 1 & 1 & 0 & 1 & -1 & 3 & -1 & 1 \\
0 & 1 & 0 & 0 & -1 & 1 & 1 & 0 & 0 & 0 & 1 & 1 & 0 & 0 & 1 & 0 & 0 & 1 & -1 & 3 & -1 \\
0 & 0 & 0 & 1 & 1 & 0 & 0 & 0 & 1 & 1 & 1 & 0 & 1 & 1 & 0 &  0 & 1 & -1 & 1 & -1 & 3 
\end{array}
\right).
}
\end{equation}

\subsection*{Planes in the proof of Theorem \ref{mainthm-two}}
The $11$ planes $P^\prime_{ij}$ ($1\le i\le 5, 1\le j\le 2$), 
$P^\prime$ in the proof of Theorem \ref{mainthm-two} are:
\begin{equation}\label{5+1-planes}
\begin{array}{rclcrcl}
P'_{11}&:=& \{e^{\frac{2\pi i}{3}}z_{1}+z_{6}=e^{-\frac{2\pi i}{3}}z_{2}+z_{4}=e^{-\frac{2\pi i}{3}}z_{3}+z_{5}=0 \} \\
P'_{12}&:=&\{e^{\frac{2\pi i}{3}}z_{1}+z_{3}=e^{-\frac{2\pi i}{3}}z_{2}+z_{4}=z_{5}+z_{6}=0\}=P_{16} \\
P'_{21}&:=& \{e^{-\frac{2\pi i}{3}}z_{1}+z_{2}=e^{-\frac{2\pi i}{3}}z_{3}+z_{6}=e^{\frac{2\pi i}{3}}z_{4}+z_{5}=0 \} \\
P'_{22}&:=&\{e^{\frac{2\pi i}{3}}z_{1}+z_{2}=z_{3}+z_{5}=z_{4}+z_{6}=0 \} \\
P'_{31}&:=& \{e^{-\frac{2\pi i}{3}}z_{1}+z_{6}=z_{2}+z_{4}=z_{3}+z_{5}=0 \} \\
P'_{32}&:=& \{e^{-\frac{2\pi i}{3}}z_{1}+z_{5}=e^{\frac{2\pi i}{3}}z_{2}+z_{3}=e^{\frac{2\pi i}{3}}z_{4}+z_{6}=0 \} \\
P'_{41}&:=& \{z_{1}+z_{5}=e^{-\frac{2\pi i}{3}}z_{2}+z_{6}=e^{\frac{2\pi i}{3}}z_{3}+z_{4}=0 \}  \\
P'_{42}&:=& \{e^{\frac{2\pi i}{3}}z_{1}+z_{4}=e^{\frac{2\pi i}{3}}z_{2}+z_{5}=e^{\frac{2\pi i}{3}}z_{3}+z_{6}=0 \} \\
P'_{51}&:=& \{e^{-\frac{2\pi i}{3}}z_{1}+z_{5}=e^{-\frac{2\pi i}{3}}z_{2}+z_{4}=e^{\frac{2\pi i}{3}}z_{3}+z_{6}=0 \} \\
P'_{52}&:=& \{e^{-\frac{2\pi i}{3}}z_{1}+z_{2}=z_{3}+z_{5}=z_{4}+z_{6}=0 \} \\
P'&:=& \{z_{1}+z_{3}=z_{2}+z_{5}=e^{-\frac{2\pi i}{3}}z_{4}+z_{6}=0\}.
\end{array}
\end{equation}

Under the basis $(\xi_{1}, \xi_{2}, \cdots, \xi_{21})$ of $A(X_{F})$, 
the coordinates of the cohomology classes of $P^\prime_{ij}$, $P^\prime$ are expressed as follows:
\begin{equation}\label{11planes}
\begin{array}{rclcrcl}
{[P'_{11}]}&:=& (0,-1,-1,1,-1,-1,0,1,-1,1,-1,1,1,1,-1,0, -1,0,2,1,0) \\
{[P'_{12}]}&:=& (0,0,0,0,0,0,0,0,0,0,0,0,0,0,0,1,0,0,0,0,0) \\
{[P'_{21}]}&:=& (-2,-3,-4,2,0,-3,-1,3,-4,3,-3,3,3,2,-2,1,-3,-1,5,5,0) \\
{[P'_{22}]}&:=& (-1,-2,-3,2,0,-3,0,2,-3,2,-3,3,3,2,-2,0,-3,-1,4,4,0) \\
{[P'_{31}]}&:=& (-3,-6,-7,4,0,-4,-1,4,-6,5,-6,6,6,4,-3,1,-6,-2,8,8,-1) \\
{[P'_{32}]}&:=& (-1,-2,-2,2,0,-2,0,1,-2,2,-2,1,2,1,-1,0,-2,0,3,3,0) \\
{[P'_{41}]}&:=& (0,0,1,0,0,1,0,0,1,0,1,0,-1,0,0,0,0,0,-1,-1,0) \\
{[P'_{42}]}&:=& (1,3,4,-2,0,3,0,-2,4,-3,3,-3,-3,-3,2,0,4,1,-5,-4,1) \\
{[P'_{51}]}&:=& (1,2,3,-1,0,1,0,-1,2,-2,2,-2,-2,-1,1,-1,2,1,-3,-2,1) \\
{[P'_{52}]}&:=& (3,5,6,-3,0,4,1,-4,5,-4,5,-5,-5,-3,3,-1,5,2,-7,-7,1) \\
{[P']}:&=&(-3,-6,-8,3,1,-4,1,3,-6,6,-6,6,7,4,-3,1,-7,-3,9,8,-2)
\end{array}
\end{equation}


\end{document}